\pdfoutput=1 

\documentclass[12pt]{amsart}
\usepackage[margin=1in, marginparwidth=0.8in, marginparsep=0.1in]{geometry}
\usepackage{enumitem, verbatim}
\usepackage{microtype}
\allowdisplaybreaks

\usepackage[export]{adjustbox}
\usepackage{amsmath,amssymb,amsthm,mathtools}
\usepackage{graphicx}
\usepackage[hyphens]{url}
\usepackage[pdfusetitle]{hyperref}
\usepackage[nameinlink]{cleveref}
\usepackage{color}
\hypersetup{colorlinks,linkcolor=blue,citecolor=cyan}
\usepackage{physics}
\usepackage{asymptote}
\usepackage{standalone}
\usepackage{subcaption}
\usepackage{wasysym}
\usepackage{stmaryrd}
\usepackage{float}
\usepackage{subcaption}
\captionsetup[subfloat]{labelfont=normalfont}
\usepackage{bbm}
\usepackage{comment}
\usepackage{adjustbox}

\usepackage{tikz-cd, tikz-3dplot}
\usetikzlibrary{patterns, knots, calc, arrows.meta, decorations.markings, shapes.misc, math, hobby, intersections, spath3}
\usetikzlibrary{shadings}
\usepgflibrary {shadings}
\usetikzlibrary{snakes}

\tikzset{
    anchorbase/.style={baseline={([yshift=-0.5ex]current bounding box.center)}}
}
\tikzstyle directed=[postaction={decorate,decoration={markings,
    mark=at position #1 with {\arrow{>}}}}]
\tikzset{
    cross/.style={cross out, draw=black, minimum size=2*(#1-\pgflinewidth), inner sep=0pt, outer sep=0pt},
    cross/.default={1pt}
}
\tikzset{
    partial ellipse/.style args={#1:#2:#3}{
        insert path={+ (#1:#3) arc (#1:#2:#3)}
    }
}

\makeatletter
\renewcommand*\env@matrix[1][\arraystretch]{%
  \edef\arraystretch{#1}%
  \hskip -\arraycolsep
  \let\@ifnextchar\new@ifnextchar
  \array{*\c@MaxMatrixCols c}}
\makeatother

\makeatletter
\newcommand{\labeleditem}[2]{%
  \item[(#1)]%
  \edef\@currentlabel{#1}%
  \label{#2}%
}
\makeatother

\newtheorem{thm}{Theorem}[section]
\newtheorem{mainthm}{Theorem}
\renewcommand{\themainthm}{\Alph{mainthm}}
\newtheorem{cor}[thm]{Corollary}
\newtheorem{lem}[thm]{Lemma}
\newtheorem{prop}[thm]{Proposition}
\newtheorem{conj}[thm]{Conjecture}

\theoremstyle{definition}
\newtheorem{defn}[thm]{Definition}
\newtheorem{eg}[thm]{Example}

\newtheorem{rmk}[thm]{Remark}

\DeclareMathOperator{\Sk}{\mathrm{Sk}} 
\DeclareMathOperator{\SkAlg}{\mathrm{SkAlg}} 
\DeclareMathOperator{\dual}{d}
\DeclareMathOperator{\SQGM}{\mathrm{SQGM}} 
\DeclareMathOperator{\QT}{Q} 
\DeclareMathOperator{\SQTS}{\mathrm{SQTS}} 
\DeclareMathOperator{\evmap}{\mathrm{ev}} 
\DeclareMathOperator{\st}{\mathrm{st}} 
\DeclareMathOperator{\CT}{\mathsf{c}_{\mathbb{T}}} 
\DeclareMathOperator{\CB}{\mathsf{c}_{\mathbb{B}}} 
\DeclareMathOperator{\tri}{\triangle} 



\title{Compatibility of quantum trace and UV-IR maps}
\author[S. Panitch]{Samuel Panitch}
\address{Department of Mathematics, Yale University, New Haven, CT 06511, USA}
\email{\href{mailto:sam.panitch@yale.edu}{sam.panitch@yale.edu}}

\author[S. Park]{Sunghyuk Park}
\address{Department of Mathematics, Harvard University, Cambridge, MA 02138, USA \and Center of Mathematical Sciences and Applications, Harvard University, Cambridge, MA 02138, USA}
\email{\href{mailto:sunghyukpark@math.harvard.edu}{sunghyukpark@math.harvard.edu}}

\begin{document}

\maketitle

\begin{abstract}
This paper studies the connection between the quantum trace map \cite{BW, PP, GY}
-- which maps the $\mathfrak{sl}_2$-skein module to the quantum Teichm\"uller space for surfaces and to the quantum gluing module for 3-manifolds --
and the quantum UV-IR map \cite{NY}
-- which maps the $\mathfrak{gl}_2$-skein module to the $\mathfrak{gl}_1$-skein module of the branched double cover. 
We show that the two maps are compatible in a precise sense, and that the compatibility map is natural under changes of triangulation; 
for surfaces, this resolves a conjecture of Neitzke and Yan \cite{NY}. 
As a corollary, under a mild hypothesis on the 3-manifold, the quantum trace map can be recovered from the quantum UV-IR map, hence providing yet another construction of the 3d quantum trace map recently introduced in \cite{PP, GY}. 
\end{abstract}

\tableofcontents

\section{Introduction}
At the crossroads of low-dimensional topology and quantum algebra, skein modules have emerged as powerful invariants of 3-manifolds. 
Since their inception in the 1990s \cite{Przytycki-skein, Tur}, they have driven advances in quantum topology
and created bridges to areas spanning 3- and 4-dimensional TQFTs \cite{Walker, GJS, Jordan}, factorization homology \cite{MW, BZBJ, Cooke}, character varieties \cite{Bullock, PS}, and -- crucial to this work -- cluster algebras \cite{BW, Muller, JLSS}. 
Central to this cluster-theoretic picture are ``abelianization'' maps that, given an ideal triangulation of a surface, send its skein algebra to a quantum torus, and that, in the 3-manifold setting, produce certain quotients of quantum tori, thereby providing cluster-like coordinates on skein modules that are natural under changes of triangulation. 
There are two such maps known in the literature: the quantum trace map and the quantum UV-IR map. 

Let $\Sigma$ be a surface equipped with an ideal triangulation $\tau$. 
The \emph{quantum trace map} for $\Sigma$, introduced by Bonahon and Wong \cite{BW}, is an algebra homomorphism 
\[
\Tr_{\tau} : \SkAlg^{\mathfrak{sl}_2}_A(\Sigma) \rightarrow \SQTS_{\tau}(\Sigma)
\]
from the $\mathfrak{sl}_2$-skein algebra $\SkAlg^{\mathfrak{sl}_2}_A(\Sigma)$ of $\Sigma$ into a quantum torus called the \emph{square-root quantum Teichm\"uller space} $\SQTS_{\tau}(\Sigma)$, commonly known as the \emph{square-root Chekhov-Fock algebra} of $(\Sigma, \tau)$. 

The \emph{quantum UV-IR map} for $\Sigma$, introduced by Neitzke and Yan \cite{NY} drawing inspirations from \cite{GLM, Gabella}, is an algebra homomorphism
\[
F_{\tau} : \SkAlg^{\mathfrak{gl}_2}_q(\Sigma) \rightarrow \SkAlg^{\mathfrak{gl}_1}_q(\widetilde{\Sigma}_{\tau})
\]
from the $\mathfrak{gl}_2$-skein algebra $\SkAlg^{\mathfrak{gl}_2}_q(\Sigma)$ of $\Sigma$ to the $\mathfrak{gl}_1$-skein algebra $\SkAlg^{\mathfrak{gl}_1}_q(\widetilde{\Sigma}_{\tau})$ of the branched double cover $\widetilde{\Sigma}_{\tau}$ of $\Sigma$ associated to the ideal triangulation $\tau$. 

Both the quantum trace map and the quantum UV-IR map behave naturally under changes of triangulation: for different triangulations, the corresponding maps are related via quantum cluster transformations. 
Moreover, both of these maps admit generalizations to 3-manifolds. 
The first is the 3d quantum trace map for an ideal triangulation $\mathcal{T}$ of a 3-manifold $Y$,
\[
\Tr_{\mathcal{T}} : \Sk^{\mathfrak{sl}_2}_{A}(Y) \rightarrow \SQGM_{\mathcal{T}}(Y),
\]
which was recently introduced in \cite{PP, GY}.
Here, $\SQGM_{\mathcal{T}}(Y)$ denotes the 3d analog of the square-root quantum Teichm\"uller space, called the \emph{square-root quantum gluing module}.
The second is the 3d quantum UV-IR map
\[
F_{\mathcal{T}} : \Sk^{\mathfrak{gl}_2}_{q}(Y) \rightarrow \Sk^{\mathfrak{gl}_1}_{q}(\widetilde{Y}_{\mathcal{T}})
\]
which is described in Section \ref{subsec:3dUVIR} of this paper. 

While these two maps are both homomorphisms of skein algebras and modules into quantum tori and their quotients, 
it is worth noting that the construction of the quantum trace map is largely algebraic, whereas the quantum UV-IR map has a more geometric and topological flavor. 
The quantum trace map is defined explicitly in terms of generators and relations in the skein module, reflecting combinatorial data of the triangulation. 
In contrast, the quantum UV-IR map relies on a 1-dimensional foliation of the 3-manifold induced by the triangulation, which has its origin in symplectic geometry. 

\bigskip
The main purpose of this paper is to bridge the gap between the two maps -- the quantum trace map and the quantum UV-IR map -- and show that they are compatible in the following sense. 
Given a 3-manifold $Y$ with an ideal triangulation $\mathcal{T}$ and the associated branched double cover $\widetilde{Y}_\mathcal{T}$ (see Section \ref{subsec:3dSpecNet}), 
we construct a commutative square
\begin{equation}\label{eq:main-thm-commutative-square}
\begin{tikzcd}
\Sk^{\mathfrak{gl}_2}_{q}(Y) \arrow[d, "\pi"] \arrow[r, "F_{\mathcal{T}}"] & \Sk^{\mathfrak{gl}_1}_{q}(\widetilde{Y}_{\mathcal{T}}) \arrow[d, "\evmap"]\\
\Sk^{\mathfrak{sl}_2}_{A}(Y) \otimes \Sk^{\mathfrak{gl}_1}_{-A}(Y) \arrow[r, "\Tr_{\mathcal{T}} \otimes \mathrm{id}"] & \SQGM_{\mathcal{T}}(Y) \otimes \Sk^{\mathfrak{gl}_1}_{-A}(Y)
\end{tikzcd},
\end{equation}
where the four arrows are as follows. 
\begin{itemize}
\item The bottom horizontal arrow is the 3d quantum trace map \cite{PP} 
\[
\Tr_{\mathcal{T}} : \Sk^{\mathfrak{sl}_2}_{A}(Y) \rightarrow \SQGM_{\mathcal{T}}(Y)
\]
for the ideal triangulation $\mathcal{T}$ of $Y$, which we review in Section \ref{subsec:3d-quantum-trace}, tensored with the identity map on $\Sk_{-A}^{\mathfrak{gl}_1}(Y)$. 
\item The top horizontal arrow
\[
F_{\mathcal{T}} : \Sk^{\mathfrak{gl}_2}_{q}(Y) \rightarrow \Sk^{\mathfrak{gl}_1}_{q}(\widetilde{Y}_{\mathcal{T}})
\]
is the 3d generalization of the quantum UV-IR map of \cite{NY}, which we describe in detail in Section \ref{subsec:3dUVIR}. 
\item The left vertical arrow $\pi$ is the ``$\mathfrak{gl}_2$-$\mathfrak{sl}_2$ map'' 
\begin{align*}
\pi : \Sk^{\mathfrak{gl}_2}_{q}(Y) &\rightarrow \Sk^{\mathfrak{sl}_2}_{A}(Y) \otimes \Sk^{\mathfrak{gl}_1}_{-A}(Y)\\
[\vec{L}]^{\mathfrak{gl}_2} &\mapsto (-1)^{b(\vec{L})}[L]^{\mathfrak{sl}_2} \otimes [\vec{L}]^{\mathfrak{gl}_1}
\end{align*}
that factors the $\mathfrak{gl}_2$-skeins into tensor products of $\mathfrak{sl}_2$- and $\mathfrak{gl}_1$-skeins, 
defined in Section \ref{subsec:gl2tosl2}. 
\item The right vertical arrow is the ``evaluation map''
\[
\evmap : \Sk^{\mathfrak{gl}_1}_{q}(\widetilde{Y}_{\mathcal{T}}) \rightarrow \SQGM_{\mathcal{T}}(Y) \otimes \Sk^{\mathfrak{gl}_1}_{-A}(Y)
\]
which we construct in Sections \ref{sec:surface-compatibility} and \ref{sec: 3-manifold compatibility}. 
\end{itemize}

We do this by constructing such commutative squares locally (i.e., for elementary pieces) and then showing that they can be glued consistently. 

\begin{mainthm}[Compatibility theorem] \label{thm:main-thm-compatibility}
The quantum trace map is compatible with the quantum UV-IR map in the sense that there are commutative squares \eqref{eq:main-thm-commutative-square}. 
Moreover, these commutative squares are compatible with Pachner moves. 
\end{mainthm}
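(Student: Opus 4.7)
The plan is to prove Theorem \ref{thm:main-thm-compatibility} by a local-to-global argument, exploiting the fact that all four maps in \eqref{eq:main-thm-commutative-square} are compatible with the tetrahedral decomposition of $(Y,\mathcal{T})$. The 3d quantum trace $\Tr_{\mathcal{T}}$ assembles its image from contributions of each tetrahedron inside $\SQGM_{\mathcal{T}}(Y)$, the 3d UV-IR map $F_{\mathcal{T}}$ is built from a $1$-dimensional foliation adapted to $\mathcal{T}$ and hence also decomposes tetrahedron-by-tetrahedron, and the splitting $\pi$ is already local on link diagrams. It therefore suffices to produce, on each elementary piece, a local evaluation map $\evmap$ and a local commutative square, and then check that these local data glue coherently across shared faces.

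Step one is the surface case, which is precisely the Neitzke-Yan conjecture recalled in the introduction. On an ideal triangle one defines $\evmap$ by sending a $\mathfrak{gl}_1$-link in the branched double cover, once isotoped into a standard position near the three corners, to the product of a monomial in the square-root shear coordinates generating $\SQTS_{\tau}(\Sigma)$ and a residue $\mathfrak{gl}_1$-loop pushed down to the base surface. Commutativity of the triangle-local square reduces to a finite verification on the three elementary $\mathfrak{gl}_2$-arcs (edge-parallel, corner, and crossing) together with the trivalent vertex, matched directly against the Bonahon-Wong formulas. The edge-gluing step is then the first consistency check: one shows that the $\evmap$'s defined on two triangles sharing an edge agree modulo the framing and ordering conventions built into $\SQTS_{\tau}(\Sigma)$.

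The 3d case proceeds along the same lines on the ideal tetrahedron. The branched double cover of a tetrahedron is an explicit handlebody whose $\mathfrak{gl}_1$-skein module can be written down by generators, and one defines $\evmap$ on a tetrahedron as the factorization of any $\mathfrak{gl}_1$-skein into an $\SQGM_{\mathcal{T}}$-monomial and a residual $\mathfrak{gl}_1$-class downstairs. Commutativity of the tetrahedron-local square then follows by a direct comparison of the definitions of $\Tr_{\mathcal{T}}$ in Section \ref{subsec:3d-quantum-trace} and of $F_{\mathcal{T}}$ in Section \ref{subsec:3dUVIR}, applied to each elementary $\mathfrak{gl}_2$-skein crossing the tetrahedron. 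I expect the main obstacle to be the face-gluing step: one must verify that the $\evmap$'s defined on adjacent tetrahedra agree on their common triangle. This is delicate because the $q$-powers contributed by $\Tr_{\mathcal{T}}$ and by $F_{\mathcal{T}}$ when a skein pierces a face are prescribed by independently chosen framing conventions, and reconciling them requires careful bookkeeping of orientations in both $\widetilde{Y}_{\mathcal{T}}$ and $\SQGM_{\mathcal{T}}(Y)$. This is precisely where the surface-level compatibility of Step one plays its essential role.

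For the Pachner-compatibility statement it suffices to check the $2$-$3$ and $1$-$4$ moves on ideal triangulations. In each case $\Tr_{\mathcal{T}}$ transforms under a known cluster-like automorphism of $\SQGM_{\mathcal{T}}(Y)$, while the $\mathfrak{gl}_1$-skein module of the branched double cover is canonically identified across the move because the cover changes only inside the affected region. Since the local squares on the unaffected tetrahedra remain untouched, one reduces to a compatibility statement inside the affected region; this in turn follows, via the face-gluing of the previous step, from the surface-level naturality of the UV-IR map under flips, which is already available from \cite{NY}. Combining these pieces yields both the commutativity of \eqref{eq:main-thm-commutative-square} and its naturality under Pachner moves.
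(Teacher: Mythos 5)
Your local-to-global strategy is the right general spirit, but as written it has several gaps that track exactly the places where the paper has to work hardest. First, the elementary 3d piece cannot be the tetrahedron in the way you describe: the branched double cover $\widetilde{Y}_{\mathcal{T}}$ is \emph{not} a manifold (let alone a handlebody) near the barycenters --- it has a singular cone point over a torus with four branch points in each tetrahedron, and its $\mathfrak{gl}_1$-skein module carries the extra $3$-term relation \eqref{eq:gl1skeinrel5} depending on a generalized angle structure. That relation is precisely what produces the Lagrangian relation \eqref{item:Lagrangian} of $\SQGM_{\mathcal{T}}(Y)$, and it is absent from your proposal. The paper instead cuts $Y$ along \emph{face suspensions} $Sf$, whose reduced stated skein modules have an explicit cyclic bimodule presentation, and defines $\evmap_{Sf}$ not by an ``isotope to standard position'' prescription (which is not obviously well defined) but by surjectivity of the \emph{stated} quantum UV-IR map on the elementary piece: surjectivity forces uniqueness of $\evmap$ and reduces existence to checking finitely many relations. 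Your plan also skips two necessary ingredients on the left column: $\pi$ is not a (bi)module/algebra map on the nose --- one needs the sign $(-1)^{b(\vec{L})}$ and the sign-twisted products --- and the real content of the gluing step is not ``framing bookkeeping'' but the verification that the glued evaluation map $\overline{\bigotimes}_{f}\evmap_{Sf}$ descends to the relative tensor products, i.e.\ kills the vertex, Lagrangian, and gluing relations \eqref{item:vertex}, \eqref{item:Lagrangian}, \eqref{item:gluing} defining $\SQGM_{\mathcal{T}}(Y)$ (this uses the cone-point $3$-term relation and the normalization $\CB^2=q\CT^2$), together with the identification $\Sk^{\mathfrak{gl}_1}_{-A}(Y)\cong(\overline{\bigotimes}_f\Sk^{\mathfrak{gl}_1}_{-A}(Sf))_0$.

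The Pachner step is also not correct as stated. The $\mathfrak{gl}_1$-skein modules of $\widetilde{Y}_{\mathcal{T}_2}$ and $\widetilde{Y}_{\mathcal{T}_3}$ are \emph{not} canonically identified: the double cover changes inside the bipyramid (two cone points become three, the branch locus changes), and one must construct a transition map $\phi_{2\rightarrow 3}$ and prove it is well defined, which involves a genuine check of the sign relation (the meridian of the old branch arc maps to a product of three meridians, giving $(-1)^3=-1$) and of the relative $3$-term relations with compatible angle structures; likewise $\varphi_{2\rightarrow 3}$ on $\SQGM$ must be shown to respect \eqref{item:vertex} and \eqref{item:Lagrangian}. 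Moreover the $2$-$3$ move is not a composite of surface flips at the level of these maps, so the 3d naturality cannot be deduced from 2d flip naturality (which, incidentally, is proved in this paper rather than taken from \cite{NY}); in the paper the remaining face of the naturality prism is obtained from surjectivity of the stated UV-IR map on the triangulated bipyramid, with all other faces already known to commute. Finally, only the $2$-$3$ move needs to be treated for ideal triangulations, and your logical order for the surface case is reversed relative to the paper: the 2d compatibility square is proved first (via the triangle-local square and gluing), and the Neitzke--Yan conjecture is then deduced as a corollary by exhibiting the embedding $i^{\mathrm{odd}}$ into $Q_{\Gamma^{\mathrm{odd}}}$, not used as an input.
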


Along the way, we prove the conjecture of \cite[Sec. 9.2]{NY} on the relation between the 2d quantum trace map and the 2d quantum UV-IR map. 
\begin{mainthm}[Proof of Neitzke-Yan conjecture; detailed version in Theorem \ref{thm:NeitzkeYanConjIsTrue}] \label{thm:conjectureOfAndyAndFei}
The conjecture of Neitzke-Yan \cite{NY} on the relation between the 2d quantum trace map and the 2d quantum UV-IR map is true. 
\end{mainthm}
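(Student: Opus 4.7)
The plan is to deduce Theorem \ref{thm:conjectureOfAndyAndFei} from Theorem \ref{thm:main-thm-compatibility} by specializing the 3d compatibility square to thickened surfaces. Given a triangulated surface $(\Sigma, \tau)$, consider the thickened 3-manifold $Y = \Sigma \times [0,1]$ equipped with the ideal triangulation $\mathcal{T}$ obtained by extending $\tau$ through the interval direction and subdividing each triangular prism into three tetrahedra in the standard way. Under this specialization, the $\mathfrak{g}$-skein modules $\Sk_q^{\mathfrak{g}}(\Sigma \times [0,1])$ recover the $\mathfrak{g}$-skein algebras $\SkAlg_q^{\mathfrak{g}}(\Sigma)$ for $\mathfrak{g} \in \{\mathfrak{sl}_2, \mathfrak{gl}_2, \mathfrak{gl}_1\}$, the branched double cover $\widetilde{Y}_{\mathcal{T}}$ is homotopy equivalent to $\widetilde{\Sigma}_{\tau} \times [0,1]$, and the square-root quantum gluing module $\SQGM_{\mathcal{T}}(Y)$ reduces to the square-root Chekhov-Fock algebra $\SQTS_{\tau}(\Sigma)$.

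Next I would verify that each of the four maps appearing in \eqref{eq:main-thm-commutative-square} restricts to the corresponding 2d map under these identifications. This is essentially tautological for $F_{\mathcal{T}}$, since the 3d spectral network construction of Section \ref{subsec:3dUVIR} is built to agree with the Neitzke-Yan construction on product pieces, and for $\pi$, which is defined pointwise on skeins. The 3d quantum trace for a thickened surface recovers the Bonahon-Wong map by the characterization in \cite{PP}, and by construction the evaluation map $\evmap$ built in Section \ref{sec: 3-manifold compatibility} restricts to the surface evaluation map built in Section \ref{sec:surface-compatibility}. Thus the commutative square of Theorem \ref{thm:main-thm-compatibility} specializes to the 2d commutative square, which is the content of the detailed Theorem \ref{thm:NeitzkeYanConjIsTrue}.

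The remaining task is to check that this square is precisely the one conjectured in \cite[Sec. 9.2]{NY}. Their conjecture is stated in terms of particular conventions on $\mathfrak{gl}_1$ holonomies along lifts of arcs to the branched double cover, signs and framings attached to the resolved spectral network, and the identification of a quotient of $\SkAlg^{\mathfrak{gl}_1}_q(\widetilde{\Sigma}_\tau)$ with the square-root Chekhov-Fock algebra. I would match these conventions explicitly, verifying that the evaluation map $\evmap$ coincides -- up to the auxiliary $\Sk^{\mathfrak{gl}_1}_{-A}(\Sigma)$ tensor factor that keeps track of the abelianized holonomy -- with the map implicit in \cite{NY}.

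The main obstacle is this convention-matching step. Both maps involve several sign and scaling choices (writhe corrections, branch-sheet labelings, orientation of the foliation underlying the UV-IR construction, and the $(-1)^{b(\vec L)}$ appearing in $\pi$) that can differ between the two formulations, and only after these are reconciled does the precise form of the Neitzke-Yan conjecture become a formal consequence of Theorem \ref{thm:main-thm-compatibility}. Once the identifications are pinned down, naturality under flips -- the 2d Pachner move -- follows immediately from the Pachner-move compatibility asserted in Theorem \ref{thm:main-thm-compatibility}, completing the proof.
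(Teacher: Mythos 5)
There is a genuine gap, in two places. First, the route you take to the 2d commutative square does not work as stated. Theorem \ref{thm:main-thm-compatibility} is formulated for an ideally triangulated cusped $3$-manifold without boundary, and $\Sigma\times[0,1]$ with a tetrahedral subdivision of the prisms is not in its scope; more importantly, the identifications you call ``essentially tautological'' fail. The branched double cover of a tetrahedrally triangulated $\Sigma\times[0,1]$ is a pseudomanifold with one cone point per tetrahedron, so $\Sk^{\mathfrak{gl}_1}_q(\widetilde{Y}_{\mathcal{T}})$ carries the angle-dependent $3$-term relations \eqref{eq:gl1skeinrel5} and is not $\SkAlg^{\mathfrak{gl}_1}_q(\widetilde{\Sigma}_\tau)$; likewise $\SQGM_{\mathcal{T}}(\Sigma\times I)$ does not simply ``reduce'' to $\SQTS_\tau(\Sigma)$ (the vertex, Lagrangian, and gluing relations have no 2d counterpart, and the 3d turning numbers are non-integral and angle-dependent). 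Attaching tetrahedra to a surface implements flips/quantum-dilogarithm conjugations, not the identity, so extracting the 2d square for a fixed $\tau$ from the 3d theorem requires substantial extra argument. The paper avoids this entirely: the logical order is the reverse of yours — the 2d square (Theorem \ref{thm:2d-compatibility}) is proved first and directly, by building the evaluation map on a single triangle (Theorem \ref{thm:evMapTriangle}, using surjectivity of $F_{\tri}$ on the stated $\mathfrak{gl}_2$-skein algebra) and gluing via the splitting maps and Lemma \ref{lem:gl1_iso_surfaces}. Your appeal to ``the evaluation map of Section \ref{sec: 3-manifold compatibility} restricting to the one of Section \ref{sec:surface-compatibility}'' is circular in your framework, since the 2d evaluation map is exactly what has to be produced.

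Second, and more decisively for the statement at hand: even granting the 2d commutative square, the Neitzke--Yan conjecture is not that square. It is the specific diagram \eqref{eq:NeitzkeYan-commutative-diagram} involving the quantum tori $Q_{2\Gamma}$, $Q_{\Gamma^{\mathrm{odd}}}$, $Q_{\Gamma^{\mathrm{even}}}$, the maps $\rho$, $F^{\mathrm{odd}}$, $F^{\mathrm{even}}$ with their writhe prefactors, and the assertion that $F^{\mathrm{odd}}$ coincides with the Bonahon--Wong quantum trace. What you defer as ``convention matching'' is precisely the content of the paper's proof of Theorem \ref{thm:NeitzkeYanConjIsTrue}: one must construct the embedding $i^{\mathrm{odd}}:\SQTS_\tau(\Sigma)\hookrightarrow Q_{\Gamma^{\mathrm{odd}}}$ sending $\hat{x}_e$ to an explicit odd cycle, extend the isomorphism $\iota$ between $\mathfrak{gl}_1$-skein algebras and quantum tori to surfaces with boundary, verify the identities $(i^{\mathrm{odd}}\otimes i^{\mathrm{even}})\circ\evmap=\rho\circ\iota$ and $i^{\mathrm{odd}}\circ\Tr_\tau=F^{\mathrm{odd}}$ locally on triangles, and glue. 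Without carrying out this step your argument establishes (at best, and by a problematic route) the compatibility square, not the conjecture itself.
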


There is complementary work in the literature touching on related themes:
\cite{KLS} established a connection between the 2d quantum trace map and the map of Gabella \cite{Gabella}, 
while \cite{KorinmanQuesney} studied its relation to the non-abelianization map of \cite{GMNsn, HollandsNeitzke}, which may be viewed as the classical UV-IR map for character varieties. 
Our Theorem \ref{thm:conjectureOfAndyAndFei}, however, furnishes a direct and concrete relationship between the 2d quantum trace map and the quantum UV-IR map. 

Our compatibility theorem (Theorem \ref{thm:main-thm-compatibility}) shows, in particular, that the quantum trace map $\Tr_\mathcal{T}$, tensored with the identity map on $\Sk_{-A}^{\mathfrak{gl}_1}(Y)$, can be recovered from the quantum UV-IR map $F_\mathcal{T}$. 
Under a mild assumption on $Y$ to ensure that $\Sk_{-A}^{\mathfrak{gl}_1}(Y)$ is torsion-free, we can fully recover the quantum trace map from the quantum UV-IR map: 
\begin{mainthm}\label{thm:quantum-trace-from-quantum-UV-IR}
Suppose $Y$ is a $3$-manifold such that the intersection pairing between $H_1(Y;\mathbb{Z})$ and $H_2(Y;\mathbb{Z})$ is zero (e.g., $Y$ can be any knot complement). 
Then, for any ideal triangulation $\mathcal{T}$ of $Y$, the quantum trace map
\[
\Tr_{\mathcal{T}} : \Sk_{A}^{\mathfrak{sl}_2}(Y) \rightarrow \SQGM_{\mathcal{T}}(Y)
\]
can be recovered from the quantum UV-IR map
\[
F_{\mathcal{T}} : \Sk_{q}^{\mathfrak{gl}_2}(Y) \rightarrow \Sk_{q}^{\mathfrak{gl}_1}(\widetilde{Y}_{\mathcal{T}}). 
\]
Explicitly, for any framed, unoriented link $L \subset Y$, if $\vec{L}$ denotes the same link equipped with an arbitrary choice of orientation, then
\[
\Tr_{\mathcal{T}}([L]^{\mathfrak{sl}_2}) = p_{\vec{L}} \circ \evmap \circ F_{\mathcal{T}}([\vec{L}]^{\mathfrak{gl}_2}),
\]
where 
\begin{align*}
p_{\vec{L}} : \SQGM_{\mathcal{T}}(Y) \otimes \Sk^{\mathfrak{gl}_1}_{-A}(Y) &\rightarrow \SQGM_{\mathcal{T}}(Y) \\
x \otimes [\vec{K}] &\mapsto 
\begin{cases}
(-A)^{w(\vec{K},\vec{L})} x &\text{ if } [\vec{K}] = [\vec{L}] \text{ in }H_1(Y;\mathbb{Z}) \\
0 &\text{otherwise}
\end{cases},
\end{align*}
and $w(\vec{K},\vec{L})$ denote the relative writhe between the two framed oriented links $\vec{K}$ and $\vec{L}$, i.e., it is the unique integer such that $[\vec{K}]^{\mathfrak{gl}_1} = (-A)^{w(\vec{K},\vec{L})} [\vec{L}]^{\mathfrak{gl}_1}$ in $\Sk^{\mathfrak{gl}_1}_{-A}(Y)$. 
\end{mainthm}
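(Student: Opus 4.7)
The plan is to derive the theorem as a direct consequence of the compatibility theorem (Theorem~\ref{thm:main-thm-compatibility}) once the projection $p_{\vec{L}}$ is known to be well-defined. Applied to the generator $[\vec{L}]^{\mathfrak{gl}_2}$, the commutative square \eqref{eq:main-thm-commutative-square} yields
\[
\evmap \circ F_{\mathcal{T}}([\vec{L}]^{\mathfrak{gl}_2})
= (\Tr_{\mathcal{T}} \otimes \mathrm{id}) \circ \pi([\vec{L}]^{\mathfrak{gl}_2})
= (-1)^{b(\vec{L})}\, \Tr_{\mathcal{T}}([L]^{\mathfrak{sl}_2}) \otimes [\vec{L}]^{\mathfrak{gl}_1}
\]
in $\SQGM_{\mathcal{T}}(Y) \otimes \Sk^{\mathfrak{gl}_1}_{-A}(Y)$. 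Since $\vec{L}$ is a closed link, the sign $(-1)^{b(\vec{L})}$ contributes trivially, and applying $p_{\vec{L}}$ collapses the right-hand side to $(-A)^{w(\vec{L},\vec{L})} \Tr_{\mathcal{T}}([L]^{\mathfrak{sl}_2}) = \Tr_{\mathcal{T}}([L]^{\mathfrak{sl}_2})$, as claimed.

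The substantive content of the theorem therefore lies in verifying that $p_{\vec{L}}$ is well-defined. This requires two properties of $\Sk^{\mathfrak{gl}_1}_{-A}(Y)$: (i) a canonical decomposition indexed by $H_1(Y;\mathbb{Z})$, so that the case distinction in the definition makes sense; and (ii) for any framed oriented link $\vec{K}$ with $[\vec{K}] = [\vec{L}]$ in $H_1(Y;\mathbb{Z})$, a \emph{unique} integer $n = w(\vec{K},\vec{L})$ with $[\vec{K}]^{\mathfrak{gl}_1} = (-A)^n [\vec{L}]^{\mathfrak{gl}_1}$. Together these amount to the structural statement that $\Sk^{\mathfrak{gl}_1}_{-A}(Y)$ is a torsion-free module over its ground ring, freely generated (after choosing representatives) by the classes in $H_1(Y;\mathbb{Z})$.

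Establishing this structural statement is the main obstacle, and it is precisely where the hypothesis on $Y$ enters. The $\mathfrak{gl}_1$-skein relations already identify homologous framed oriented links up to a power of $-A$ determined by their relative linking data, and any additional identification --- and hence any source of torsion --- must arise from pushing a framed link across a closed oriented surface $\Sigma \subset Y$, which introduces a scalar whose exponent is the intersection number $[\vec{L}] \cdot [\Sigma] \in \mathbb{Z}$. The assumption that the pairing $H_1(Y;\mathbb{Z}) \times H_2(Y;\mathbb{Z}) \to \mathbb{Z}$ vanishes forces every such exponent to be zero, so no further relations appear; concretely, one expects $\Sk^{\mathfrak{gl}_1}_{-A}(Y)$ to be presented as a twisted group algebra with $H_1(Y;\mathbb{Z})$-basis (the twist recording the linking form on $H_1$). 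Once this presentation is in hand, both the $H_1$-decomposition and the uniqueness of $w(\vec{K},\vec{L})$ are immediate, the projection $p_{\vec{L}}$ is well-defined, and the theorem reduces to the computation in the first paragraph.
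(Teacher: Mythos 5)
Your first paragraph matches the paper's argument exactly: apply the commutative square \eqref{eq:main-thm-commutative-square} to $[\vec{L}]^{\mathfrak{gl}_2}$, note that $b(\vec{L})=0$ for a closed link, and observe that $p_{\vec{L}}$ sends $\Tr_{\mathcal{T}}([L]^{\mathfrak{sl}_2})\otimes[\vec{L}]^{\mathfrak{gl}_1}$ back to $\Tr_{\mathcal{T}}([L]^{\mathfrak{sl}_2})$. You also correctly identify where the hypothesis on the $H_1$--$H_2$ pairing must enter, namely in the structure of $\Sk^{\mathfrak{gl}_1}_{-A}(Y)$: one needs the $H_1(Y;\mathbb{Z})$-grading together with the fact that the graded piece containing $[\vec{L}]$ is a free rank-one $R$-module generated by $[\vec{L}]$, so that $p_{\vec{L}}$ is well-defined and the relative writhe $w(\vec{K},\vec{L})$ is unique.

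The gap is that this structural statement is only asserted heuristically (``one expects $\Sk^{\mathfrak{gl}_1}_{-A}(Y)$ to be presented as a twisted group algebra with $H_1(Y;\mathbb{Z})$-basis''), not proved. The claim that homologous framed oriented links are always related by a power of $-A$, and that the \emph{only} possible source of torsion is sliding a link across a closed oriented surface, is precisely the nontrivial content here; your sketch names the right mechanism but does not rule out other relations or verify that the surface-crossing relations become trivial exactly when the intersection pairing vanishes. The paper closes this by invoking Przytycki's classification of $\mathfrak{gl}_1$ (q-homology) skein modules \cite{Przytycki0, Przytycki1}, which states that the $\alpha$-graded part of $\Sk^{\mathfrak{gl}_1}_{-A}(Y)$ is torsion-free and isomorphic to $R$ if and only if $\alpha$ pairs trivially with all of $H_2(Y;\mathbb{Z})$; under the hypothesis of the theorem this holds for every $\alpha$, and the rest of your argument then goes through verbatim. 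So the route is the same as the paper's, but as written your proof is incomplete unless you either cite this classification or supply a genuine proof of it.
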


We conclude by highlighting some promising avenues for future work. 
It should be possible to extend our analysis to higher rank. 
The $\mathfrak{sl}_n$ 2d quantum trace is constructed in \cite{LY}, while the $\mathfrak{gl}_3$ version of the 2d quantum UV-IR map is studied in \cite{NY2}.
It is natural to expect that these maps both admit 3d generalizations, and that both the 2d and 3d versions for higher rank fit into a similar commutative square to \eqref{eq:main-thm-commutative-square}. 

Even more interesting is to extend our compatibility theorem to closed surfaces by, e.g., comparing the quantum trace map of \cite{DetcherrySantharoubane} to the quantum UV-IR map associated to the Fenchel-Nielsen spectral networks \cite{HollandsNeitzke}.

\subsection*{Organization of the paper}
This paper is organized as follows. 

In Section \ref{sec:quantumtrace}, we recall the construction of both the 2d and 3d quantum trace maps, following \cite{BW,Le,PP}. 
This includes a review of stated $\mathfrak{sl}_2$-skein modules and their splitting homomorphisms. 
Moreover, we remind the reader of the naturality of the 2d quantum trace map with respect to flips (changes of the ideal triangulation), and prove a corresponding result for the 3d quantum trace map with respect to 2-3 Pachner moves. 

Section \ref{sec:quantumUVIR} is dedicated to the quantum UV-IR map. 
We begin by reviewing the 2d construction, following \cite{NY}, and demonstrating that it is natural with respect to flips.
Next, using the theory of 3d spectral networks developed in \cite{FN}, we generalize the map to ideally triangulated 3-manifolds. 
It is then shown that this 3d quantum UV-IR map is natural with respect to Pachner moves. 
We conclude the section by introducing a stated version of the quantum UV-IR map (Theorem \ref{thm:stated-quantum-UV-IR}), which will allow for computing the quantum UV-IR map locally. 
This stated version is paramount, as it brings the topological construction of the quantum UV-IR map closer to the algebraic formulation of the quantum trace map. 

With both a quantum trace map and a quantum UV-IR map in hand, Section \ref{sec:surface-compatibility} handles the compatibility of the maps for surfaces. 
We construct the projection map from stated $\mathfrak{gl}_2$-skeins to $\mathfrak{sl}_2$-skeins (Proposition \ref{prop:gl2tosl2}) and twist the product on the $\mathfrak{gl}_2$-skein modules so that this map is an algebra homomorphism in the surface case and a bimodule homomorphism in the 3-manifold case. 

The main idea in proving the compatibility of both the 2d and 3d maps is to first prove that the maps fit into a commutative square locally, and then ``glue'' these local squares together. 
To this end, the evaluation map for triangles is constructed in such a way that the maps are manifestly compatible on a triangle. 
It is a simple matter of pasting these local squares together to obtain the main theorem of the section, Theorem \ref{thm:2d-compatibility}, from which we get Theorem \ref{thm:conjectureOfAndyAndFei} as a corollary. 
Section \ref{sec:surface-compatibility} concludes with a proof that the compatibility between the 2d quantum UV-IR map and the 2d quantum trace is also natural with respect to changes in the ideal triangulation of the surface. 

Section \ref{sec: 3-manifold compatibility} extends the analysis of the previous section to the 3-manifold case. 
The commutative square for a face suspension is established and shown to respect the gluing relations imposed by the splitting maps. 
This culminates in Theorem \ref{thm:main-thm-compatibility}.
We go on to demonstrate that the compatibility of the 3d quantum trace map and 3d quantum UV-IR map is also natural with respect to 2-3 Pachner moves.
The remainder of the section shows that the established compatibility between the maps allows for the 3d quantum trace map to be recovered from the UV-IR map (Theorem \ref{thm:quantum-trace-from-quantum-UV-IR}). 

Section \ref{sec:examples} contains a concrete example of the compatibility established in Theorem \ref{thm:main-thm-compatibility} for a skein in the figure-8 knot complement $Y = S^3 \setminus \mathbf{4_1}$. 

Finally, the definitions and properties of stated $\mathfrak{gl}_2$- and $\mathfrak{gl}_1$-skein modules are detailed in the appendices. 

\subsection*{Acknowledgements}
We are deeply grateful to Andy Neitzke for uncountably many helpful discussions. 
We also thank Thang L\^e for stimulating conversations. 
Part of this work was carried out while S.Park was visiting Yale University, and he thanks both the department and especially Andy Neitzke for their hospitality. 

S.Park was supported in part by Simons Foundation through Simons Collaboration on Global Categorical Symmetries.

\section{Quantum trace map}\label{sec:quantumtrace}
In this section, we review the quantum trace map for surfaces \cite{BW} and for 3-manifolds \cite{PP, GY}.

\subsection{2d quantum trace map}
The quantum trace map of Bonahon and Wong \cite{BW} is an algebra homomorphism from the $\mathfrak{sl}_2$-skein algebra of a surface to its quantum Teichm\"uller space. 
It is a quantization of the classical trace map that expresses the loop coordinates on the $\mathfrak{sl}_2$-character variety in terms of the trace of matrices of Thurston's sheer coordinates on Teichm\"uller space. 
The construction of the quantum trace map can be best described in terms of stated skein modules, which we describe next.

\subsubsection{Stated $\mathfrak{sl}_2$-skeins}
Here we recall the definition of stated $\mathfrak{sl}_2$-skein modules \cite{Le, CL, CL2, PP} for boundary-marked 3-manifolds, following \cite[Sec. 3]{PP}. 

\begin{defn}[{\cite[Def. 3.1]{PP}}]
Let $Y$ be an oriented 3-manifold with boundary. 

A \emph{boundary marking} for $Y$ is a smoothly embedded oriented graph $\Gamma\subset \partial Y$ where every vertex is either a source or a sink.\footnote{In 2d (i.e. for stated skein algebras), the boundary markings will always be disjoint unions of intervals. In 3d, however, the vertices of the boundary markings play a crucial role.} 
For example, a disjoint union of oriented intervals on $\partial Y$ is a boundary marking with univalent vertices. 

A \emph{boundary-marked 3-manifold} is a pair $(Y,\Gamma)$ consisting of a 3-manifold $Y$ and a boundary marking $\Gamma$ on $\partial Y$. 

A \emph{ribbon tangle in $(Y,\Gamma)$} is a ribbon (i.e., framed)\footnote{We will allow half-twists of the ribbon, so by ``framing'', we mean a choice of a section of the $\mathbb{RP}^1$-bundle over the tangle, given by the unit normal bundle modulo $\mathbb{Z}/2$.} 
tangle in $Y$ whose boundary points lie on the boundary marking $\Gamma$ away from the vertices $V(\Gamma)$ and whose framing at the boundary points are parallel (or anti-parallel) to the tangent vector of the boundary marking. 

A \emph{stated ribbon tangle} is a ribbon tangle equipped with a labeling of each boundary point $p$ by a sign $\mu_p \in \{\pm\}$ (i.e., a ``state''). 
See Figure \ref{fig:stated-ribbon-tangle-example} for an example. 
\end{defn}
\begin{figure}[htbp]
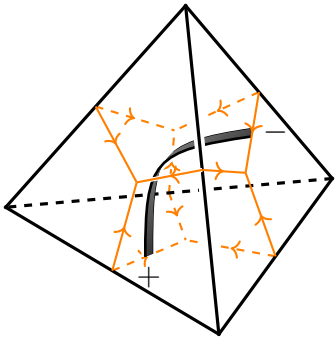

\centering
$
\tdplotsetmaincoords{60}{80}

$
\caption{An example of a stated ribbon tangle; this one is in a tetrahedron ($\approx B^3$) with boundary marking.}
\label{fig:stated-ribbon-tangle-example}
\end{figure}

\begin{defn}
Let $(Y, \Gamma)$ be a boundary marked 3-manifold, and let $R:=\mathbb{Z}[A^{\pm\frac{1}{2}},(-A^2)^{\pm \frac{1}{2}}]$ be the base ring. 
The \emph{stated $\mathfrak{sl}_2$-skein module} 
$\Sk^{\mathfrak{sl}_2}_A(Y, \Gamma)$ 
is the $R$-module generated by the isotopy classes of unoriented, stated ribbon tangles in $Y$, 
modulo the following skein relations:\footnote{
All the framed links, thought of as ribbons, are drawn flat on the page, except of course on the LHS of the relation (\ref{item:halfTwistRel}) which has a positive half-twist.
}
\footnote{
In the relations \eqref{item: sl2 stated skein rel 1} and \eqref{item: sl2 stated skein rel 2}, the orange line denotes a boundary marking, which should be thought of as sitting at a fixed height in a thickened version of the pictures drawn.
}
\footnote{
Here we use the manifestly symmetric version of the stated skein relations, as in \cite[Def. 3.3]{PP}, using the central half-twist relation \eqref{item:halfTwistRel}. 
In \cite[Rmk. 3.5]{PP}, it is explained why these relations are equivalent to the usual non-symmetric version of stated skein relations \cite{Le}. 
}
\begin{gather}
\vcenter{\hbox{

}}
\;. \label{item:halfTwistRel}
\end{gather}
\end{defn}

For an oriented surface $\Sigma$ with a choice of a set of marked points $P\subset \partial \Sigma$ on the boundary, the corresponding cylinder $(\Sigma\times I,P\times I)$ is a boundary-marked 3-manifold, where $P \times I$ is oriented according to the orientation of the interval $I$. 
The associated stated skein module $\Sk^{\mathfrak{sl}_2}_A(\Sigma\times I,P\times I)$ has a natural (unital, associative) algebra structure given by stacking along the $I$-direction. 
The \emph{stated skein algebra} \cite{Le, CL} of $(\Sigma, P)$ is defined to be this algebra: 
\[
\SkAlg^{\mathfrak{sl}_2}_A(\Sigma,P):=\Sk^{\mathfrak{sl}_2}_A(\Sigma\times I,P\times I).
\]
When $\Sigma$ is a punctured bordered surface (in the sense of \cite{Le, CL}) so that every connected component of $\partial \Sigma$ is an interval, we can choose one marked point for each boundary interval, and this gives a canonical choice of boundary marking for $\Sigma \times I$. 
In this case, for simplicity of notation, we will often denote the corresponding stated skein algebra simply by $\SkAlg^{\mathfrak{sl}_2}_A(\Sigma)$. 

\begin{eg}
An example of a punctured bordered surface is the $n$-gon $D_n$, which is the 2-disk $D^2$ with $n$ punctures on the boundary. 
The associated canonical boundary marking (in the example of $n=6$) is illustrated in Figure \ref{fig:D_n-example}. 
\begin{figure}[htbp]
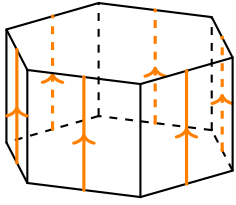

\centering
$
\vcenter{\hbox{
\tdplotsetmaincoords{70}{80}

}}
$
\caption{$D_6 \times I$ with the canonical boundary marking}
\label{fig:D_n-example}
\end{figure}
The associated stated skein algebras $\SkAlg_A^{\mathfrak{sl}_2}(D_n)$, especially the ones for biangles ($n=2$) and triangles ($n=3$), will play a crucial role in the construction of quantum trace maps. 
\end{eg}

The stated skein algebras are designed in such a way that there are natural maps associated to the operation of splitting a punctured bordered surface along an ideal arc. 

\begin{thm}[2d splitting map {\cite[Thm. 1]{Le}}]\label{thm:2d-splitting-map}
Let $\Sigma$ be a punctured bordered surface and let $\Sigma'$ be the surface obtained by splitting $\Sigma$ along an ideal arc $c$ embedded in $\Sigma$. 
Then, there is an algebra homomorphism (in fact an embedding)
\[
\sigma_c : \SkAlg^{\mathfrak{sl}_2}_A(\Sigma) \rightarrow \SkAlg^{\mathfrak{sl}_2}_A(\Sigma'),
\]
defined on stated tangles by
\[
[L] \mapsto  \sum_{\vec{\epsilon} \in \{\pm\}^{(c \times I) \cap L}}  [L_{\Sigma'}^{\vec{\epsilon}}],
\]
where $L_{\Sigma'}^{\vec{\epsilon}} \subset \Sigma' \times I$ denotes the stated tangle obtained by splitting $L \subset \Sigma \times I$ along $c \times I$ and assigning the state $\epsilon_p \in \{\pm\}$ to the two newly created boundary points corresponding to the intersection point $p \in (c \times I) \cap L$.\footnote{To be more precise, in order to get a ribbon tangle after splitting, we need to split after some isotopy of $L$ so that all the intersection points in $(c \times I) \cap L$ are of different height and the framing of $L$ at each intersection point is parallel to the $I$-direction. 
Part of the claim of this theorem is that the map is independent of the choice of such isotopy.} 
\end{thm}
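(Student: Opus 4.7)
The plan is to verify four assertions in order: (a) the formula $[L]\mapsto\sum_{\vec\epsilon}[L^{\vec\epsilon}_{\Sigma'}]$ is well-defined on isotopy classes of stated ribbon tangles in $\Sigma\times I$; (b) it descends to $\SkAlg^{\mathfrak{sl}_2}_A(\Sigma)$, i.e.\ it respects the defining skein relations; (c) it is multiplicative under the stacking product; and (d) it is injective.

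For (a), given a stated ribbon tangle $L$, one first isotopes $L$ to a representative whose intersection points with $c\times I$ have distinct heights and $I$-parallel framings, and then must check that $\sigma_c([L])$ is independent of this choice. Any two such representatives are connected by a finite sequence of elementary moves: height exchanges of adjacent intersection points, birth/death of pairs of intersection points via tangencies with $c\times I$, and full framing twists at individual intersection points. The height exchange and birth/death cases are handled by a local computation near $c\times I$ in which the state sum absorbs the Kauffman-bracket coefficients through relations \eqref{item: sl2 stated skein rel 1}--\eqref{item: sl2 stated skein rel 2}; in categorical language, one is invoking the fact that the $\mathfrak{sl}_2$ $R$-matrix becomes the identity after contraction with the quantum (co)evaluation. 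The framing twist produces a factor of $-A^3$ via two applications of \eqref{item:halfTwistRel}, which is cancelled by the corresponding relabeling of boundary states.

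For (b), each defining skein relation is supported in a small ball $B\subset\Sigma\times I$. When $B$ is disjoint from $c\times I$, the relation holds verbatim after splitting and is preserved under the sum over boundary states. When $B$ meets $c\times I$, a preliminary local isotopy pushes $B$ to one side of $c$, reducing to the disjoint case. Step (c) is then immediate: the splitting is performed along $c$, which is spatially transverse to the $I$-direction of stacking, so splitting commutes with stacking at the tangle level, and the resulting state sums factor correctly into a product of sums.

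The main obstacle, and by far the most delicate part, is (d), injectivity. The standard strategy is to produce compatible bases on both sides. Concretely, the stated skein algebra of a punctured bordered surface admits a basis consisting of stated simple diagrams -- simple multicurves with no contractible closed components, equipped with ``sign-sorted'' state labels on each boundary edge -- and one checks that $\sigma_c$ is upper-triangular with respect to a filtration by geometric intersection number with $c$, with a unit leading term. An equivalent and cleaner approach is to construct a left inverse by gluing back along $c$: the stated skein algebra of the biangle $D_2=c\times I$ acts as an identity bimodule under splitting, so contracting $\sigma_c([L])$ against the splitting identity element of $\SkAlg^{\mathfrak{sl}_2}_A(D_2)$ recovers $[L]$. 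In either formulation, one is ultimately invoking the non-degeneracy of the quantum $\mathfrak{sl}_2$ (co)evaluation pairing, which is encoded in the invertibility of the coefficients appearing in \eqref{item: sl2 stated skein rel 1}--\eqref{item: sl2 stated skein rel 2}.
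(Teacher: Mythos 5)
The paper does not prove this statement at all: it is imported verbatim from L\^e's work (cited as \cite[Thm.\ 1]{Le}), so there is no in-paper argument to compare yours against. Your outline therefore has to be judged against the standard proof, which is L\^e's.

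Measured that way, your steps (b) and (c) are fine and essentially routine, and in (a) you correctly identify the elementary moves (height exchanges, creation/annihilation of intersection points with $c\times I$, framing rotations) whose invariance one must check, though the actual local computations with the boundary relations are only asserted, and these checks are the entire content of the existence part. The genuine gap is in (d). The ``left inverse by gluing back'' is not available in the form you describe: regluing along $c$ does not induce a map $\SkAlg^{\mathfrak{sl}_2}_A(\Sigma')\to\SkAlg^{\mathfrak{sl}_2}_A(\Sigma)$ (states and endpoint positions on the two new edges need not match), and the counit property of the biangle algebra $\mathbb{B}\cong\SkAlg^{\mathfrak{sl}_2}_A(D_2)$ only recovers the identity when the splitting produces a biangle as a separate tensor factor, e.g.\ when one splits along an arc parallel to a boundary edge; for a general ideal arc there is no such factor in $\SkAlg^{\mathfrak{sl}_2}_A(\Sigma')$ to ``contract against,'' so this shortcut fails. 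The other route you name --- a basis of stated simple diagrams with sorted states, a filtration by geometric intersection number with $c$, and an upper-triangularity argument showing the leading term is a unit --- is indeed how L\^e proves injectivity, but in your write-up it is named rather than carried out, and that is precisely the load-bearing step for the ``in fact an embedding'' clause. As it stands, the proposal is a correct roadmap for well-definedness and the homomorphism property, but the injectivity claim is not established and the proposed alternative argument would not work.
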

The 2d splitting map is illustrated in Figure \ref{fig:2d-splitting-map}. 
\begin{figure}[htbp]
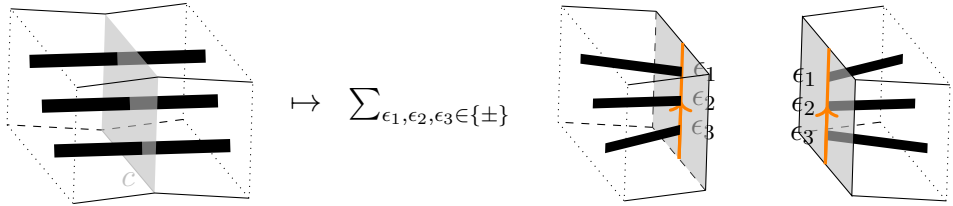

\centering
$
\vcenter{\hbox{
\tdplotsetmaincoords{20}{40}

}}
$
\caption{2d splitting map. The gray surface is $c \times I$, the surface we are cutting $\Sigma \times I$ along. }
\label{fig:2d-splitting-map}
\end{figure}

\begin{defn}\label{defn:bad-arcs}
A \emph{bad arc} is any stated tangle in $\Sigma \times I$ with 1 component that connects two boundary components abutting the same boundary puncture in a trivial way, with boundary states $-$ and $+$ in the counter-clockwise order when viewed from the boundary puncture; see Figure \ref{fig:2d-bad-arc}. 
\end{defn}
\begin{figure}[htbp]
\centering
$
\vcenter{\hbox{
\tdplotsetmaincoords{70}{-20}
\begin{tikzpicture}[tdplot_main_coords, scale=0.8]
\filldraw[black] (1, 0, {1-0.05}) .. controls (1/2, {sqrt(3)/2}, {1-0.05}) .. (-1/2, {sqrt(3)/2}, {1-0.05}) -- (-1/2, {sqrt(3)/2}, {1+0.05}) .. controls (1/2, {sqrt(3)/2}, {1+0.05}) .. (1, 0, {1+0.05}) -- cycle;
\draw[white, line width=2] (0, 0, 0) -- (0, 0, 2);
\draw[gray] (0, 0, 0) -- (0, 0, 2);
\filldraw[gray, opacity=0.3] (0, 0, 0) -- (-1, {sqrt(3)}, 0) -- (-1, {sqrt(3)}, 2) -- (0, 0, 2) -- cycle;
\filldraw[gray, opacity=0.3] (0, 0, 0) -- (2, 0, 0) -- (2, 0, 2) -- (0, 0, 2) -- cycle;
\draw[very thick, orange] (-1/2, {sqrt(3)/2}, 0) -- (-1/2, {sqrt(3)/2}, 2);
\draw[very thick, orange, ->] (-1/2, {sqrt(3)/2}, 0) -- (-1/2, {sqrt(3)/2}, 1);
\draw[very thick, orange] (1, 0, 0) -- (1, 0, 2);
\draw[very thick, orange, ->] (1, 0, 0) -- (1, 0, 1);
\node[anchor=west] at (1, 0, 1){$-$};
\node[anchor=east] at (-1/2, {sqrt(3)/2}, 1){$+$};
\end{tikzpicture}
}}.
$
\caption{A bad arc}
\label{fig:2d-bad-arc}
\end{figure}
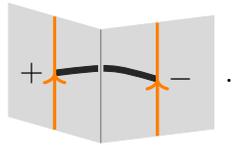

\begin{defn}[{\cite{BW}, \cite[Sec. 7]{CL}}]\label{defn:reduced-stated-skein-algebra}
The \emph{reduced stated skein algebra} $\overline{\SkAlg}^{\mathfrak{sl}_2}_A(\Sigma)$ is the quotient
\[
\overline{\SkAlg}^{\mathfrak{sl}_2}_A(\Sigma) := \SkAlg^{\mathfrak{sl}_2}_A(\Sigma)/I^{\mathrm{bad}},
\]
where $I^{\mathrm{bad}}$ denotes the two-sided ideal of $\SkAlg_A^{\mathrm{sl}_2}(\Sigma)$ generated by the bad arcs. 
\end{defn}
It is easy to see that the image of a bad arc in $\Sigma$ under the splitting map (Theorem \ref{thm:2d-splitting-map}) is in the two-sided ideal generated by bad arcs in $\Sigma'$, and it follows that the splitting map descends to a splitting map of reduced stated skein algebras:
\[
\overline{\sigma}_c : \overline{\SkAlg}^{\mathfrak{sl}_2}_A(\Sigma) \rightarrow \overline{\SkAlg}^{\mathfrak{sl}_2}_A(\Sigma').
\]
This map also turns out to be an embedding. 
As a special case, consider a punctured bordered surface $\Sigma$ equipped with an ideal triangulation $\tau$; 
let $\tau^{(d)}$ denote the set of all $d$-simplices in the triangulation. 
Then we have:
\begin{cor}[\cite{Le}]\label{cor:2d-splitting-triangles}
There is an algebra homomorphism (in fact an embedding)
\begin{align*}
\overline{\sigma}^{\mathfrak{sl}_2}_\tau : \overline{\SkAlg}^{\mathfrak{sl}_2}_A(\Sigma) &\rightarrow \bigotimes_{\tri\in \tau^{(2)}}\overline{\SkAlg}^{\mathfrak{sl}_2}_A(\tri),\\
[L] &\mapsto \sum_{\vec{\epsilon} \in \{\pm\}^{(E\times I) \cap L}}\otimes_{\tri\in \tau^{(2)}} [L_{\tri}^{\vec{\epsilon}}],
\end{align*}
where $E \subset \Sigma$ denotes the union of all the edges of the triangulation, and
$L_{\tri}^{\vec{\epsilon}}$ denotes the part of $L$ in $\tri \times I$ after splitting, with states assigned to newly created boundary points according to $\vec{\epsilon}$. 
\end{cor}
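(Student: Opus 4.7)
The plan is to deduce Corollary \ref{cor:2d-splitting-triangles} by iterating the single-edge splitting map of Theorem \ref{thm:2d-splitting-map}, one internal edge of $\tau$ at a time, and checking that the composition descends to the reduced quotients.

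First, enumerate the internal edges of $\tau$ (those not lying on $\partial \Sigma$) as $e_1, \dots, e_N$. Let $\Sigma_0 := \Sigma$ and inductively let $\Sigma_k$ be the surface obtained by splitting $\Sigma_{k-1}$ along $e_k$. After the last splitting, $\Sigma_N$ is the disjoint union $\bigsqcup_{\tri \in \tau^{(2)}} \tri$, so $\SkAlg_A^{\mathfrak{sl}_2}(\Sigma_N) \cong \bigotimes_{\tri \in \tau^{(2)}} \SkAlg_A^{\mathfrak{sl}_2}(\tri)$. Composing the splitting embeddings of Theorem \ref{thm:2d-splitting-map}, we obtain an algebra embedding
\[
\sigma_\tau^{\mathfrak{sl}_2} \;:=\; \sigma_{e_N} \circ \sigma_{e_{N-1}} \circ \cdots \circ \sigma_{e_1} \;:\; \SkAlg_A^{\mathfrak{sl}_2}(\Sigma) \;\hookrightarrow\; \bigotimes_{\tri \in \tau^{(2)}} \SkAlg_A^{\mathfrak{sl}_2}(\tri).
\]
Unwinding the defining state-sum formula one edge at a time, each splitting introduces an independent sum over signs at its newly created boundary intersections, so the composition sends $[L]$ to $\sum_{\vec\epsilon \in \{\pm\}^{(E\times I)\cap L}} \bigotimes_{\tri \in \tau^{(2)}} [L_\tri^{\vec\epsilon}]$, which matches the formula in the statement. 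The fact that this does not depend on the order $e_1, \dots, e_N$ is immediate from the formula, but if desired it also follows from the observation that for two disjoint ideal arcs $c, c'$, the maps $\sigma_c$ and $\sigma_{c'}$ commute after identifying the appropriate surfaces.

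Next, descend to the reduced algebras. The paragraph just before the corollary notes that the image of a bad arc under $\sigma_c$ lies in the ideal generated by bad arcs of $\Sigma'$, so each $\sigma_{e_i}$ descends to an embedding $\overline{\sigma}_{e_i}$ of reduced algebras. Composing these yields the desired map $\overline{\sigma}_\tau^{\mathfrak{sl}_2}$, which is an embedding as the composition of embeddings. Since the passage to reduced quotients only identifies representatives in the source and target, the same state-sum formula persists at the level of the reduced algebras.

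The only genuinely delicate point — and the one I expect to be the main obstacle — is the injectivity after reduction. Injectivity of each single-edge reduced splitting map $\overline{\sigma}_c$ is nontrivial and is precisely the content invoked from \cite{Le}; granting this, however, the corollary reduces to the formal bookkeeping above. Everything else (the explicit state-sum form, order-independence, and compatibility with the algebra structure) is a direct consequence of Theorem \ref{thm:2d-splitting-map} applied repeatedly.
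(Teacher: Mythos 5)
Your argument is correct and is essentially the route the paper itself takes: the corollary is obtained by iterating the single-edge splitting map of Theorem \ref{thm:2d-splitting-map} over the edges of $\tau$, descending to the reduced algebras via the bad-arc observation stated just before the corollary, and citing \cite{Le} for injectivity of the reduced maps. Your explicit bookkeeping of the state sum and order-independence is exactly the formal content the paper leaves implicit, and deferring the injectivity to \cite{Le} is consistent with how the paper invokes it.
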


The reduced stated skein algebras of biangles and triangles, which for simplicity we will call the \emph{biangle algebra} and the \emph{triangle algebra} respectively, have particularly simple structures (e.g. they are quantum tori) and will play a crucial role in the construction of both 2d and 3d quantum trace maps. 
\begin{thm}[{\cite{BW}, \cite[Sec. 7.6-7.7]{CL}}]\label{thm:biangle-triangle-algebras}
\begin{enumerate}
\item The biangle algebra is
\[
\mathbb{B} := \overline{\SkAlg}_A^{\mathfrak{sl}_2}(D_2) \cong R[x,x^{-1}], 
\]
with the isomorphism given by
\[
\vcenter{\hbox{

}}
\leftrightarrow
\alpha^{-1}
,
\]
and likewise for $\beta$ and $\gamma$. 
\end{enumerate}
\end{thm}

\subsubsection{2d quantum trace map}
Here we briefly review the construction of the 2d quantum trace map \cite{BW, Le, CL};
the details can be found in the cited references. 

\begin{defn}\label{defn:quantum-torus}
Let $\Gamma$ be a lattice equipped with an antisymmetric bilinear form 
$\langle \, , \rangle : \Gamma \times \Gamma \rightarrow \mathbb{Z}$. 
The \emph{quantum torus} $\QT_{\Gamma}$ associated to $\Gamma$ is the unital associative $R$-algebra with basis $\{x_\gamma\}_{\gamma \in \Gamma}$ and the product law
\[
x_{\gamma} x_{\gamma'} = A^{\frac{\langle \gamma, \gamma'\rangle}{2}}x_{\gamma + \gamma'}.
\]
Note, if we are given a basis $e_1, \cdots, e_n$ of the lattice, 
the quantum torus is generated by $x_1, \cdots, x_n$ (where $x_i := x_{e_i}$) and their inverses, 
subject to commutation relations
\[
x_i x_j = A^{\langle e_i, e_j\rangle} x_j x_i. 
\]

The \emph{Weyl-ordered product} $[x_{i_1} \cdots x_{i_k}]$ of $x_{i_1}, \cdots, x_{i_k}$ is defined to be
\[
[x_{i_1} \cdots  x_{i_k}] := x_{e_{i_1} + \cdots + e_{i_k}} = A^{-\frac{\sum_{1\leq j < l \leq k} \langle e_{i_{j}}, e_{i_{l}}\rangle}{2}} x_{i_1} \cdots  x_{i_k}. 
\]
The Weyl-ordered product is designed to be independent of the ordering of the variables. 
\end{defn}

\begin{defn}\label{def:extended-triangle-algebra}
Fix an invertible scalar $\CT \in R^\times$. 
The \emph{extended triangle algebra} $\widetilde{\mathbb{T}} = \widetilde{\mathbb{T}}_{\CT}$ is
an extension of the triangle algebra $\mathbb{T}$ (as in Theorem \ref{thm:biangle-triangle-algebras})
defined as
\[
\widetilde{\mathbb{T}} := \frac{R\langle a^{\pm 1}, b^{\pm 1}, c^{\pm 1}\rangle}{\langle ba = Aab, cb = Abc, ac = Aca \rangle},
\]
equipped with an embedding
\begin{align*}
\Tr_{\CT} : \mathbb{T} &\hookrightarrow \widetilde{\mathbb{T}} \\
\alpha &\mapsto \CT[bc], \\
\beta &\mapsto \CT[ca], \\
\gamma &\mapsto \CT[ab].
\end{align*}
We view the generators $a, b, c$ as being associated to the three edges of the triangle in counter-clockwise order, as in Figure \ref{fig:extended-triangle-algebra}. 
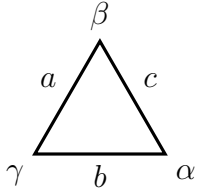
\begin{figure}[htbp]
\centering
$
\vcenter{\hbox{
\begin{tikzpicture}[scale=1.0]
\coordinate (alpha) at ({sqrt(3)/2}, -1/2);
\coordinate (beta) at (0, 1);
\coordinate (gamma) at ({-sqrt(3)/2}, -1/2);
\coordinate (a) at ({-sqrt(3)/4}, 1/4);
\coordinate (b) at (0, -1/2);
\coordinate (c) at ({sqrt(3)/4}, 1/4);
\draw[very thick] (alpha) -- (beta) -- (gamma) -- cycle;
\node[anchor = north west] at (alpha){$\alpha$}; 
\node[anchor = south] at (beta){$\beta$};
\node[anchor = north east] at (gamma){$\gamma$}; 
\node[anchor = south east] at (a){$a$};
\node[anchor = north] at (b){$b$};
\node[anchor = south west] at (c){$c$};
\end{tikzpicture}
}}
$
\caption{Edges $a, b, c$ (and their inverses) generate the extended triangle algebra $\widetilde{\mathbb{T}}$. }
\label{fig:extended-triangle-algebra}
\end{figure}
\end{defn}

Suppose $\Sigma$ is a punctured surface (without boundary) equipped with an ideal triangulation $\tau$. 
For each edge $e \in \tau^{(1)}$ of the ideal triangulation $\tau$, consider the following element $\hat{x}_e$ of the big quantum torus $\bigotimes_{\tri \in \tau^{(2)}} \widetilde{\mathbb{T}}$. 
\begin{enumerate}
\item If there are two ideal triangles $\tri_1, \tri_2 \in \tau^{(2)}$ abutting $e$, define
\[
\hat{x}_e := a_1 \otimes a_2,
\]
where $a_1, a_2$ are the edges of $\tri_1, \tri_2$ which are glued to give $e$, respectively. 
\item If there is only one ideal triangle $\tri$ abutting $e$, define
\[
\hat{x}_e := [a_1 a_2],
\]
where $a_1, a_2$ are the two edges of $\tri$ which are glued to give $e$. 
\end{enumerate}
By convention, we are suppressing the entries of the tensor product which are $1$. 
The element $\hat{x}_e \in \bigotimes_{\tri \in \tau^{(2)}} \widetilde{\mathbb{T}}$ is called the \emph{square-root quantized shear parameter} associated to the edge $e$ of the ideal triangulation $\tau$ of $\Sigma$. 
The idea behind the definition of $\hat{x}_e$ is that each edge $e \in \tau^{(1)}$ is obtained by gluing two bare edges (i.e. edges of the ideal triangles before gluing), as in Figure \ref{fig:square-root-quantized-shear-parameter}. 
\begin{figure}[htbp]
\centering
$
\vcenter{\hbox{
\begin{tikzpicture}[scale=1.0]
\coordinate (alpha) at (-1/2, {sqrt(3)/2});
\coordinate (beta) at (1, 0);
\coordinate (beta2) at (-2, 0);
\coordinate (gamma) at (-1/2, {-sqrt(3)/2});
\coordinate (a) at (1/4, {-sqrt(3)/4});
\coordinate (b) at (-1/2, 0);
\coordinate (c) at (1/4, {sqrt(3)/4});
\draw[very thick] (alpha) -- (beta) -- (gamma) -- cycle;
\draw[very thick] (alpha) -- (beta2) -- (gamma) -- cycle;
\node[left] at (-1/2, 0){$e$};
\end{tikzpicture}
}}
\;\;\rightsquigarrow\;\;
\vcenter{\hbox{
\begin{tikzpicture}[scale=1.0]
\begin{scope}
\coordinate (alpha) at (-1/2, {sqrt(3)/2});
\coordinate (beta2) at (-2, 0);
\coordinate (gamma) at (-1/2, {-sqrt(3)/2});
\coordinate (a) at (1/4, {-sqrt(3)/4});
\coordinate (b) at (-1/2, 0);
\coordinate (c) at (1/4, {sqrt(3)/4});
\draw[very thick] (alpha) -- (beta2) -- (gamma) -- cycle;
\node[left] at (-1/2, 0){$a_1$};
\end{scope}
\begin{scope}[shift={(1/2, 0)}]
\coordinate (alpha) at (-1/2, {sqrt(3)/2});
\coordinate (beta) at (1, 0);
\coordinate (gamma) at (-1/2, {-sqrt(3)/2});
\coordinate (a) at (1/4, {-sqrt(3)/4});
\coordinate (b) at (-1/2, 0);
\coordinate (c) at (1/4, {sqrt(3)/4});
\draw[very thick] (alpha) -- (beta) -- (gamma) -- cycle;
\node[right] at (-1/2, 0){$a_2$};
\end{scope}
\end{tikzpicture}
}}
$
\caption{An edge $e$ of triangulation $\tau$ is split into two bare edges $a_1$ and $a_2$.}
\label{fig:square-root-quantized-shear-parameter}
\end{figure}

\begin{defn}
The \emph{square-root quantum Teichm\"uller space} (a.k.a. \emph{square-root Chekhov-Fock algebra}), denoted $\SQTS_\tau(\Sigma)$, is the sub-quantum torus of $\bigotimes_{\tri \in \tau^{(2)}} \widetilde{\mathbb{T}}$ generated by $\{\hat{x}_e\}_{e\in \tau^{(1)}}$. 

In other words, $\SQTS_\tau(\Sigma)$ is the quantum torus $\QT_{\Gamma_\tau}$ for the lattice $\Gamma_\tau$ generated by the edges of the triangulation $\tau$, equipped with the antisymmetric bilinear form given by
\[
\langle e, e'\rangle := a_{ee'} - a_{e'e} \in \{0, \pm 1, \pm 2\},
\]
where $a_{ee'}$ denotes the number of angular sectors delimited by $e$ and $e'$ in the faces of $\tau$, with $e$ coming first in counter-clockwise order. 
\end{defn}

Combining the splitting map (Corollary \ref{cor:2d-splitting-triangles}) and the embedding of triangle algebras into the extended triangle algebras (Definition \ref{def:extended-triangle-algebra}), we obtain the following sequence of maps:
\[
\SkAlg_A^{\mathfrak{sl}_2}(\Sigma) = \overline{\SkAlg}_A^{\mathfrak{sl}_2}(\Sigma) 
\overset{\overline{\sigma}}{\hookrightarrow} 
\bigotimes_{\tri \in \tau^{(2)}} \overline{\SkAlg}_A^{\mathfrak{sl}_2}(\tri) 
\xhookrightarrow{\otimes_{\tri\in \tau^{(2)}}\Tr_{\CT}} 
\bigotimes_{\tri \in \tau^{(2)}} \widetilde{\mathbb{T}}. 
\]
It is easy to see that the image of the composition of these maps is contained in $\SQTS_\tau(\Sigma)$. 

\begin{thm}[\cite{BW, Le}]\label{thm:2d-quantum-trace-map}
There is an algebra embedding $\Tr_\tau:\SkAlg_A^{\mathfrak{sl}_2}(\Sigma) \rightarrow \SQTS_\tau(\Sigma)$
defined as the composition: 
\[
\begin{tikzcd}
\SkAlg_A^{\mathfrak{sl}_2}(\Sigma) \arrow[hookrightarrow, rrrd, swap, "\Tr_\tau"] \arrow[r, equal] & \overline{\SkAlg}_A^{\mathfrak{sl}_2}(\Sigma) \arrow[hookrightarrow, r, "\overline{\sigma}"] & \underset{\tri \in \tau^{(2)}}{\bigotimes} \overline{\SkAlg}_A^{\mathfrak{sl}_2}(\tri) \arrow[hookrightarrow, r, "\otimes_{\tri\in \tau^{(2)}}\Tr_{\CT}"] & \underset{\tri \in \tau^{(2)}}{\bigotimes} \widetilde{\mathbb{T}} \\
& & & \SQTS_\tau(\Sigma) \arrow[hookrightarrow, u, ""]
\end{tikzcd}.
\]
\end{thm}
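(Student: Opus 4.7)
The plan is to verify the claim in two logically separable steps: first, that the displayed composition is a well-defined algebra embedding into $\bigotimes_{\tri\in \tau^{(2)}}\widetilde{\mathbb{T}}$, and second, that its image actually lies in the sub-quantum torus $\SQTS_\tau(\Sigma)$. Once both are established, restricting the codomain yields the quantum trace embedding $\Tr_\tau$.

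For the first step, since $\Sigma$ is a punctured surface without boundary, no bad arcs exist, so $\SkAlg_A^{\mathfrak{sl}_2}(\Sigma)=\overline{\SkAlg}_A^{\mathfrak{sl}_2}(\Sigma)$ and the equality on the left of the diagram is tautological. The middle arrow $\overline{\sigma}$ is an algebra embedding by Corollary \ref{cor:2d-splitting-triangles}. The extended triangle embedding $\Tr_{\CT}$ of Definition \ref{def:extended-triangle-algebra} is injective directly from the explicit presentations in Theorem \ref{thm:biangle-triangle-algebras} and Definition \ref{def:extended-triangle-algebra}: the monomial basis of $\mathbb{T}$ maps to a linearly independent set of Weyl-ordered monomials in $\widetilde{\mathbb{T}}$. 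Since the component algebras are free $R$-modules, tensoring $\Tr_{\CT}$ over all triangles remains injective, so the composition is an algebra embedding into $\bigotimes_\tri \widetilde{\mathbb{T}}$.

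The nontrivial content is showing the image lies in $\SQTS_\tau(\Sigma)$. Given a ribbon link $L\subset\Sigma\times I$ in general position with respect to $\tau$, the splitting map produces $\sum_{\vec\epsilon}\otimes_\tri [L_\tri^{\vec\epsilon}]$, and under $\otimes_\tri \Tr_{\CT}$ each factor $[L_\tri^{\vec\epsilon}]$ becomes a Laurent monomial in the bare edge variables $a,b,c$ of $\widetilde{\mathbb{T}}$ indexed by the edges of $\tri$. The key observation is that the state $\epsilon_p$ attached to a splitting point $p\in(E\times I)\cap L$ contributes a factor of a bare edge variable $a_1^{\epsilon_p}$ on one side and $a_2^{\epsilon_p}$ on the other (with the same sign, since splitting creates two boundary points carrying the same state). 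When $e$ is shared between two distinct triangles, the contributions combine as $a_1\otimes a_2=\hat{x}_e$; when $e$ is a folded edge of a single triangle, they combine (after accounting for the Weyl-ordering coming from the triangle-algebra relations) as $[a_1 a_2]=\hat{x}_e$. Thus every term of $(\otimes_\tri\Tr_{\CT})\circ\overline{\sigma}([L])$ is a Laurent monomial in the $\hat{x}_e$, so the image lies in $\SQTS_\tau(\Sigma)$.

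The main obstacle is the bookkeeping in this last step: one must check that the sum over $\epsilon_p$ does not merely produce some element of the ambient quantum torus but exactly the Weyl-ordered form defining $\hat{x}_e$, with the correct half-integer powers of $A$ arising from the commutation relations in $\widetilde{\mathbb{T}}$ and the central half-twist relation \eqref{item:halfTwistRel}. This is verified most cleanly by reducing to the two model cases (a single edge shared between two triangles, and a folded edge) and computing the image of the elementary stated skein crossings that edge; the general case then follows by locality of the splitting map and induction on the number of intersections $|L\cap(E\times I)|$. Once the image is confirmed to lie in $\SQTS_\tau(\Sigma)$, corestricting gives the desired embedding $\Tr_\tau$.
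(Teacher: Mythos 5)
Your proposal is correct and takes essentially the same route as the paper, which presents this as a cited result of Bonahon--Wong and L\^e: the map is the composition of the splitting embedding $\overline{\sigma}$ with the injective triangle embeddings $\Tr_{\CT}$, and the only point left to check is that the image lies in $\SQTS_\tau(\Sigma)$, which your balanced-exponent observation (the two boundary points created at each splitting point carry the same state, so for every fixed state assignment the exponents of the two bare edge variables of each edge agree, giving a monomial in the $\hat{x}_e$) establishes. The details you supply -- injectivity of $\Tr_{\CT}$ via the exponent map $(i,j,k)\mapsto(j+k,i+k,i+j)$ and the degree bookkeeping along split edges -- are exactly the standard verifications the paper leaves to the references.
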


\begin{rmk}
While the 2d quantum trace map $\Tr_\tau = \Tr_{\tau, \CT}$, as we presented in Theorem \ref{thm:2d-quantum-trace-map}, depends on the choice of an invertible scalar $\CT \in R^\times$, this dependence can be absorbed into rescaling of the generators of $\SQTS_{\tau}(\Sigma)$. 
That is, for any $\CT, \CT' \in R^\times$, we have a commutative diagram
\[
\begin{tikzcd}
 & \SQTS_{\tau}(\Sigma) \arrow[dd, "\lambda_{\CT \rightarrow \CT'}", "\rotatebox{90}{\(\sim\)}"']\\
\SkAlg_A^{\mathfrak{sl}_2}(\Sigma) \arrow[hookrightarrow, ur, "\Tr_{\tau, \CT}"] \arrow[hookrightarrow, dr, "\Tr_{\tau, \CT'}"] & \\
 & \SQTS_{\tau}(\Sigma)
\end{tikzcd},
\]
where 
\begin{align*}
\lambda_{\CT \rightarrow \CT'} : \SQTS_{\tau}(\Sigma) &\overset{\sim}{\rightarrow} \SQTS_{\tau}(\Sigma) \\
\hat{x}_e &\mapsto \frac{\CT'}{\CT}\hat{x}_e
\end{align*}
is an automorphism that simply rescales the generators of the quantum torus. 
\end{rmk}

\subsubsection{Naturality of the 2d quantum trace maps with respect to flips}\label{subsubsec:2d-quantum-trace-naturality}
Suppose $\tau$ and $\tau'$ are two ideal triangulations of $\Sigma$ that differ by a flip, i.e. they look identical outside of an ideal quadrilateral in which we change the triangulation as in Figure \ref{fig:flip}. 
\begin{figure}[htbp]
\centering
$
\vcenter{\hbox{
\begin{tikzpicture}[scale=1.0]
\draw[very thick] (0, 0) -- (2, 0) -- (2, 2) -- (0, 2) -- cycle;
\draw[very thick] (0, 0) -- (2, 2);
\node[above] at (1, 1){$\hat{x}$};
\node[below] at (1, 0){$\hat{y}$};
\node[right] at (2, 1){$\hat{z}$};
\node[above] at (1, 2){$\hat{v}$};
\node[left] at (0, 1){$\hat{w}$};
\end{tikzpicture}
}}
\;\;\rightsquigarrow\;\;
\vcenter{\hbox{
\begin{tikzpicture}[scale=1.0]
\draw[very thick] (0, 0) -- (2, 0) -- (2, 2) -- (0, 2) -- cycle;
\draw[very thick] (0, 2) -- (2, 0);
\node[above] at (1, 1){$\hat{x}'$};
\node[below] at (1, 0){$\hat{y}$};
\node[right] at (2, 1){$\hat{z}$};
\node[above] at (1, 2){$\hat{v}$};
\node[left] at (0, 1){$\hat{w}$};
\end{tikzpicture}
}}
$
\caption{A flip on edge $x$}
\label{fig:flip}
\end{figure}
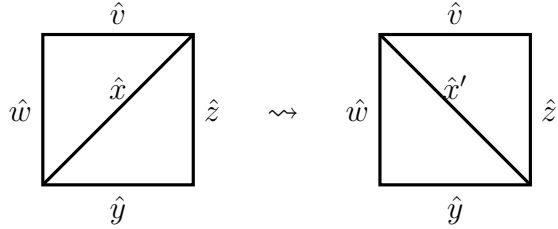
Then, there is an algebra isomorphism between (some appropriate completions of) the square-root quantum Teichm\"uller spaces given by\footnote{
Here, we describe this for $\CT = 1$; in order to get a transition map compatible with the rescaled quantum trace map $\Tr_{\CT}$, replace every $\hat{x}'$ by $\CT \hat{x}'$. 
}
\begin{align*}
\theta_{\tau \rightarrow \tau'} : \widehat{\SQTS}_{\tau}(\Sigma) &\overset{\sim}{\rightarrow} \widehat{\SQTS}_{\tau'}(\Sigma)\\
\hat{x} &\mapsto \hat{x}'^{-1},\\
\hat{y} &\mapsto \hat{y} f(A^{\frac{1}{2}}\hat{x}') = f(A^{-\frac{1}{2}}\hat{x}') \hat{y},\\
\hat{z} &\mapsto \hat{z} g(A^{-\frac{1}{2}}\hat{x}') = g(A^{\frac{1}{2}}\hat{x}') \hat{z},\\
\hat{v} &\mapsto \hat{v} f(A^{\frac{1}{2}}\hat{x}') = f(A^{-\frac{1}{2}}\hat{x}') \hat{v},\\
\hat{w} &\mapsto \hat{w} g(A^{-\frac{1}{2}}\hat{x}') = g(A^{\frac{1}{2}}\hat{x}') \hat{w},\\
\end{align*}
where $f$ and $g$ are some versions of the quantum dilogarithm given by
\begin{align*}
f(x) &:= x \exp\qty(\sum_{d\geq 1} \frac{(-1)^d}{d(A^d + A^{-d})}x^{2d}) = \exp\qty(\sum_{d\geq 1} \frac{(-1)^d}{d(A^d + A^{-d})}x^{-2d}),\\
g(x) &:= \exp\qty(-\sum_{d\geq 1} \frac{(-1)^d}{d(A^d + A^{-d})}x^{2d}) = x \exp\qty(-\sum_{d\geq 1} \frac{(-1)^d}{d(A^d + A^{-d})}x^{-2d}),
\end{align*}
and $\hat{e} \mapsto \hat{e}$ for every edge $e \in \tau^{(1)}$ other than the 5 edges involved in the flip. 
Note, $f$ and $g$ satisfy
\begin{align*}
f(x)g(x) &= x, \\
f(x)g(x^{-1}) &= 1, \\
f(A^{-\frac{1}{2}}x)f(A^{\frac{1}{2}}x) &= \frac{1}{1+x^{-2}}, \\
g(A^{-\frac{1}{2}}x)g(A^{\frac{1}{2}}x) &= 1+x^2.
\end{align*}
From these identities, it follows that, if we restrict to the even part of the algebra (i.e., the subalgebra generated by all monomials whose total degree of variables associated to each triangle is even), the transition map $\theta_{\tau \rightarrow \tau'}$ is given by (see \cite{Hiatt}, \cite[Sec. 7]{BW})
\begin{align*}
[\hat{y}\hat{z}] &\mapsto [\hat{y}\hat{x}'\hat{z}], \\
[\hat{y}^{-1}\hat{z}] &\mapsto \hat{y}^{-1}(\hat{x}' + \hat{x}'^{-1})\hat{z}, \\
[\hat{w}\hat{x}\hat{y}] &\mapsto [\hat{w}\hat{y}], \\
[\hat{w}\hat{x}^{-1}\hat{y}] &\mapsto [\hat{w}\hat{x}'^2 \hat{y}], \\
[\hat{w}^{-1}\hat{x}\hat{y}] &\mapsto A^{-\frac{1}{2}} \hat{w}^{-1} \frac{\hat{x}'^{-1}}{\hat{x}' + \hat{x}'^{-1}} \hat{y}, \\
[\hat{w}^{-1}\hat{x}^{-1}\hat{y}] &\mapsto A^{-\frac{1}{2}} \hat{w}^{-1} \frac{\hat{x}'}{\hat{x}' + \hat{x}'^{-1}} \hat{y}, \\
[\hat{y}\hat{x}\hat{v}] &\mapsto \hat{y} \frac{1}{\hat{x}' + \hat{x}'^{-1}} \hat{v}, \\
[\hat{y}\hat{x}^{-1}\hat{v}] &\mapsto \hat{y} \frac{\hat{x}'^2}{\hat{x}' + \hat{x}'^{-1}} \hat{v}, \\
[\hat{y}^{-1}\hat{x}\hat{v}] &\mapsto [\hat{y}^{-1}\hat{x}'^{-1}\hat{v}], \\
[\hat{y}^{-1}\hat{x}^{-1}\hat{v}] &\mapsto [\hat{y}^{-1}\hat{x}'\hat{v}], \\
[\hat{w}\hat{x}\hat{z}] &\mapsto \hat{w} (\hat{x}' + \hat{x}'^{-1}) \hat{z}, \\
[\hat{w}\hat{x}^{-1}\hat{z}] &\mapsto \hat{w} \hat{x}'^2(\hat{x}' + \hat{x}'^{-1}) \hat{z}, \\
[\hat{w}^{-1}\hat{x}\hat{z}] &\mapsto [\hat{w}^{-1}\hat{x}'^{-1}\hat{z}], \\
[\hat{w}^{-1}\hat{x}^{-1}\hat{z}] &\mapsto [\hat{w}^{-1}\hat{x}'\hat{z}], 
\end{align*}
and likewise for monomials obtained by a 180 degree rotation (i.e. after simultaneously replacing $\hat{y} \leftrightarrow \hat{v}$ and $\hat{z} \leftrightarrow \hat{w}$). 
In particular, when restricted to the even part, $\theta_{\tau \rightarrow \tau'}$ restricts to an algebra isomorphism between fractional division algebras of the quantum tori. 

These algebra isomorphisms satisfy the pentagon relation (see Figure \ref{fig:pentagon-relation}), 
and as a result, if $\tau$ and $\tau'$ are any two ideal triangulations of $\Sigma$, then we can choose any sequence of flips from $\tau$ to $\tau'$ and define $\theta_{\tau \rightarrow \tau'} : \widehat{\SQTS}_{\tau}(\Sigma) \overset{\sim}{\rightarrow} \widehat{\SQTS}_{\tau'}(\Sigma)$ to be the composition of the transition maps associated to those flips; 
by the pentagon relation, the resulting map $\theta_{\tau \rightarrow \tau'}$ is independent of the choice of the sequence of flips from $\tau$ to $\tau'$. 
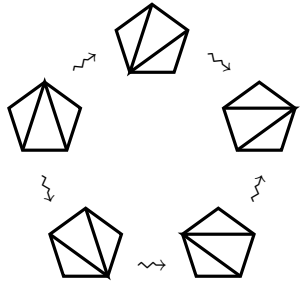
\begin{figure}[htbp]
\centering
$
\vcenter{\hbox{
\begin{tikzpicture}[scale=0.5]
\newcommand*{\defcoords}{
\coordinate (a) at ({cos(1/20*360)}, {sin(1/20*360)});
\coordinate (b) at ({cos(5/20*360)}, {sin(5/20*360)});
\coordinate (c) at ({cos(9/20*360)}, {sin(9/20*360)});
\coordinate (d) at ({cos(13/20*360)}, {sin(13/20*360)});
\coordinate (e) at ({cos(17/20*360)}, {sin(17/20*360)});
}
\begin{scope}[shift={($3*({cos(1/20*360)}, {sin(1/20*360)})$)}]
\defcoords
\draw[very thick] (a) -- (b) -- (c) -- (d) -- (e) -- cycle;
\draw[very thick] (c) -- (a) -- (d);
\end{scope}
\begin{scope}[shift={($3*({cos(5/20*360)}, {sin(5/20*360)})$)}]
\defcoords
\draw[very thick] (a) -- (b) -- (c) -- (d) -- (e) -- cycle;
\draw[very thick] (a) -- (d) -- (b);
\end{scope}
\begin{scope}[shift={($3*({cos(9/20*360)}, {sin(9/20*360)})$)}]
\defcoords
\draw[very thick] (a) -- (b) -- (c) -- (d) -- (e) -- cycle;
\draw[very thick] (e) -- (b) -- (d);
\end{scope}
\begin{scope}[shift={($3*({cos(13/20*360)}, {sin(13/20*360)})$)}]
\defcoords
\draw[very thick] (a) -- (b) -- (c) -- (d) -- (e) -- cycle;
\draw[very thick] (b) -- (e) -- (c);
\end{scope}
\begin{scope}[shift={($3*({cos(17/20*360)}, {sin(17/20*360)})$)}]
\defcoords
\draw[very thick] (a) -- (b) -- (c) -- (d) -- (e) -- cycle;
\draw[very thick] (a) -- (c) -- (e);
\end{scope}
\node[rotate=-36] at ($3*({cos(3/20*360)}, {sin(3/20*360)})$){$\rightsquigarrow$};
\node[rotate=36] at ($3*({cos(7/20*360)}, {sin(7/20*360)})$){$\rightsquigarrow$};
\node[rotate=-72] at ($3*({cos(11/20*360)}, {sin(11/20*360)})$){$\rightsquigarrow$};
\node[rotate=0] at ($3*({cos(15/20*360)}, {sin(15/20*360)})$){$\rightsquigarrow$};
\node[rotate=72] at ($3*({cos(19/20*360)}, {sin(19/20*360)})$){$\rightsquigarrow$};
\end{tikzpicture}
}}
$
\caption{Pentagon relation ensures that different sequences of flips from $\tau$ to $\tau'$ induce the same transition map.}
\label{fig:pentagon-relation}
\end{figure}
The 2d quantum trace map is compatible with these transition maps; 
the following diagram commutes:\footnote{
Here, we are setting $\CT = 1$. 
For rescaled quantum trace maps, we need to rescale the transition maps accordingly; see the previous footnote. 
}
\[
\begin{tikzcd}
 & \widehat{\SQTS}_{\tau}(\Sigma) \arrow[dd, "\theta_{\tau \rightarrow \tau'}", "\rotatebox{90}{\(\sim\)}"']\\
\SkAlg_A^{\mathfrak{sl}_2}(\Sigma) \arrow[hookrightarrow, ur, "\Tr_{\tau}"] \arrow[hookrightarrow, dr, "\Tr_{\tau'}"] & \\
 & \widehat{\SQTS}_{\tau'}(\Sigma)
\end{tikzcd}.
\]

\subsection{3d quantum trace map}\label{subsec:3d-quantum-trace}

In this subsection, we review the construction of the 3d quantum trace map of \cite{PP}. 

\subsubsection{Bimodule structure and the splitting map}
Let $(Y,\Gamma)$ be a boundary-marked 3-manifold. 
A crucial observation \cite[Sec. 3.1]{PP} is that $\Sk^{\mathfrak{sl}_2}_A(Y,\Gamma)$ has a natural module structure over $\SkAlg^{\mathfrak{sl}_2}_A(D_n)$ for each vertex of the boundary marking, where $n$ is the degree of the vertex. 
This can be seen as follows. 
For any vertex $v$ of $\Gamma$ of degree $\deg v = n$, 
we can take the complement of a small neighborhood of $v$ to get a local picture homeomorphic to a cylinder with $n$ boundary markings; see Figure \ref{fig: module structure}. 
\begin{figure}[htbp]
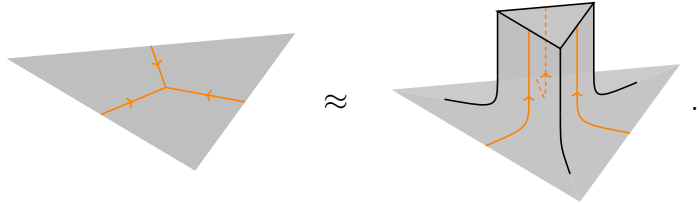

\centering
\begin{math}
\vcenter{\hbox{
\includestandalone[scale=0.5]{figures/module_structureLHS}
}}
\;\approx\;
\vcenter{\hbox{
\includestandalone[scale=0.5]{figures/module_structureRHS}
}}.
\end{math}
\caption{$\SkAlg(D_{\deg v})$-module structure at $v \in V(\Gamma)$}
\label{fig: module structure}
\end{figure}
The action of a stated skein in $\SkAlg(D_n)$ is then obtained by stacking it on top of this cylinder; this action is either a left or right action depending on whether the vertex is a sink or a source. 
For simplicity of notation, let's write
\begin{equation}\label{eq: vertex skein algebra}
\SkAlg_A^{\mathfrak{sl}_2}(V(\Gamma)^{\pm}) := \bigotimes_{v\in V(\Gamma)^{\pm}} \SkAlg_A^{\mathfrak{sl}_2}(D_{\deg v})
\end{equation}
where $V(\Gamma)^{+}$ and $V(\Gamma)^{-}$ denote the set of sink and source vertices of $\Gamma$, respectively. 
The following proposition summarizes this module structure: 
\begin{prop}[{\cite[Prop. 3.13]{PP}}]\label{prop:bimodule-structure}
The stated skein module $\Sk^{\mathfrak{sl}_2}_A(Y,\Gamma)$ has a natural 
$\SkAlg_A^{\mathfrak{sl}_2}(V(\Gamma)^{+})$--$\SkAlg_A^{\mathfrak{sl}_2}(V(\Gamma)^{-})$-bimodule structure. 
\end{prop}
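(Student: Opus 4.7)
The plan is to construct the left and right actions explicitly from local geometry at each vertex of $\Gamma$, verify that these descend through the stated skein relations, and then deduce the bimodule axioms from the stacking multiplication on stated skein algebras.

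For each vertex $v \in V(\Gamma)$ of degree $n := \deg v$, I would fix a small half-ball $B_v \subset Y$ around $v$ together with an orientation-preserving identification of a collar neighborhood
\[
U_v \;\cong\; D_n \times I
\]
sending the portions of boundary-marking edges meeting $v$ to the canonical boundary marking $P_n \times I$. The $I$-factor is oriented so that the marking tangent vectors point in the $+I$ direction exactly when $v$ is a sink; this is the sole geometric input that distinguishes sinks from sources. Given $s \in \SkAlg_A^{\mathfrak{sl}_2}(D_n)$ and a stated ribbon tangle $L$ representing a class in $\Sk_A^{\mathfrak{sl}_2}(Y,\Gamma)$, I can isotope $L$ so that the strands meeting $U_v$ run parallel to $I$ at the marking, and then define $s \cdot [L]$ (when $v$ is a sink) or $[L] \cdot s$ (when $v$ is a source) as the class of the tangle obtained by inserting a representative of $s$ into $U_v$, stacked adjacent to these strands.

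Next I would verify well-definedness. Since all stated skein relations are supported in small balls and the identification $U_v \cong D_n \times I$ intertwines the relations on either side, the operation descends to $R$-linear maps on the quotients. Independence of the choices of $B_v$, of the isotopy of $L$, and of the representative of $s$ follows from isotopy invariance of stated skein modules, together with the fact that any two valid collar identifications differ by an ambient isotopy supported in $D_n \times I$. For the bimodule axioms, associativity at a single vertex is immediate because the product on $\SkAlg_A^{\mathfrak{sl}_2}(D_n)$ is itself defined by stacking in the $I$-direction, so it coincides with iterated insertion; the empty skein gives the unit. Actions at different vertices commute because the neighborhoods $U_v$ can be chosen pairwise disjoint, so their contents are isotoped past each other freely; this also yields the compatibility of any left action with any right action.

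The main technical subtlety is the orientation bookkeeping when fixing the identifications $U_v \cong D_n \times I$: the sink/source dichotomy must produce a left action on one side and a right action on the other, not two actions of the same handedness, and the convention chosen at each vertex must be consistent with the orientation of the boundary marking there. Concretely, at a sink, inserting $s$ places it between $L$ and the cap where $v$ used to lie, which matches left multiplication in $\SkAlg_A^{\mathfrak{sl}_2}(D_n)$, while at a source the insertion happens on the opposite side, giving right multiplication. Once this orientation convention is fixed coherently for every vertex, the bimodule structure follows from the local nature of stated skein relations and the cylinder stacking product on $\SkAlg_A^{\mathfrak{sl}_2}(D_n)$.
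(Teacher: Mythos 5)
Your proposal is correct and follows essentially the same route as the paper: identify a neighborhood of each vertex $v$ (after removing a small ball around $v$) with the cylinder $D_{\deg v}\times I$ carrying its canonical boundary marking, let $\SkAlg_A^{\mathfrak{sl}_2}(D_{\deg v})$ act by stacking a stated skein into that cylinder, with the sink/source orientation of the marking determining left versus right action, and deduce the bimodule axioms from locality of the stated skein relations and the stacking product. No substantive difference from the argument the paper invokes.
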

This bimodule structure plays an essential role in understanding the behavior of stated skein modules under cutting a 3-manifold into simpler pieces.
In the natural 3d analog of the splitting homomorphism (Theorem \ref{thm:2d-splitting-map}), the codomain must be the relative tensor product  with respect to these bimodule structures; 
see \cite[Sec. 3.2]{PP}. 
For our purposes, it is enough to consider the special case of the splitting homomorphism corresponding to cutting an ideally triangulated 3-manifold into elementary pieces called \emph{face suspensions}, which we review below in Theorem \ref{thm:3d-splitting-map-for-Sf}. 
Readers interested in the general form of 3d splitting homomorphisms can refer to \cite[Thms. 3.21, 3.24]{PP}. 

As in the 2d case, the 3d quantum traces we are going to define factor through a quotient of the stated skein module obtained by setting all of the bad arcs to $0$. 
Recall from Definition \ref{defn:bad-arcs} that \emph{bad arcs} are stated tangles of a specific form near the boundary;
for each vertex $v \in V(\Gamma)$ of the boundary marking $\Gamma$, the bad arcs of $\SkAlg^{\mathfrak{sl}_2}_A(D_{\deg v})$ are shown in Figure \ref{fig:bad-arcs-3d}.\footnote{
The pictures shown are projections of the stated tangles in $(Y,\Gamma)$ on the boundary $\partial Y$ in a neighborhood of $v$, viewed from \emph{outside} of $Y$ (so, mirror the pictures when viewed from \emph{inside} of $Y$). 
The dashed orange lines indicate that there may be more edges of $\Gamma$ incident to $v$ in those regions of $\partial Y$. 
}
\begin{figure}[htbp]
\centering
$
\vcenter{\hbox{
\begin{tikzpicture}[scale=0.8]
\coordinate (o) at (0, 0);
\coordinate (a) at ({sqrt(3)}, -1);
\coordinate (a1) at ({sqrt(3)/3}, -1/3);
\coordinate (b) at (-{sqrt(3)}, -1);
\coordinate (b1) at (-{sqrt(3)/3}, -1/3);
\draw[line width=3] (a1) arc (-30:-150:2/3);
\begin{scope}[very thick,decoration={
    markings,
    mark=at position 0.5 with {\arrow{>}}}
    ] 
    \draw[postaction={decorate}, orange, ultra thick] (a) -- (o);
    \draw[postaction={decorate}, orange, ultra thick] (b) -- (o);
\end{scope}
\filldraw[orange] (o) circle (0.1em);
\draw[thick, orange, dashed] (0, 0) [partial ellipse=-30 : 210 : 1.0];
\node[anchor = south] at (a1) {$-$};
\node[anchor = south] at (b1) {$+$};
\end{tikzpicture}
}}
,\;
\vcenter{\hbox{
\begin{tikzpicture}[scale=0.8]
\coordinate (o) at (0, 0);
\coordinate (a) at ({sqrt(3)}, 1);
\coordinate (a1) at ({sqrt(3)/3}, 1/3);
\coordinate (b) at (-{sqrt(3)}, 1);
\coordinate (b1) at (-{sqrt(3)/3}, 1/3);
\draw[line width=3] (a1) arc (30:150:2/3);
\begin{scope}[very thick, decoration={
    markings,
    mark=at position 0.5 with {\arrow{>}}}
    ] 
    \draw[postaction={decorate}, orange, ultra thick] (o) -- (a);
    \draw[postaction={decorate}, orange, ultra thick] (o) -- (b);
\end{scope}
\filldraw[orange] (o) circle (0.1em);
\draw[thick, orange, dashed] (0, 0) [partial ellipse=150 : 390 : 1.0];
\node[anchor = north] at (a1) {$-$};
\node[anchor = north] at (b1) {$+$};
\end{tikzpicture}
}}
$
\caption{Bad arcs in $\SkAlg^{\mathfrak{sl}_2}_A(D_{\deg v})$}
\label{fig:bad-arcs-3d}
\end{figure}
Analogously to Definition \ref{defn:reduced-stated-skein-algebra}, we define:
\begin{defn}[{\cite[Def. 3.34]{PP}}]
The \emph{reduced stated skein module} $\overline{\Sk}^{\mathfrak{sl}_2}_A(Y, \Gamma)$ is the quotient
\[
\overline{\Sk}^{\mathfrak{sl}_2}_A(Y, \Gamma) := 
I^{\mathrm{bad}, +} \backslash \Sk^{\mathfrak{sl}_2}_A(Y, \Gamma)/I^{\mathrm{bad}, -}
 = \frac{\Sk^{\mathfrak{sl}_2}_A(Y, \Gamma)}{I^{\mathrm{bad}, +}\Sk^{\mathfrak{sl}_2}_A(Y,\Gamma) + \Sk^{\mathfrak{sl}_2}_A(Y,\Gamma)I^{\mathrm{bad}, -}},
\]
where $I^{\mathrm{bad}, +}$ denotes the right ideal of $\SkAlg_A^{\mathfrak{sl}_2}(V(\Gamma)^+)$ generated by the bad arcs near the sinks, and $I^{\mathrm{bad}, -}$ denotes the left ideal of $\SkAlg_A^{\mathfrak{sl}_2}(V(\Gamma)^-)$ generated by the bad arcs near the sources. 
\end{defn}
Similarly to \eqref{eq: vertex skein algebra}, let's write, for simplicity of notation, 
\[
\overline{\SkAlg}_A^{\mathfrak{sl}_2}(V(\Gamma)^{\pm}) := \bigotimes_{v\in V(\Gamma)^{\pm}} \overline{\SkAlg}_A^{\mathfrak{sl}_2}(D_{\deg v}).
\]
Then, it is clear from the definition that the reduced stated skein module $\overline{\Sk}^{\mathfrak{sl}_2}_A(Y,\Gamma)$ has a $\overline{\SkAlg}_A^{\mathfrak{sl}_2}(V(\Gamma)^{+})$-$\overline{\SkAlg}_A^{\mathfrak{sl}_2}(V(\Gamma)^{-})$-bimodule structure. 

Suppose $Y$ is a cusped 3-manifold (without boundary) equipped with an ideal triangulation $\mathcal{T}$; 
let $\mathcal{T}^{(d)}$ denote the set of all $d$-simplices in the triangulation. 
We call the vertices, edges, and faces of the tetrahedra in $\mathcal{T}^{(3)}$ before gluing the \emph{bare vertices}, \emph{bare edges}, and \emph{bare faces}, respectively. 
For each ideal tetrahedron $T \in \mathcal{T}^{(3)}$, we fix a base point in the interior and call it the \emph{barycenter} of $T$. 
For each bare vertex $v$ (resp. bare edge $e$, resp. bare face $f$) of $T$, the \emph{vertex cone} $Cv$ (resp. \emph{edge cone} $Ce$, resp. \emph{face cone} $Cf$) is the join of $v$ (resp. $e$, resp. $f$) with the barycenter of $T$. 
Each face $f \in \mathcal{T}^{(2)}$ splits into two bare faces, and the \emph{face suspension} $Sf$ is the union of the two face cones. 
See Figure \ref{fig:vertex-cone-edge-cone-face-suspension}. 
\begin{figure}[htbp]
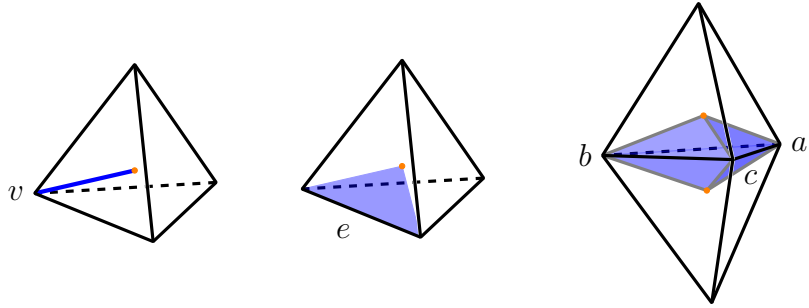

\centering
$
\vcenter{\hbox{
\tdplotsetmaincoords{70}{80}

}}
$
\caption{A vertex cone $Cv$, an edge cone $Ce$, and a face suspension $Sf$ for the face with vertices $a, b, c$}
\label{fig:vertex-cone-edge-cone-face-suspension}
\end{figure}

Consider the decomposition $Y= \bigcup_{f\in \mathcal{T}^{(2)}} Sf$ of $Y$ into face suspensions. 
Note, under this decomposition, each edge cone splits into two bare edge cones. 
We equip each face suspension $Sf$ with a boundary marking by drawing an edge from each of the 3 side edges to the 2 cone points; 
see Figure \ref{fig:face_suspension}. 
Each edge of the boundary marking corresponds to a bare edge cone (i.e. a face of $Sf$). 
We may label the top and the bottom bare edge cones abutting the edge $a$ by $a_1$ and $a_2$, respectively, and likewise for the edges $b$ and $c$. 
\begin{figure}[htbp]
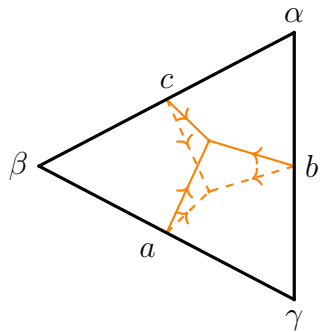

    \centering
    \includestandalone[scale=1]{figures/triangular_pillow}
    \caption{A face suspension with the standard boundary marking}
    \label{fig:face_suspension}
\end{figure}
This boundary marking has 2 sink vertices of degree 3 and 3 source vertices of degree 2, so the reduced stated skein module $\overline{\Sk}^{\mathfrak{sl}_2}_A(Sf)$ has a natural $\mathbb{T}^{\otimes 2}$--$\mathbb{B}^{\otimes 3}$-bimodule structure. 
In \cite[Sec. 4]{PP}, the structure of such bimodules is completely determined. 
To state this structure theorem for face suspensions, let $x_a$ denote the elementary tangle connecting the bare edge cones $a_1$ and $a_2$ with states $++$ (and likewise for $x_b$ and $x_c$), and let $\alpha_i$ ($i = 1, 2$) denote the elementary tangle connecting the bare edge cones $b_i$ and $c_i$ with states $++$ (and likewise for $\beta_i$ and $\gamma_i$). 
\begin{prop}[{\cite[Cor. 4.9]{PP}}]
The reduced stated skein module of a face suspension is given by 
\[
\overline{\Sk}^{\mathfrak{sl}_2}_A(Sf) \cong \frac{\mathbb{T}^{\otimes 2} \otimes \mathbb{B}^{\otimes 3}}{
\langle
(-A^2)\alpha_1\alpha_2 = x_b x_c, \;
(-A^2)\beta_1\beta_2 = x_c x_a, \;
(-A^2)\gamma_1\gamma_2 = x_a x_b
\rangle}
\]
as a left $\mathbb{T}^{\otimes 2} \otimes (\mathbb{B}^{\mathrm{op}})^{\otimes 3} = \mathbb{T}^{\otimes 2} \otimes \mathbb{B}^{\otimes 3}$-module, 
where
\begin{align*}
\mathbb{T}^{\otimes 2} &=
\frac{R\langle \alpha_1^{\pm 1}, \beta_1^{\pm 1}, \gamma_1^{\pm 1}\rangle}{\langle \beta_1\alpha_1 = A\alpha_1\beta_1,\; \gamma_1\beta_1 = A\beta_1\gamma_1,\; \alpha_1\gamma_1 = A\gamma_1\alpha_1 \rangle}\\
&\quad \otimes
\frac{R\langle \alpha_2^{\pm 1}, \beta_2^{\pm 1}, \gamma_2^{\pm 1}\rangle}{\langle \alpha_2\beta_2 = A\beta_2\alpha_2,\; \beta_2\gamma_2 = A\gamma_2\beta_2,\; \gamma_2\alpha_2 = A\alpha_2\gamma_2 \rangle}
\end{align*}
and
\[
\mathbb{B}^{\otimes 3} = R[x_a^{\pm 1}, x_b^{\pm 1}, x_c^{\pm 1}]. 
\]
\end{prop}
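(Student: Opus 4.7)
The plan is to present $\overline{\Sk}^{\mathfrak{sl}_2}_A(Sf)$ by exhibiting its generators and defining relations, leveraging the decomposition of $Sf$ into two face cones and the known structure of reduced stated skein modules of 3-balls with boundary markings.

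\textbf{Step 1: Surjectivity.} First I would show that the natural map $\mathbb{T}^{\otimes 2} \otimes \mathbb{B}^{\otimes 3} \to \overline{\Sk}^{\mathfrak{sl}_2}_A(Sf)$ sending the generators to the corresponding elementary tangles $\alpha_i, \beta_i, \gamma_i, x_a, x_b, x_c$ is well defined and surjective. Well-definedness reduces to the triangle and biangle algebra relations near each sink and source vertex, which are precisely the relations encoded by the $\mathbb{T}^{\otimes 2}$--$\mathbb{B}^{\otimes 3}$-bimodule structure of Proposition \ref{prop:bimodule-structure}. For surjectivity, since $Sf$ is a 3-ball and its boundary marking has only five vertices, any stated ribbon tangle can be isotoped so that its intersection with each bare edge cone is a collection of elementary arcs; successive applications of the skein and half-twist relations then express any such tangle as a monomial in the proposed generators.

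\textbf{Step 2: The gluing relations hold.} I would next verify the three relations $(-A^2)\alpha_1\alpha_2 = x_b x_c$ and their cyclic analogues. Geometrically, both sides represent stated ribbon tangles in $Sf$ with the same four endpoints on the bare edge cones $b_1, b_2, c_1, c_2$, realizing two different planar matchings of these endpoints. The third matching (corresponding to the ``crossing'' configuration) is killed in $\overline{\Sk}^{\mathfrak{sl}_2}_A(Sf)$ because its pushoff to either cone point is a bad arc in the sense of Figure \ref{fig:bad-arcs-3d}. The scalar $-A^2$ is then pinned down by applying relation \eqref{item: sl2 stated skein rel 2} to convert the matching and tracking the half-twists required to make framings parallel to the boundary marking, using \eqref{item:halfTwistRel}; the two $(-A^2)^{1/2}$ contributions combine to give $-A^2$.

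\textbf{Step 3: The relations suffice.} To prove that these are all the relations, I would apply the 3d splitting map to cut $Sf$ along the shared triangular face $f$ into the two face cones $Cf_1$ and $Cf_2$. Each face cone is a 3-ball whose boundary marking consists of a single degree-3 sink at the cone point together with the trace of the marking at $f$; its reduced stated skein module can be identified explicitly (by a direct normal-form argument, since any tangle retracts onto cone arcs from the barycenter) with an extension of the triangle algebra generated by the $\alpha_i, \beta_i, \gamma_i$ together with the three $f$-boundary variables. The splitting map then embeds $\overline{\Sk}^{\mathfrak{sl}_2}_A(Sf)$ into the relative tensor product of these two algebras over the $\mathbb{T}$-bimodule structure associated to $f$. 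Working out that relative tensor product, one finds that the $f$-boundary variables on either side get identified and combine exactly into the quantities $x_b x_c, x_c x_a, x_a x_b$ (with the scalar $-A^2$) on the one hand and $\alpha_1\alpha_2, \beta_1\beta_2, \gamma_1\gamma_2$ on the other, recovering the presentation in the statement.

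\textbf{Main obstacle.} The chief difficulty is bookkeeping the scalar $-A^2$ and verifying that exactly three gluing relations (and no additional relations) generate the kernel. This requires careful tracking of framings of tangles meeting the splitting face $f$ and of the bad-arc ideals on both sides of the cut. An alternative, more hands-on, route is to bypass the splitting map altogether and instead exhibit an explicit spanning set in normal form -- for instance, products of cyclically ordered monomials in each triangle algebra times Laurent monomials in $x_a, x_b, x_c$ modulo the proposed gluing relations -- and then prove linear independence by pairing against the known PBW-type bases of $\mathbb{T}$ and $\mathbb{B}$ restricted to each face of the bipyramid.
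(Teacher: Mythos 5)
First, note that the paper does not actually prove this statement: it is imported verbatim from \cite[Cor.~4.9]{PP}, and the argument behind it (the same one invoked for the appendix analogues, Theorems \ref{thm:gl2skeinmodule} and \ref{thm:gl1skeinmodule}, which are stated to follow ``an argument almost identical to that in \cite[Sec.~4.1]{PP}'') is a general presentation theorem for reduced stated skein modules of $3$-balls with combinatorially foliated boundary: such a module is cyclic on the empty skein over the boundary vertex algebras, with annihilator generated by exactly one relation per face of the boundary tessellation, i.e.\ per boundary puncture. Specializing to $Sf$, whose tessellation has three faces (one for each ideal vertex of $f$), produces precisely the three displayed relations. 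Your Steps 1 and 2 are consistent with the first half of that argument: cyclicity via a normal form governed by the foliation, and a direct check that the three relations hold, with the crossing matching killed by bad arcs (Figure \ref{fig:bad-arcs-3d}) and the scalar $-A^2$ assembled from two factors of $(-A^2)^{1/2}$ via \eqref{item: sl2 stated skein rel 2} and \eqref{item:halfTwistRel}.

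The genuine gap is in Step 3, which is where the substance of the statement lies (that the three relations generate the \emph{entire} annihilator). Splitting $Sf$ along the interior face $f$ is not available in the form you use it. First, $f\cong D_3$ must itself be equipped with a combinatorial foliation (say three elementary quadrilaterals meeting at an interior trivalent vertex), and the resulting relative tensor product is then constrained not only by the triangle algebra at the new cone vertex but also by gluing relations along the three boundary arcs of $f$, in the style of Definition \ref{defn:relative-tensor-product-Sf}; your computation of the relative tensor product omits these. Second, and more seriously, you assert that the splitting map ``embeds'' $\overline{\Sk}^{\mathfrak{sl}_2}_A(Sf)$ into the relative tensor product. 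Injectivity of $3$d splitting maps is not known in this setting (unlike the $2$d case; in this paper isomorphism-type statements for splittings are only established in the $\mathfrak{gl}_1$ setting, as in Lemma \ref{lem:gl1_iso}), and proving it for this cut would require essentially the structural control over $\overline{\Sk}^{\mathfrak{sl}_2}_A(Sf)$ and of the face cones that you are trying to establish, so the argument is circular as written. Third, the reduced modules of the face cones and their relative tensor product are themselves uncomputed instances of the same kind of problem, and you do not carry them out. Your closing ``alternative route'' --- an explicit spanning set in normal form (monomials in $\alpha_i,\beta_i,\gamma_i$ times Laurent monomials in $x_a,x_b,x_c$, reduced modulo the three relations) together with a linear-independence argument --- is in fact the shape of the argument in \cite{PP}; fleshing that out, rather than the splitting along $f$, is what would close the gap.
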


In order to state the splitting map associated to the decomposition of $Y$ into face suspensions, we need to first define the relative tensor product of these bimodules. 
\begin{defn}\label{defn:relative-tensor-product-Sf}
The \emph{relative tensor product} $\overline{\bigotimes}_{f\in \mathcal{T}^{(2)}}\overline{\Sk}^{\mathfrak{sl}_2}_A(Sf)$ of the bimodules $\overline{\Sk}^{\mathfrak{sl}_2}_A(Sf)$, $f\in \mathcal{T}^{(2)}$, is defined to be the quotient of the ordinary tensor product (as $R$-modules) $\bigotimes_{f\in \mathcal{T}^{(2)}}\overline{\Sk}^{\mathfrak{sl}_2}_A(Sf)$ by the following relations: 
\begin{itemize}
\item \label{item:redSkMod2} For each vertex cone $Cv$, we have the following relations among left actions of $\bigotimes_{\substack{f \in \mathcal{T}^{(2)} \\ Sf \text{ abutting }Cv}}\mathbb{T}$ on $\bigotimes_{f\in \mathcal{T}^{(2)}} \overline{\Sk}^{\mathfrak{sl}_2}_A(Sf)$:
\begin{gather} 
\vcenter{\hbox{

}}
\;,
\quad \epsilon \in \{\pm\},
\end{equation}
where each sector in the above diagrams represents one of the face suspensions surrounding $e$ (as many as the number of tetrahedra abutting $e$), and the markings shown are on the bare edge cones abutting $e$. 
\end{itemize} 
\end{defn}

\begin{thm}[{\cite[Cor. 3.38]{PP}}]\label{thm:3d-splitting-map-for-Sf}
There is a well-defined splitting map 
\begin{align*}
\overline{\sigma}^{\mathfrak{sl}_2}:\overline{\Sk}^{\mathfrak{sl}_2}_A(Y) &\rightarrow \underset{f\in \mathcal{T}^{(2)}}{\overline{\bigotimes}} \overline{\Sk}^{\mathfrak{sl}_2}_A(Sf),\\
[L] &\mapsto  \qty[\sum_{\vec{\epsilon} \in \{\pm\}^{\cup_{f}Sf \cap L}}\otimes_{f\in \mathcal{T}^{(2)}} [L_{f}^{\vec{\epsilon}}]], 
\end{align*}
where $L_f^{\vec{\epsilon}}$ denotes the part of $L$ in $Sf$ after splitting, with boundary states determined by $\vec{\epsilon}$. 
\end{thm}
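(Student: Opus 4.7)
The plan is to deduce the theorem from the general 3d splitting homomorphism of \cite[Thms. 3.21, 3.24]{PP}, specialized to the decomposition $Y = \bigcup_{f\in \mathcal{T}^{(2)}} Sf$ of $Y$ into face suspensions. That general result produces, for any embedded codimension-$1$ submanifold separating a boundary-marked 3-manifold, a homomorphism from the stated skein module of the whole into the relative tensor product of the stated skein modules of the pieces, where the relative structure is dictated by the bimodule actions from Proposition \ref{prop:bimodule-structure}. The formula is exactly the sum-over-states formula claimed in the statement.

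First I would identify the cutting surface as the union of all bare edge cones, so that cutting $Y$ along it yields exactly the family $\{Sf\}_{f\in\mathcal{T}^{(2)}}$, each with the boundary marking depicted in Figure \ref{fig:face_suspension}. Applying the general splitting theorem then gives a well-defined homomorphism
\[
\sigma : \Sk^{\mathfrak{sl}_2}_A(Y) \longrightarrow \bigotimes^{\mathrm{rel}}_{f\in \mathcal{T}^{(2)}} \Sk^{\mathfrak{sl}_2}_A(Sf)
\]
carrying $[L]$ to the stated sum. Since $\partial Y = \emptyset$, the source is automatically reduced; on the target, the bad-arc ideals are compatible with the bimodule actions (bad arcs map under splitting to elements lying in bad-arc ideals of neighboring pieces), so $\sigma$ descends to a homomorphism into the reduced relative tensor product $\overline{\bigotimes}_{f\in \mathcal{T}^{(2)}} \overline{\Sk}^{\mathfrak{sl}_2}_A(Sf)$.

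The main obstacle, and essentially the entire content of the proof, is matching the abstract relative tensor product supplied by the general theorem with the explicit quotient described in Definition \ref{defn:relative-tensor-product-Sf}. For each internal edge $e \in \mathcal{T}^{(1)}$, the face suspensions surrounding $e$ share a biangle $D_2$ at each source vertex on $e$; through the isomorphism $\mathbb{B} \cong R[x^{\pm 1}]$ of Theorem \ref{thm:biangle-triangle-algebras}(1), the right-action relations from the relative product unfold into precisely the edge-cone relations (\ref{item:redSkMod1}). For each vertex cone $Cv$, the three face suspensions meeting at the corresponding barycenter share a triangle $D_3$, and via the triangle-algebra structure of Theorem \ref{thm:biangle-triangle-algebras}(2) the relative product over the corresponding $\mathbb{T}$-actions translates into the vertex-cone relations (\ref{item:redSkMod2}). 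Unpacking this correspondence in concrete stated-skein pictures, which requires the structure theorem for $\overline{\Sk}^{\mathfrak{sl}_2}_A(Sf)$, is the bulk of the work; it is exactly the computation carried out in \cite[Cor. 3.38]{PP}.
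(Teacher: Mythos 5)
Your proposal follows the same route as the source this paper relies on: the statement is quoted from \cite[Cor. 3.38]{PP} with no independent proof given here, and that corollary is obtained essentially as you describe, by specializing the general 3d splitting homomorphisms of \cite[Thms. 3.21, 3.24]{PP} to the face-suspension decomposition and identifying the resulting relative tensor product (via the $\mathbb{T}$-actions at the barycenters and the $\mathbb{B}$-actions at the edges) with the explicit vertex-cone and edge-cone relations of Definition \ref{defn:relative-tensor-product-Sf}. The only caveat is that the cut locus, the union of all edge cones, is not a single embedded surface, so the cutting must be performed along the edge cones iteratively (or via the general form of the splitting theorem) rather than in one step as your phrasing suggests; this, together with the unpacking of the bimodule relations that you defer to the citation, is exactly what the cited corollary carries out.
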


\subsubsection{3d quantum trace map}\label{subsubec:3d-quantum-trace}
The codomain of the 3d quantum trace map is the 3d analog of the square-root quantum Teichm\"uller space called the \emph{square-root quantum gluing module}. 
Just like the square-root quantum Teichm\"uller space was built out of extended triangle algebras, the square-root quantum gluing module can be built out of the following basic building blocks:
\begin{defn}
The \emph{face suspension module} $\mathbb{S}f$ of $Sf$ is the quantum torus
\begin{align*}
\mathbb{S}f := \widetilde{\mathbb{T}}^{\otimes 2} &= 
\frac{R\langle a_1^{\pm 1}, b_1^{\pm 1}, c_1^{\pm 1}\rangle}{\langle b_1a_1 = Aa_1b_1,\; c_1b_1 = Ab_1c_1,\; a_1c_1 = Ac_1a_1\rangle} \\
&\quad \otimes \frac{R\langle a_2^{\pm 1}, b_2^{\pm 1}, c_2^{\pm 1}\rangle}{\langle a_2b_2 = Ab_2a_2,\; b_2c_2 = Ac_2b_2,\; c_2a_2 = Aa_2c_2\rangle}
\end{align*}
generated by the 6 bare edge cones of $Sf$, viewed
as a regular $\widetilde{\mathbb{T}}^{\otimes 2}$--$\widetilde{\mathbb{T}}^{\otimes 2}$-bimodule. 
\end{defn}

\begin{thm}
Let $\CT, \CB \in R^{\times}$ be invertible scalars such that 
\begin{equation}\label{eq:scaling-parameters}
\CB^2 = q \CT^2.
\end{equation}
Then there is a well-defined $\mathbb{T}^{\otimes 2}$-$\mathbb{B}^{\otimes 3}$-bimodule homomorphism (in fact, an embedding)
\[
\Tr_{Sf} : \overline{\Sk}(Sf) \rightarrow \mathbb{S}f
\]
mapping the empty skein $[\emptyset]$ to $1$, induced by the embeddings of algebras
\begin{align*}
\mathbb{T}^{\otimes 2} &\hookrightarrow \widetilde{\mathbb{T}}^{\otimes 2}\\
\alpha_1 &\mapsto \CT [b_1c_1],\\
\beta_1 &\mapsto \CT [c_1a_1],\\
\gamma_1 &\mapsto \CT [a_1b_1],\\
\alpha_2 &\mapsto \CT [b_2c_2],\\
\beta_2 &\mapsto \CT [c_2a_2],\\
\gamma_2 &\mapsto \CT [a_2b_2],
\end{align*}
and
\begin{align*}
\mathbb{B}^{\otimes 3} &\hookrightarrow \widetilde{\mathbb{T}}^{\otimes 2}\\
x_a &\mapsto \CB a_1 \otimes a_2,\\
x_b &\mapsto \CB b_1 \otimes b_2,\\
x_c &\mapsto \CB c_1 \otimes c_2.
\end{align*}
\end{thm}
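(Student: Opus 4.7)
The plan is to leverage the explicit presentation of $\overline{\Sk}(Sf)$ given in the preceding proposition and verify directly that the proposed formulas extend to a well-defined bimodule homomorphism, after which injectivity follows from a basis count.

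First I would confirm that the two displayed assignments are themselves algebra homomorphisms into $\widetilde{\mathbb{T}}^{\otimes 2}$. The map on $\mathbb{T}^{\otimes 2}$ is just two copies of the 2d embedding $\Tr_{\CT}\colon \mathbb{T}\hookrightarrow \widetilde{\mathbb{T}}$ from Definition \ref{def:extended-triangle-algebra}. The map on $\mathbb{B}^{\otimes 3}$ is valid because the images $\CB\,a_1\otimes a_2$, $\CB\,b_1\otimes b_2$, $\CB\,c_1\otimes c_2$ pairwise commute in $\widetilde{\mathbb{T}}^{\otimes 2}$: the opposite chiralities built into the two copies of $\widetilde{\mathbb{T}}$ (the first has $b_1a_1=Aa_1b_1$, the second has $a_2b_2=Ab_2a_2$) cause the $A^{\pm 1}$ factors to cancel across the tensor product. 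These two embeddings equip $\mathbb{S}f$ with a $\mathbb{T}^{\otimes 2}$--$\mathbb{B}^{\otimes 3}$-bimodule structure via left and right multiplication in the regular bimodule over $\widetilde{\mathbb{T}}^{\otimes 2}$.

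Second, by the structure theorem of the preceding proposition, $\overline{\Sk}(Sf)$ is presented as the cyclic left $\mathbb{T}^{\otimes 2}\otimes \mathbb{B}^{\otimes 3}$-module generated by the empty skein $[\emptyset]$ subject to the three gluing relations $(-A^2)\alpha_1\alpha_2=x_bx_c$, $(-A^2)\beta_1\beta_2=x_cx_a$, and $(-A^2)\gamma_1\gamma_2=x_ax_b$. The universal property therefore produces a bimodule homomorphism $\Tr_{Sf}$ with $[\emptyset]\mapsto 1$ as soon as these three identities hold in $\mathbb{S}f$ under the displayed assignments.

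Third, I would verify the first relation by a direct computation, the other two being symmetric. A short Weyl-ordering check in each tensor factor gives $[b_1c_1]=A^{1/2}b_1c_1$ (since $c_1b_1=Ab_1c_1$) and $[b_2c_2]=A^{-1/2}b_2c_2$ (since $b_2c_2=Ac_2b_2$), so the two $A^{\pm 1/2}$ factors cancel and
\[
\Tr_{Sf}(\alpha_1\alpha_2)=\CT^2\,[b_1c_1]\otimes[b_2c_2]=\CT^2\,(b_1c_1\otimes b_2c_2),
\]
while
\[
\Tr_{Sf}(x_bx_c)=\CB^2\,(b_1\otimes b_2)(c_1\otimes c_2)=\CB^2\,(b_1c_1\otimes b_2c_2).
\]
The relation therefore reduces to the single scalar identity $\CB^2=(-A^2)\CT^2$, which is precisely the hypothesis $\CB^2=q\CT^2$ under the standard identification $q=-A^2$ between the $\mathfrak{gl}_2$ and $\mathfrak{sl}_2$ parameters. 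The remaining two relations follow by the cyclic symmetry $(a,b,c)\to(b,c,a)$ of the defining commutation relations and of the formulas $\alpha\mapsto[bc]$, $\beta\mapsto[ca]$, $\gamma\mapsto[ab]$.

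Finally, I would establish injectivity via a basis argument. The structure theorem supplies a free $R$-basis of $\overline{\Sk}(Sf)$ consisting of ordered monomials in the nine generators indexed by a sub-lattice of $\mathbb{Z}^9$ cut out by the three gluing relations; the image inside the free $R$-module $\widetilde{\mathbb{T}}^{\otimes 2}$ consists of Laurent monomials in $a_i,b_i,c_i$ indexed by the same sub-lattice (up to powers of $A^{1/2}$), which is manifestly $R$-linearly independent. The main obstacle is the simultaneous coherence of the Weyl-ordering factors, the scaling parameters $\CT,\CB$, and the three central gluing relations; the cyclic symmetry makes the three checks essentially identical, so the single scalar identity $\CB^2=-A^2\CT^2$ encoded in the hypothesis $\CB^2=q\CT^2$ is exactly what makes the map exist.
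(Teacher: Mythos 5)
Your verification is correct, and it is essentially the only argument available: the paper states this theorem without proof (it is part of the review of the construction from \cite{PP}), so the check you perform -- that the two displayed maps are algebra homomorphisms, and that the images of the three gluing relations $(-A^2)\alpha_1\alpha_2=x_bx_c$, etc.\ hold in $\widetilde{\mathbb{T}}^{\otimes 2}$ -- is exactly what is needed. Your Weyl-ordering computation is right with the paper's conventions ($[b_1c_1]=A^{1/2}b_1c_1$ in the first factor, $[b_2c_2]=A^{-1/2}b_2c_2$ in the second, so the half-powers cancel), and the whole statement does reduce to the single scalar identity $\CB^2=(-A^2)\CT^2$, which is \eqref{eq:scaling-parameters} under the identification $q=-A^2$ implicit in the base ring $\mathbb{Z}[A^{\pm\frac12},(-A^2)^{\pm\frac12}]$. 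This is also exactly how the paper uses the scaling relation downstream (e.g.\ when checking the analogous face-suspension relations for $\evmap_{Sf}$), so your route is consistent with the paper's.

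The one soft spot is the injectivity ("in fact an embedding") paragraph. The quoted structure result gives $\overline{\Sk}(Sf)$ as a cyclic module presented by the three relations; that it has a \emph{free} monomial basis indexed by $\mathbb{Z}^9$ modulo the rank-$3$ relation lattice is true but not literally stated, and your argument silently uses it. To close it you would note that each relation identifies one invertible monomial with a scalar multiple of another, so the quotient is spanned by monomials indexed by $\mathbb{Z}^9/K$ (with $K$ generated by the three relation vectors), and then check that the exponent map $\mathbb{Z}^9\to\mathbb{Z}^6$, $\alpha_1\mapsto e_{b_1}+e_{c_1}$, \dots, $x_a\mapsto e_{a_1}+e_{a_2}$, has kernel exactly $K$ (a short computation: the difference vectors $d_\alpha,d_\beta,d_\gamma$ between the two triangle factors must vanish). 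With that, distinct cosets map to distinct Laurent monomials in $\widetilde{\mathbb{T}}^{\otimes 2}$ and injectivity follows. Since the embedding claim is parenthetical and is anyway inherited from \cite{PP}, this is a minor gap rather than an error.
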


\begin{rmk}
Note, scaling both $\CT$ and $\CB$ by a common factor just corresponds to rescaling the generators of $\mathbb{S}f$. 
Up to scaling, there are two possible choices of the scalars $\CT$ and $\CB$:\footnote{While these two choices can be related by scaling the generators of one copy of $\widetilde{\mathbb{T}}$ by $(-1)^{\frac{1}{2}}$ and the generators of the other copy by $(-1)^{-\frac{1}{2}}$, there is no canonical such choice since the two copies are identical.}
\[
(\CT, \CB) = (q^{-\frac{1}{2}}, 1)\quad \text{or}\quad (\CT, \CB) = (-q^{-\frac{1}{2}}, 1).
\]
In \cite{PP}, we used $(\CT, \CB) = (A^{-1}, (-1)^{-\frac{1}{2}})$ which, up to scaling, corresponds to the second choice. 
\end{rmk}

For each edge cone $Ce$ corresponding to a bare edge $e$, consider the element $\hat{z}_e = a \otimes a'$ of the quantum torus
$\bigotimes_{f\in \mathcal{T}^{(2)}}\mathbb{S}f$, 
where $a$ and $a'$ are the two bare edge cones corresponding to $Ce$ after splitting into face suspensions; see Figure \ref{fig:edge-cone-split}.  
The element $\hat{z}_e \in \bigotimes_{f\in \mathcal{T}^{(2)}}\mathbb{S}f$ is called the \emph{square-root quantized shape parameter} associated to the bare edge $e$ of the ideal triangulation $\mathcal{T}$ of $Y$. 
\begin{figure}[htbp]
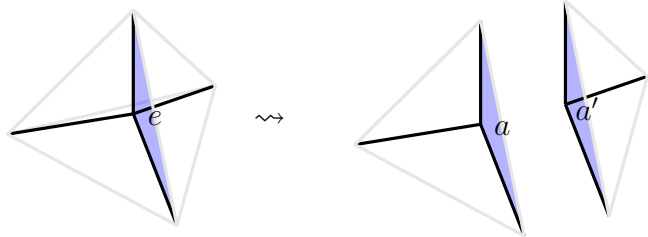

\centering
$
\vcenter{\hbox{
\tdplotsetmaincoords{50}{70}

}}
$
\caption{An edge cone $Ce$ splits into two bare edge cones $a$ and $a'$. Note, this is a cone over Figure \ref{fig:square-root-quantized-shear-parameter}.}
\label{fig:edge-cone-split}
\end{figure}

The square-root quantized shape parameters generate a sub-quantum torus of $\bigotimes_{f\in \mathcal{T}^{(2)}}\mathbb{S}f$. 
The \emph{square-root quantum gluing module} $\SQGM_{\mathcal{T}}(Y)$ that we define below is a two-sided quotient of this sub-quantum torus.\footnote{
The definition of $\SQGM_{\mathcal{T}}(Y)$ we use here is slightly different from the one we gave in \cite[Sec. 5]{PP}; 
the one we use here is the quotient of the one in \cite{PP} by setting $\hat{z}_e = \hat{z}_{e'}$ for each pair of opposite bare edges $e, e'$. 
The effect of this quotient is minor, as the vertex relations already imply that $\hat{z}_e^2 = \hat{z}_{e'}^2$. 
It may seem like there is another natural quotient given by setting $\hat{z}_e = -\hat{z}_{e'}$, but such a quotient turns out to be inconsistent with the 2-3 Pachner move. 
}
\begin{defn}\label{defn:SQGM}
With the scaling parameters $\CB$ and $\CT$, 
the \emph{square-root quantum gluing module} $\SQGM_{\mathcal{T}}(Y)$ is the two-sided quotient of the quantum torus 
\[
\bigotimes_{T \in \mathcal{T}^{(3)}} \frac{R \langle \hat{z}^{\pm 1}, \hat{z}^{'\pm 1}, \hat{z}^{''\pm 1}\rangle}{\langle \hat{z}'\hat{z} = A \hat{z} \hat{z}',\; \hat{z}''\hat{z}' = A \hat{z}' \hat{z}'',\; \hat{z}\hat{z}'' = A \hat{z}'' \hat{z}\rangle},
\]
where $\hat{z}$, $\hat{z}'$, $\hat{z}''$ are (generators associated to) the 3 pairs of opposite edge cones of $T$, in the anti-clockwise order when viewed from a vertex
\[
\vcenter{\hbox{
\tdplotsetmaincoords{60}{80}
\begin{tikzpicture}[tdplot_main_coords, scale=0.8]
\begin{scope}[scale = 0.8, tdplot_main_coords]
    \newcommand*{\defcoords}{
        \coordinate (a) at (0, 0, 3);
        \coordinate (b) at ({2*sqrt(2)}, 0, -1);
        \coordinate (c) at ({-sqrt(2)}, {sqrt(6)}, -1);
        \coordinate (d) at ({-sqrt(2)}, {-sqrt(6)}, -1);
        \coordinate (e) at (0, 0, -5);
        \coordinate (ab) at ($1/2*(a) + 1/2*(b)$);
        \coordinate (ac) at ($1/2*(a) + 1/2*(c)$);
        \coordinate (ad) at ($1/2*(a) + 1/2*(d)$);
        \coordinate (eb) at ($1/2*(e) + 1/2*(b)$);
        \coordinate (ec) at ($1/2*(e) + 1/2*(c)$);
        \coordinate (ed) at ($1/2*(e) + 1/2*(d)$);
        \coordinate (bc) at ($1/2*(b) + 1/2*(c)$);
        \coordinate (bd) at ($1/2*(b) + 1/2*(d)$);
        \coordinate (cd) at ($1/2*(c) + 1/2*(d)$);
    }
    
    \defcoords

    \draw[white, line width=5] (b) -- (c);
    \draw[white, line width=5] (b) -- (d);
    \draw[very thick] (b) -- (c);
    \draw[very thick] (b) -- (d);
    
    \draw[very thick] (c) -- (d);
    
    \draw[white, line width=5] (a) -- (b);
    \draw[very thick] (a) -- (b);
    
    \draw[very thick] (b) -- (c);
    \draw[very thick] (b) -- (d);
    
    \draw[very thick] (a) -- (c);
    \draw[very thick] (a) -- (d);
    
    \filldraw (b) circle (0.05em);
    \filldraw (c) circle (0.05em);
    \filldraw (d) circle (0.05em);
    \filldraw (a) circle (0.05em);
    
    \node[left] at (ad){$\hat{z}$};
    \node[right] at (bc){$\hat{z}$};
    \node[right] at (ab){$\hat{z}'$};
    \node[below] at (cd){$\hat{z}'$};
    \node[right] at (ac){$\hat{z}''$};
    \node[below] at (bd){$\hat{z}''$};

\end{scope}
\end{tikzpicture}
}}
,
\]
by the following relations: 
\begin{enumerate}
\labeleditem{V}{item:vertex} Vertex relations (central):
For each tetrahedron, 
\[
[\hat{z} \hat{z}' \hat{z}''] = q^{-1}\CT^{-3},
\]
\labeleditem{L}{item:Lagrangian} Lagrangian relations (as left actions):
For each tetrahedron, 
\[
1 = \CB^{-2} \hat{z}^{-2} + \CB^2 \hat{z}''^2,
\]
\labeleditem{G}{item:gluing} Gluing relations (as right actions):
For each edge, 
\[
\hat{e} = q \CB^{-k},
\]
where
\[
\hat{e} := \qty[\prod_{\hat{z}\text{ abutting }e} \hat{z}]
\]
and $k$ is the number of edge cones abutting the edge $e$. 
\end{enumerate}
\end{defn}
For the rest of the paper, unless otherwise specified, fix $(\CT, \CB) = (q^{-\frac{1}{2}},1)$.

\begin{thm}
There is an $R$-module homomorphism $\Tr_{\mathcal{T}} : \Sk^{\mathfrak{sl}_2}_{A}(Y) \rightarrow \SQGM_{\mathcal{T}}(Y)$ defined as the composition 
\[
\begin{tikzcd}
\Sk^{\mathfrak{sl}_2}_{A}(Y) \arrow[rrrd, swap, "\Tr_\mathcal{T}"] \arrow[r, equal] & \overline{\Sk}^{\mathfrak{sl}_2}_{A}(Y) \arrow[r, "\overline{\sigma}"] & \underset{f\in \mathcal{T}^{(2)}}{\overline{\bigotimes}} \overline{\Sk}^{\mathfrak{sl}_2}_{A}(Sf) \arrow[hookrightarrow, r, "\overline{\otimes}_{f\in \mathcal{T}^{(2)}} \Tr_{Sf}"] &[3em] \underset{f\in \mathcal{T}^{(2)}}{\overline{\bigotimes}} \mathbb{S}f \\
& & & \SQGM_{\mathcal{T}}(Y) \arrow[hookrightarrow, u, ""]
\end{tikzcd}.
\]
\end{thm}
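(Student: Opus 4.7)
The plan is to verify that each arrow in the composition is well-defined, and then show that the image lies in $\SQGM_\mathcal{T}(Y)$ viewed inside $\overline{\bigotimes}_f \mathbb{S}f$. Since $Y$ is a cusped 3-manifold without boundary, there are no boundary markings, hence no bad arcs, so the equality $\Sk^{\mathfrak{sl}_2}_A(Y) = \overline{\Sk}^{\mathfrak{sl}_2}_A(Y)$ is tautological. The splitting map $\overline{\sigma}$ is already provided by Theorem \ref{thm:3d-splitting-map-for-Sf}, landing in $\overline{\bigotimes}_{f \in \mathcal{T}^{(2)}} \overline{\Sk}^{\mathfrak{sl}_2}_A(Sf)$. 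The map $\overline{\otimes}_f \Tr_{Sf}$ descends from the ordinary tensor product to the relative tensor product because each $\Tr_{Sf}$ is a $\mathbb{T}^{\otimes 2}$-$\mathbb{B}^{\otimes 3}$-bimodule embedding: the vertex cone and edge relations defining $\overline{\bigotimes}_f$ in Definition \ref{defn:relative-tensor-product-Sf} are expressed purely through these bimodule actions, which are transported to corresponding actions of $\widetilde{\mathbb{T}}^{\otimes 2}$ on each $\mathbb{S}f$.

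The main task is to show that the composite image lies in $\SQGM_\mathcal{T}(Y)$. First, I would verify that it consists of Weyl products of the square-root quantized shape parameters $\hat{z}_e$. Splitting a skein in $Y$ along the 2-skeleton produces stated arcs in each $Sf$ whose boundary points lie on the bare edge cones; the state sum in $\overline{\sigma}$ then couples adjacent boundary points across each internal edge, and under the formula $x_a \mapsto \CB\, a_1 \otimes a_2$ built into $\Tr_{Sf}$, paired bare edge cones assemble into the generators $\hat{z}_e = a \otimes a'$. Next, I would verify that the three defining relations of $\SQGM_\mathcal{T}(Y)$ hold on the image. The vertex relation $[\hat{z}\hat{z}'\hat{z}''] = q^{-1}\CT^{-3}$ follows from unwinding the embedding $\mathbb{T} \hookrightarrow \widetilde{\mathbb{T}}$ applied to the product $\alpha\beta\gamma$. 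The Lagrangian relation $1 = \CB^{-2}\hat{z}^{-2} + \CB^2 \hat{z}''^2$ is the image of the bimodule identity $(-A^2)\alpha_1\alpha_2 = x_b x_c$ (and cyclic variants) in $\overline{\Sk}^{\mathfrak{sl}_2}_A(Sf)$, which in turn is a consequence of the stated skein relation \eqref{item: sl2 stated skein rel 2} on a bare face cone. The gluing relation $\hat{e} = q\CB^{-k}$ at each interior edge $e$ arises from the edge relations in Definition \ref{defn:relative-tensor-product-Sf} translated through the embedding $\mathbb{B}^{\otimes 3} \hookrightarrow \widetilde{\mathbb{T}}^{\otimes 2}$.

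The main obstacle is verifying the gluing relation, which demands careful bookkeeping around the cycle of face suspensions abutting $e$: one must track the accumulation of $\CB$-scalars from each $x_a \mapsto \CB\, a_1 \otimes a_2$, the Weyl-ordering signs coming from the antisymmetric form on the shape-parameter lattice, and the $(-A^2)$-factors introduced by the edge relations in the relative tensor product. The constraint \eqref{eq:scaling-parameters} relating $\CB$ and $\CT$ is what makes these scalars match the prescribed right-hand side $q\CB^{-k}$. Once this cyclic computation is handled, the remaining checks are local at each tetrahedron and the map assembles into a well-defined homomorphism landing in the quotient $\SQGM_\mathcal{T}(Y)$.
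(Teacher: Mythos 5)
Your overall skeleton is reasonable, and in fact the paper itself gives no proof here (the construction is recalled from \cite{PP}); the substantive content is (a) that $\overline{\otimes}_{f}\Tr_{Sf}$ descends from the ordinary tensor product to the relative tensor products, which amounts to matching each gluing relation of Definition \ref{defn:relative-tensor-product-Sf} with one of the defining relations \eqref{item:vertex}, \eqref{item:Lagrangian}, \eqref{item:gluing} of $\SQGM_{\mathcal{T}}(Y)$, and (b) that the image is spanned by monomials in the $\hat z_e$ (your degree-matching argument across the paired bare edge cones is fine), together with the injectivity of $\SQGM_{\mathcal{T}}(Y)\hookrightarrow\overline{\bigotimes}_{f}\mathbb{S}f$, which is imported from \cite{PP}. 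Note that the descent in (a) is not automatic from $\Tr_{Sf}$ being a bimodule embedding, and the logic is not to ``verify the relations on the image'': one must check that the images of the skein-side gluing relations are among the relations imposed on the target, i.e.\ precisely the (V)/(L)/(G) matching. Your identification of the gluing relation \eqref{item:gluing} with the edge relation around an interior edge is correct.

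However, your sources for \eqref{item:vertex} and \eqref{item:Lagrangian} are wrong, and these are the computational heart of the argument. Under $\Tr_{Sf}$ the single-face-suspension relation $(-A^2)\alpha_1\alpha_2 = x_bx_c$ becomes the assertion $(-A^2)\CT^2[b_1c_1]\otimes[b_2c_2]=\CB^2(b_1\otimes b_2)(c_1\otimes c_2)$; the Weyl factors cancel and this reduces to the scalar identity $q\CT^2=\CB^2$, i.e.\ \eqref{eq:scaling-parameters} --- it is the condition for $\Tr_{Sf}$ to exist at all, not the Lagrangian relation. Indeed no relation internal to a single $Sf$ can produce $1=\CB^{-2}\hat z^{-2}+\CB^{2}\hat z''^{2}$, since each $\hat z_e$ couples bare edge cones lying in two \emph{different} face suspensions; the Lagrangian relation corresponds instead to the three-term vertex-cone relation of Definition \ref{defn:relative-tensor-product-Sf} (coming from splitting an arc pushed across a vertex cone and expanding via \eqref{item: sl2 stated skein rel 2}). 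Similarly, \eqref{item:vertex} does not come from ``unwinding'' $\alpha\beta\gamma$ in a single extended triangle algebra --- there $[\alpha\beta\gamma]\mapsto\CT^{3}[a^2b^2c^2]$, which is not a scalar --- but from the first vertex-cone relation, the loop of three $\epsilon\epsilon$ corner arcs around $Cv$ evaluating to $(-A^2)^{-\epsilon}$. This matching exactly mirrors the $\mathfrak{gl}_1$-side computation in Section \ref{sec: 3-manifold compatibility}, where \eqref{item:vertex}, \eqref{item:Lagrangian}, \eqref{item:gluing} are played against \eqref{eqn:gl1-vertex-cone}, \eqref{eq:3termRelProjected} and \eqref{eqn:gl1-gluing}, respectively. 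As written, the two misattributed steps would not go through.
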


\subsection{Naturality with respect to Pachner moves}
We'll show that the 3d quantum trace map is natural with respect to 2-3 Pachner moves. 
Suppose that $\mathcal{T}_2$ and $\mathcal{T}_3$ are two ideal triangulations of $Y$ related by the 2-3 Pachner move. 
\begin{figure}[htbp]
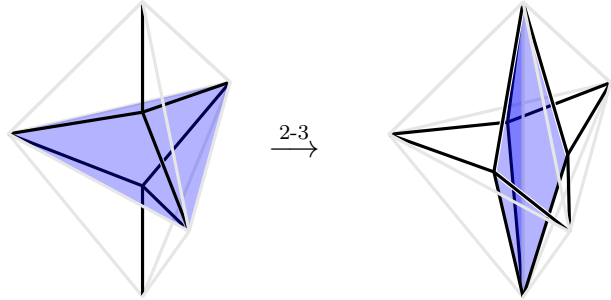

\centering
$
\vcenter{\hbox{
\tdplotsetmaincoords{60}{80}

}}
$
\caption{The 2-3 Pachner move turns 7 face suspensions (6 exterior + 1 interior) into 9 face suspensions (6 exterior +  3 interior). The interior faces (1 on the left and 3 on the right) are highlighted in blue.}
\label{fig:23Pachner-face-suspensions}
\end{figure}

Then, we have:
\begin{prop}\label{prop:SQGM23Pachner}
There is an $R$-module homomorphism 
\[
\varphi_{2\rightarrow 3} : \SQGM_{\mathcal{T}_2}(Y) \rightarrow \SQGM_{\mathcal{T}_3}(Y)
\]
defined on the quantum torus generators -- each of which corresponds to an edge cone -- by
\[
\vcenter{\hbox{
\tdplotsetmaincoords{50}{70}

}}
. 
\]
\end{prop}
\begin{proof}
To check the well-definedness of $\varphi_{2\rightarrow 3}$ defined above for the generators of the quantum torus, we need to check that the map descends to the two-sided quotient -- that is, we need to check that they satisfy the vertex equation \eqref{item:vertex}
\[
[\hat{z}\hat{z}'\hat{z}''] = q^{-1} \CT^{-3} \quad \text{(central)}
\]
and the Lagrangian equation \eqref{item:Lagrangian}
\[
1 = \CB^{-2}\hat{z}^{-2} + \CB^2 \hat{z}''^2 \quad\text{(as left actions)},
\]
where $\hat{z}, \hat{z}', \hat{z}''$ are any triple of edge cones sharing a vertex, which are cyclically ordered in an anticlockwise manner when viewed from a vertex, as 
\[
\vcenter{\hbox{
\tdplotsetmaincoords{50}{70}

}} \\
&\overset{\eqref{item:gluing}}{=} 
1 - 
q \CT^3 \CB^3 (q^{-1} \CT^{-3})^3 (q \CB^{-3})^{-1}\\
&= 
1 - \qty(q^{-1} \CT^{-2} \CB^2)^3
\overset{\eqref{eq:scaling-parameters}}{=} 0,
\end{align*}
where we have used red to indicate inverses. 
Note, for the third equality, we have used the fact that the gluing relation associated to the interior edge of the 3-tetrahedra triangulation of the bipyramid is actually central, as the product of the 3 edge cones is central in the image of $\varphi_{2 \rightarrow 3}$. 

Since we are identifying the square-root quantized shape parameters associated to the opposite edge cones, the proof is identical for the side vertices of the bipyramid. 

\textbf{Lagrangian equation:}
The proof is analogous to that of \cite[Lem. 4.1]{GY2}. 
Let's label the edge cones as in the figure below: 
\[
\vcenter{\hbox{
\tdplotsetmaincoords{60}{80}

}}.
\]
We have, as a left relation, 
\begin{align*}
&\quad \varphi_{2 \rightarrow 3} \qty(1 - \CB^{-2} \hat{v}^{-2} - \CB^2 \hat{v}''^2)\\ 
&= 
1 - \CB^{-4} (\hat{x}' \hat{z}'')^{-2} - \CB^4 (\hat{z}' \hat{y}'')^2 \\
&= (1 - \CB^{-2} \hat{z}''^{-2} - \CB^2 \hat{z}'^2) + (1 - \CB^{-2} \hat{x}'^{-2} - \CB^2 \hat{x}^2)\CB^{-2} \hat{z}''^{-2}
 \\
&\quad + (1 - \CB^{-2} \hat{y}^{-2} + \CB^2 \hat{y}''^2) \CB^2 \hat{z}'^2 + \hat{x}^2 \hat{z}''^{-2} + \hat{y}^{-2} \hat{z}'^{2}\\
&\overset{\eqref{item:Lagrangian}}{=} \hat{x}^2 \hat{z}''^{-2} + \hat{y}^{-2} \hat{z}'^{2} 
= \hat{z}''^{-2}\hat{z}^{-2}\hat{y}^{-2}\qty(\hat{x}^2 \hat{y}^2 \hat{z}^2 - q^2 \CB^{-6}) + \qty(q^2 \CB^{-6} \hat{z}''^{-2} \hat{z}^{-2} \hat{z}'^{-2} + 1) \hat{y}^{-2}\hat{z}'^2\\
&\overset{\eqref{item:gluing}}{=} \qty(q^2 \CB^{-6} \hat{z}''^{-2} \hat{z}^{-2} \hat{z}'^{-2} + 1) \hat{y}^{-2}\hat{z}'^2 
= \qty(q^2 \CB^{-6} A^{-2} [\hat{z} \hat{z}' \hat{z}'']^{-2} + 1) \hat{y}^{-2}\hat{z}'^2 \\
&= \qty(-q \CB^{-6} [\hat{z} \hat{z}' \hat{z}'']^{-2} + 1) \hat{y}^{-2}\hat{z}'^2
\overset{\eqref{eq:scaling-parameters}}{=} \qty(-q^{-2} \CT^{-6} [\hat{z} \hat{z}' \hat{z}'']^{-2} + 1) \hat{y}^{-2}\hat{z}'^2 \\
&\overset{\eqref{item:vertex}}{=} 0.
\end{align*}
Again, we have used the fact that the gluing relation for the interior edge of the 3-tetrahedra triangulation, 
\[
\hat{x}\hat{y}\hat{z} - q \CB^{-3} = 0,
\]
which is a priori a relation among right actions, is actually central in the image of $\varphi_{2 \rightarrow 3}$. 
\end{proof}

\begin{thm}
The 3d quantum trace map is compatible with the Pachner move in the sense that we have the following commutative diagram:
\[
\begin{tikzcd}
 & \SQGM_{\mathcal{T}_2}(Y) \arrow[dd, "\varphi_{2 \rightarrow 3}"]\\
\Sk^{\mathfrak{sl}_2}_{A}(Y) \arrow[ur, "\Tr_2"] \arrow[dr, "\Tr_3"] & \\
 & \SQGM_{\mathcal{T}_3}(Y)
\end{tikzcd}.
\]
\end{thm}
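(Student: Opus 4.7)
The plan is to reduce the compatibility to a purely local computation inside the triangular bipyramid $B$ where the 2-3 Pachner move takes place. Since $\mathcal{T}_2$ and $\mathcal{T}_3$ agree outside $B$, and both $\Tr_{\mathcal{T}_2}$ and $\Tr_{\mathcal{T}_3}$ are built from the splitting map followed by face suspension traces, I would first apply the general splitting homomorphism of \cite[Sec. 3.2]{PP} along the closed triangulated surface $\partial B$. This factors each $\Tr_{\mathcal{T}_i}$ through the reduced stated skein module $\overline{\Sk}^{\mathfrak{sl}_2}_A(B,\partial B)$ of the bipyramid (with boundary marking inherited from the triangulation of $\partial B$, which is common to $\mathcal{T}_2$ and $\mathcal{T}_3$), tensored with the stated skein module of $Y\setminus B$. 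Since the face suspension decomposition of $Y\setminus B$ does not change under the Pachner move, the theorem reduces to showing that the two compositions
\[
\overline{\Sk}^{\mathfrak{sl}_2}_A(B,\partial B) \;\longrightarrow\; \underset{f\in \mathcal{T}_i^{(2)}|_B}{\overline{\bigotimes}} \overline{\Sk}^{\mathfrak{sl}_2}_A(Sf) \;\longrightarrow\; \SQGM_{\mathcal{T}_i}(B)
\]
for $i=2,3$ are intertwined by the local version of $\varphi_{2\to 3}$.

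Next, since all maps in sight are $R$-linear bimodule homomorphisms over the boundary vertex algebras at each vertex of $\partial B$, it suffices to verify the identity on a generating set of $\overline{\Sk}^{\mathfrak{sl}_2}_A(B,\partial B)$. Using an analog of the structure theorem for face suspensions (\cite[Sec. 4]{PP}) applied to $B$, I would identify a small explicit generating set consisting of the empty skein and elementary stated strands between pairs of boundary points, and reduce the theorem to checking commutativity on this finite list.

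For each such generator, I would then explicitly split along the interior face(s) --- one face for $\mathcal{T}_2$, three faces for $\mathcal{T}_3$ --- apply $\Tr_{Sf}$ on each face suspension, and express the result in terms of the square-root quantized shape parameters generating $\SQGM_{\mathcal{T}_i}(B)$. Proposition \ref{prop:SQGM23Pachner} already guarantees that $\varphi_{2\to 3}$ descends to the quotient algebras, so the remaining task is a finite algebraic verification in the quantum tori that uses the vertex relations \eqref{item:vertex}, Lagrangian relations \eqref{item:Lagrangian}, and gluing relations \eqref{item:gluing} on both sides, together with the scaling constraint \eqref{eq:scaling-parameters}.

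The main obstacle is the combinatorial bookkeeping that arises when a stated tangle crosses several interior faces of the $\mathcal{T}_3$ triangulation: splitting along three interior faces produces a sum over $2^3$ choices of interior states, and one must reorganize the resulting sum of products of face suspension traces and match it, after applying $\varphi_{2\to 3}$, with the much smaller $2^1$-term expression coming from $\mathcal{T}_2$. This collapse is of the same flavor as the pentagon identity that controls the 2d flip naturality in Section \ref{subsubsec:2d-quantum-trace-naturality}, and I expect it to rely crucially on the Lagrangian and vertex relations of Definition \ref{defn:SQGM} to telescope the sum to a single term --- a computation analogous to (and slightly more intricate than) the one carried out in the proof of Proposition \ref{prop:SQGM23Pachner}.
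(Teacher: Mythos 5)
Your proposal follows essentially the same route as the paper: reduce to the triangular bipyramid via the splitting map, use the cyclic bimodule description of $\overline{\Sk}^{\mathfrak{sl}_2}_A(\mathrm{BP})$ over the boundary triangle and biangle algebras to check commutativity only on generators, and verify the remaining identity inside the quantum tori using the defining relations of $\SQGM_{\mathcal{T}}(Y)$. The one discrepancy is your anticipated $2^3$-state telescoping: in the paper's execution the face generators and non-horizontal biangle generators commute trivially or by the very definition of $\varphi_{2 \rightarrow 3}$, and the single non-trivial check (the horizontal biangle generators) is a direct manipulation of one monomial using the vertex relation \eqref{item:vertex}, the gluing relation \eqref{item:gluing} for the new interior edge, and \eqref{eq:scaling-parameters}, with no Lagrangian relation or collapse of a state sum required.
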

\begin{proof}
Since the 3d quantum trace map factors through the splitting map for reduced stated skein modules, it suffices to check the commutativity for the triangular bipyramid $\mathrm{BP}$ on which we are performing the 2-3 Pachner move. 
Topologically, the triangular bipyramid is a 3-ball, whose boundary is combinatorially foliated (see Figure \ref{fig:boundary-marked-triangular-bipyramid}), so we have an explicit generator-and-relation description of $\overline{\Sk}_A^{\mathfrak{sl}_2}(\mathrm{BP})$. 
\begin{figure}[htbp]
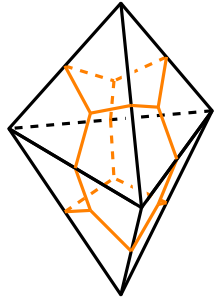

\centering
$
\vcenter{\hbox{
\tdplotsetmaincoords{60}{80}

}}
$
\caption{Triangular bipyramid $\mathrm{BP}$ with the standard boundary marking (all the edges of the boundary marking are oriented toward the center of each face)}
\label{fig:boundary-marked-triangular-bipyramid}
\end{figure}
Concretely, we have
\[
\overline{\Sk}_A^{\mathfrak{sl}_2}(\mathrm{BP}) \cong \frac{\mathbb{T}^{\otimes 6} \otimes (\mathbb{B}^{\mathrm{op}})^{\otimes 9}}{\mathrm{Ann}([\emptyset])},
\]
where we have one factor of $\mathbb{T}$ for each face and one factor of $\mathbb{B}^{\mathrm{op}}$ for each edge of the triangular bipyramid, and 
$\mathrm{Ann}([\emptyset])$ is the left ideal generated by relations coming from each face of the boundary tessellation. 
The triangular bipyramid splits into 7 face suspensions in the case of $\mathcal{T}_2$ and 9 face suspensions in the case of $\mathcal{T}_3$, 
so we need to check the commutativity of the diagram
\[
\begin{tikzcd}
 & \sim\backslash\qty(\widetilde{\mathbb{T}}^{\otimes 2})^{\otimes 7}/\sim \arrow[dd, "\varphi_{2 \rightarrow 3}"]\\
\overline{\Sk}^{\mathfrak{sl}_2}_{A}(\mathrm{BP}) \arrow[ur, "\Tr_2"] \arrow[dr, "\Tr_3"] & \\
 & \sim\backslash\qty(\widetilde{\mathbb{T}}^{\otimes 2})^{\otimes 9}/\sim
\end{tikzcd}, 
\]
where $\sim$ denotes the relations defining the square-root quantum gluing module (Definition \ref{defn:SQGM}). 
Since the maps are bimodule homomorphisms, it suffices to check this for each generator of the triangle and biangle algebras. 
Even though there are $6$ factors of triangle algebras and $9$ factors of biangle algebras, by symmetry, 
there are only 3 different types of generators we need to check: 
\begin{enumerate}
\item triangle algebra of a face,
\item biangle algebra of a non-horizontal edge,
\item biangle algebra of a horizontal edge. 
\end{enumerate}
The commutativity for the triangle algebra generators is obvious, as the 2-3 Pachner move doesn't affect the tangles localized at the center of a face of the bipyramid. 
The commutativity for the non-horizontal biangle algebra generators is also immediate; in fact, we have defined $\varphi_{2 \rightarrow 3}$ in such a way that the diagram commutes. 
Thus, the only non-trivial check is the commutativity for the horizontal biangle algebra generators (as right relations): 
\[
\begin{tikzcd}
\vcenter{\hbox{
\tdplotsetmaincoords{50}{70}

}},
\end{align*}
where, as before, we have used red to indicate inverses. 
\end{proof}

\begin{rmk}
When identifying the square-root quantized shape parameters of opposite edge cones, we had to choose between setting them equal or setting them to negatives of each other, since only their squares agreed a priori.
It turns out that for the compatibility with the 2-3 Pachner move, only the first identification works; for the second choice, we get an extra $-$ sign in the computation above. 
\end{rmk}


\section{Quantum UV-IR map}\label{sec:quantumUVIR}
In this section, we review the quantum UV-IR map \cite{NY} for surfaces and discuss its generalization to 3-manifolds.

\subsection{2d quantum UV-IR map}\label{subsec:2dUVIR}
Let $\Sigma$ be a surface equipped with an ideal triangulation $\tau$. 
Then, there is a branched double cover $\widetilde{\Sigma} = \widetilde{\Sigma}_{\tau}$ of $\Sigma$ associated to the ideal triangulation. 
This branched double cover can be obtained by putting a branch point at the center of each ideal triangle and drawing branch cuts as in Figure \ref{fig:triangle-branch-cut}; the corresponding branched double cover is an ideal hexagon. 
We will often trivialize the double cover away from the branch cuts so that we can call the two sheets of $\widetilde{\Sigma}$ by ``sheet 1'' and ``sheet 2''. 
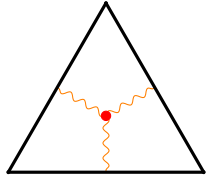
\begin{figure}[htbp]
\centering
$\vcenter{\hbox{
\begin{tikzpicture}
\begin{scope}[scale = 1.5]
    \coordinate (o) at (0, 0);
    \coordinate (a) at (0, 1);
    \coordinate (a1) at (0, -1/2);
    \coordinate (b) at ({sqrt(3)/2}, -1/2);
    \coordinate (b1) at ({-sqrt(3)/4}, 1/4);
    \coordinate (c) at (-{sqrt(3)/2}, -1/2);
    \coordinate (c1) at ({sqrt(3)/4}, 1/4);

    \tikzset{decoration={snake, amplitude=.4mm, segment length=2mm, post length=0mm, pre length=0mm}}
    \draw[orange, decorate] (o) -- (a1);
    \draw[orange, decorate] (o) -- (b1);
    \draw[orange, decorate] (o) -- (c1);
    
    \draw[very thick] (a) -- (b);
    \draw[very thick] (a) -- (c);
    \draw[very thick] (b) -- (c);
    \filldraw[red] (o) circle (0.1em);
    \filldraw (a) circle (0.03em);
    \filldraw (b) circle (0.03em);
    \filldraw (c) circle (0.03em);
\end{scope}
\end{tikzpicture}
}}$
\caption{Branch point (red dot) and branch cuts (orange squiggly lines) for an ideal triangle}
\label{fig:triangle-branch-cut}
\end{figure}

Given such a branched double cover $\widetilde{\Sigma} \rightarrow \Sigma$, 
\cite{NY} constructed an algebra homomorphism, called the \emph{quantum UV-IR map}, from the $\mathfrak{gl}_2$-skein algebra of $\Sigma$ to the $\mathfrak{gl}_1$-skein algebra of the double cover $\widetilde{\Sigma}$. 
Before reviewing this construction, let's briefly recall the definition of $\mathfrak{gl}_2$- and $\mathfrak{gl}_1$-skein modules. 
\begin{defn}
Let $Y$ be an oriented 3-manifold. 
The \emph{$\mathfrak{gl}_2$-skein module} $\Sk_q^{\mathfrak{gl}_2}(Y)$ of $Y$ 
is defined as
\[
\Sk^{\mathfrak{gl}_2}_q(Y) := \frac{R\langle \text{isotopy classes of framed oriented links in }Y \rangle}{\langle \mathfrak{gl}_2\text{-skein relations}\rangle},
\]
where the $\mathfrak{gl}_2$-skein relations are given by
\begin{gather}
\;\vcenter{\hbox{

}},
\end{gather}
with $[2] = q+q^{-1}$, the quantum $2$.\footnote{
Here, we used the half-twist relation \eqref{eq:gl2skeinrel3} -- instead of the usual full-twist relation -- for convenience; this doesn't change the skein module. 
We do the same for $\mathfrak{gl}_1$ skein modules (see \eqref{eq:gl1skeinrel4}). 
} 
\footnote{
In \cite[Sec 3.1]{NY}, the last two skein relations \eqref{eq:gl2extraskeinrel1}-\eqref{eq:gl2extraskeinrel2} (which correspond, for instance, to \cite[Eqn. 2.3]{QueffelecWedrich}) were missing, but here we add them for completeness, since the quantum UV-IR map indeed respects all these relations. 
}
\end{defn}

For our $\mathfrak{gl}_1$-skein modules, we need to twist the usual notion of $\mathfrak{gl}_1$-skein modules by introducing ``sign defects''. 
\begin{defn}\label{defn:gl1skeins-with-sign-defects}
Let $Y$ be an oriented 3-manifold decorated by a ``sign defect'' $B \subset Y$, an embedded 1-manifold. 
The \emph{$\mathfrak{gl}_1$-skein module} $\Sk_q^{\mathfrak{gl}_1}(Y) = \Sk_q^{\mathfrak{gl}_1}(Y,B)$ of $Y$
is defined as
\[
\Sk^{\mathfrak{gl}_1}_q(Y) := \frac{R\langle \text{isotopy classes of framed oriented links in }Y\setminus B\rangle}{\langle \mathfrak{gl}_1\text{-skein relations}\rangle},
\]
where the $\mathfrak{gl}_1$-skein relations are given by
\begin{gather}
q^{-1}\;\vcenter{\hbox{

}}
\;, \label{eq:gl1skeinrel3}
\end{gather}
with the red line in the final skein relation representing a part of the sign defect $B$.\footnote{This last skein relation justifies the use of the name ``sign defect'', as our skein acquires an extra sign as it passes through the sign defect.} 
\end{defn}
Whenever we talk about the $\mathfrak{gl}_1$-skein module of a branched double cover, we will always assume that we are decorating the branched double cover by a sign defect along the branch locus.\footnote{The reason we need to twist the $\mathfrak{gl}_1$-skein module of $\widetilde{Y}$ by the sign defect along the branch locus has to do with the fact that the pull-back of a spin structure on $Y$ to $\widetilde{Y} \setminus B$ doesn't extend over $B$, i.e. it is the non-bounding spin structure over the $S^1$ linking the branch locus $B$; see \cite{FN, ELPS}.} 

The 2d quantum UV-IR map \cite{NY} is an algebra homomorphism 
\[
F : \SkAlg_{q}^{\mathfrak{gl}_2}(\Sigma) \rightarrow \SkAlg_{q}^{\mathfrak{gl}_1}(\widetilde{\Sigma})
\]
from the $\mathfrak{gl}_2$-skein algebra of $\Sigma$ to the $\mathfrak{gl}_1$-skein algebra of the branched double cover $\widetilde{\Sigma}$ (associated to a choice of ideal triangulation $\tau$) decorated by sign defects at the branch points. 
The construction is best described in terms of projection to the leaf space of a 1-dimensional foliation of $\Sigma \times I$ called the \emph{WKB foliation}.\footnote{The name comes from the exact WKB analysis of Schr\"odinger equations on Riemann surfaces, where such a foliation can be obtained from a holomorphic quadratic differential.} 
Topologically, the WKB foliation is a 1-dimensional foliation that looks like Figure \ref{fig:2d-WKB} in each ideal triangle. 
Note that near each edge of the ideal triangle, the leaves are parallel and thus glue well to give a 1-dimensional foliation of $\Sigma$ in the complement of the branch locus. 
\begin{figure}[htbp]
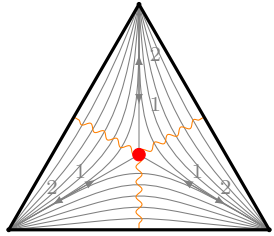

\centering
$\vcenter{\hbox{

}}$
\caption{2d WKB foliation and its orientation}
\label{fig:2d-WKB}
\end{figure}

The WKB foliation of $\Sigma$ lifts to an oriented 1-dimensional foliation on $\widetilde{\Sigma}$: 
we orient the leaves of the foliation of $\widetilde{\Sigma}$ so that the orientation is away from (resp. toward) the ideal vertices in sheet 1 (resp. sheet 2). 
This orientation is also indicated in Figure \ref{fig:2d-WKB}. 

While the generic leaves of the WKB foliation of $\Sigma$ are homeomorphic to $\mathbb{R}$, 
there are 3 \emph{singular leaves} in each ideal triangle which are homeomorphic to $\mathbb{R}_{\geq 0}$; these are the three leaves emanating out of the branch point. 
The union of singular leaves is called the \emph{2d spectral network} \cite{GMNsn}. 

The space of leaves in the WKB foliation, or the \emph{leaf space} for short, of an ideal triangle times $I$ is given by 3 facets glued along a binder, as in Figure \ref{fig:triangle-leaf-space}. 
The leaf space for $\Sigma \times I$ is obtained by gluing the leaf space for $T \times I$ along the edges. 
Note, we can canonically identify the binders of the leaf space to the branch locus and the facets of the leaf space to the branch cuts. 
\begin{figure}[htbp]
\centering
$\vcenter{\hbox{
\tdplotsetmaincoords{60}{80}
\begin{tikzpicture}[tdplot_main_coords]
\begin{scope}[scale = 0.8, tdplot_main_coords]
    \coordinate (o) at (0, 0, -1);
    \coordinate (o2) at (0, 0, 1);
    
    \coordinate (b) at ({2*sqrt(2)}, 0, -1);
    \coordinate (c) at ({-sqrt(2)}, {sqrt(6)}, -1);
    \coordinate (d) at ({-sqrt(2)}, {-sqrt(6)}, -1);
    
    \coordinate (b2) at ({2*sqrt(2)}, 0, 1);
    \coordinate (c2) at ({-sqrt(2)}, {sqrt(6)}, 1);
    \coordinate (d2) at ({-sqrt(2)}, {-sqrt(6)}, 1);
    
    \coordinate (bc) at ({sqrt(2)/2}, {sqrt(6)/2}, -1);
    \coordinate (bd) at ({sqrt(2)/2}, {-sqrt(6)/2}, -1);
    \coordinate (cd) at ({-sqrt(2)}, 0, -1);

    \coordinate (bc2) at ({sqrt(2)/2}, {sqrt(6)/2}, 1);
    \coordinate (bd2) at ({sqrt(2)/2}, {-sqrt(6)/2}, 1);
    \coordinate (cd2) at ({-sqrt(2)}, 0, 1);

    \draw[very thick] (b) -- (c) -- (d) -- cycle;
    \draw[red, very thick] (o) -- (o2);
    \draw[white, ultra thick] (b2) -- (c2) -- (d2) -- cycle;
    
    \filldraw [very thin, fill= orange, opacity = 0.2] (o) -- (bc) -- (bc2) -- (o2) -- (o);
    \filldraw [very thin, fill= orange, opacity = 0.2] (o) -- (bd) -- (bd2) -- (o2) -- (o);
    \filldraw [very thin, fill= orange, opacity = 0.2] (o) -- (cd) -- (cd2) -- (o2) -- (o);
    
    \draw[very thick] (b2) -- (c2) -- (d2) -- cycle;
    \draw[very thick] (b) -- (b2);
    \draw[very thick] (c) -- (c2);
    \draw[very thick] (d) -- (d2);

    
    \filldraw (b) circle (0.05em);
    \filldraw (c) circle (0.05em);
    \filldraw (d) circle (0.05em);

    \filldraw (b2) circle (0.05em);
    \filldraw (c2) circle (0.05em);
    \filldraw (d2) circle (0.05em);
\end{scope}
\end{tikzpicture}
}}$
\caption{Leaf space for an ideal triangle times $I$}
\label{fig:triangle-leaf-space}
\end{figure}
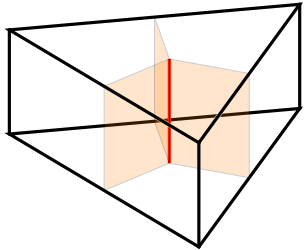

\subsubsection{2d quantum UV-IR map}\label{subsec:2d-quantum-UV-IR-map}
Now, we are ready to review the construction of the 2d quantum UV-IR map. 
Let $L$ be a framed oriented link in $\Sigma \times I$. 
By making a small isotopy if necessary, let's assume that $L$ is in general position with respect to the WKB foliation, so that the projection of $L$ on the leaf space gives a nice link diagram. 
The link diagram of $L$ on the leaf space will have finitely many crossings, half-twists, and intersection points with the binders. 
In terms of the WKB foliation, crossings correspond to the leaves that intersect with $L$ twice, half-twists correspond to points of $L$ where the tangent vector is parallel to that of the foliation, and intersection points with the binders correspond to singular leaves (i.e. the leaves of the WKB foliation that meet the binder) that intersect with $L$.

The 2d quantum UV-IR map \cite{NY} is defined as follows:
\begin{align*}
F : \SkAlg_{q}^{\mathfrak{gl}_2}(\Sigma) &\rightarrow \SkAlg_{q}^{\mathfrak{gl}_1}(\widetilde{\Sigma})\\
[L] &\mapsto \sum_{\widetilde{L}} c_{\widetilde{L}} [\widetilde{L}],
\end{align*}
where $\widetilde{L}$ ranges over all possible lifts $\widetilde{L} \subset \widetilde{\Sigma}$ of $L \subset \Sigma$ that can be constructed out of \emph{direct lifts}, \emph{detours} and \emph{exchanges}, and $c_{\widetilde{L}} \in \mathbb{Z}[q^{\pm 1}]$ is the product of \emph{exchange factors}, \emph{turning factors}, and \emph{framing factors}, each of which we describe below. 

The lifts $\widetilde{L}$ are constructed from $L$ by attaching the leaves of the foliation as follows: 
\begin{itemize}
\item (Direct lifts) 
Any part of $L$ can be directly lifted to either sheet $1$ or sheet $2$; 
we denote the lift to sheet $i \in \{1, 2\}$ by labeling that part of $L$ by $i$: 
\[
\vcenter{\hbox{

}}.
\]
\end{itemize}
The coefficient $c_{\widetilde{L}}$ is defined as the product of the following factors: 
\begin{itemize}
\item (Exchange factor) For each exchange, we get a factor of $\pm (q-q^{-1})$, where the sign is given by the sign of the crossing where the exchange was used. 
\item (Turning factor) We get a factor $q^{t}$, where $t$ is the turning number of $\widetilde{L}$ projected on the leaf space; for part of $\widetilde{L}$ in sheet $i \in \{1, 2\}$, we compute the turning number by viewing the leaf space from the side where the $i$-direction is pointing away from our eyes. 
\item (Framing factor) We get an overall framing factor given by the product of $q^{\frac{1}{2}}$, one for each positive half-twist, and $q^{-\frac{1}{2}}$, one for each negative half-twist. 
\end{itemize}

See \cite[Sec. 5]{NY} for explicit examples. 

\begin{thm}[{\cite[Sec. 7]{NY}}]\label{thm:2dquantumUVIR-welldefinedness}
The 2d quantum UV-IR map $F : \SkAlg_{q}^{\mathfrak{gl}_2}(\Sigma) \rightarrow \SkAlg_{q}^{\mathfrak{gl}_1}(\widetilde{\Sigma})$ defined as above is a well-defined algebra homomorphism. 
\end{thm}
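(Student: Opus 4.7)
The plan is to proceed locally on the leaf-space projection. Since direct lifts, detours, and exchanges are all defined in small neighborhoods of points in the diagram, both well-definedness (i.e., invariance of the sum under the choice of link diagram representing $[L]$) and the check of skein relations reduce to finite lists of local moves. First I would verify that the sum $\sum_{\widetilde{L}} c_{\widetilde{L}}[\widetilde{L}]$ is finite: each crossing of the leaf-space diagram contributes either a direct-lift choice or one exchange, and each intersection of the diagram with a binding contributes at most one detour per strand, so the sum has finitely many nonzero terms for any diagram in general position with respect to the WKB foliation.

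Next, for invariance under isotopy of $L \subset \Sigma \times I$, I would classify the local moves of the leaf-space diagram into three families and verify each in turn. Family (a) consists of moves in the complement of the spectral network: these are the Reidemeister-style moves of a framed oriented link diagram, and compatibility follows from the standard calculus of $\mathfrak{gl}_1$-skeins, using the turning, framing, and exchange factors together with the relations \eqref{eq:gl1skeinrel1}, \eqref{eq:gl1skeinrel2}, \eqref{eq:gl1skeinrel4}. Family (b) consists of moves where a strand of $L$ slides across a generic point of a singular leaf (a branch cut): here a detour appears or disappears, and the identity is closed using the sign-defect relation \eqref{eq:gl1skeinrel3} on the branch locus together with the braiding relation \eqref{eq:gl1skeinrel1}. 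Family (c) consists of moves where strands of $L$ pass across the branch point itself, which is where I expect the hardest bookkeeping: multiple detour patterns must cancel after accounting for the $-1$ from \eqref{eq:gl1skeinrel3} and the turning contributions around the branch point.

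With invariance in hand, the $\mathfrak{gl}_2$-skein relations \eqref{eq:gl2skeinrel1}--\eqref{eq:gl2extraskeinrel2} can each be localized inside a small disk disjoint from the spectral network, so the WKB foliation restricts to a trivial foliation on that disk and the computation becomes purely combinatorial. For \eqref{eq:gl2skeinrel1}, summing over the sheet assignments $i,j \in \{1,2\}$ gives four lift configurations; the two $i \neq j$ terms match on both sides via Reidemeister II, and the two $i=j$ terms differ by an exchange, producing exactly the factor $\pm(q-q^{-1})$ matched against the cap-cup lift on the right-hand side. The unknot relation \eqref{eq:gl2skeinrel2} follows from the two sheet choices for the small circle, each contributing $1$ after incorporating the turning factor; the half-twist relation \eqref{eq:gl2skeinrel3} is immediate from the framing factor $q^{1/2}$ squared; and the ``extra'' relations \eqref{eq:gl2extraskeinrel1}--\eqref{eq:gl2extraskeinrel2} reduce to multilinear identities among three or four unbraided strands, which are verified by expanding each summand into direct lifts and exchanges and matching coefficients.

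Finally, the algebra homomorphism property follows from the observation that the algebra product on both sides is stacking in the $I$-direction and that the WKB foliation pulled back to $\Sigma \times I$ is translation-invariant in $I$. A diagram representing $[L_1] \cdot [L_2]$ decomposes as the stack of diagrams for $L_1$ and $L_2$, and every admissible lift of the stack decomposes uniquely as a stack of lifts of $L_1$ and $L_2$ with matching sheet assignments at the interface; the exchange, turning, and framing factors are additive under this decomposition, so the coefficients factor and $F([L_1] \cdot [L_2]) = F([L_1]) \cdot F([L_2])$. The main obstacle throughout is family (c) — verifying that passing strands across branch points produces a cancellation once all detour contributions and sign-defect signs are tallied — which is the heart of why the construction exists in the first place.
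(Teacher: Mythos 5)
Your outline takes essentially the same route as the paper: exploit locality of the construction, check invariance under the finitely many elementary moves (Reidemeister moves away from the network, moves at the walls, passage across the branch point), verify the $\mathfrak{gl}_2$-relations on a trivially foliated patch, and obtain multiplicativity from stacking in the $I$-direction. The paper's own proof simply delegates isotopy invariance and the relations \eqref{eq:gl2skeinrel1}--\eqref{eq:gl2skeinrel3} to \cite[Sec.~7]{NY} and only writes out the two extra relations \eqref{eq:gl2extraskeinrel1}--\eqref{eq:gl2extraskeinrel2}, organizing that computation via the doubled-strand identity \eqref{eq:small-lemma} — which is exactly the ``expand each summand into direct lifts and exchanges and match coefficients'' step in your sketch.
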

\begin{proof}
As shown in \cite[Sec. 7]{NY}, the quantum UV-IR map is invariant under isotopy and respects the skein relations \eqref{eq:gl2skeinrel1}-\eqref{eq:gl2skeinrel3}. 
Since the skein relations \eqref{eq:gl2extraskeinrel1}-\eqref{eq:gl2extraskeinrel2} were missing in their paper, we add a proof that the quantum UV-IR map indeed respects these two extra skein relations as well. 

We draw all the diagrams on a facet of the leaf space -- which locally looks like $\mathbb{R}^2$ --  with the leaves oriented in such a way that the ``1''(resp., ``2'')-direction is pointing away from (resp. toward) our eyes. 

\begin{itemize}
\item The relation \eqref{eq:gl2extraskeinrel1}: 
Defining
\[
\tikzset{
  >={To[length=5pt]}
}
\vcenter{\hbox{

}}
),
\end{align*}
where, in the second and the third line, we have drawn the two sheets separately, and $\sigma \in S_2^4$ ranges over all possible ways to perturb the relative positions of the boundary points of the tangle in sheet $1$ and $2$, on all 4 edges of the square; 
when the tangle is oriented upward, the trivial permutation $1 \in S_2$ corresponds to $2$ on the left of $1$, and the non-trivial permutation $\sigma_1 \in S_2$ corresponds to $1$ on the left of $2$. 
The sum of the lengths of the 4 permutations is denoted $l(\sigma)$. 
\end{itemize}
\end{proof}

\subsubsection{Naturality of the 2d quantum UV-IR map with respect to flips}\label{subsubsec:2d-quantum-UVIR-naturality}
Suppose $\tau$ and $\tau'$ are ideal triangulations of $\Sigma$ that differ by a flip. 
Then, there is a transition map
\[
\psi_{\tau \rightarrow \tau'} : 
\SkAlg_q^{\mathfrak{gl}_1}(\widetilde{\Sigma}_\tau) \rightarrow \SkAlg_q^{\mathfrak{gl}_1}(\widetilde{\Sigma}_{\tau'}),
\]
which is an algebra map defined on the algebra generators by
\begin{align}
\psi : 
\SkAlg^{\mathfrak{gl}_1}_{q} \qty(
\vcenter{\hbox{

}}, \label{eq:psi-map-3-term}
\end{align}
where $i \in \{1, 2\}$ ranges over the two sheets.\footnote{Here, we are using the fact that the $\mathfrak{gl}_1$-splitting map is injective, so it suffices to define the transition map for the branched double cover of the ideal quadrilateral as above. 
The corresponding stated $\mathfrak{gl}_1$-skein algebra is a quantum torus of rank 8; the first 8 corner tangles generate rank 7 part of it, and together with the last generator, they generate the whole quantum torus.} 

It is straightforward to check that the following diagram commutes:
\[
\begin{tikzcd}
 & \SkAlg^{\mathfrak{gl}_1}_{q}(\widetilde{\Sigma}_{\tau}) \arrow[dd, "\psi_{\tau \rightarrow \tau'}"]\\
\overline{\SkAlg}^{\mathfrak{gl}_2}_{q}(\Sigma) \arrow[ur, "F_{\tau}"] \arrow[dr, "F_{\tau'}"] & \\
 & \SkAlg^{\mathfrak{gl}_1}_{q}(\widetilde{\Sigma}_{\tau'})
\end{tikzcd},
\]
and hence the 2d quantum UV-IR map is natural with respect to changes of triangulation. 

\begin{rmk}\label{rmk: conjugation-by-quantum-dilog}
The coordinate change map $\psi_{\tau \rightarrow \tau'}$ can also be characterized as a conjugation by the quantum dilogarithm $\Psi = \frac{1}{(qx;q^2)_\infty}$; see \cite[Sec. 8.3]{NY}. 
To be more precise, the branched double cover of a quadrilateral triangulated into two ideal triangles is an annulus, whose (non-stated) $\mathfrak{gl}_1$-skein algebra is isomorphic to the polynomial ring $R[x^{\pm 1}]$, where $x$ represents the longitude of this annulus. 
The quantum dilogarithm $\Psi = \frac{1}{(qx;q^2)_\infty}$ is an element of a completion of the $\mathfrak{gl}_1$-skein algebra of the annulus, that solves the recurrence relation
\begin{equation}\label{eq:Psi-3-term}
(1  - \hat{y} -q \hat{x})\Psi = 0, 
\end{equation}
where $\hat{y}$ is the meridian so that $\hat{y}\hat{x} = q^2 \hat{x}\hat{y}$. 
While $\Psi$ itself lives in the completion, conjugation by $\Psi$ gives a well-defined map between ordinary (i.e., non-completed) skein algebras and, in fact, gives exactly the map described above: 
it is easy to see that conjugation by $\Psi$ acts as the identity on the first 8 corner tangles, as those corner tangles commute with $\Psi$, while the last relation \eqref{eq:psi-map-3-term} is exactly the 3-term relation \eqref{eq:Psi-3-term} satisfied by $\Psi$. 

The pentagon identity for the transition maps $\psi$ follows from that of the quantum dilogarithm; see, e.g., \cite[Sec. 2.1]{KLNPS} for an illustration and its interpretation in terms of the unlinking moves on symmetric quivers. 
\end{rmk}

\subsection{3d spectral networks}\label{subsec:3dSpecNet}
The crucial ingredient in the construction of the 2d quantum UV-IR map was the 1-dimensional foliation on ideally triangulated surfaces and the associated leaf space. 
Such combinatorial topological structures can be generalized to ideally triangulated 3-manifolds, as studied in \cite{FN}, and are called \emph{3d spectral networks}.\footnote{To be more precise, the term spectral network usually refers to the stratification structure given by the union of singular leaves of the WKB foliation, but here we are using the term in a vague sense to refer to the whole combinatorial topological structure of the WKB foliations and the associated leaf space.} 

Let $Y$ be a 3-manifold equipped with a triangulation $\mathcal{T}$ into ideal tetrahedra. 
Then, there is a branched double cover $\widetilde{Y} = \widetilde{Y}_{\mathcal{T}}$ of $Y$ associated to the ideal triangulation, obtained by putting the branch locus given by the embedded 4-valent graph connecting the barycenter of each ideal tetrahedron with the barycenters of its 4 faces; see Figure \ref{fig:tetrahedron-double-cover}. 
Note that on each face, the branching structure is exactly what we had for surfaces (Figure \ref{fig:triangle-branch-cut}). 
\begin{figure}[htbp]
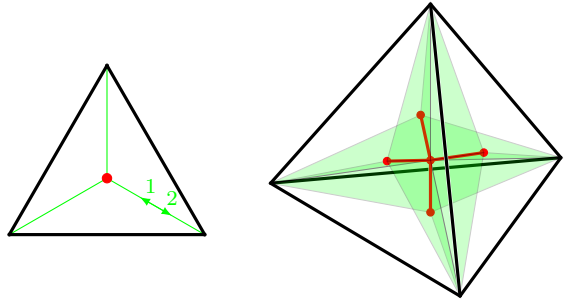

\centering
$\vcenter{\hbox{
\tdplotsetmaincoords{60}{80}

}}$
\caption{2d and 3d spectral networks}
\label{fig:2d-and-3d-spectral-networks}
\end{figure}
As in the previous subsection, we always orient the WKB foliation so that the ``1'' (resp. ``2'')-direction is always pointing away from (resp. toward) the ideal vertices.  
This convention will determine our dictionary between the sheet labels ($1$ and $2$) and the states ($+$ and $-$) when we extend the quantum UV-IR map to stated skein modules in Section \ref{subsec:stated_quantum_UV-IR}.

\subsection{$\mathfrak{gl}_1$-skein modules with defects}
We saw in the previous subsection that the branched double cover associated to an ideal trangulation of a 3-manifold has a cone point at the barycenter of each tetrahedron. 
The appropriate notion of the $\mathfrak{gl}_1$-skein module for such a pseudomanifold turns out to be the one with an extra 3-term relation for each cone point, that depends on a choice of a generalized angle structure of the ideal triangulation. 
We describe this $\mathfrak{gl}_1$-skein module in this subsection. 

\subsubsection{Generalized angle structures}
\begin{defn}
For an ideally triangulated 3-manifold $Y$, 
a \emph{generalized angle structure} is 
an assignment of a real number\footnote{Can be thought of as formal commutative parameters satisfying certain linear relations.} to each dihedral angle of the tetrahedra
satisfying the following properties: 
\begin{enumerate}
\item The numbers assigned to the opposite dihedral angles of each tetrahedron are the same; see Fig. \ref{fig:dihedral_angles} below. 
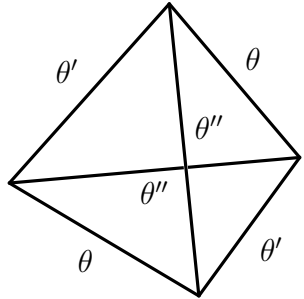
\begin{figure}[htbp]
\centering
$\vcenter{\hbox{
\tdplotsetmaincoords{60}{80}
\begin{tikzpicture}[tdplot_main_coords]
\begin{scope}[scale = 0.8, tdplot_main_coords]
    \coordinate (o) at (0, 0, 0);
    \coordinate (a) at (0, 0, 3);
    \coordinate (a1) at (0, 0, -1);
    \coordinate (b) at ({2*sqrt(2)}, 0, -1);
    \coordinate (b1) at ({-2*sqrt(2)/3}, 0, 1/3);
    \coordinate (c) at ({-sqrt(2)}, {sqrt(6)}, -1);
    \coordinate (c1) at ({sqrt(2)/3}, {-sqrt(6)/3}, 1/3);
    \coordinate (d) at ({-sqrt(2)}, {-sqrt(6)}, -1);    \coordinate (d1) at ({sqrt(2)/3}, {sqrt(6)/3}, 1/3);
    \coordinate (ab) at ({sqrt(2)}, 0, 1);
    \coordinate (ac) at ({-sqrt(2)/2}, {sqrt(6)/2}, 1);
    \coordinate (ad) at ({-sqrt(2)/2}, {-sqrt(6)/2}, 1);
    \coordinate (bc) at ({sqrt(2)/2}, {sqrt(6)/2}, -1);
    \coordinate (bd) at ({sqrt(2)/2}, {-sqrt(6)/2}, -1);
    \coordinate (cd) at ({-sqrt(2)}, 0, -1);
    \draw[very thick] (c) -- (d);
    \draw[white, ultra thick] (a) -- (b);
    \draw[very thick] (a) -- (b);
    \draw[very thick] (a) -- (c);
    \draw[very thick] (a) -- (d);
    \draw[very thick] (b) -- (c);
    \draw[very thick] (b) -- (d);
    \filldraw (a) circle (0.05em);
    \filldraw (b) circle (0.05em);
    \filldraw (c) circle (0.05em);
    \filldraw (d) circle (0.05em);
    \node[anchor = south east] at (ad) {$\theta'$};
    \node[anchor = south west] at (ac) {$\theta$};
    \node[anchor = north east] at (bd) {$\theta$};
    \node[anchor = north west] at (bc) {$\theta'$};
    \node[anchor = south west] at (ab) {$\theta''$};
    \node[anchor = north] at (cd) {$\theta''$};
\end{scope}
\end{tikzpicture}
}}$
\caption{Dihedral angles $\theta, \theta', \theta''$}
\label{fig:dihedral_angles}
\end{figure}
\item The 3 numbers $\theta$, $\theta'$, and $\theta''$ add up to $\pi$, for each tetrahedron. 
\item For each internal edge of the ideal triangulation, the angles add up to $2\pi$. 
\end{enumerate}
\end{defn}
We will denote a generalized angle structure by $\Theta = \{\theta_i\}_{i\in I}$, where $I$ is some indexing set for the set of dihedral angles of the triangulation. 

\begin{rmk}
It is known \cite[Theorem 1]{LuoTillmann} that, if $\mathcal{T}$ is an ideal triangulation with $t$ ideal tetrahedra of a compact 3-manifold $Y$ with $v$ boundary components, then $\mathcal{T}$ admits a generalized angle structure if and only if each boundary component of $Y$ is either a torus or a Klein bottle;
since we are dealing with oriented 3-manifolds, they have to be tori. 
Moreover, the space of generalized angle structures (when non-empty) is an affine space of dimension $t+v$. 
\end{rmk}

For our purposes, what's important is that a choice of a generalized angle structure induces a flat connection on the tangent bundle of the leaf space. 
That is, once equipped with a generalized angle structure, each corner of the leaf space carries a definite angle; see Figure \ref{fig:leafspace_Euclideanstructure}. 
\begin{figure}[htbp]
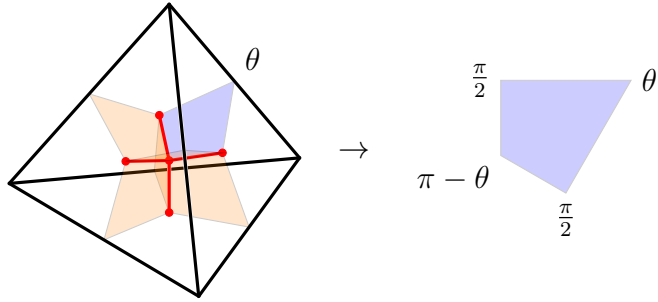

\centering
$\vcenter{\hbox{
\tdplotsetmaincoords{60}{80}

}}$
\caption{Euclidean structure on each facet of the leaf space}
\label{fig:leafspace_Euclideanstructure}
\end{figure}
In particular, because 
\[
(\pi - \theta) + (\pi - \theta') + (\pi - \theta'') = 3\pi - (\theta + \theta' + \theta'') = 2\pi,
\]
for any three facets of the leaf space of a tetrahedron around a fixed vertex, the sum of the three inner angles add up to $2\pi$. 
This, along with the condition that the angles around each edge of the triangulation add up to $2\pi$, means that the \emph{turning number} is a smooth isotopy invariant of a closed curve drawn on the leaf space. 
Unlike the 2d case, however, the turning number on the leaf space of a 3d WKB foliation is some $\mathbb{Z}$-linear combination of $\frac{\theta}{2\pi}$'s and $\frac{1}{2}$, so in general is \emph{not} an integer.

\subsubsection{$\mathfrak{gl}_1$-skein module of branched double covers}
Recall that the branched double cover $\widetilde{Y}$ has branch locus and cone points. 
Accordingly, there are additional skein relations for $\Sk_{q}^{\mathfrak{gl}_1}(\widetilde{Y})$:

\begin{defn}\label{defn:SkeinModuleOfBranchedDoubleCover}
Let $Y$ be an ideally triangulated 3-manifold with an associated branched double cover $\widetilde{Y}$. 
Suppose that the ideal triangulation of $Y$ is equipped with a generalized angle structure $\Theta = \{\theta_i\}_{i\in I}$. 
Let $R^{\Theta} := \mathbb{Z}[q^{\pm 1}]\otimes \mathbb{Z}[\{q^{\pm \frac{\theta_i}{\pi}}\}_{i\in I}]$. 
Then, the \emph{$\mathfrak{gl}_1$-skein module} $\mathrm{Sk}_{q}^{\mathfrak{gl}_1}(\widetilde{Y},\Theta)$ of $\widetilde{Y}$
is the free $R^{\Theta}$-module spanned by the isotopy classes of framed oriented links in $\widetilde{Y}$ away from the branch locus, modulo \eqref{eq:gl1skeinrel1}-\eqref{eq:gl1skeinrel4}, as well as the following extra relation near the cone points: 
\begin{equation}
\vcenter{\hbox{

}}
\;. \label{eq:gl1skeinrel5}
\end{equation}

Note, the relations \eqref{eq:gl1skeinrel1}-\eqref{eq:gl1skeinrel4} are drawn in $\widetilde{Y}$, while the last relation, \eqref{eq:gl1skeinrel5}, is drawn in the projection to $Y$. 
The meaning of these skein relations except for the last one should be clear; \eqref{eq:gl1skeinrel1}-\eqref{eq:gl1skeinrel2} are the usual relations (with a half twist relation \eqref{eq:gl1skeinrel4}), and \eqref{eq:gl1skeinrel3} says that when a strand of a link passes through the branch locus, the skein acquires a minus sign. 
The last skein relation \eqref{eq:gl1skeinrel5} is a local relation near the singular cone point, viewed from a vertex of the ideal tetrahedron. 
It can be drawn 3-dimensionally as in Fig. \ref{fig:3termrelation}. 
The labels $1$ and $2$ denote which sheet (in the complement of the branch cuts) that part of the skein belongs to. 
\begin{figure}[htbp]
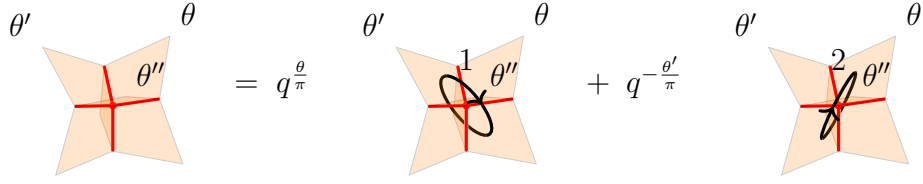

\centering
$\vcenter{\hbox{
\tdplotsetmaincoords{60}{80}

}}
$
\caption{The 3-term relation \eqref{eq:gl1skeinrel5} near the cone point}
\label{fig:3termrelation}
\end{figure}
\end{defn}

\begin{rmk}
When the angles are $\pi, 0, 0$, the 3-term relation \eqref{eq:gl1skeinrel5} is nothing but the recurrence relation for the quantum dilogarithm $\Psi$ in Remark \ref{rmk: conjugation-by-quantum-dilog}. 
This fact is used crucially in \cite{ELPS} to study a non-singular version of the quantum UV-IR map and give a geometric interpretation in terms of holomorphic curve counts. 
\end{rmk}

\begin{rmk}
The appearance of non-integer powers of $q$ in \eqref{eq:gl1skeinrel5} may seem strange, but there is an easy way to see why such factors are necessary for the skein module to be non-zero. 
Firstly, let's take a look at the following simple observation: 
\begin{lem}\label{lem:WeylOrderedProduct-q}
Let $\mathbb{T}$ be a quantum torus defined as
\[
\mathbb{T} := \frac{\mathbb{C}\langle x^{\pm 1}, {x'}^{\pm 1}, {x''}^{\pm 1}\rangle}{(x x' = q^2 x' x,\; x' x'' = q^2 x'' x',\; x'' x = q^2 x x'')}.
\]
Let $M$ be a cyclic left $\mathbb{T}$-module defined by
\[
M = \mathbb{T}/I,
\]
where $I$ is the left ideal generated by the relations
\[
x'' = 1-x^{-1},\; x=1-{x'}^{-1},\; x'=1-{x''}^{-1}.
\]
Then, the Weyl ordered product $[x x' x''] := q^{-1} x x' x'' = q x' x x''$ acts on the cyclic vector of $M$ by $-q$. 
\end{lem}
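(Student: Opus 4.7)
Let $v := [1] \in M$ denote the cyclic vector. The three generators of the left ideal $I$ yield the identities
\[
x''v = v - x^{-1}v, \quad xv = v - (x')^{-1}v, \quad x'v = v - (x'')^{-1}v
\]
on $v$; the last two rewrite as $(x')^{-1}v = v - xv$ and $(x'')^{-1}v = v - x'v$. I would then aim to compute $xx'x''v$ directly by successive left multiplication, using only the quantum torus commutation relations to rearrange factors and substituting these identities at the end.

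Concretely, starting from $x''v = v - x^{-1}v$, my first step is to apply $x'$ on the left, commuting $x'$ past $x^{-1}$ via $x'x^{-1} = q^2 x^{-1}x'$ (which is immediate from $xx' = q^2 x'x$). This produces $x'x''v = x'v - q^2 x^{-1}x'v$. Next, applying $x$ on the left cancels the $x^{-1}$ and leaves $xx'x''v = xx'v - q^2 x'v$; using $xx' = q^2 x'x$ again collapses this to $q^2 x'(xv - v)$. The defining identity $xv - v = -(x')^{-1}v$ then yields $xx'x''v = -q^2 v$.

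Finally, reading off the pairings $\langle e_1,e_2\rangle, \langle e_2,e_3\rangle, \langle e_1,e_3\rangle$ from the commutation relations of $\mathbb{T}$, the Weyl-ordering convention gives $[xx'x''] = q^{-1}xx'x''$, so $[xx'x''] v = -qv$, as claimed.

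There is no genuine obstacle in this argument; it is a few lines of direct algebra. The one subtlety worth flagging is the order in which the operators are applied: one must peel off factors from the right, using the commutation relations to push inverse generators leftward so that they are absorbed by the next left-multiplication. This produces a telescoping that collapses the answer to a scalar. Rewriting the defining relations ``symmetrically'' before multiplying would instead introduce inverse generators of $x, x', x''$ acting on $v$ in an unreduced form and obscure the cancellation.
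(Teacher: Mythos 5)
Your proof is correct and is essentially the paper's own argument: both peel off $x''$ via $x''v=(1-x^{-1})v$, absorb the inverse with the next left multiplication, and finish with $xv-v=-x'^{-1}v$. The only cosmetic difference is that the paper starts from the other Weyl-ordered form $[xx'x'']=q\,x'xx''$, which avoids the single commutation $x'x^{-1}=q^{2}x^{-1}x'$ you perform.
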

\begin{proof}
Let $[\emptyset]$ denote the cyclic vector of $M$. Then, 
\begin{align*}
[x x' x''][\emptyset] &= q x' x x''[\emptyset]\\
&= q x' x (1-x^{-1})[\emptyset]\\
&= q x'(x - 1)[\emptyset]\\
&= q x'(-x'^{-1})[\emptyset]\\
&= -q[\emptyset].
\end{align*}
\end{proof}
On the other hand, in the $\mathfrak{gl}_1$ skein algebra of the torus with 4 branch points, the Weyl ordered product of the three natural cycles of $T^2$ is $-1$: 
\[
\vcenter{\hbox{

}}
),
\]
then the three-term recursion relations associated to the singular point cannot be the ones in Lemma \ref{lem:WeylOrderedProduct-q}. 
That is, there must be some factor of $q$ in some of the relations. 
We also see immediately that some non-integer power of $q$ is necessary to have the $\mathbb{Z}/3$ symmetry of the recursion relations. 
\end{rmk}

\subsection{3d quantum UV-IR map}\label{subsec:3dUVIR}
We are ready to state the 3d generalization of the quantum UV-IR map. 
The construction is essentially the same as the 2d quantum UV-IR map described in Section \ref{subsec:2d-quantum-UV-IR-map}:
For each framed oriented link $L \subset Y$ representing an element $[L] \in \Sk_q^{\mathfrak{gl}_2}(Y)$, which we assume to be in general position with respect to the 3d WKB foliation, 
we take the linear combination $\sum_{\widetilde{L}} c_{\widetilde{L}}[\widetilde{L}] \in \Sk_q^{\mathfrak{gl}_1}(\widetilde{Y})$ of all possible lifts $\widetilde{L}$ that can be constructed out of direct lifts, detours and exchanges (using the 3d WKB foliation), 
where the coefficients $c_{\widetilde{L}}$ are given by the product of exchange factors, turning factors, and framing factors, exactly as in Section \ref{subsec:2d-quantum-UV-IR-map}. 
The new phenomenon in 3d is that the turning numbers $t$ in the turning factors are no longer integers and depend on the choice of generalized angle structure. 
The claim is that the resulting 3d quantum UV-IR map is well-defined: 
\begin{thm}\label{thm:3dquantumUVIR}
Let $Y$ be a 3-manifold equipped with an ideal triangulation and a generalized angle structure. 
Then, there is a well-defined $R$-module homomorphism
\[
F : \Sk_q^{\mathfrak{gl}_2}(Y) \rightarrow \Sk^{\mathfrak{gl}_1}_q(\widetilde{Y}).
\]
\end{thm}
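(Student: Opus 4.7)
The plan is to reduce the verification as much as possible to the 2d case (Theorem \ref{thm:2dquantumUVIR-welldefinedness}) and to isolate the genuinely new 3d phenomena: the cone points in $\widetilde{Y}$ where four binders of the leaf space meet, and the non-integer turning numbers that arise from the generalized angle structure. Throughout, fix an oriented framed link $L \subset Y$ in general position with respect to the 3d WKB foliation, so that its projection to the leaf space has only finitely many crossings, half-twists, and transverse intersections with the binders.

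\textbf{Step 1: Isotopy invariance on facets.} Away from the branch locus, the cone points, and the singular leaves, each facet of the leaf space is locally $\mathbb{R}^2$ equipped with an oriented 1-dimensional foliation, and the 3d WKB foliation restricted to a tubular neighborhood of a facet is just the 2d WKB foliation times an interval. All generic isotopy moves on a facet (Reidemeister moves, moves that create or cancel crossings of the projection, moves that push a half-twist through a crossing) reduce verbatim to the computations of \cite[Sec.~7]{NY} as recalled in the proof of Theorem \ref{thm:2dquantumUVIR-welldefinedness}. In particular, the exchange and detour factors combine correctly, and the framing factor is isotopy invariant once the half-twist relation \eqref{eq:gl1skeinrel4} is imposed.

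\textbf{Step 2: Moves across the branch locus.} An isotopy sliding a strand of $L$ across a binder changes the projection by crossing a branch cut; in $\widetilde{Y}$ the lift picks up an extra passage through the branch locus, contributing the sign in \eqref{eq:gl1skeinrel3}, which is exactly compensated by the $-$ sign appearing in the exchange factor $\pm(q-q^{-1})$ when a detour is reassigned to the other side of the branch cut. This is essentially the 2d argument applied transversely.

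\textbf{Step 3: Moves near the cone points.} This is where the truly new content of the 3d map lives, and is the step I expect to be the main obstacle. Consider a strand that in the leaf space passes ``behind'' the cone point at the barycenter of a tetrahedron $T$. Two different projections of the same strand can be related by sliding it past the cone point through one of the three pairs of adjacent facets, i.e.\ through the dihedral wedge with angle $\theta$, $\theta'$, or $\theta''$. Applying direct lifts, detours, and exchanges yields three competing lifts of the strand into $\widetilde{Y}$ whose coefficients differ by turning numbers accumulated as each lifted arc bends through the angles of the respective facets at the cone. The generalized-angle-structure condition $\theta + \theta' + \theta'' = \pi$ plus the link-of-the-cone-point identity (encoded in the skein relation \eqref{eq:gl1skeinrel5}) is precisely what is needed for these three coefficients to sum to zero in $\Sk_q^{\mathfrak{gl}_1}(\widetilde{Y}, \Theta)$. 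Thus the $q^{\theta_i/\pi}$-weighted 3-term relation is not an ad hoc addition but the exact amount of slack one must impose on the $\mathfrak{gl}_1$ side to absorb fractional turning numbers around cone points.

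\textbf{Step 4: Gluing of tetrahedra and closed-curve well-definedness.} The two remaining checks are that the map descends from the link level to the skein-module level once triangulation data are glued. First, when two facets from different tetrahedra are identified across a face of $\mathcal{T}$, the condition that generalized angles around each internal edge sum to $2\pi$ ensures that the turning number of a closed loop on the leaf space is well-defined independent of which facets it crosses, so the turning factor $q^t$ is a global isotopy invariant. Second, all the $\mathfrak{gl}_2$-skein relations \eqref{eq:gl2skeinrel1}--\eqref{eq:gl2extraskeinrel2} occur inside a small ball that can be isotoped to lie within a single facet of the leaf space (by pushing away from branch cuts and cone points), so respecting these relations follows directly from Theorem \ref{thm:2dquantumUVIR-welldefinedness}, which in particular includes the verification of \eqref{eq:gl2extraskeinrel1}--\eqref{eq:gl2extraskeinrel2} given there. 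Combining Steps 1--4 gives a well-defined $R$-module homomorphism $F : \Sk_q^{\mathfrak{gl}_2}(Y) \to \Sk_q^{\mathfrak{gl}_1}(\widetilde{Y})$.
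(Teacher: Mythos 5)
Your proposal is correct and follows essentially the same route as the paper's proof: decompose any isotopy into elementary moves, reduce the framed Reidemeister moves, the moves near the bindings, and all the $\mathfrak{gl}_2$-skein relations \eqref{eq:gl2skeinrel1}--\eqref{eq:gl2extraskeinrel2} to the 2d verification of \cite[Sec.~7]{NY} (Theorem \ref{thm:2dquantumUVIR-welldefinedness}), and check the one new elementary move at the cone point, where the $q^{\pm\theta/\pi}$-weighted lifts on the two sides of the move agree precisely because of the 3-term relation \eqref{eq:gl1skeinrel5}. Your Steps 3--4 are exactly the paper's argument (with the turning-number invariance from the edge-angle condition appearing there as part of the setup of generalized angle structures), so no gap to report.
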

\begin{proof}
We need to show that $F : L \mapsto \sum_{\widetilde{L}} c_{\widetilde{L}}[\widetilde{L}]$ is invariant under the isotopy of $L$ and that this map respects the $\mathfrak{gl}_2$-skein relations. 
Any isotopy of $L$ is a finite composition of elementary isotopies, each corresponding to a violation of a general position requirement of $L$ with respect to the 3d WKB foliation. 
The elementary isotopies consist of
\begin{enumerate}
\item 3 types of framed Reidemeister moves, 
\item 4 types of moves near the bindings, 
\item and an extra move near the singular point. 
\end{enumerate}
The 3 framed Reidemeister moves and the 4 moves near the bindings are already present in the 2d case and were analyzed in detail in \cite[Sec. 7]{NY}, where it is shown that $F$ is indeed invariant under those isotopies. 
The last move, which is new in 3d, is an elementary isotopy of a link diagram on the leaf space when it crosses the singular point (Figure \ref{fig:isotopy-crossing-singular-point}). 
\begin{figure}[htbp]
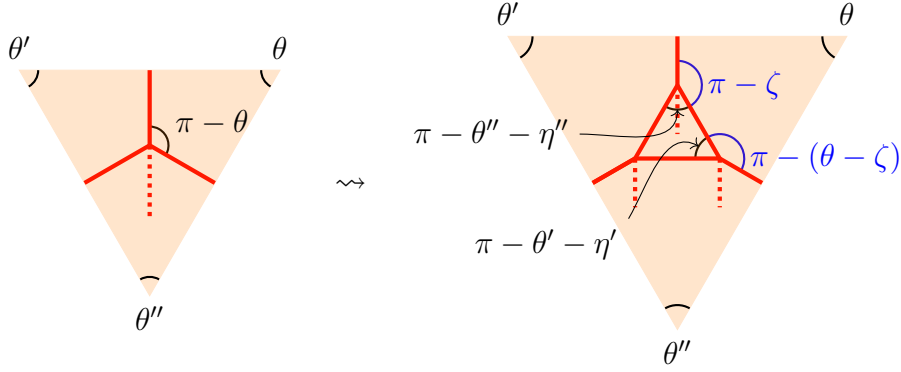

\centering
$
\vcenter{\hbox{
\tdplotsetmaincoords{30}{80}

}}, 
\]
and the invariance is ensured exactly by the 3-term relation \eqref{eq:gl1skeinrel5} near the cone point. 

Finally, that $F$ respects the $\mathfrak{gl}_2$-skein relations was already shown in \cite[Sec. 7]{NY} (see Theorem \ref{thm:2dquantumUVIR-welldefinedness}). 
\end{proof}

\begin{rmk}
The 3d quantum UV-IR map gives an intuitive explanation for why the 2d quantum UV-IR map gets conjugated by the quantum dilogarithm $\Psi$ under a flip:
A flip in an ideal triangulation can be thought of as a bordism given by attaching a taut ideal tetrahedron (i.e., the one with angles $\pi, 0, 0$), and for such an ideal tetrahedron, the corresponding $\mathfrak{gl}_1$-skein module of the branched double cover knows the 3-term relation \eqref{eq:gl1skeinrel5} -- which is equivalent to an insertion of the quantum dilogarithm, after resolving the singular cone point. 
\end{rmk}

\begin{rmk}
Theorem \ref{thm:3dquantumUVIR} admits a natural generalization to a map between HOMFLYPT skein modules
\[
F : \Sk_{a^2,z}(Y) \rightarrow \Sk_{a,z}(\widetilde{Y}),
\]
which specializes to the above map after setting $a=q$ and $z = q-q^{-1}$. 
In \cite{ELPS}, such a map is interpreted in terms of skein-valued counts of holomorphic curves (in the spirit of \cite{ES}) and is vastly generalized to branched covers coming from Lagrangians in $T^*Y$. 
\end{rmk}

\subsection{Naturality with respect to Pachner moves}\label{subsec:quantumUVIR-Pachner}
Suppose that $\mathcal{T}_2$ and $\mathcal{T}_3$ are two ideal triangulations of $Y$ related by a 2-3 Pachner move, and let $\widetilde{Y}_2$ and $\widetilde{Y}_3$ be the corresponding branched double covers. 
The leaf space of the WKB foliation corresponding to the ideal triangulation $\mathcal{T}_3$ looks identical to that of $\mathcal{T}_2$ outside of the bipyramid where we are performing the 2-3 Pachner move. 
Inside the bipyramid, the leaf space undergoes the transformation depicted in Figure \ref{fig:leaf-space-23Pachner}; the two singular points collide and split into three singular points, which bound a newly created triangular facet. 
\begin{figure}[htbp]
\centering
$
\vcenter{\hbox{
\tdplotsetmaincoords{60}{80}

}}
$
\caption{Compatible generalized angle structures under the 2-3 Pachner move. 
Here, $\eta, \eta', \eta''$ are the dihedral angles of the bottom tetrahedron which are directly below $\theta, \theta', \theta''$, respectively, and $\zeta$ is a free parameter. 
The seams of the leaf space are perpendicular to the boundary. }
\label{fig:23Pachner-angles}
\end{figure}

In this setup, there is a natural map 
\[
\phi_{2 \rightarrow 3} : \Sk_q^{\mathfrak{gl}_1}(\widetilde{Y}_2, \Theta_2) \rightarrow \Sk_q^{\mathfrak{gl}_1}(\widetilde{Y}_3, \Theta_3)
\]
between the two $\mathfrak{gl}_1$-skein modules that can be constructed as follows. 
Let $L$ be any framed oriented link in $\widetilde{Y}_2$. 
We can visualize it by its projection $\overline{L}$ to $Y$; 
when put in a general position with respect to the leaf space (thought of as an embedded foam $F_2$ in $Y$), the projection $\overline{L} \subset Y$ carries a sheet label $i \in \{1, 2\}$ in each component of $\overline{L} \setminus F_2$, and the sheet label flips every time $\overline{L}$ crosses the leaf space $F_2 \subset Y$. 
Since $\overline{L} \subset Y$ is disjoint from the branch locus, we can perform the 1-parameter deformation $F_2 \rightsquigarrow F_3$ of the leaf space as in Figure \ref{fig:leaf-space-23Pachner}, while keeping it in general position with respect to $\overline{L}$. 
As a result, we obtain a link $\overline{L} \subset Y$ in general position with respect to the new foam $F_3 \subset Y$, equipped with a sheet label $i \in \{1,2\}$ for each component of $\overline{L} \setminus F_3$, which is nothing but a link in $\widetilde{Y}_3$. 
Let's call this resulting link $L' \subset \widetilde{Y}_3$. 
\begin{prop}\label{prop:gl1skein_23Pachner}
The map
\begin{align*}
\phi_{2 \rightarrow 3} : \Sk_q^{\mathfrak{gl}_1}(\widetilde{Y}_2) &\rightarrow \Sk_q^{\mathfrak{gl}_1}(\widetilde{Y}_3) \\
[L] &\mapsto [L']
\end{align*}
is well-defined. 
\end{prop}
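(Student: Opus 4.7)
The plan is to verify that the assignment $[L] \mapsto [L']$ is well-defined on isotopy classes of framed oriented links in $\widetilde{Y}_2$, and then that it respects each of the $\mathfrak{gl}_1$-skein relations defining $\Sk_q^{\mathfrak{gl}_1}(\widetilde{Y}_3, \Theta_3)$. I would realize the deformation $F_2 \rightsquigarrow F_3$ by a smooth 1-parameter family $\{F_t\}_{t \in [0,1]}$ of foams in $Y$ whose branch loci interpolate smoothly between those of $\mathcal{T}_2$ and $\mathcal{T}_3$. Since the branch locus is 1-dimensional in the 3-manifold $Y$, a small generic perturbation ensures that $\overline{L}$ is disjoint from the moving branch locus throughout. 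The sheet labels on $\overline{L} \setminus F_t$ then evolve continuously, flipping only at the discrete times when $\overline{L}$ transversally crosses an interior (non-branch) facet of $F_t$. Any two such families can be joined by a generic 2-parameter family, so the resulting two candidate links in $\widetilde{Y}_3$ differ only by isotopies induced by transverse crossings in this 2-parameter family; the invariance under isotopies of $L$ in $\widetilde{Y}_2$ follows by the same transversality argument.

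Next I would check each skein relation in turn. Relations \eqref{eq:gl1skeinrel1}, \eqref{eq:gl1skeinrel2}, and \eqref{eq:gl1skeinrel4} are local moves supported away from both branch loci and are automatically preserved by the deformation. The sign relation \eqref{eq:gl1skeinrel3} is preserved because a transverse intersection of $\overline{L}$ with $B_2$ is tracked continuously through $\{F_t\}$ to a transverse intersection with $B_3$, so the parity of branch crossings along each strand is unchanged. Outside the bipyramid, the cone-point 3-term relation \eqref{eq:gl1skeinrel5} is trivially preserved since neither the foam nor the branch locus changes there.

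The main obstacle—where the compatibility of the angle structures $\Theta_2$ and $\Theta_3$ enters crucially—is verifying \eqref{eq:gl1skeinrel5} at each cone point inside the bipyramid. Under the Pachner move, the two cone points of $\mathcal{T}_2$ are replaced by three cone points of $\mathcal{T}_3$ bounding a newly created triangular facet, so each application of \eqref{eq:gl1skeinrel5} at an old cone point must map under $\phi_{2\to 3}$ to a consistent combination of applications of \eqref{eq:gl1skeinrel5} at the new cone points. The approach is to pick a small meridional disk through the old cone point, apply the 3-term relation there with weights $q^{\theta/\pi}$ and $q^{-\theta'/\pi}$, and track each of the three resulting terms through the deformation; each of them becomes a strand winding around one or two of the new cone points. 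Using the angle decomposition indicated in Figure \ref{fig:23Pachner-angles}—in particular the splitting $\theta = \zeta + (\theta - \zeta)$ and the fact that the angles at the three new cone points sum appropriately around each old angular region, with the free parameter $\zeta$ cancelling from the final identity—the weighted contributions combine to give precisely the needed relation in $\Sk_q^{\mathfrak{gl}_1}(\widetilde{Y}_3, \Theta_3)$.

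The essential bookkeeping step is the additivity of weighted turning numbers along arcs encircling groups of the new cone points, which reduces the verification to a finite collection of local equalities in the $\mathbb{Z}$-module generated by the angle parameters. Once this local identity is established, all skein relations are preserved and $\phi_{2\to 3}$ descends to a well-defined $R$-module homomorphism. I expect this last step—the explicit matching of the angle-weighted coefficients through the triangular facet—to be the most delicate part of the argument, since it is the place where the geometric deformation interacts nontrivially with the algebra of generalized angles.
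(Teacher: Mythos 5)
Your overall strategy --- prove isotopy invariance and then check each skein relation, with the only nontrivial verifications localized to the bipyramid --- is the same as the paper's, and your transversality setup for isotopy invariance is fine (in fact lighter arguments suffice: an isotopy in $\widetilde{Y}_2$ projects to an isotopy of $\overline{L}$, possibly with crossing changes between strands carrying different sheet labels, and this can simply be carried along the deformation of the foam). However, the two places where the proposition actually has content are not established. First, your treatment of the sign relation \eqref{eq:gl1skeinrel3} --- ``a transverse intersection of $\overline{L}$ with $B_2$ is tracked continuously to a transverse intersection with $B_3$, so the parity is unchanged'' --- does not apply to the segment of the branch locus joining the two cone points of $\widetilde{Y}_2$: that segment has no counterpart in $B_3$ (it is destroyed when the cone points collide), and links are disjoint from the branch locus anyway, so there is nothing to track. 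What must be shown is that the relation attached to this disappearing branch component is still a consequence of the relations in $\Sk_q^{\mathfrak{gl}_1}(\widetilde{Y}_3)$; concretely, the meridian of the old interior branch segment becomes, after the deformation, a loop that is isotopic (using the ordinary $\mathfrak{gl}_1$ relations) to a product of three meridians of new branch edges, and hence evaluates to $(-1)^3=-1$ --- this is exactly the content of Figure \ref{fig:sign-relation}. The parity statement you assert is precisely what needs proof, and it holds only because three is odd.

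Second, for the 3-term relation \eqref{eq:gl1skeinrel5} at the old cone points you describe the right plan but stop at expecting the angle-weighted coefficients to combine, and the mechanism you propose (additivity of turning numbers together with the angle sums, with $\zeta$ cancelling) is not by itself sufficient. Applying \eqref{eq:gl1skeinrel5} twice at the new cone points produces four terms. Two of them recombine by pure angle bookkeeping (using $\theta=\zeta+(\theta-\zeta)$ and $\theta+\theta'+\theta''=\pi$, $\eta+\eta'+\eta''=\pi$) into the desired $q^{\theta/\pi}(\cdots)+q^{-\theta'/\pi}(\cdots)$, and there the free parameter $\zeta$ does cancel. But the two cross terms, which carry the common coefficient $q^{(\zeta-\theta'-\eta')/\pi}$, do not vanish by angle arithmetic: they cancel against each other because the corresponding lifted strands differ by pushing through one of the new branch edges, so the sign relation \eqref{eq:gl1skeinrel3} contributes a relative minus sign, together with a framing change absorbing a residual factor of $q$. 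This topological cancellation is the step where the sign defects and the compatibility of $\Theta_2$ and $\Theta_3$ (as in Figure \ref{fig:23Pachner-angles}) genuinely interact, and it is absent from your outline; as written, the essential verification of the proposition remains unproven.
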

\begin{proof}
We need to check that the map $L \mapsto [L']$ is invariant under isotopy of $L$ and respects the $\mathfrak{gl}_1$-skein relations on $L$. 
Invariance under isotopy (away from the branch locus) is trivial, as any isotopy of $L \subset \widetilde{Y}_2$ corresponds to an isotopy of $\overline{L}$ (where we allow crossing changes between strands with different sheet labels), which induces an isotopy of $L' \subset \widetilde{Y}_3$. 
This map also straightforwardly respects most of the $\mathfrak{gl}_1$-skein relations. 
The only non-trivial skein relations we need to check are:
\begin{enumerate}
    \item The sign relation \eqref{eq:gl1skeinrel3} on the branch locus connecting the two singular cone points of $\widetilde{Y}_2$. 
    \item The 3-term relations \eqref{eq:gl1skeinrel5} for the two singular cone points of $\widetilde{Y}_2$. 
\end{enumerate}
For the sign relations, let's say $L$ is a meridian of the branch locus connecting the two singular cone points of $\widetilde{Y}_2$. 
We need to show that the image of $L$ evaluates to $-1$ in $\Sk_q^{\mathfrak{gl}_1}(\widetilde{Y}_3)$. 
It is enough to observe that the image of $L$ is (modulo the usual $\mathfrak{gl}_1$-skein relations) a union of three meridians around some branch loci in $\widetilde{Y}_3$, and thus evaluates to $(-1)^3 = -1$; see Figure \ref{fig:sign-relation}. 
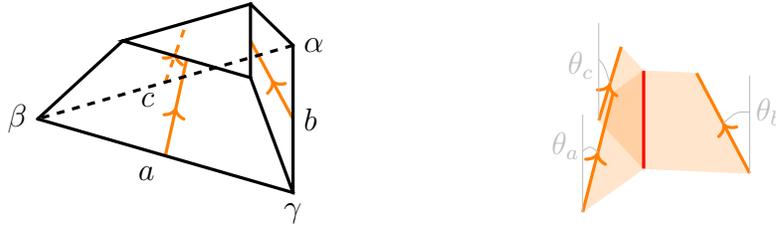
\begin{figure}[htbp]
\centering
$
\vcenter{\hbox{
\tdplotsetmaincoords{60}{80}

}}
),
\end{align*}
where, in the third to last equality, we changed the framing of the last tangle by $+1$, hence absorbing the factor of $q$. 
\end{proof}

\begin{rmk}
Proposition \ref{prop:gl1skein_23Pachner} is the singular analog of the pentagon relation for the quantum dilogarithm. 
In fact, when the angle structure is taut, we can resolve the singular cone points in a certain way and insert the quantum dilogarithm -- which satisfies the same 3-term recurrence relation -- at each of those resolved cone points. 
Then, Proposition \ref{prop:gl1skein_23Pachner} becomes exactly the well-known pentagon relation for the quantum dilogarithm. 
\end{rmk}

\begin{thm}
The quantum UV-IR map is natural with respect to the transition maps $\phi_{2 \rightarrow 3}$. 
That is, the following diagram commutes:
\[
\begin{tikzcd}
 & \Sk_q^{\mathfrak{gl}_1}(\widetilde{Y}_2) \arrow[dd, "\phi_{2 \rightarrow 3}"]\\
\Sk^{\mathfrak{gl}_2}_{q}(Y) \arrow[ur, "F_2"] \arrow[dr, "F_3"] & \\
 & \Sk_q^{\mathfrak{gl}_1}(\widetilde{Y}_3)
\end{tikzcd}.
\]
\end{thm}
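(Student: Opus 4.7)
The plan is to prove commutativity by localizing to the bipyramid $\mathrm{BP}$ supporting the $2$-$3$ Pachner move, and then matching the two compositions term by term via a continuous deformation of the WKB foliation. Since $\mathcal{T}_2$ and $\mathcal{T}_3$ agree outside $\mathrm{BP}$, the WKB foliations, angle structures, and branch loci can be chosen to coincide on $Y \setminus \mathrm{BP}$, so that $\phi_{2\to 3}$ restricts to the identity there. Using that $\phi_{2\to 3}$ is defined leaf-space-locally (Proposition~\ref{prop:gl1skein_23Pachner}) and that the 3d quantum UV-IR map is also leaf-space-local, it suffices to establish the diagram for framed oriented links with support inside $\mathrm{BP}$, and, by bilinearity in the obvious boundary module structure, for a small list of generating elementary tangles in $\mathrm{BP}$.

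The key observation is that $\phi_{2 \to 3}$ is constructed by a $1$-parameter homotopy $\{F_t\}_{t\in[0,1]}$ of leaf spaces interpolating between the leaf spaces for $\mathcal{T}_2$ and $\mathcal{T}_3$, with the projection $\overline{L}\subset Y$ kept in general position with respect to $F_t$ away from finitely many values of $t$. At each such value, one of the following elementary events occurs: (i) a crossing of $\overline{L}$ slides across the branch locus, (ii) a detour appears or disappears through an isotopy of $L$, or (iii) the two singular cone points of $F_0$ collide and resolve into the three cone points bounding the new triangular facet of $F_1$, as depicted in Figure~\ref{fig:leaf-space-23Pachner}. Events (i)--(ii) are precisely the isotopy moves already verified in the proof of Theorem~\ref{thm:3dquantumUVIR}, so the UV-IR formula is invariant under them. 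For event (iii), the required local identity is exactly the pentagon-type relation that was established in the proof of Proposition~\ref{prop:gl1skein_23Pachner}: the three-term relation \eqref{eq:gl1skeinrel5} applied successively at the three new cone points recovers the two original three-term relations, once the angle compatibility of Figure~\ref{fig:23Pachner-angles} is used.

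Accordingly, I would compare $\phi_{2\to 3}(F_2([L]))$ with $F_3([L])$ by tracking each summand $c_{\widetilde{L}_2}[\widetilde{L}_2]$ through the homotopy $\{F_t\}$. Direct lifts, detours, and exchanges transform into their counterparts in a sheet-preserving way, inducing a bijection between contributions on the two sides once the cone-point bifurcation is resolved via \eqref{eq:gl1skeinrel5}. Framing factors, being topological, are preserved; exchange signs match because the local sign of a crossing on a facet is stable under the homotopy; and the only nontrivial check is that the angle-weighted turning factor $q^t$ of each lift is preserved. The variation of $t$ under the homotopy is accounted for exactly by the angle substitutions of Figure~\ref{fig:23Pachner-angles} (in particular, the free parameter $\zeta$ drops out of the totals, as it does in the proof of Proposition~\ref{prop:gl1skein_23Pachner}).

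The main obstacle I anticipate is the bookkeeping at event (iii): across the bifurcation, strands of lifts of $L$ are re-routed through the newly created triangular facet, and the combinatorial expansion produced by applying $\phi_{2\to 3}$ to each summand of $F_2([L])$ must match, monomial by monomial, the expansion produced by $F_3$ on the other side. Controlling this requires carefully chasing the (non-integer) turning-angle contributions through the deformation, and verifying that terms generated by detours around the three new cone points cancel or combine into exactly the detours expected around the two old cone points. Reducing via Steps~1--2 to a finite list of elementary tangles and invoking the local pentagon identity of Proposition~\ref{prop:gl1skein_23Pachner} at the bifurcation is the mechanism by which this complexity is tamed.
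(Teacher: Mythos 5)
Your proposal is correct and takes essentially the same route as the paper: the paper's proof simply observes that the leaf-space deformation defining $\phi_{2 \rightarrow 3}$ preserves the Euclidean structures on the facets, so the UV-IR state sum (lifts, detour/exchange choices, turning and framing factors) is transported term by term from the $\mathcal{T}_2$ side to the $\mathcal{T}_3$ side, with the cone-point bifurcation already absorbed into the well-definedness of $\phi_{2 \rightarrow 3}$ established in Proposition \ref{prop:gl1skein_23Pachner} and the remaining generic events covered by the isotopy-invariance checks in Theorem \ref{thm:3dquantumUVIR}. The only real difference is that what you spell out as a homotopy-tracking argument with an explicit event analysis, the paper compresses into a single sentence, since all nontrivial local verifications were done beforehand.
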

\begin{proof}
The deformation of the leaf space (Figure \ref{fig:leaf-space-23Pachner}) preserves the Euclidean structures on the facets of the leaf space, so this is immediate from the construction of the quantum UV-IR map. 
\end{proof}

\subsection{Stated quantum UV-IR map}\label{subsec:stated_quantum_UV-IR}
So far, we have reviewed the construction of the quantum trace map (Section \ref{sec:quantumtrace}) and the quantum UV-IR map (Section \ref{subsec:2dUVIR}-\ref{subsec:quantumUVIR-Pachner}). 
While they are both maps that ``abelianize'' the skein modules in a sense, their constructions were quite different -- 
the quantum trace map is defined mostly algebraically, while the quantum UV-IR map is defined topologically, crucially using the 1-dimensional foliation induced by a choice of ideal triangulation. 

In this subsection, we bring the quantum UV-IR map closer to the quantum trace map by constructing a stated version of the quantum UV-IR map
\[
F : \Sk_q^{\mathfrak{gl}_2}(Y) \rightarrow \Sk_q^{\mathfrak{gl}_1}(\widetilde{Y}).
\]
Eventually, this will allow us to use the cut-and-glue approach to check the compatibility between the two maps locally. 

The $\mathfrak{gl}_2$-skeins and $\mathfrak{gl}_1$-skeins with defects admit natural extensions to stated tangles; 
the precise definition of the stated $\mathfrak{gl}_2$-skein modules and the stated $\mathfrak{gl}_1$-skein modules with defects that we use are given in Appendix \ref{sec:stated-gl2-skeins} and \ref{sec:stated-gl1-skeins}, respectively, where we also show that these stated skein modules behave nicely under splitting of 3-manifolds, analogously to stated $\mathfrak{sl}_2$-modules. 
Since the construction of the quantum UV-IR map was local, in order to extend it to a map between stated skein modules, we simply need to carefully determine the dictionary between the boundary states of stated tangles and the boundary condition for lifts (i.e., which of the two sheets it needs to be lifted to). 

Before stating the rules for determining the boundary condition, let's briefly recall (from Section \ref{subsec:2dUVIR}-\ref{subsec:3dSpecNet}) our conventions on the sheet labels $1$ and $2$. 
Given an ideal triangulation and the associated WKB foliation, we orient the foliation so that, in the complement of the branch cut, 
\begin{itemize}
\item the ``sheet 2''-direction is always pointing toward the ideal vertices, and
\item the ``sheet 1''-direction is always pointing away from the ideal vertices. 
\end{itemize}

With this orientation convention in mind, here's the rule determining the boundary condition for the lifts of stated tangles:
\begin{defn}[Dictionary between boundary states and sheets]\label{defn:dictionary-boundary-state-sheet}
Depending on the boundary state and orientation of the tangle, we impose the following boundary conditions on the lifts of stated tangles:
\[
\vcenter{\hbox{

}}
\;\;.
\]
Here, $\widetilde{e}_{ij}$ ($ij \in \{12, 21\}$) denotes the lift of the boundary marking $e$ to $\widetilde{Y}$, for which the sheet labels are $i$ and $j$ on the left and the right of $\widetilde{e}_{ij}$, respectively, if $\widetilde{e}_{ij}$ is pointing upward and viewed from outside of $\widetilde{Y}$. 
\end{defn}

\begin{thm}\label{thm:stated-quantum-UV-IR}
With the boundary conditions given as in Definition \ref{defn:dictionary-boundary-state-sheet}, we get a well-defined quantum UV-IR map
\[
F : \Sk_q^{\mathfrak{gl}_2}(Y) \rightarrow \Sk_q^{\mathfrak{gl}_1}(\widetilde{Y}), 
\]
between the stated skein modules, which factors through the reduced $\mathfrak{gl}_2$-skein module
\[
F : \Sk_q^{\mathfrak{gl}_2}(Y) \rightarrow \overline{\Sk}_q^{\mathfrak{gl}_2}(Y) \rightarrow\Sk_q^{\mathfrak{gl}_1}(\widetilde{Y}).
\]
\end{thm}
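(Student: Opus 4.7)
The plan is to extend the unstated map $F$ of Theorem \ref{thm:3dquantumUVIR} using the same local recipe (direct lifts, detours, exchanges, with the same exchange, turning, and framing factors), and to fix boundary behavior by the dictionary of Definition \ref{defn:dictionary-boundary-state-sheet}. With this definition the map is manifestly local with respect to the WKB foliation, so well-definedness and the factorization through $\overline{\Sk}^{\mathfrak{gl}_2}_q(Y)$ both reduce to a short list of model checks in a neighborhood of a single boundary edge, a boundary vertex, or a stratum of the foliation. To make the local structure available, I would first push everything through the stated $\mathfrak{gl}_2$-splitting map of Appendix \ref{sec:stated-gl2-skeins}, so that it suffices to treat the splittings of $Y$ into face suspensions (or, in the $2$d case, biangles and triangles).

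Next I would handle isotopies and the non-stated $\mathfrak{gl}_2$-skein relations. Isotopies and relations supported in the interior are exactly those of Theorem \ref{thm:3dquantumUVIR} and need no new work. Near the boundary the only additional elementary moves are sliding a boundary endpoint along its edge $e$ and across the point where $e$ meets a singular leaf; in both cases the sheet label prescribed by Definition \ref{defn:dictionary-boundary-state-sheet} is preserved, and the effect on the lift coefficient is the product of a framing factor and a turning factor which match on the two sides. The genuinely \emph{stated} skein relations (the half-twist/cap relations together with the ones allowing one to split a cup or cap, cf.\ relations analogous to \eqref{item: sl2 stated skein rel 1}--\eqref{item: sl2 stated skein rel 2}) are then a direct calculation: the dictionary forces the sheet configuration of each admissible lift, and the sum over lifts collapses to the prescribed scalar using \eqref{eq:gl1skeinrel1}--\eqref{eq:gl1skeinrel4}.

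The main obstacle will be vanishing on bad arcs, which is what makes $F$ descend to the reduced stated $\mathfrak{gl}_2$-skein module. A bad arc sits in a neighborhood of a boundary vertex $v$ and, by the dictionary, its two endpoints are forced onto \emph{the same} sheet of the branched cover. In the cone neighborhood of $v$, which has the model WKB structure of Figure \ref{fig:leafspace_Euclideanstructure}, the only ways to join the two endpoints by a lift are to use a detour across the singular leaf $L_v$ emanating from $v$, or to push the arc over the opposite side of $v$; the detour and the opposite-side lift contribute two terms whose coefficients are, respectively, the exchange factor $\pm(q-q^{-1})$ and $\pm 1$ times a turning and sign factor. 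The heart of the computation will be to verify that these two terms cancel once one invokes the sign rule \eqref{eq:gl1skeinrel3} (which accounts for the crossing of the branch cut emanating from $v$) and, in the 3d case, the $3$-term cone-point relation \eqref{eq:gl1skeinrel5}. I expect this cancellation to be the only step requiring real computation; once it is done, the stated skein splitting map reduces the general bad-arc case to this local model and the proof is complete.
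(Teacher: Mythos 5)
Your overall frame -- extend the local recipe of Theorem \ref{thm:3dquantumUVIR}, fix boundary behaviour by the dictionary of Definition \ref{defn:dictionary-boundary-state-sheet}, and reduce everything to local checks of the stated relations -- is the same as the paper's, and your treatment of interior relations, boundary isotopies, and the stated $\mathfrak{gl}_2$ relations \eqref{eq:gl2boundaryskeinrel1}--\eqref{eq:gl2boundaryskeinrel4} (explicit evaluation of the sums over lifts) is where the real computational content lies, exactly as in the paper. Two remarks on that part: the preliminary pass through the stated $\mathfrak{gl}_2$-splitting map is unnecessary and mildly circular, since the map is defined by a local rule on tangle diagrams and the relations to be checked are already local (compatibility with splitting, Proposition \ref{prop:UVIRMapCompatibleWithSplitting}, is a consequence of Theorem \ref{thm:stated-quantum-UV-IR}, not an ingredient of it); and the relations you should be quoting are the $\mathfrak{gl}_2$ boundary relations of Appendix \ref{sec:stated-gl2-skeins}, not their $\mathfrak{sl}_2$ analogues.

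The genuine gap is the bad-arc step, which you single out as the heart of the proof and for which you propose the wrong mechanism. You predict a cancellation between a ``detour'' term carrying the exchange factor $\pm(q-q^{-1})$ and a direct term carrying $\pm 1$, mediated by the sign relation \eqref{eq:gl1skeinrel3} and the cone-point relation \eqref{eq:gl1skeinrel5}. This cannot happen: the factor $q-q^{-1}$ is attached only to \emph{exchanges}, i.e.\ to leaves meeting the tangle twice (crossings in the leaf-space projection), whereas detours carry no such factor, and a small embedded bad arc near a marking vertex produces no crossings at all -- so no term weighted by $q-q^{-1}$ ever appears, and in any case $\pm(q-q^{-1})$ could not cancel $\pm 1$ over $R$. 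The actual reason bad arcs die, and the reason the paper disposes of this in one line, is that the dictionary forces boundary conditions at the two endpoints of a bad arc which admit \emph{no} compatible lift: the single wall the arc meets only allows the sheet transition in the opposite direction, so the defining sum for $F$ is empty. This is the same phenomenon visible in the check of \eqref{eq:gl2boundaryskeinrel1}, where the $(+,+)$ and $(-,-)$ caps map to $0$ because there are no admissible lifts, and in Lemma \ref{lem:qUVIR_triangle_surjectivity}, where the non-bad corner generators map to single monomials; neither \eqref{eq:gl1skeinrel3} nor \eqref{eq:gl1skeinrel5} enters the bad-arc verification at all. So the step you flag as the main computation is in fact immediate once the dictionary is in place, while the computation you propose to carry out there would not close.
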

\begin{proof}
The proof of invariance under isotopy and that it respects the interior (i.e., non-stated) $\mathfrak{gl}_2$-skein relations \eqref{eq:gl2skeinrel1}-\eqref{eq:gl2extraskeinrel2} is the same as in the proof of Theorem \ref{thm:3dquantumUVIR}. 
Thus, we need to check that the quantum UV-IR map respects the boundary (i.e., stated) $\mathfrak{gl}_2$-skein relations \eqref{eq:gl2boundaryskeinrel1}-\eqref{eq:gl2boundaryskeinrel4}. 
\begin{itemize}
\item The first boundary skein relation \eqref{eq:gl2boundaryskeinrel1}:
\begin{align*}
F\qty(
\qty[
\vcenter{\hbox{

]^{\mathfrak{gl}_2}
).
\end{align*}
\end{itemize}
For other skein relations obtained by simultaneous orientation reversal of the tangles, the proof is essentially identical. 

Finally, it is easy to see that the bad arcs map to $0$, and hence $F$ descends to a map from the reduced $\mathfrak{gl}_2$-skein module. 
\end{proof}

Recall (e.g. from Proposition \ref{prop:bimodule-structure}) that the stated skein modules carry natural bimodule structures;
specifically, 
$\overline{\Sk}_q^{\mathfrak{gl}_2}(Y, \Gamma)$ is a $\overline{\SkAlg}_q^{\mathfrak{gl}_2}(V(\Gamma)^+)$-$\overline{\SkAlg}_q^{\mathfrak{gl}_2}(V(\Gamma)^-)$-bimodule, and 
$\Sk_q^{\mathfrak{gl}_1}(\widetilde{Y}, \widetilde{\Gamma})$ is a $\SkAlg_q^{\mathfrak{gl}_1}(V(\widetilde{\Gamma})^+)$-$\SkAlg_q^{\mathfrak{gl}_1}(V(\widetilde{\Gamma})^-)$-bimodule. 
Since the quantum UV-IR map $F : \overline{\Sk}_q^{\mathfrak{gl}_2}(Y, \Gamma) \rightarrow \Sk_q^{\mathfrak{gl}_1}(\widetilde{Y}, \widetilde{\Gamma})$ is defined locally, it is in fact a bimodule map when $\Sk_q^{\mathfrak{gl}_1}(\widetilde{Y}, \widetilde{\Gamma})$ is regarded as a $\overline{\SkAlg}_q^{\mathfrak{gl}_2}(V(\Gamma)^+)$-$\overline{\SkAlg}_q^{\mathfrak{gl}_2}(V(\Gamma)^-)$-bimodule.
This bimodule structure is induced from the algebra homomorphisms 
\[
F: \overline{\SkAlg}_q^{\mathfrak{gl}_2}(V(\Gamma)^{\pm}) \rightarrow \SkAlg_q^{\mathfrak{gl}_1}(V(\widetilde{\Gamma})^{\pm})
\]
determined by the quantum UV-IR map computed near the vertices of the boundary markings. 

\begin{eg}\label{eg:angled-homomorphism}
The algebra homomorphism 
\[
F: \overline{\SkAlg}_q^{\mathfrak{gl}_2}(V(\Gamma)^{\pm}) \rightarrow \SkAlg_q^{\mathfrak{gl}_1}(V(\widetilde{\Gamma})^{\pm})
\]
depends in general on the angles at the vertices. 
Here, we describe this map explicitly in the case of vertices of degree 2 and 3. 
Around those vertices, the leaf space is locally modeled by the angled biangular and triangular prism depicted in Figure \ref{fig:angled-biangular-prism} and \ref{fig:angled-prism}. 
\begin{figure}[htbp]
\centering
$
\vcenter{\hbox{
\tdplotsetmaincoords{70}{110}

}}
$
\caption{An angled triangular prism $(D_3 \times I)_{\theta_a, \theta_b, \theta_c}$ (left) and the associated leaf space (right)}
\label{fig:angled-prism}
\end{figure}
Direct calculation shows that the quantum UV-IR map on the angled biangular prism $(D_2 \times I)_{\theta}$ is the algebra homomorphism given by\footnote{
Here, we used a notational convention where $\overrightarrow{x_\mu y_\nu}$ ($\mu, \nu \in \{\pm\}$) denotes the elementary stated $\mathfrak{gl}_2$ tangle connecting the boundary markings $x$ and $y$ -- with boundary states $\mu$ and $\nu$, respectively -- in the simplest possible way, oriented from $x$ to $y$. 
We use similar notations for elementary $\mathfrak{gl}_1$ tangles, with $x_{ij}$ ($ij \in \{12, 21\}$) denoting the lift of the boundary marking $x$ to the double cover, where the sheet labels are $i$ and $j$ on the left and right of $x_{ij}$, when viewed from outside of the 3-manifold and if $x_{ij}$ is oriented upward;
we have already used a similar notational convention in Definition \ref{defn:dictionary-boundary-state-sheet}. 
} \footnote{These elementary tangles generate the reduced $\mathfrak{gl}_2$-skein algebra of the biangle; see Lemma \ref{lem:reduced-gl2-biangle}.}
\[
F : \overline{\SkAlg}_q^{\mathfrak{gl}_2}(D_2) \rightarrow \SkAlg_q^{\mathfrak{gl}_1}(\widetilde{D_2}) = \SkAlg_q^{\mathfrak{gl}_1}(D_2)^{\otimes 2}
\]
\begin{gather*}
\overrightarrow{a_+b_+} \mapsto q^{-\frac{\theta}{2\pi}} 1 \otimes \overrightarrow{a_{12}b_{21}}, \quad 
\overleftarrow{a_-b_-} \mapsto q^{\frac{\theta}{2\pi}}1 \otimes \overleftarrow{a_{12}b_{21}}, \\
\overrightarrow{a_-b_-} \mapsto q^{\frac{\theta}{2\pi}} \overrightarrow{a_{21}b_{12}} \otimes 1, \quad 
\overleftarrow{a_+b_+} \mapsto q^{-\frac{\theta}{2\pi}} \overleftarrow{a_{21}b_{12}} \otimes 1. 
\end{gather*}
Likewise, the quantum UV-IR map on the angled triangular prism $(D_3 \times I)_{\theta_a, \theta_b, \theta_c}$ is the algebra homomorphism given by\footnote{These elementary tangles (and their inverses) generate the reduced $\mathfrak{gl}_2$-skein algebra of the triangle, as shown in Lemma \ref{lem:red_gl2_triangle}.}
\[
F : \overline{\SkAlg}_q^{\mathfrak{gl}_2}(D_3) \rightarrow \SkAlg_q^{\mathfrak{gl}_1}(\widetilde{D_3})
\]
\begin{gather*}
\overrightarrow{a_+b_+} \mapsto q^{-\frac{\theta_a + \theta_b}{2\pi}} \overrightarrow{a_{12}b_{21}}, \quad
\overleftarrow{a_+b_+} \mapsto q^{-\frac{\theta_a + \theta_b}{2\pi}} \overleftarrow{a_{21}b_{12}}, \\
\overrightarrow{b_+c_+} \mapsto q^{-\frac{\theta_b + \theta_c}{2\pi}} \overrightarrow{b_{12}c_{21}}, \quad
\overleftarrow{b_+c_+} \mapsto q^{-\frac{\theta_b + \theta_c}{2\pi}} \overleftarrow{b_{21}c_{12}}, \\
\overrightarrow{c_+a_+} \mapsto q^{-\frac{\theta_c + \theta_a}{2\pi}} \overrightarrow{c_{12}a_{21}}, \quad
\overleftarrow{c_+a_+} \mapsto q^{-\frac{\theta_c + \theta_a}{2\pi}} \overleftarrow{c_{21}a_{12}}. 
\end{gather*}

\end{eg}

\begin{prop}\label{prop:quantum-UV-IR-bimodule-homomorphism}
Considering $\Sk_q^{\mathfrak{gl}_1}(\widetilde{Y}, \widetilde{\Gamma})$ as a $\overline{\SkAlg}_q^{\mathfrak{gl}_2}(V(\Gamma)^+)$-$\overline{\SkAlg}_q^{\mathfrak{gl}_2}(V(\Gamma)^-)$-bimodule, with the bimodule structure induced from the algebra homomorphism 
\[
F: \overline{\SkAlg}_q^{\mathfrak{gl}_2}(V(\Gamma)^{\pm}) \rightarrow \SkAlg_q^{\mathfrak{gl}_1}(V(\widetilde{\Gamma})^{\pm}),
\]
the quantum UV-IR map
\[
F : \overline{\Sk}_q^{\mathfrak{gl}_2}(Y, \Gamma) \rightarrow \Sk_q^{\mathfrak{gl}_1}(\widetilde{Y}, \widetilde{\Gamma})
\]
is a bimodule homomorphism. 
\end{prop}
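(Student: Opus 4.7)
The strategy is to leverage the locality of both constructions. The bimodule structure on $\overline{\Sk}_q^{\mathfrak{gl}_2}(Y,\Gamma)$ is defined by stacking skeins from $\overline{\SkAlg}_q^{\mathfrak{gl}_2}(D_{\deg v})$ in a collar $C_v \cong D_{\deg v} \times I$ near each vertex $v$ of $\Gamma$, and the bimodule structure on $\Sk_q^{\mathfrak{gl}_1}(\widetilde{Y},\widetilde{\Gamma})$ is, \emph{by definition}, the one pulled back along the algebra homomorphism $F$ computed on the vertex algebras (as in Example \ref{eg:angled-homomorphism}). Thus the proposition reduces to showing that $F$ commutes with this localization.

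I would fix $v \in V(\Gamma)$ of degree $n$, an element $[x] \in \overline{\SkAlg}_q^{\mathfrak{gl}_2}(D_n)$, and a stated skein $[L] \in \overline{\Sk}_q^{\mathfrak{gl}_2}(Y,\Gamma)$, and (after isotopy) assume $L$ is disjoint from a small collar $C_v$ about $v$. Then $[x]\cdot[L]$ is represented by the concatenation $x \sqcup L$ obtained by placing a representative of $x$ in $C_v$ and gluing to $L$ along the inner interface $D_n \times \{*\}$. Choose the WKB foliation on $C_v$ compatible with the global one, as in Example \ref{eg:angled-homomorphism}. The key observation is that direct lifts, detours, and exchanges are local moves supported in the interior of pieces: every lift $\widetilde{x \sqcup L}$ decomposes uniquely as a concatenation $\widetilde{x} \cdot \widetilde{L}$ of a lift $\widetilde{x}$ of $x$ inside $\widetilde{C}_v$ with a lift $\widetilde{L}$ of $L$ in the complement, whose boundary conditions on the interior interface agree via the dictionary of Definition \ref{defn:dictionary-boundary-state-sheet}. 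Conversely, any such compatible pair stacks to a lift of $x \sqcup L$. Summing over all such decompositions (with the inner interface states summed to implement the action of the vertex algebra) yields
\[
F([x]\cdot[L]) \;=\; \sum_{\widetilde{x},\widetilde{L}} c_{\widetilde{x}}\, c_{\widetilde{L}}\, [\widetilde{x}\cdot\widetilde{L}] \;=\; F([x])\cdot F([L]),
\]
and the right action is handled symmetrically.

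The main technical point to verify is the multiplicativity of the coefficient $c_{\widetilde{L}}$ under stacking. The exchange and framing factors are local to crossings and half-twists, so they contribute cleanly to whichever side contains them. The only potentially subtle contribution is the turning factor, which in the 3d setting carries non-integer exponents depending on the generalized angle structure: we must ensure that the total turning of a concatenated lift equals the sum of the turnings of its two pieces. I would address this by arranging the interface $D_n \times \{*\}$ to be transverse to every leaf of the WKB foliation near $v$, so that link diagrams on the leaf space meet the interface in straight horizontal segments contributing no turning, and the curvature integrals over the two pieces simply add. With this calibration in place, the bimodule homomorphism property becomes a direct consequence of the local and multiplicative nature of the UV-IR construction—indeed, the algebra homomorphisms computed in Example \ref{eg:angled-homomorphism} were precisely extracted from this local calculation, so the only thing to check is that no global obstruction interferes, which the transversality of the interface rules out.
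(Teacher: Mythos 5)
Your argument is correct and takes essentially the same route as the paper, whose entire proof is the one-line observation that the claim is immediate from the locality of the quantum UV-IR map --- precisely the factorization of lifts and multiplicativity of the coefficients $c_{\widetilde{L}}$ that you spell out, with the bimodule structure on the $\mathfrak{gl}_1$ side being by definition the one induced by $F$ on the vertex algebras. One small simplification: since the vertex skein $x$ sits in a collar of $v$ disjoint from $L$ and their strands never join, there is no interface state sum to perform and the exchange, framing, and turning contributions are supported in disjoint pieces of the leaf space, so the transversality calibration of the interface you introduce is not actually needed.
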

\begin{proof}
This is immediate from the locality of the quantum UV-IR map. 
\end{proof}

It also follows immediately from the fact that the quantum UV-IR map is constructed locally that it is compatible with the splitting maps on both sides: 
\begin{prop} \label{prop:UVIRMapCompatibleWithSplitting}
The quantum UV-IR map is compatible with the splitting maps on both sides. 
That is, if $Y$ is obtained by gluing $Y_i$'s with compatible branched covers and generalized angle structures, we have a commuting square
\[
\begin{tikzcd}
\overline{\Sk}^{\mathfrak{gl}_2}_{q}(Y) \arrow[d, "\sigma^{\mathfrak{gl}_2}"] \arrow[r, "F_{Y}"] & \Sk^{\mathfrak{gl}_1}_{q}(\widetilde{Y}) \arrow[d, "\sigma^{\mathfrak{gl}_1}"]\\
\overline{\bigotimes}_{i}\overline{\Sk}^{\mathfrak{gl}_2}_{q}(Y_i) \arrow[r, "\overline{\otimes}_i F_{Y_i}"] & \overline{\bigotimes}_{i}\Sk^{\mathfrak{gl}_1}_{q}(\widetilde{Y}_i)
\end{tikzcd}. 
\]
\end{prop}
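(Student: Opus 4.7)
The plan is to track a representative tangle through both paths of the square and observe that the quantum UV-IR map interacts with splitting in a purely local fashion, compatible with the state/sheet dictionary. Fix a stated ribbon tangle $L$ representing an element of $\overline{\Sk}^{\mathfrak{gl}_2}_q(Y)$. By a small isotopy, I would arrange $L$ to be simultaneously in general position with respect to the WKB foliation of $\widetilde{Y}$ and with respect to the cutting surfaces realizing the decomposition $Y=\bigcup_i Y_i$, so that all crossings, half-twists, intersections with singular leaves, and transverse intersections with the cutting surfaces occur at distinct heights and away from each other. In particular, every ``local feature'' that contributes to the construction of $F_Y(L)$ lies strictly in the interior of some $Y_i$.

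First I would analyze the clockwise path ``$\sigma^{\mathfrak{gl}_1}\circ F_Y$''. By construction, $F_Y([L]) = \sum_{\widetilde L} c_{\widetilde L}[\widetilde L]$, where the sum is over all lifts of $L$ produced from direct lifts, detours, and exchanges, and $c_{\widetilde L}$ is a product of exchange, turning, and framing factors. Since the Euclidean/angle structure on the leaf space of $Y$ is compatible with that of each $Y_i$ (by compatibility of the generalized angle structures), turning numbers integrate additively across cutting surfaces, so each $c_{\widetilde L}$ factors as $\prod_i c_{\widetilde L \cap \widetilde{Y}_i}$. Applying $\sigma^{\mathfrak{gl}_1}$ then splits each $\widetilde{L}$ along the preimages of the cutting surfaces in $\widetilde{Y}$, assigning each newly created boundary point a sheet label in $\{1,2\}$ and summing over these labels.

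Next I would analyze the counterclockwise path ``$(\overline{\otimes}_i F_{Y_i})\circ \sigma^{\mathfrak{gl}_2}$''. Splitting $L$ first via $\sigma^{\mathfrak{gl}_2}$ produces a sum $\sum_{\vec\epsilon}\bigotimes_i [L_i^{\vec\epsilon}]$ over all state assignments on the newly created boundary points. Applying $F_{Y_i}$ to each piece then produces lifts of $L_i^{\vec\epsilon}$ in $\widetilde{Y}_i$ using the same local moves (direct lift, detour, exchange) and the same local coefficients, but now with boundary conditions on the newly created boundary points dictated by the state/sheet dictionary of Definition \ref{defn:dictionary-boundary-state-sheet}. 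The key point is that this dictionary is a bijection between $\{\pm\}$ and $\{1,2\}$ which depends only on the orientation of the tangle at the boundary, and crucially respects the global orientation convention of the WKB foliation (``sheet $2$'' toward ideal vertices, ``sheet $1$'' away), so the label it assigns is unambiguous across a cutting surface.

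Matching the two paths is then essentially a tautology: both sums are indexed by the same data, namely a lift of $L$ to a framed oriented link in $\bigsqcup_i \widetilde{Y}_i$ together with a choice of sheet label on each newly created boundary point, and both carry the same coefficient, the product of local factors in each $Y_i$. The main (minor) obstacle will be carefully verifying that the dictionary in Definition \ref{defn:dictionary-boundary-state-sheet} is consistent on the two sides of a cutting surface (so that the sum over $\mathfrak{gl}_2$-states $\vec\epsilon$ is in natural bijection with the sum over sheet labels appearing from $\sigma^{\mathfrak{gl}_1}$), and checking that the half-twist/framing factors associated with the boundary markings do not double-count under splitting; both are straightforward unpackings of the definitions. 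With these checks in place, commutativity of the square follows from strict locality of all ingredients.
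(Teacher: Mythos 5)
Your plan is correct and is essentially the paper's argument: the paper disposes of this proposition with a one-line appeal to the fact that the quantum UV-IR map is constructed locally, and your write-up is just the natural unpacking of that locality — putting the tangle in general position relative to both the WKB foliation and the cutting surfaces, factoring the exchange/turning/framing coefficients across the pieces (using compatibility of the angle structures for the turning factors), and matching the index sets of the two composite sums via the state/sheet dictionary of Definition \ref{defn:dictionary-boundary-state-sheet}. One small inaccuracy: $\sigma^{\mathfrak{gl}_1}$ does not itself sum over sheet labels, since the endpoints of a given lift $\widetilde{L}$ already lie on a definite lift $\widetilde{e}_{12}$ or $\widetilde{e}_{21}$ of the marking; the summation over sheet labels at cut points is induced by the sum over global lifts inside $F_Y$, which is exactly how your final matching paragraph treats it, so the argument is unaffected.
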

Here, $\overline{\bigotimes}_{i}\Sk^{\mathfrak{gl}_1}_{q}(\widetilde{Y}_i)$ denotes the relative tensor product of the stated $\mathfrak{gl}_1$-skein modules $\Sk^{\mathfrak{gl}_1}_q(\widetilde{Y}_i)$, i.e., the quotient of the ordinary tensor product $\bigotimes_{i}\Sk^{\mathfrak{gl}_1}_{q}(\widetilde{Y}_i)$ by the gluing relations, as well as extra 3-term relations for each cone point of $\widetilde{Y}$; see Appendix \ref{sec:stated-gl1-skeins}.

\section{Compatibility for surfaces}\label{sec:surface-compatibility}
Now that we have carefully reviewed both the quantum trace map and the quantum UV-IR map (both in 2d and 3d), we are ready to compare them; 
we will do this for surfaces in this section, and for 3-manifolds in Section \ref{sec: 3-manifold compatibility}. 

Let $\Sigma$ be a surface without boundary (but with punctures), and let $\widetilde{\Sigma} = \widetilde{\Sigma}_{\tau}$ be its branched double cover associated to some ideal triangulation $\tau$ of $\Sigma$. 
In this section, we will construct algebra homomorphisms
\[
\pi : \overline{\SkAlg}^{\mathfrak{gl}_2}_{q}(\Sigma) \rightarrow \overline{\SkAlg}^{\mathfrak{sl}_2}_A(\Sigma) \otimes \SkAlg^{\mathfrak{gl}_1}_{-A}(\Sigma)
\]
and
\[
\evmap : \SkAlg^{\mathfrak{gl}_1}_q(\widetilde{\Sigma}_\tau) \rightarrow \SQTS_{\tau}(\Sigma) \otimes \SkAlg^{\mathfrak{gl}_1}_{-A}(\Sigma)
\]
which fit into the following commutative square of algebra maps: 
\begin{equation}\label{eqn: surface compatibility}
\begin{tikzcd}
\overline{\SkAlg}^{\mathfrak{gl}_2}_{q}(\Sigma) \arrow[d, "\pi"] \arrow[r, "F_{\tau}"] & \SkAlg^{\mathfrak{gl}_1}_{q}(\widetilde{\Sigma}_{\tau}) \arrow[d, "\evmap"]\\
\overline{\SkAlg}^{\mathfrak{sl}_2}_{A}(\Sigma) \otimes \SkAlg^{\mathfrak{gl}_1}_{-A}(\Sigma) \arrow[r, "\Tr_{\tau} \otimes \mathrm{id}"] & \SQTS_{\tau}(\Sigma) \otimes \SkAlg^{\mathfrak{gl}_1}_{-A}(\Sigma)
\end{tikzcd}.
\end{equation}
We will do so by first constructing a similar commutative square for ideal triangles and then show that they glue consistently. 
For surfaces with boundary (such as ideal triangles), the top left and right corner of such commutative diagrams will carry a product twisted by a sign that we describe below.

\subsection{From $\mathfrak{gl}_2$ to $\mathfrak{sl}_2$}\label{subsec:gl2tosl2}
For each stated oriented tangle $\vec{L}$ in $Y$, we will define a quantity $b(\vec{L}) \in \frac{1}{2}\mathbb{Z}/2\mathbb{Z}$ that depends only on the boundary behavior of $\vec{L}$ (hence $b$ for boundary). 
This will play a crucial role in the construction of the left vertical map $\pi$ in the commutative square. 
We will also use it to twist the product structures on $\SkAlg^{\mathfrak{gl}_2}_{q}(\Sigma)$ and on $\SkAlg^{\mathfrak{gl}_1}_{q}(\widetilde{\Sigma})$. 

\begin{defn}
Let $\vec{L}$ be a stated oriented tangle in a boundary marked 3-manifold $(Y,\Gamma)$. 
Define $b(\vec{L}) \in \frac{1}{2}\mathbb{Z}/2\mathbb{Z}$ to be
\[
b(\vec{L}) := \sum_{e \in E(\Gamma)}\sum_{\{p_1, p_2\} \subset e} b(\{p_1, p_2\}),
\]
where the second sum is over all (unordered) pairs $\{p_1, p_2\}$ of boundary points of $\vec{L}$ on the edge $e$ of the boundary marking $\Gamma$, 
and $b(\{p_1, p_2\}) \in \{\pm \frac{1}{2}, 0\}$ is determined by the following rules: 
\[
\vcenter{\hbox{

}}
\;\;\rightsquigarrow\;\; 
\frac{1}{2}
\;\;
,
\]
and all other contributions are zero.\footnote{Note that the only non-zero contributions are from pairs of boundary points which lift to different boundary markings under the quantum UV-IR map (see the dictionary \ref{defn:dictionary-boundary-state-sheet}. } 
\end{defn}

Now, we are ready to construct the natural ``$\mathfrak{gl}_2$-$\mathfrak{sl}_2$ map'' from the $\mathfrak{gl}_2$-skeins to the tensor product of $\mathfrak{sl}_2$ and $\mathfrak{gl}_1$-skeins. 
\begin{prop}\label{prop:gl2tosl2}
The map
\begin{align*}
\pi : \Sk^{\mathfrak{gl}_2}_{q}(Y) &\rightarrow \Sk^{\mathfrak{sl}_2}_A(Y) \otimes \Sk^{\mathfrak{gl}_1}_{-A}(Y) \\
[\vec{L}]^{\mathfrak{gl}_2} &\mapsto (-1)^{b(\vec{L})} [L]^{\mathfrak{sl}_2} \otimes [\vec{L}]^{\mathfrak{gl}_1}
\end{align*}
is a well-defined $R$-module homomorphism. 
\end{prop}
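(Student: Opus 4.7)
The plan is to verify that $\pi$ is well-defined by checking it respects ambient isotopy and each defining relation of $\Sk^{\mathfrak{gl}_2}_q(Y)$. Since $[L]^{\mathfrak{sl}_2}$ and $[\vec{L}]^{\mathfrak{gl}_1}$ are each invariants and $b(\vec{L})$ depends only on data at the boundary, invariance under interior isotopy is immediate. Any $\mathfrak{gl}_2$-skein move localized in the interior also leaves $b$ unchanged, so the sign $(-1)^{b(\vec{L})}$ factors out and the problem reduces to verifying that the interior relations \eqref{eq:gl2skeinrel1}--\eqref{eq:gl2extraskeinrel2} hold in $\Sk^{\mathfrak{sl}_2}_A(Y)\otimes\Sk^{\mathfrak{gl}_1}_{-A}(Y)$ under the parameter identification $q=-A^2$.

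This identification is forced by the base ring $R=\mathbb{Z}[A^{\pm 1/2},(-A^2)^{\pm 1/2}]$ together with the matching of loop values $q+q^{-1}=-A^2-A^{-2}$, and with it in hand the interior relations collapse to short algebraic checks. Expanding an $\mathfrak{sl}_2$ crossing as $A\cdot(\text{vertical})+A^{-1}\cdot(\text{horizontal})$ and using the $\mathfrak{gl}_1$ crossing identity $X_{\pm}=-A^{\pm 1}\cdot(\text{vertical})$ at parameter $-A$, the cross terms involving the $\mathfrak{sl}_2$ horizontal smoothing cancel in the difference $\pi([\vec L_+]^{\mathfrak{gl}_2})-\pi([\vec L_-]^{\mathfrak{gl}_2})$, leaving $(A^{-2}-A^2)\,\pi([\vec L_{\|}]^{\mathfrak{gl}_2})=(q-q^{-1})\,\pi([\vec L_{\|}]^{\mathfrak{gl}_2})$, which is \eqref{eq:gl2skeinrel1}. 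The loop relation \eqref{eq:gl2skeinrel2} and the half-twist relation \eqref{eq:gl2skeinrel3} reduce respectively to $(-A^2-A^{-2})\cdot 1=q+q^{-1}$ and to the square-root identity $(-A^3)^{1/2}\cdot(-A)^{1/2}=q$, both manifest in $R$ once one writes $(-A)^{1/2}=(-A^2)^{1/2}A^{-1/2}$. The remaining relations \eqref{eq:gl2extraskeinrel1}--\eqref{eq:gl2extraskeinrel2} can then be handled by the same direct-expansion argument used for the quantum UV-IR map in the proof of Theorem \ref{thm:2dquantumUVIR-welldefinedness}.

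The genuinely new content lies in the stated boundary case, where the sign $(-1)^{b(\vec{L})}$ is essential. Under a stated $\mathfrak{gl}_2$-skein move at the boundary, the $\mathfrak{sl}_2$ side contributes coefficients involving powers of $(-A^2)^{1/2}=q^{1/2}$ (as in \eqref{item: sl2 stated skein rel 1}, \eqref{item: sl2 stated skein rel 2}) while the $\mathfrak{gl}_1$ side contributes powers of $(-A)^{\pm 1/2}$; the product typically differs from the $\mathfrak{gl}_2$ stated coefficient by a sign. The local rules defining $b(\{p_1,p_2\})$ are tuned precisely so that the change $\Delta b$ induced by each boundary move exactly cancels this discrepancy, which is why $b$ has been set up as a sum over \emph{unordered} pairs of boundary points with the specific $\pm\tfrac{1}{2}$ contributions recorded in the table. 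The main obstacle is the case-by-case verification of this cancellation: for each type of stated boundary move (cap creation, endpoint reordering, orientation reversal), one must compute $\Delta b$ from the local table defining $b$ and match it against the ratio of coefficients on the $\mathfrak{gl}_2$ and $\mathfrak{sl}_2\otimes\mathfrak{gl}_1$ sides. Once these sign chases are complete, $\pi$ descends to a well-defined $R$-module homomorphism.
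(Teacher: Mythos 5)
Your proposal follows essentially the same route as the paper: $\pi$ is checked directly against every stated $\mathfrak{gl}_2$-relation under the identification $q=-A^2$, with the interior relations \eqref{eq:gl2skeinrel1}--\eqref{eq:gl2extraskeinrel2} reducing to exactly the short expansions you describe (the paper handles \eqref{eq:gl2extraskeinrel1}--\eqref{eq:gl2extraskeinrel2} via the auxiliary identity \eqref{eq:small-lemma-gl2-sl2}, the analogue of \eqref{eq:small-lemma} from the UV-IR proof). The boundary relations \eqref{eq:gl2boundaryskeinrel1}--\eqref{eq:gl2boundaryskeinrel4}, which you only outline, are where the paper's proof spends its effort, and the cancellation you predict — the $(-1)^{\pm\frac{1}{2}}$ contributions of $b$ offsetting the mismatch between the $(-A^2)^{\pm\frac{1}{2}}$ factors on the $\mathfrak{sl}_2$ side and the $(-A)^{\pm\frac{1}{2}}$ factors on the $\mathfrak{gl}_1$ side — is indeed exactly how those case-by-case computations close, so your plan is correct, though those checks (including the matrix relations and their orientation reversals) still need to be written out to complete the argument.
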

\begin{proof}
We need to check that this map respects all the stated $\mathfrak{gl}_2$-skein relations, i.e., the relations \eqref{eq:gl2skeinrel1}-\eqref{eq:gl2extraskeinrel2} 
and the boundary relations \eqref{eq:gl2boundaryskeinrel1}-\eqref{eq:gl2boundaryskeinrel4}. 

\begin{itemize}
\item The first skein relation \eqref{eq:gl2skeinrel1}: 
\begin{align*}
&\pi\qty(
\qty[
\vcenter{\hbox{

]^{\mathfrak{gl}_2}
).
\end{align*}
\end{itemize}
For the other skein relations obtained by simultaneous orientation reversal of the tangles, the proof is identical. 
\end{proof}

\subsection{Sign-twisted products}
Because of the extra factor $b(\vec{L})$, the $\mathfrak{gl}_2$-$\mathfrak{sl}_2$ map
\begin{align*}
\pi : \Sk^{\mathfrak{gl}_2}_{q}(\Sigma \times I) &\rightarrow \Sk^{\mathfrak{sl}_2}_A(\Sigma \times I) \otimes \Sk^{\mathfrak{gl}_1}_{-A}(\Sigma \times I) \\
[\vec{L}]^{\mathfrak{gl}_2} &\mapsto (-1)^{b(\vec{L})} [L]^{\mathfrak{sl}_2} \otimes [\vec{L}]^{\mathfrak{gl}_1}
\end{align*}
is \emph{not} an algebra homomorphism. 
We would like to fix this by modifying the product on $\SkAlg^{\mathfrak{gl}_2}_{q}(\Sigma)$ accordingly. 

The quantity $b(\vec{L})$ itself unfortunately does not behave nicely with respect to the $\mathfrak{gl}_2$ skein relations (i.e., it does \emph{not} give a grading on the $\mathfrak{gl}_2$ skein module). 
However, the relative version of $b$ that we define below behaves well. 

\begin{defn}
Given two stated oriented tangles $\vec{L}_1$ and $\vec{L}_2$ in $\Sigma \times I$, define
\[
b(\vec{L}_1, \vec{L}_2) := b(\vec{L}_1 \cdot \vec{L}_2) - b(\vec{L}_1) - b(\vec{L}_2) \in \frac{1}{2}\mathbb{Z}/2\mathbb{Z}, 
\]
where $\vec{L}_1 \cdot \vec{L}_2$ denotes the stated oriented tangle in $\Sigma \times I$ obtained by stacking $\vec{L}_1$ above $\vec{L}_2$ as usual. 
\end{defn}
It is useful to note that $b(\vec{L}_1, \vec{L}_2) = -b(\vec{L}_2, \vec{L}_1)$. 

\begin{prop}\label{prop:b_grading}
The quantity $b(\vec{L}_1, \vec{L}_2)$ is invariant under stated $\mathfrak{gl}_2$-skein relations applied to $\vec{L}_1$ or to $\vec{L}_2$. 
Therefore, for any fixed $\vec{L}$, 
\[
b(\vec{L}, \cdot) = -b(\cdot, \vec{L})
\]
gives a grading on $\Sk^{\mathfrak{gl}_2}_q(\Sigma \times I)$. 
\end{prop}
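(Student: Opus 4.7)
The plan is to exploit the locality of $b$. First, I observe that $b$ is additive over boundary edges: $b(\vec{L}) = \sum_{e} b_e(\vec{L})$, where $b_e$ depends only on the boundary points of $\vec{L}$ lying on $e$ together with their heights. Writing out the rules defining $b(\{p_1,p_2\})$ and comparing with the dictionary of Definition~\ref{defn:dictionary-boundary-state-sheet}, one sees that $b(\{p_1,p_2\})$ is nonzero precisely when the two endpoints lift to different sheets $\widetilde{e}_{12}$ and $\widetilde{e}_{21}$ of the branched double cover, with sign governed by which of the two is higher. Let $N_e(\vec{L})$ denote the number of boundary points of $\vec{L}$ on $e$ and $W_e(\vec{L})$ the same count signed by $+1$ for sheet $12$ and $-1$ for sheet $21$. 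A direct computation from the definition of the stacking product then yields the compact formula
\[
b_e(\vec{L}_1,\vec{L}_2) \;=\; \tfrac{1}{4}\bigl[N_e(\vec{L}_2)\,W_e(\vec{L}_1) \;-\; N_e(\vec{L}_1)\,W_e(\vec{L}_2)\bigr],
\]
exhibiting $b(\vec{L}_1,\vec{L}_2)$ as a bilinear pairing in these boundary statistics. The antisymmetry $b(\vec{L}_1,\vec{L}_2) = -b(\vec{L}_2,\vec{L}_1)$ asserted in the proposition is an immediate consequence.

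Given this, the proof splits according to the type of skein relation being applied to $\vec{L}_2$ (the argument for relations applied to $\vec{L}_1$ being symmetric). The interior $\mathfrak{gl}_2$-skein relations \eqref{eq:gl2skeinrel1}--\eqref{eq:gl2extraskeinrel2} are each supported in a ball in the interior of $\Sigma\times I$ disjoint from $\partial\Sigma\times I$, so they manifestly preserve both $N_e(\vec{L}_2)$ and $W_e(\vec{L}_2)$ for every edge $e$. The formula above then shows that $b(\vec{L}_1,\vec{L}_2)$ is unchanged by any such relation.

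The remaining work is to handle the four boundary relations \eqref{eq:gl2boundaryskeinrel1}--\eqref{eq:gl2boundaryskeinrel4}. Each is localized near a single edge $e$, so it suffices to verify, term by term, that every summand on each side of the relation produces the same value of $b_e(\vec{L}_1,\cdot)$ modulo $2$ in $\frac{1}{2}\mathbb{Z}/2\mathbb{Z}$. Using the dictionary of Definition~\ref{defn:dictionary-boundary-state-sheet} one reads off the sheet-label data of each term: for \eqref{eq:gl2boundaryskeinrel1} the cup $\vec{c}_{\mu,-\mu}$ has both endpoints on the same sheet (namely $\widetilde{e}_{21}$ if $\mu=+$ and $\widetilde{e}_{12}$ if $\mu=-$); for \eqref{eq:gl2boundaryskeinrel2} the two summands indexed by $\mu\in\{\pm\}$ contribute endpoints of opposite sheet labels; and for \eqref{eq:gl2boundaryskeinrel3}--\eqref{eq:gl2boundaryskeinrel4} the four terms, indexed by the two states at each of the two strands, produce balancing shifts in $(N_e,W_e)$. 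Plugging these shifts into the bilinear formula and reducing modulo $2$ yields the required invariance. I expect the main obstacle to lie in this case analysis --- in particular, keeping straight the simultaneous-orientation-reversed versions of each relation and the bookkeeping of the sixteen combinations of incoming states that appear in \eqref{eq:gl2boundaryskeinrel3}--\eqref{eq:gl2boundaryskeinrel4}, together with the framing sign that enters \eqref{eq:gl2boundaryskeinrel4}. Once invariance is established, decomposing $\Sk_q^{\mathfrak{gl}_2}(\Sigma\times I)$ into the eigenspaces of $b(\vec{L},\cdot)$ produces the claimed grading.
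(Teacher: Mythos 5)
Your reduction of $b(\vec{L}_1,\vec{L}_2)$ to the edge statistics $(N_e,W_e)$ is where the argument breaks. The proposed formula $b_e(\vec{L}_1,\vec{L}_2)=\tfrac14\bigl[N_e(\vec{L}_2)W_e(\vec{L}_1)-N_e(\vec{L}_1)W_e(\vec{L}_2)\bigr]$ is false, because the sign of a cross-pair contribution depends on the orientations (equivalently, the states) of the two endpoints, not only on which sheets they lift to and which is higher. Concretely, let $\vec{L}_1$ have a single endpoint on $e$ that is outgoing with state $+$ (lift $\widetilde{e}_{12}$). Pairing it with an endpoint of $\vec{L}_2$ that is incoming with state $+$ (lift $\widetilde{e}_{21}$) contributes $+\tfrac12$, while pairing it with an endpoint that is outgoing with state $-$ (also lift $\widetilde{e}_{21}$) contributes $-\tfrac12$; these differ in $\tfrac12\mathbb{Z}/2\mathbb{Z}$, yet both situations have identical $(N_e,W_e)$ data for both tangles. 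So no expression in $(N_e,W_e)$ alone can compute $b(\vec{L}_1,\vec{L}_2)$, and the antisymmetry, though true, is not established by this route. Worse, the bookkeeping you propose actually predicts a failure where there is none: in the $(+,-)$ row of \eqref{eq:gl2boundaryskeinrel4} the two left-hand endpoints both lift to $\widetilde{e}_{21}$, while in one right-hand term both endpoints lift to $\widetilde{e}_{12}$, so $W_e(\vec{L}_2)$ jumps by $4$ and your formula would change $b$ by $N_e(\vec{L}_1)$ modulo $2$; the ``balancing shifts'' you invoke do not balance at the level of $(N_e,W_e)$.

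The correct mechanism is finer and, once isolated, much shorter (it is essentially the paper's one-line proof). Since stacking places every endpoint of $\vec{L}_1$ above every endpoint of $\vec{L}_2$, $b(\vec{L}_1,\vec{L}_2)$ is exactly the sum of cross-pair contributions, and from the defining table each such contribution vanishes when the two endpoints lift to the same marking and equals $-\tfrac12\,\mu(p_1)\,o(p_2)$ otherwise, where $\mu(p_1)$ is the state of the upper endpoint and $o(p_2)=\pm1$ records whether the lower endpoint is outgoing or incoming. Interior relations do not touch boundary data at all; in every nonzero term of \eqref{eq:gl2boundaryskeinrel3}--\eqref{eq:gl2boundaryskeinrel4} the two affected endpoints either carry the same multiset of (lift, orientation) data as before, or have equal lifts and opposite orientations, in which case their joint pairing against any external endpoint cancels; and the pair created or annihilated in \eqref{eq:gl2boundaryskeinrel1}--\eqref{eq:gl2boundaryskeinrel2} is always of this last type (one incoming, one outgoing, opposite states, hence the same lift), which is the only case the paper's proof singles out as needing a check. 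To repair your write-up, redo the case analysis tracking the pair (lift, orientation) of each affected endpoint rather than the counts $N_e$ and $W_e$; the skeleton of your argument then survives, but the bilinear formula must be discarded.
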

\begin{proof}
The only thing we need to check is the invariance under the stated skein relations \eqref{eq:gl2skeinrel3} and \eqref{eq:gl2boundaryskeinrel1} which creates (or annihilates) a pair of boundary points, which is immediate from our definition of $b(\vec{L})$. 
\end{proof}

\begin{defn}
Define the \emph{sign-twisted $\mathfrak{gl}_2$-skein algebra} $\SkAlg^{\mathfrak{gl}_2, \st}_q(\Sigma)$ to be 
the usual $\mathfrak{gl}_2$-skein module $\Sk^{\mathfrak{gl}_2}_q(\Sigma \times I)$, 
but with the product structure twisted by a sign determined by $(-1)^{b(\cdot, \cdot)}$. 
That is, 
\[
[\vec{L}_1] \overset{\st}{\cdot} [\vec{L}_2] := (-1)^{-b(\vec{L}_1, \vec{L}_2)}[\vec{L}_1 \cdot \vec{L}_2].
\]
This gives a well-defined associative product, as $b(\cdot, \cdot)$ satisfies the following obvious cocycle condition:
\[
b(\vec{L}_1, \vec{L}_2 \cdot \vec{L}_3) + b(\vec{L}_2, \vec{L}_3) = b(\vec{L}_1, \vec{L}_2) + b(\vec{L}_1, \vec{L}_3) + b(\vec{L}_2, \vec{L}_3) = b(\vec{L}_1, \vec{L}_2) + b(\vec{L}_1 \cdot \vec{L}_2, \vec{L}_3). 
\]

Likewise, define the \emph{sign-twisted $\mathfrak{gl}_2$-skein module} $\Sk^{\mathfrak{gl}_2, \st}_q(Y,\Gamma)$ to be the usual $\mathfrak{gl}_2$-skein module $\Sk^{\mathfrak{gl}_2}_q(Y,\Gamma)$, 
but with the sign-twisted bimodule structure, i.e., considered as a $\otimes_{v\in V(\Gamma)^+}\SkAlg^{\mathfrak{gl}_2,\st}_q(D_{\deg v})$-$\otimes_{w\in V(\Gamma)^-}\SkAlg^{\mathfrak{gl}_2,\st}_q(D_{\deg w})$-bimodule. 
\end{defn}

\begin{rmk}
Since our sign-twisting only changes the product by a sign, 
bad arcs generate the same ideal as before, and it follows that the sign-twisting descends well to reduced skein modules. 
\end{rmk}

\begin{prop}\label{prop:gl2-sl2-map}
The map
\begin{align*}
\pi : \SkAlg^{\mathfrak{gl}_2, \st}_{q}(\Sigma) &\rightarrow \SkAlg^{\mathfrak{sl}_2}_A(\Sigma) \otimes \SkAlg^{\mathfrak{gl}_1}_{-A}(\Sigma) \\
[\vec{L}]^{\mathfrak{gl}_2} &\mapsto (-1)^{b(\vec{L})} [L]^{\mathfrak{sl}_2} \otimes [\vec{L}]^{\mathfrak{gl}_1}
\end{align*}
is an algebra homomorphism. 
\end{prop}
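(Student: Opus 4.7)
The plan is to leverage the work already done in Proposition \ref{prop:gl2tosl2}, which establishes that $\pi$ is a well-defined $R$-module homomorphism out of $\Sk_q^{\mathfrak{gl}_2}(Y)$ (and in particular out of $\SkAlg_q^{\mathfrak{gl}_2,\mathrm{st}}_q(\Sigma)$, since the sign-twisted skein algebra coincides with the untwisted one as an $R$-module). Thus the only remaining thing to verify is that $\pi$ is multiplicative, namely
\[
\pi\bigl([\vec L_1]\overset{\mathrm{st}}{\cdot}[\vec L_2]\bigr)=\pi([\vec L_1])\cdot\pi([\vec L_2])
\]
for any two stated oriented ribbon tangles $\vec L_1,\vec L_2$ in $\Sigma\times I$, where on the right-hand side we use the ordinary tensor product of algebras.

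The plan for verifying this is a direct calculation using the defining identity $b(\vec L_1\cdot\vec L_2)=b(\vec L_1)+b(\vec L_2)+b(\vec L_1,\vec L_2)$. Unpacking the definitions,
\[
\pi\bigl([\vec L_1]\overset{\mathrm{st}}{\cdot}[\vec L_2]\bigr)
= (-1)^{-b(\vec L_1,\vec L_2)}\,\pi([\vec L_1\cdot\vec L_2])
= (-1)^{-b(\vec L_1,\vec L_2)+b(\vec L_1\cdot\vec L_2)}\,[L_1\cdot L_2]^{\mathfrak{sl}_2}\otimes[\vec L_1\cdot\vec L_2]^{\mathfrak{gl}_1}.
\]
The exponent collapses to $b(\vec L_1)+b(\vec L_2)$, and since stacking along $I$ is literally the product on both $\SkAlg_A^{\mathfrak{sl}_2}(\Sigma)$ and $\SkAlg_{-A}^{\mathfrak{gl}_1}(\Sigma)$, the right factor splits as $[L_1]^{\mathfrak{sl}_2}[L_2]^{\mathfrak{sl}_2}\otimes[\vec L_1]^{\mathfrak{gl}_1}[\vec L_2]^{\mathfrak{gl}_1}$. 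Regrouping the signs gives $\pi([\vec L_1])\cdot\pi([\vec L_2])$.

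In short, there is no real obstacle here: the sign-twisting on $\SkAlg_q^{\mathfrak{gl}_2,\mathrm{st}}(\Sigma)$ was defined precisely to absorb the ``boundary cocycle'' $(-1)^{b(\vec L_1,\vec L_2)}$ by which $\pi$ fails to be multiplicative with respect to the untwisted product, and the cocycle identity for $b(\cdot,\cdot)$ that makes the twisted product associative is the same identity that drives the computation above. The nontrivial ingredient was already supplied by Proposition \ref{prop:b_grading}, which ensures that $b(\vec L,\cdot)$ descends to the $\mathfrak{gl}_2$-skein module, so that the sign-twisted product -- and hence the multiplicativity check above -- makes sense at the level of skein algebras rather than just at the level of tangles.
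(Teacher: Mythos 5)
Your proposal is correct and matches the paper's proof: well-definedness is inherited from Proposition \ref{prop:gl2tosl2}, and multiplicativity is exactly the one-line computation using $b(\vec L_1\cdot\vec L_2)=b(\vec L_1)+b(\vec L_2)+b(\vec L_1,\vec L_2)$, which is precisely how the paper argues.
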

\begin{proof}
This is immediate from the definition of the sign-twisted product: 
\begin{align*}
\pi\qty(
[\vec{L}_1]^{\mathfrak{gl}_2} \overset{\st}{\cdot} [\vec{L}_2]^{\mathfrak{gl}_2} 
)
&= 
\pi\qty(
(-1)^{-b(\vec{L}_1, \vec{L}_2)}
[\vec{L}_1 \cdot \vec{L}_2]^{\mathfrak{gl}_2} 
) \\
&= 
(-1)^{-b(\vec{L}_1, \vec{L}_2) + b(\vec{L}_1 \cdot \vec{L}_2)}
[L_1 \cdot L_2]^{\mathfrak{sl}_2} \otimes [\vec{L}_1 \cdot \vec{L}_2]^{\mathfrak{gl}_1}\\
&= 
\qty(
(-1)^{b(\vec{L}_1)} [L_1]^{\mathfrak{sl}_2} \otimes [\vec{L}_1]^{\mathfrak{gl}_1}
) \cdot 
\qty(
(-1)^{b(\vec{L}_2)} [L_2]^{\mathfrak{sl}_2} \otimes [\vec{L}_2]^{\mathfrak{gl}_1}
)\\
&= \pi([\vec{L}_1]) \cdot \pi([\vec{L}_2]).
\end{align*}
\end{proof}
From the above, it follows that both $\Sk^{\mathfrak{gl}_2, \st}_{q}(Y)$ and $\Sk^{\mathfrak{sl}_2}_A(Y) \otimes \Sk^{\mathfrak{gl}_1}_{-A}(Y)$ can be considered as $\otimes_{v\in V(\Gamma)^+}\SkAlg^{\mathfrak{gl}_2,\st}_q(D_{\deg v})$-$\otimes_{w\in V(\Gamma)^-}\SkAlg^{\mathfrak{gl}_2,\st}_q(D_{\deg w})$-bimodules, where the bimodule structure on the latter is defined using $\pi$, and $\Gamma$ is the boundary marking for $Y$. 
The following proposition is clear: 
\begin{prop}
The map
\begin{align*}
\pi : \Sk^{\mathfrak{gl}_2, \st}_{q}(Y) &\rightarrow \Sk^{\mathfrak{sl}_2}_A(Y) \otimes \Sk^{\mathfrak{gl}_1}_{-A}(Y) \\
[\vec{L}]^{\mathfrak{gl}_2} &\mapsto (-1)^{b(\vec{L})} [L]^{\mathfrak{sl}_2} \otimes [\vec{L}]^{\mathfrak{gl}_1}
\end{align*}
is a bimodule homomorphism. 
\end{prop}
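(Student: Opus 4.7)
The plan is straightforward once one observes that the bimodule action is locally the same stacking operation that defines the sign-twisted product on $\SkAlg^{\mathfrak{gl}_2,\st}_q(D_{\deg v})$. First I would unpack the definitions: at a sink (resp.\ source) vertex $v \in V(\Gamma)$, the left (resp.\ right) action of $\SkAlg^{\mathfrak{gl}_2,\st}_q(D_{\deg v})$ on $\Sk^{\mathfrak{gl}_2,\st}_q(Y,\Gamma)$ is obtained by stacking a stated skein in a collar neighborhood $\cong D_{\deg v}\times I$ near $v$ (as in Figure~\ref{fig: module structure}), with the product sign-twisted by $(-1)^{-b(\vec{K},\vec{L})}$ where $\vec{K}$ is the algebra element and $\vec{L}$ the module element. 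The target bimodule structure on $\Sk^{\mathfrak{sl}_2}_A(Y)\otimes \Sk^{\mathfrak{gl}_1}_{-A}(Y)$ is, by definition, induced from the same algebra via $\pi$ applied at each vertex (using Proposition~\ref{prop:gl2-sl2-map}), so the claim is that $\pi$ intertwines these actions.

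The key step is to verify, for $\vec{K}$ a stated ribbon tangle supported in the collar near $v$ and $\vec{L}\in \Sk^{\mathfrak{gl}_2}_q(Y,\Gamma)$, that
\[
\pi\bigl([\vec{K}]\overset{\st}{\cdot}[\vec{L}]\bigr) \;=\; \pi([\vec{K}])\cdot \pi([\vec{L}]).
\]
Expanding both sides using $\pi([\vec{M}]^{\mathfrak{gl}_2}) = (-1)^{b(\vec{M})}[M]^{\mathfrak{sl}_2}\otimes [\vec{M}]^{\mathfrak{gl}_1}$ and using the cocycle identity
\[
b(\vec{K}\cdot \vec{L}) = b(\vec{K}) + b(\vec{L}) + b(\vec{K},\vec{L}),
\]
the sign twist on the left exactly cancels the $(-1)^{b(\vec{K},\vec{L})}$ discrepancy, leaving the matching sign $(-1)^{b(\vec{K})+b(\vec{L})}$ on both sides. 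The $\mathfrak{sl}_2$- and $\mathfrak{gl}_1$-factors are then manifestly multiplicative under stacking, since the underlying unoriented tangle $K\cdot L$ and the underlying oriented tangle $\vec{K}\cdot\vec{L}$ are literally concatenations. The cocycle identity itself holds because $b$ depends only on boundary behavior, and the pairs of boundary points relevant to $b(\vec{K},\vec{L})$ all lie on the edges of $\Gamma$ incident to $v$ -- exactly the setting of Proposition~\ref{prop:b_grading}.

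I expect no real obstacle here: the entire statement is a formal consequence of Proposition~\ref{prop:gl2-sl2-map} together with the observation that the bimodule action at $v$ is geometrically indistinguishable from the algebra product in $\SkAlg^{\mathfrak{gl}_2,\st}_q(D_{\deg v})$. No new skein relations enter, and the sign bookkeeping is identical to the calculation already performed for the algebra case. For multiple vertices, the actions commute (sinks versus sources act on opposite sides), and the argument applies independently at each $v$, so the bimodule structure over $\bigotimes_{v\in V(\Gamma)^+}\SkAlg^{\mathfrak{gl}_2,\st}_q(D_{\deg v})$ and $\bigotimes_{w\in V(\Gamma)^-}\SkAlg^{\mathfrak{gl}_2,\st}_q(D_{\deg w})$ is respected.
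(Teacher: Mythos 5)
Your proposal is correct and follows essentially the same route as the paper, which dismisses the statement as immediate precisely because the bimodule structure on the target is defined via $\pi$ at each vertex and the sign bookkeeping is the same cocycle cancellation $(-1)^{-b(\vec{K},\vec{L})+b(\vec{K}\cdot\vec{L})}=(-1)^{b(\vec{K})+b(\vec{L})}$ already carried out for the algebra-homomorphism case (Proposition \ref{prop:gl2-sl2-map}). The paper records no further argument, so your spelled-out verification of the intertwining at each sink and source vertex is exactly the content behind its ``the following proposition is clear.''
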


We can define the sign-twisted $\mathfrak{gl}_1$-skein algebra of $\widetilde{\Sigma}$ in exactly the same way, using the dictionary translating between the states and the lift described in Section \ref{subsec:stated_quantum_UV-IR}. 
That is, for any tangle $\vec{L}$ in $\widetilde{Y}$ whose boundary points are in general position so that no two boundary points project down to the same point in $Y$, 
we define $b(\vec{L}) \in \frac{1}{2}\mathbb{Z}/2\mathbb{Z}$ to be
\[
b(\vec{L}) := \sum_{e \in E(\Gamma)}\sum_{\substack{p_1 \in \widetilde{e}_{12} \\ p_2 \in \widetilde{e}_{21}}} b(p_1, p_2),
\]
where the second sum is over all pairs $(p_1, p_2)$ of boundary points of $\vec{L}$ which lie in different lifts $\widetilde{e}_{12}$ and $\widetilde{e}_{21}$ of the same edge $e$ of the boundary marking $\Gamma$ of $Y$, 
and $b(p_1, p_2) \in \{\pm \frac{1}{2}\}$ is determined by the following rules: 
\[
\vcenter{\hbox{

}}
\;\;\rightsquigarrow\;\; 
\frac{1}{2}
\;\;
.
\]

Then, given two oriented tangles $\vec{L}_1$ and $\vec{L}_2$ in $\widetilde{\Sigma} \times I$, define
\[
b(\vec{L}_1, \vec{L}_2) := b(\vec{L}_1 \cdot \vec{L}_2) - b(\vec{L}_1) - b(\vec{L}_2) \in \frac{1}{2}\mathbb{Z}/2\mathbb{Z}, 
\]
where $\vec{L}_1 \cdot \vec{L}_2$ denotes the oriented tangle in $\widetilde{\Sigma} \times I$ obtained by stacking $\vec{L}_1$ above $\vec{L}_2$ as usual. 
Analogous to Proposition \ref{prop:b_grading}, we have the following: 
\begin{prop}
The quantity $b(\vec{L}_1, \vec{L}_2)$ is invariant under $\mathfrak{gl}_1$-skein relations or isotopies that exchange the height of some pair of boundary points $p_1 \in \widetilde{e}_{12}$ and $p_2 \in \widetilde{e}_{21}$, applied to $\vec{L}_1$ or to $\vec{L}_2$. 
Therefore, for any fixed $\vec{L}$ in $\widetilde{\Sigma} \times I$, 
\[
b(\vec{L}, \cdot) = -b(\cdot, \vec{L})
\]
gives a grading on $\Sk^{\mathfrak{gl}_1}_q(\widetilde{\Sigma} \times I)$. 
\end{prop}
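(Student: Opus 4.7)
The plan is to reduce the statement to a clean observation about which contributions to $b$ survive in the relative quantity $b(\vec{L}_1,\vec{L}_2) = b(\vec{L}_1\cdot\vec{L}_2)-b(\vec{L}_1)-b(\vec{L}_2)$. Since $b$ of any tangle is a sum over unordered pairs of boundary points lying on conjugate lifts $\widetilde{e}_{12}$, $\widetilde{e}_{21}$ of a common edge $e$, we can split the pairs that contribute to $b(\vec{L}_1\cdot\vec{L}_2)$ into three types: both endpoints in $\vec{L}_1$, both in $\vec{L}_2$, or one in each. The first two types reproduce $b(\vec{L}_1)$ and $b(\vec{L}_2)$ respectively, so
\[
b(\vec{L}_1,\vec{L}_2) \;=\; \sum_{\substack{p_1\in\partial\vec{L}_1,\,p_2\in\partial\vec{L}_2 \\ p_1,p_2\text{ on conjugate lifts of same }e}} b(\{p_1,p_2\}),
\]
where, because $\vec{L}_1$ is stacked above $\vec{L}_2$, every such cross pair has $p_1$ above $p_2$.

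Next, I would observe that each cross-term summand depends only on (i) which of the two conjugate lifts each of $p_1,p_2$ lies on, (ii) the orientation of the strand at each of those points, and (iii) the relative height ordering between them. The $\mathfrak{gl}_1$-skein relations \eqref{eq:gl1skeinrel1}--\eqref{eq:gl1skeinrel4} and the branch-crossing relation \eqref{eq:gl1skeinrel3} are all purely interior: applied inside $\vec{L}_1$ (respectively $\vec{L}_2$), they change no boundary point, preserve the markings and orientations at every boundary point, and preserve the stack ordering between $\vec{L}_1$- and $\vec{L}_2$-points. Hence each cross-term summand is left invariant, and so is their sum $b(\vec{L}_1,\vec{L}_2)$.

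Finally, I would handle the height-exchange isotopies. Suppose such an isotopy inside $\vec{L}_1$ swaps the heights of two boundary points $q_1\in\widetilde{e}_{12}$ and $q_2\in\widetilde{e}_{21}$. The pair $\{q_1,q_2\}$ is internal to $\vec{L}_1$, so it contributes to both $b(\vec{L}_1)$ and $b(\vec{L}_1\cdot\vec{L}_2)$; the rules from the previous page show its value changes sign under the swap in both expressions, and the change cancels in the difference. No cross pair $(q_i,p)$ with $p\in\vec{L}_2$ is affected: since $p$ is strictly below the whole stack-top copy of $\vec{L}_1$, the ordering $q_i>p$ is unchanged by an internal height swap in $\vec{L}_1$. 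Thus $b(\vec{L}_1,\vec{L}_2)$ is invariant. The same argument works verbatim with $\vec{L}_1$ and $\vec{L}_2$ exchanged, using $b(\vec{L}_1,\vec{L}_2)=-b(\vec{L}_2,\vec{L}_1)$.

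Given invariance, the grading statement follows formally: $b(\vec{L},\cdot)$ descends to the skein module, and the bilinearity/additivity $b(\vec{L},\vec{L}_1\cdot\vec{L}_2)=b(\vec{L},\vec{L}_1)+b(\vec{L},\vec{L}_2)$ is the cocycle identity already noted for $b(\cdot,\cdot)$. The only real step is the bookkeeping in the second paragraph; I do not anticipate any essential obstacle, as the non-stated $\mathfrak{gl}_1$-skein relations never touch the boundary, and the height-exchange isotopies manifestly leave cross-pair data unchanged.
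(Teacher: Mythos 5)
Your cross-pair rewriting of $b(\vec{L}_1,\vec{L}_2)$, your handling of the height-exchange isotopies, and the final cocycle argument are all correct, and they match what the paper intends (the paper offers no separate proof here, only the pointer ``analogous to Proposition \ref{prop:b_grading}''). The gap is in your second paragraph: you examine only the interior relations \eqref{eq:gl1skeinrel1}--\eqref{eq:gl1skeinrel4} together with the sign-defect relation \eqref{eq:gl1skeinrel3}, and you assert that the $\mathfrak{gl}_1$-skein relations ``never touch the boundary.'' But $\Sk^{\mathfrak{gl}_1}_q(\widetilde{\Sigma}\times I)$ in this proposition is the \emph{stated} $\mathfrak{gl}_1$-skein module of Appendix \ref{sec:stated-gl1-skeins}, whose defining relations also include the boundary relations \eqref{eq:statedgl1skeinrel3} and \eqref{eq:statedgl1skeinrel4}; these create or annihilate a pair of boundary points on a lifted marking, so they do change $\partial\vec{L}_i$ and are not covered by your argument. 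This is precisely the check that the paper singles out as the only nontrivial one in the $\mathfrak{gl}_2$ analogue (invariance under the relations that create or annihilate a pair of boundary points), so skipping it leaves the claim that $b(\vec{L},\cdot)$ descends to the skein module unestablished.

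The missing step is easy to supply inside your framework. The pair of boundary points created (or removed) by \eqref{eq:statedgl1skeinrel3} or \eqref{eq:statedgl1skeinrel4} lies on a single lift $\widetilde{e}_{12}$ (or $\widetilde{e}_{21}$), consists of two adjacent points with \emph{opposite} orientations, and sits entirely above or entirely below any boundary point of the other tangle on the conjugate lift. By the table defining $b(p_1,p_2)$, reversing the orientation of the strand on one side while keeping the lift assignment and relative height fixed switches the value between $+\tfrac{1}{2}$ and $-\tfrac{1}{2}$; hence the two new points contribute cancelling cross-terms against every fixed conjugate point, and the cross-pair sum, i.e.\ $b(\vec{L}_1,\vec{L}_2)$, is unchanged (the same cancellation shows each of $b(\vec{L}_i)$ and $b(\vec{L}_1\cdot\vec{L}_2)$ is unchanged). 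With this observation added, your proof is complete.
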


\begin{defn}
The \emph{sign-twisted} $\mathfrak{gl}_1$-skein algebra $\SkAlg^{\mathfrak{gl}_1,\st}_q(\widetilde{\Sigma})$ of $\widetilde{\Sigma}$ is defined to be the usual $\mathfrak{gl}_1$-skein module $\Sk^{\mathfrak{gl}_1}_q(\widetilde{\Sigma} \times I)$, but with the product structure twisted by a sign determined by $(-1)^{b(\cdot, \cdot)}$. 
That is, 
\[
[\vec{L}_1] \overset{\st}{\cdot} [\vec{L}_2] := (-1)^{-b(\vec{L}_1, \vec{L}_2)}[\vec{L}_1 \cdot \vec{L}_2].
\]
\end{defn}
Since we are using the same sign-twisting for both $\mathfrak{gl}_2$-skeins and $\mathfrak{gl}_1$-skeins, the following is clear. 
\begin{prop}
The quantum UV-IR map is still a bimodule homomorphism after sign-twisting. 
That is, 
\[
F : \SkAlg^{\mathfrak{gl}_2,\st}_q(\Sigma) \rightarrow \SkAlg^{\mathfrak{gl}_1,\st}_q(\widetilde{\Sigma})
\]
is still an algebra homomorphism, and
\[
F : \Sk^{\mathfrak{gl}_2,\st}_q(Y) \rightarrow \Sk^{\mathfrak{gl}_1,\st}_q(\widetilde{Y})
\]
is a bimodule homomorphism. 
\end{prop}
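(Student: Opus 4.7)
The strategy is to reduce the sign-twisted assertion to the already-established untwisted one (Theorem \ref{thm:stated-quantum-UV-IR} and Proposition \ref{prop:quantum-UV-IR-bimodule-homomorphism}) by showing that the twisting cocycle $b(\cdot,\cdot)$ on the $\mathfrak{gl}_2$-side matches the twisting cocycle $b(\cdot,\cdot)$ on the $\mathfrak{gl}_1$-side under the quantum UV-IR map.

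First I would establish the following key lemma: for any stated $\mathfrak{gl}_2$-tangle $\vec{L}$, every lift $\widetilde{L}$ appearing in the sum $F([\vec{L}])$ has the same $b$-value (defined for $\mathfrak{gl}_1$-tangles in $\widetilde{\Sigma}\times I$), and this common value equals $b(\vec{L})$. The proof is a case-by-case comparison: the dictionary in Definition \ref{defn:dictionary-boundary-state-sheet} determines the lift of each boundary point of $\vec{L}$ uniquely from its state and orientation, so every lift $\widetilde{L}$ in the sum has the same boundary configuration on $\widetilde{\Sigma}$. The local $b$-contribution of each pair of boundary points then depends only on this boundary data on each side, so the claim reduces to checking that the eight nonzero $\mathfrak{gl}_2$-entries match the eight nonzero $\mathfrak{gl}_1$-entries under the dictionary
\[
(\rightarrow,+)\leftrightarrow(\rightarrow,\widetilde{e}_{12}), \quad (\rightarrow,-)\leftrightarrow(\rightarrow,\widetilde{e}_{21}), \quad (\leftarrow,+)\leftrightarrow(\leftarrow,\widetilde{e}_{21}), \quad (\leftarrow,-)\leftrightarrow(\leftarrow,\widetilde{e}_{12}).
\]
Direct inspection of the two tables side by side confirms this: for instance, the $\mathfrak{gl}_2$ entry for upper-$(\rightarrow,+)$ and lower-$(\leftarrow,+)$ contributes $+\tfrac{1}{2}$, and the corresponding lifted pair upper-$(\rightarrow,\widetilde{e}_{12})$, lower-$(\leftarrow,\widetilde{e}_{21})$ also contributes $+\tfrac{1}{2}$ on the $\mathfrak{gl}_1$-side, and so on for the remaining seven configurations.

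Granted the lemma, $b(\vec{L}_1,\vec{L}_2)=b(F(\vec{L}_1),F(\vec{L}_2))$ is well-defined (the right-hand side depending only on the common boundary configuration shared by all lifts in $F(\vec{L}_i)$) and the algebra case unfolds as
\[
F\bigl([\vec{L}_1]\overset{\st}{\cdot}[\vec{L}_2]\bigr)=(-1)^{-b(\vec{L}_1,\vec{L}_2)}F\bigl([\vec{L}_1\cdot\vec{L}_2]\bigr)=(-1)^{-b(\vec{L}_1,\vec{L}_2)}F([\vec{L}_1])\cdot F([\vec{L}_2])=F([\vec{L}_1])\overset{\st}{\cdot}F([\vec{L}_2]),
\]
using the untwisted algebra homomorphism in the middle equality. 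The bimodule statement is verbatim the same computation, with one factor replaced by an element of $\Sk^{\mathfrak{gl}_2}_q(Y)$ and invoking Proposition \ref{prop:quantum-UV-IR-bimodule-homomorphism} in place of Theorem \ref{thm:stated-quantum-UV-IR}.

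The only real obstacle is the combinatorial table-match in the lemma; once the eightfold correspondence is verified via the dictionary, everything else is a formal consequence of the cocycle identity $b(\vec{L}_1,\vec{L}_2)=b(\vec{L}_1\cdot\vec{L}_2)-b(\vec{L}_1)-b(\vec{L}_2)$ on both sides and the untwisted naturality of $F$.
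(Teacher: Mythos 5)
Your proposal is correct and follows essentially the same route as the paper, which disposes of this proposition in one line ("since we are using the same sign-twisting for both $\mathfrak{gl}_2$- and $\mathfrak{gl}_1$-skeins, this is clear") precisely because the $\mathfrak{gl}_1$-cocycle was defined so that its eight nonzero local contributions match the $\mathfrak{gl}_2$ ones under the state/sheet dictionary. Your key lemma and the subsequent cocycle computation are just an explicit unwinding of that remark, and the eightfold table check you describe does verify correctly.
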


The $\mathfrak{gl}_2$-$\mathfrak{sl}_2$ map $\pi$ also behaves nicely with respect to reduced stated skein modules, as well as to $3$-manifolds split into face suspensions. 

\begin{prop} \label{prop:splitpi}
The map 
\[
\underset{f\in \mathcal{T}^{(2)}}{\overline{\bigotimes}}\pi_{Sf} : 
\underset{f\in \mathcal{T}^{(2)}}{\overline{\bigotimes}}\overline{\Sk}^{\mathfrak{gl}_2,\st}_q(Sf)
\rightarrow 
\underset{f\in \mathcal{T}^{(2)}}{\overline{\bigotimes}} \overline{\Sk}^{\mathfrak{sl}_2}_{A}(Sf)\otimes 
\underset{f\in \mathcal{T}^{(2)}}{\overline{\bigotimes}}\Sk^{\mathfrak{gl}_1}_{-A}(Sf)
\]
is well-defined.
\end{prop}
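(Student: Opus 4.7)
The plan is to reduce the statement to a bimodule version of Proposition \ref{prop:gl2-sl2-map} applied face-suspension-by-face-suspension, and then verify that the gluing relations on the two sides match up.

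First, I would extend Proposition \ref{prop:gl2-sl2-map} to the 3d setting: for each face suspension $Sf$, the map
\[
\pi_{Sf} : \overline{\Sk}^{\mathfrak{gl}_2,\st}_q(Sf) \rightarrow \overline{\Sk}^{\mathfrak{sl}_2}_A(Sf)\otimes \Sk^{\mathfrak{gl}_1}_{-A}(Sf),\qquad [\vec{L}]^{\mathfrak{gl}_2}\mapsto (-1)^{b(\vec{L})}[L]^{\mathfrak{sl}_2}\otimes [\vec{L}]^{\mathfrak{gl}_1}
\]
is a $\mathbb{T}^{\st,\otimes 2}$-$\mathbb{B}^{\st,\otimes 3}$-bimodule homomorphism, where the bimodule structure on the target is induced via the algebra homomorphisms
\[
\pi : \overline{\SkAlg}^{\mathfrak{gl}_2,\st}_q(D_n) \rightarrow \overline{\SkAlg}^{\mathfrak{sl}_2}_A(D_n)\otimes \SkAlg^{\mathfrak{gl}_1}_{-A}(D_n),\qquad n=2,3,
\]
provided by Proposition \ref{prop:gl2-sl2-map}. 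This is a direct consequence of the locality of $\pi_{Sf}$ together with the cocycle identity for $b(\cdot,\cdot)$. With this in hand, the ordinary tensor product $\bigotimes_f \pi_{Sf}$ is automatically well defined on the ordinary tensor product of bimodules.

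Next, I would check that $\bigotimes_f \pi_{Sf}$ descends to the relative tensor products on both sides, i.e.\ that the 3d gluing relations (analogous to those of Definition \ref{defn:relative-tensor-product-Sf}) satisfied by sign-twisted $\mathfrak{gl}_2$-skeins at each vertex cone $Cv$ and each internal edge $e$ are sent into the relations defining the relative tensor product $\overline{\bigotimes}_f \overline{\Sk}^{\mathfrak{sl}_2}_A(Sf)\otimes \overline{\bigotimes}_f \Sk^{\mathfrak{gl}_1}_{-A}(Sf)$. Conceptually, this reduces to the algebra homomorphism property of $\pi$ on $\mathbb{T}^{\st,\otimes 2}$ and $\mathbb{B}^{\st,\otimes 3}$: each $\mathfrak{gl}_2$ gluing relation is a polynomial relation in the left (resp.\ right) action of the triangle (resp.\ biangle) algebra around the cone point or edge, and $\pi$ sends such a relation to the corresponding polynomial relation in the $\mathfrak{sl}_2$ action tensored with a $\mathfrak{gl}_1$ action (which is central since the $\mathfrak{gl}_1$-biangle and triangle algebras are commutative). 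What remains is to verify that the $\mathfrak{sl}_2$ component of the image is precisely the relation in Definition \ref{defn:relative-tensor-product-Sf}, and that the $\mathfrak{gl}_1$ component is automatically satisfied in the $\mathfrak{gl}_1$ relative tensor product.

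The main obstacle is the accounting for the signs. The sign factors $(-1)^{b(\vec{L})}$ from $\pi$, the sign-twisting $(-1)^{-b(\vec{L}_1,\vec{L}_2)}$ in the product on the $\mathfrak{gl}_2$ side, and the sign relations built into $\Sk^{\mathfrak{gl}_1}_{-A}$ (coming from the base ring specialization $A \mapsto -A$) must combine to reproduce exactly the factors such as $(-A^2)^{\pm\epsilon}$ appearing in the $\mathfrak{sl}_2$ vertex and edge gluing relations. The cleanest way to handle this is to verify the claim on elementary generators at each vertex cone and internal edge separately, using the explicit dictionary of Definition \ref{defn:dictionary-boundary-state-sheet} and the definition of $b$ to track the signs; the cocycle property of $b(\cdot,\cdot)$ then ensures that the check on generators extends to arbitrary elements by multiplicativity, so that the map descends to the relative tensor product.
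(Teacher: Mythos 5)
Your proposal is correct and takes essentially the same route as the paper, whose proof is just the terse observation that the composition through the ordinary tensor product factors through the relative tensor product "by comparing the gluing relations" on the two sides — your generator-by-generator check at each vertex cone and internal edge, with explicit tracking of the $(-1)^{b}$ and sign-twisting factors against the $(-A^2)^{\pm\epsilon}$ versus $q^{\pm\epsilon}$ factors, is precisely how that comparison is carried out. One small caveat: the $\mathfrak{gl}_1$ triangle algebra is a $(-A)$-commuting quantum torus, not commutative, but this parenthetical is not needed for your argument, since what matters is only that the terms of each $\mathfrak{gl}_2$ gluing relation have $\mathfrak{gl}_1$ components identified by the $\mathfrak{gl}_1$ gluing relations, which your check on generators establishes.
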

\begin{proof}
We need to show that the composition 
\[
\bigotimes_{f\in \mathcal{T}^{(2)}} \overline{\Sk}^{\mathfrak{gl}_2,\st}_q(Sf)
\overset{\bigotimes_{f\in \mathcal{T}^{(2)}} \pi_{Sf}}{\longrightarrow} 
\bigotimes_{f\in \mathcal{T}^{(2)}} \overline{\Sk}^{\mathfrak{sl}_2}_{A}(Sf)\otimes 
\bigotimes_{f\in \mathcal{T}^{(2)}} \Sk^{\mathfrak{gl}_1}_{-A}(Sf)
\rightarrow 
\underset{f\in \mathcal{T}^{(2)}}{\overline{\bigotimes}} \overline{\Sk}^{\mathfrak{sl}_2}_{A}(Sf)\otimes 
\underset{f\in \mathcal{T}^{(2)}}{\overline{\bigotimes}}\Sk^{\mathfrak{gl}_1}_{-A}(Sf)
\]
factors through $\overline{\bigotimes}_{f\in \mathcal{T}^{(2)}} \overline{\Sk}^{\mathfrak{gl}_2,\st}_q(Sf)$. 
This can be seen immediately by comparing the gluing relations in $\overline{\bigotimes}_{f\in \mathcal{T}^{(2)}}\overline{\Sk}^{\mathfrak{gl}_2,\st}_q(Sf)$ to those in $\overline{\bigotimes}_{f\in \mathcal{T}^{(2)}} \overline{\Sk}^{\mathfrak{sl}_2}_{A}(Sf)\otimes 
\overline{\bigotimes}_{f\in \mathcal{T}^{(2)}} \Sk^{\mathfrak{gl}_1}_{-A}(Sf)$. 
\end{proof}
Under splitting, the factors $(-1)^{b(\vec{L})}$ come in pairs and cancel themselves out. 
Hence, the following is immediate: 
\begin{prop} \label{prop:gl2sl2CompatibleWithSplitting}
The $\mathfrak{gl}_2$-$\mathfrak{sl}_2$ map is compatible with the splitting maps on both sides. 
That is, if $Y$ is obtained by gluing $Y_i$'s, we have a commuting square
\[
\begin{tikzcd}
\overline{\Sk}^{\mathfrak{gl}_2,\st}_{q}(Y) \arrow[d, "\sigma^{\mathfrak{gl}_2}"] \arrow[r, "\pi_{Y}"] &  \overline{\Sk}^{\mathfrak{sl}_2}_{A}(Y) \otimes \Sk^{\mathfrak{gl}_1}_{-A}(Y) \arrow[d, "\sigma^{\mathfrak{sl}_2} \otimes \sigma^{\mathfrak{gl}_1}"]\\
\overline{\bigotimes}_{i}\overline{\Sk}^{\mathfrak{gl}_2, \st}_{q}(Y_i) \arrow[r, "\overline{\otimes}_i \pi_{Y_i}"] & \overline{\bigotimes}_{i}\overline{\Sk}^{\mathfrak{sl}_2}_{A}(Y_i) \otimes \overline{\bigotimes}_{i}\Sk^{\mathfrak{gl}_1}_{-A}(Y_i)
\end{tikzcd}. 
\]
\end{prop}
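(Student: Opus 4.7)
The plan is to verify the commutativity of the square on an arbitrary generator $[\vec{L}] \in \overline{\Sk}^{\mathfrak{gl}_2, \st}_q(Y)$, since every arrow is an $R$-module homomorphism. Chasing $[\vec{L}]$ around both paths, the right-then-down composition produces
\[
(-1)^{b(\vec{L})}\sum_{\vec{\epsilon}} \Big(\bigotimes_i [L_i^{\vec{\epsilon}}]^{\mathfrak{sl}_2}\Big) \otimes \Big(\bigotimes_i [\vec{L}_i^{\vec{\epsilon}}]^{\mathfrak{gl}_1}\Big),
\]
while the down-then-right composition produces
\[
\sum_{\vec{\epsilon}}\Big(\prod_i (-1)^{b(\vec{L}_i^{\vec{\epsilon}})}\Big) \Big(\bigotimes_i [L_i^{\vec{\epsilon}}]^{\mathfrak{sl}_2}\Big) \otimes \Big(\bigotimes_i [\vec{L}_i^{\vec{\epsilon}}]^{\mathfrak{gl}_1}\Big).
\]
The unoriented $\mathfrak{sl}_2$ and oriented $\mathfrak{gl}_1$ factors agree term-by-term under the splittings, so commutativity reduces to checking, for every distribution $\vec{\epsilon}$ of states over the newly-created boundary points, the congruence $b(\vec{L}) \equiv \sum_i b(\vec{L}_i^{\vec{\epsilon}}) \pmod 2$ in $\tfrac{1}{2}\mathbb{Z}/2\mathbb{Z}$.

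Next I would decompose each $b(\vec{L}_i^{\vec{\epsilon}})$ into pair-contributions as in the definition of $b$. Pairs of pre-existing boundary points lying on edges of the original boundary marking $\Gamma$ are preserved by the split and together reproduce $b(\vec{L})$. Mixed pairs (one new, one pre-existing) cannot occur on a common edge of any $\Gamma_i$, since new boundary points lie on new edges associated to the cutting surface while pre-existing ones lie on $\Gamma$. Hence the only remaining contributions come from pairs of two new boundary points, and the task is to show that these sum to zero mod $2$.

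The main and only delicate step is the pairwise cancellation of these remaining contributions. For each pair of new boundary points $(p_1,p_2)$ sharing a common new edge of $\Gamma_i$ in $Y_i$, the gluing identification produces a mirror pair $(p_1',p_2')$ in the adjacent piece $Y_j$ sharing the same edge viewed from the opposite side. By the construction of the splitting map (e.g.\ Theorem \ref{thm:3d-splitting-map-for-Sf}), the mirror pair carries identical states and sits on an edge with identical intrinsic orientation; however, viewing the cutting surface from the opposite side reverses the in-plane projections of the tangent vectors of $\vec{L}$ at each of $p_1', p_2'$ relative to those at $p_1, p_2$. A direct inspection of the eight nonzero rows of the table defining $b(\{p_1,p_2\})$ shows that simultaneously reversing both tangent-projection arrows flips the sign of $b$, so the two mirror contributions cancel exactly in $\tfrac{1}{2}\mathbb{Z}$, and a fortiori in $\tfrac{1}{2}\mathbb{Z}/2\mathbb{Z}$. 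Having established the pointwise sign identity, commutativity at the level of ordinary tensor products follows, and the passage to the relative tensor product $\overline{\bigotimes}_i$ is guaranteed by Proposition \ref{prop:splitpi}.
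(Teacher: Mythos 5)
Your proof is correct and is essentially the paper's own argument (the paper only remarks that, under splitting, the extra $(-1)^{b}$-contributions come in pairs and cancel; you have spelled out that reduction on generators, the absence of mixed old/new pairs, and the pairwise cancellation over each cut edge). One clarification on your key geometric step: the reason both arrows reverse for the mirror pair is not the change of viewing side — which leaves the invariant data (states, source/sink, and order along the oriented edge) unchanged — but that each cut point is a terminal point of the strand in one piece and an initial point in the other, since the strand passes through the cut; with that corrected justification, your table inspection showing that simultaneously reversing both arrows flips the sign of $b(\{p_1,p_2\})$, hence cancels the two mirror contributions in $\tfrac{1}{2}\mathbb{Z}$, is exactly right.
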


\subsection{Commutative square for a triangle}\label{subsec:triangle-commutative-square}

Before defining the evaluation map for $\widetilde{\Sigma}$, we will do so first for the triangle. 
Let's first recall some basic facts about skein algebras of the triangle and its branched double cover. 

\begin{lem}
The $\mathfrak{gl}_1$-skein algebra
$\SkAlg^{\mathfrak{gl}_1}_{-A}(\tri)$ of the triangle $\tri$ 
-- with vertices labeled by $\alpha, \beta, \gamma$ in a counterclockwise manner and the opposite edges by $a, b, c$ -- 
is the quantum torus generated by
\[
\alpha := 
\vcenter{\hbox{

}}\;,
\end{gather*}
and their inverses, 
with commutation relations
\begin{gather*}
\alpha_1 \beta_2 = q \beta_2 \alpha_1,\quad 
\beta_2 \gamma_1 = q \gamma_1 \beta_2,\quad
\gamma_1 \alpha_2 = q \alpha_2 \gamma_1, \\
\alpha_2 \beta_1 = q \beta_1 \alpha_2,\quad 
\beta_1 \gamma_2 = q \gamma_2 \beta_1,\quad
\gamma_2 \alpha_1 = q \alpha_1 \gamma_2, \\
\alpha_1 \gamma_1 = - \gamma_1 \alpha_1,\quad
\beta_2 \alpha_2 = - \alpha_2 \beta_2,\quad
\gamma_1 \beta_1 = - \beta_1 \gamma_1, \\
\alpha_2 \gamma_2 = - \gamma_2 \alpha_2,\quad
\beta_1 \alpha_1 = - \alpha_1 \beta_1,\quad
\gamma_2 \beta_2 = - \beta_2 \gamma_2.
\end{gather*}
All other pairs of generators commute, 
modulo one relation:
\begin{equation}\label{eq:gl1hexagonrel}
[\alpha_1 \beta_2 \gamma_1 \alpha_2 \beta_1 \gamma_2] = -1.
\end{equation}
\end{lem}

\begin{rmk}
The $-1$ in the last relation comes from the evaluation of a small unknot around the branch point. 
The sign-twisting does not change the sign of the Weyl-ordered product in this case. 
\end{rmk}

\begin{lem}\label{lem:red_gl2_triangle}
The sign-twisted reduced $\mathfrak{gl}_2$-skein algebra $\overline{\SkAlg}^{\mathfrak{gl}_2,\st}_q(\tri)$ of the triangle $\tri$ is generated by
\[
\overrightarrow{\alpha}_{++},
\overleftarrow{\alpha}_{++},
\overrightarrow{\beta}_{++}, 
\overleftarrow{\beta}_{++},
\overrightarrow{\gamma}_{++},
\overleftarrow{\gamma}_{++},
\]
and their inverses, 
where
\[
\overrightarrow{\alpha}_{\mu\nu} := 
\vcenter{\hbox{
\begin{tikzpicture}[scale=1.0]
\coordinate (alpha) at ({sqrt(3)/2}, -1/2);
\coordinate (beta) at (0, 1);
\coordinate (gamma) at ({-sqrt(3)/2}, -1/2);
\coordinate (a) at ({-sqrt(3)/4}, 1/4);
\coordinate (b) at (0, -1/2);
\coordinate (c) at ({sqrt(3)/4}, 1/4);
\draw[very thick] (alpha) -- (beta) -- (gamma) -- cycle;
\node[anchor = north west, gray] at (alpha){$\alpha$}; 
\node[anchor = south, gray] at (beta){$\beta$};
\node[anchor = north east, gray] at (gamma){$\gamma$}; 
\draw[->, thick] (b) to[out=90, in=-150] (c);
\node[anchor = north] at (b){$\mu$};
\node[anchor = south west] at (c){$\nu$};
\end{tikzpicture}
}}
\;,\quad
\overleftarrow{\alpha}_{\mu\nu} := 
\vcenter{\hbox{
\begin{tikzpicture}[scale=1.0]
\coordinate (alpha) at ({sqrt(3)/2}, -1/2);
\coordinate (beta) at (0, 1);
\coordinate (gamma) at ({-sqrt(3)/2}, -1/2);
\coordinate (a) at ({-sqrt(3)/4}, 1/4);
\coordinate (b) at (0, -1/2);
\coordinate (c) at ({sqrt(3)/4}, 1/4);
\draw[very thick] (alpha) -- (beta) -- (gamma) -- cycle;
\node[anchor = north west, gray] at (alpha){$\alpha$}; 
\node[anchor = south, gray] at (beta){$\beta$};
\node[anchor = north east, gray] at (gamma){$\gamma$}; 
\draw[<-, thick] (b) to[out=90, in=-150] (c);
\node[anchor = north] at (b){$\mu$};
\node[anchor = south west] at (c){$\nu$};
\end{tikzpicture}
}}
\]
and likewise for $\beta$ and $\gamma$.

\end{lem}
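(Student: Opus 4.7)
My approach is to show that every element of $\overline{\SkAlg}^{\mathfrak{gl}_2,\st}_q(\tri)$ can be expressed as a sign-twisted product of the six listed generators and their inverses. First I would put any stated ribbon tangle $[\vec{L}]$ into generic position and use the interior $\mathfrak{gl}_2$-skein relations \eqref{eq:gl2skeinrel1}--\eqref{eq:gl2extraskeinrel2}, together with the half-twist relation \eqref{eq:gl2skeinrel3} and the loop-removal relation \eqref{eq:gl2skeinrel2}, to reduce $[\vec{L}]$ to a linear combination of disjoint unions of embedded oriented arcs between the three boundary edges of $\tri$. Since $\tri \times I$ deformation retracts onto a disk, the HOMFLYPT-style crossing relations together with the extra relations \eqref{eq:gl2extraskeinrel1}--\eqref{eq:gl2extraskeinrel2} suffice to produce a canonical planar form in which each component is an embedded oriented arc.

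Next I would analyze the possible states on these elementary arcs. An arc that returns to the same edge (a U-turn) can be handled using the cup/cap boundary relations, which either collapse it into a scalar multiple of the empty skein or expand it into a sum of arcs running between distinct edges. For arcs connecting two distinct edges of the triangle, there are eight a priori state-orientation combinations; the bad arc relations imposed by passing to the reduced algebra force the mixed-state corner arcs at each vertex to vanish, while the remaining $+-$ and $-+$ configurations can be rewritten in terms of $++$ and $--$ configurations using the braiding boundary relations, in the same spirit as the classical Bonahon--Wong reduction that yields Theorem \ref{thm:biangle-triangle-algebras}(2). This leaves the $++$ and $--$ arcs in both orientations on each of the three edges as the only surviving elementary pieces.

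Finally, I would show that the $--$ arcs are (up to unit scalars in $R$) the inverses of the corresponding $++$ arcs by a direct stacking computation: stacking $\overrightarrow{\alpha}_{++}$ with $\overleftarrow{\alpha}_{--}$ in the $I$-direction yields, after one application of the cap/cup relation \eqref{eq:gl2boundaryskeinrel2} and one application of the loop relation \eqref{eq:gl2skeinrel2}, a nonzero scalar multiple of the empty skein. This simultaneously shows that each of the six $++$ generators is invertible in the sign-twisted product and that the $--$ arcs lie in the subalgebra they generate. Combined with the previous paragraph, this establishes that every element of $\overline{\SkAlg}^{\mathfrak{gl}_2,\st}_q(\tri)$ is a sign-twisted product of the six listed generators and their inverses.

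The main obstacle will be bookkeeping: at every stage of the reduction one must track simultaneously the $q$-factors from the stated skein relations, the half-twist factors from \eqref{eq:gl2skeinrel3}, the sign corrections $(-1)^{b(\cdot,\cdot)}$ built into the sign-twisted product, and the cancellations coming from bad arcs. Proposition \ref{prop:b_grading} guarantees that the sign twist descends to a well-defined grading, so the combinatorial structure of the reduction is the same as in the $\mathfrak{sl}_2$-triangle computation; the new input is precisely the $\mathfrak{gl}_2$ orientation data, which is what allows both orientations of each edge arc to appear as distinct generators.
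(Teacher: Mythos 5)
Your proposal follows essentially the same strategy as the paper's proof: reduce an arbitrary stated tangle to a linear combination of products of elementary corner arcs carrying arbitrary states, then rewrite those corner arcs in terms of the six $++$ generators and their inverses. The only mechanical difference is in the reduction step: the paper projects to the leaf space of $\tri\times I$ and pushes the boundary marking toward the binder, absorbing crossings, cups and caps with the stated boundary relations, whereas you use crossing-change induction in the ball together with a separate treatment of U-turn arcs; both reductions work.

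Two points in your write-up need correction, and they are exactly where the paper's content lies. First, at each corner only one of the two mixed-state arcs (for each orientation) is a bad arc; the other, e.g.\ $\overrightarrow{\alpha}_{-+}$, does \emph{not} vanish and must instead be expressed as a Weyl-ordered product of $++$/$--$ arcs on the \emph{other two} edges, $\overrightarrow{\alpha}_{-+} = [\overleftarrow{\beta}_{++}\overleftarrow{\gamma}_{--}]$, together with its cyclic and orientation-reversed variants --- these identities are precisely the relations (\ref{eqn:gl2_triangle_rels}) that the paper verifies first. So your clause ``the bad arc relations force the mixed-state corner arcs to vanish'' overstates what bad arcs give you, although your following sentence, invoking the Bonahon--Wong-type rewriting as in Theorem \ref{thm:biangle-triangle-algebras}, recovers the correct idea. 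Second, the verification that $\overleftarrow{\alpha}_{--}=\overrightarrow{\alpha}_{++}^{-1}$ is not obtained from \eqref{eq:gl2boundaryskeinrel2} plus the circle relation \eqref{eq:gl2skeinrel2} alone: the corner arcs are not invertible before passing to the reduced algebra, so the computation must also use the bad-arc relation, and the closing step is the stated turn-back relation \eqref{eq:gl2boundaryskeinrel1} rather than the loop removal. With these two adjustments your argument coincides with the paper's proof.
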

\begin{proof}
It is straightforward to verify that 
\begin{equation} \label{eqn:gl2_triangle_rels}
\begin{gathered}
\overrightarrow{\alpha}_{--} = \overleftarrow{\alpha}_{++}^{-1}, \quad
\overleftarrow{\alpha}_{--} = \overrightarrow{\alpha}_{++}^{-1}, \\
\overrightarrow{\alpha}_{-+} = [\overleftarrow{\beta}_{++}\overleftarrow{\gamma}_{--}], \quad
\overleftarrow{\alpha}_{-+} = [\overrightarrow{\beta}_{++}\overrightarrow{\gamma}_{--}],
\end{gathered}
\end{equation}
in addition to the relations obtained from those above by cyclic permutations of $\alpha, \beta,$ and $\gamma$.

Let $\vec{L}$ be any $\mathfrak{gl}_2$-stated tangle in $\tri \times I$ representing an element $[\vec{L}]\in \overline{\SkAlg}^{\mathfrak{gl}_2,\st}_q(\tri)$. 
Consider its projection to the leaf space of $\tri \times I$. 
On each facet of the leaf space, we can ``push" the boundary marking closer to the binder while absorbing the crossings, cups, and caps, using the $\mathfrak{gl}_2$-stated skein relations. 
An example of this process, carried out on one facet of the leaf space, is shown below:
\begin{gather*}
\vcenter{\hbox{

}}.
\end{gather*}
Once the boundary marking is pushed sufficiently close to the binder, we obtain an expression of $[\vec{L}]$ as a linear combination of link diagrams that -- after a minor isotopy -- are products of stated tangles at constant heights. 
Such diagrams can be written as a word in $\overrightarrow{\alpha}_{\mu\nu}, \overrightarrow{\beta}_{\mu\nu}, \overrightarrow{\gamma}_{\mu\nu}, \overleftarrow{\alpha}_{\mu\nu}, \overleftarrow{\beta}_{\mu\nu}$, and $\overleftarrow{\gamma}_{\mu\nu}$, which can be written in terms of the $++$ generators using the relations in (\ref{eqn:gl2_triangle_rels}).

\end{proof}

\begin{lem}\label{lem:qUVIR_triangle_surjectivity}
The quantum UV-IR map on a triangle
\begin{align*}
F_{\tri} : \overline{\SkAlg}^{\mathfrak{gl}_2,\st}_q(\tri) &\rightarrow \SkAlg^{\mathfrak{gl}_1, \st}_q(\widetilde{\tri})
\end{align*}
is surjective. 
\end{lem}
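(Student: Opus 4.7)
The plan is to reduce the question to a finite generator check, using the explicit descriptions of both algebras recorded in Lemmas \ref{lem:red_gl2_triangle} and \ref{lem:sign_twisted_hexagon}. By Lemma \ref{lem:red_gl2_triangle}, the domain $\overline{\SkAlg}^{\mathfrak{gl}_2,\st}_q(\tri)$ is generated as an algebra by the six elementary corner arcs $\overrightarrow{\alpha}_{++}, \overleftarrow{\alpha}_{++}, \overrightarrow{\beta}_{++}, \overleftarrow{\beta}_{++}, \overrightarrow{\gamma}_{++}, \overleftarrow{\gamma}_{++}$ and their inverses. By Lemma \ref{lem:sign_twisted_hexagon}, the codomain $\SkAlg^{\mathfrak{gl}_1,\st}_q(\widetilde{\tri})$ is the quantum torus generated by $\alpha_1, \beta_2, \gamma_1, \alpha_2, \beta_1, \gamma_2$ and their inverses (the hexagonal relation being automatic, since it is just the evaluation of an unknot about the branch point). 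So it suffices to exhibit each of these six hexagon generators inside the image of $F_\tri$.

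First, I would invoke the explicit formulas for the stated quantum UV-IR map on an angled triangular prism computed in Example \ref{eg:angled-homomorphism}. Each corner arc of the triangle, being an elementary stated tangle connecting two adjacent edges, admits exactly one direct (i.e.\ non-detour, non-exchange) lift to the branched double cover, determined by the state-to-sheet dictionary of Definition \ref{defn:dictionary-boundary-state-sheet}. Tracing through this dictionary, $\overrightarrow{\alpha}_{++}$ (an arrow from edge $b$ to edge $c$ near vertex $\alpha$ of $\tri$) maps to a unit power of $q$ times the elementary hexagon arc near one of the two preimages of $\alpha$, while $\overleftarrow{\alpha}_{++}$ (oppositely oriented) maps to a unit power of $q$ times the elementary arc near the other preimage of $\alpha$. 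Cyclically, $\overrightarrow{\beta}_{++}, \overleftarrow{\beta}_{++}$ account for the two preimages of $\beta$ and $\overrightarrow{\gamma}_{++}, \overleftarrow{\gamma}_{++}$ for those of $\gamma$.

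Putting these six computations together, I would conclude that the images of the six $\mathfrak{gl}_2$-generators exhaust, up to invertible scalars, the six generators $\alpha_1, \alpha_2, \beta_1, \beta_2, \gamma_1, \gamma_2$ of the hexagon algebra. Because the scalars are units (powers of $q$), the inverses are also in the image, and so $F_\tri$ hits a generating set of $\SkAlg^{\mathfrak{gl}_1,\st}_q(\widetilde{\tri})$, proving surjectivity.

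The main obstacle is not analytic but combinatorial: one must be careful to match each of the six labeled corner generators of the triangle to the correct one of the six labeled corner generators of the hexagon through the conventions of Definition \ref{defn:dictionary-boundary-state-sheet} and Figure \ref{fig:angled-prism}, and to verify that the formulas in Example \ref{eg:angled-homomorphism} indeed produce a bijection between the two sets of six generators once inverses are allowed. Given the triangular symmetry of both sides, I expect this bookkeeping to close neatly.
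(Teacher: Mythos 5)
Your proof is correct and follows essentially the same route as the paper's: the paper likewise reduces the claim to "algebra homomorphism plus generators in the image," exhibiting the corner arcs $\overrightarrow{\alpha}_{--}, \overrightarrow{\alpha}_{++}, \overrightarrow{\beta}_{--}, \overrightarrow{\beta}_{++}, \overrightarrow{\gamma}_{--}, \overrightarrow{\gamma}_{++}$ as preimages of $\alpha_1, \alpha_2, \beta_1, \beta_2, \gamma_1, \gamma_2$. The only cosmetic difference is your choice of generating set from Lemma \ref{lem:red_gl2_triangle}: since $\overleftarrow{\alpha}_{++} = \overrightarrow{\alpha}_{--}^{-1}$, its image is $\alpha_1^{-1}$ (up to a unit) rather than $\alpha_1$ itself, which, as you note, is harmless once inverses are allowed.
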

\begin{proof}
Since it is an algebra homomorphism, it is enough to observe that all the generators of $\SkAlg^{\mathfrak{gl}_1, \st}_q(\widetilde{\tri})$ are in the image of $F_{\tri}$: 
\begin{align*}
F_{\tri} : \overline{\SkAlg}^{\mathfrak{gl}_2,\st}_q(\tri) &\rightarrow \SkAlg^{\mathfrak{gl}_1, \st}_q(\widetilde{\tri})\\ 
\overrightarrow{\alpha}_{--} &\mapsto \alpha_1, \\
\overrightarrow{\alpha}_{++} &\mapsto \alpha_2, \\ 
\overrightarrow{\beta}_{--} &\mapsto \beta_1, \\
\overrightarrow{\beta}_{++} &\mapsto \beta_2,\\ 
\overrightarrow{\gamma}_{--} &\mapsto \gamma_1, \\
\overrightarrow{\gamma}_{++} &\mapsto \gamma_2. 
\end{align*}
\end{proof}

\begin{cor}
If there exists a linear map
\[
\evmap_{\tri} : \SkAlg^{\mathfrak{gl}_1, \st}_q(\widetilde{\tri}) \rightarrow \widetilde{\mathbb{T}} \otimes \SkAlg^{\mathfrak{gl}_1}_{-A}(\tri)
\]
making the following square 
\begin{equation}\label{cd:triangle_compatibility}
\begin{tikzcd}
\overline{\SkAlg}^{\mathfrak{gl}_2, \st}_{q}(\tri) \arrow[d, "\pi_{\tri}"] \arrow[r, "F_{\tri}"] & \SkAlg^{\mathfrak{gl}_1, \st}_{q}(\widetilde{\tri}) \arrow[d, dashed, "\evmap_{\tri}"] \\
\overline{\SkAlg}^{\mathfrak{sl}_2}_{A}(\tri) \otimes \SkAlg^{\mathfrak{gl}_1}_{-A}(\tri) \arrow[r, "\Tr_{\tri} \otimes \mathrm{id}"] & \widetilde{\mathbb{T}} \otimes \SkAlg^{\mathfrak{gl}_1}_{-A}(\tri)
\end{tikzcd}
\end{equation}
commutative, then it is unique, and it must be an algebra homomorphism.
\end{cor}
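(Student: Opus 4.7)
The plan is to leverage the surjectivity of $F_{\tri}$ established in Lemma \ref{lem:qUVIR_triangle_surjectivity} together with the fact that every other map in the square \eqref{cd:triangle_compatibility} is already an algebra homomorphism. This reduces both claims to a standard diagram-chasing argument, so I expect no real obstacle.

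For uniqueness, I would argue as follows. Suppose $\evmap_{\tri}$ and $\evmap_{\tri}'$ are two linear maps making \eqref{cd:triangle_compatibility} commute. Then for every $y$ in the image of $F_{\tri}$, say $y = F_{\tri}(x)$, we have
\[
\evmap_{\tri}(y) = (\Tr_{\tri} \otimes \mathrm{id})(\pi_{\tri}(x)) = \evmap_{\tri}'(y).
\]
Since $F_{\tri}$ is surjective by Lemma \ref{lem:qUVIR_triangle_surjectivity}, this forces $\evmap_{\tri} = \evmap_{\tri}'$ on all of $\SkAlg^{\mathfrak{gl}_1, \st}_q(\widetilde{\tri})$.

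For the multiplicativity, I would again use surjectivity of $F_{\tri}$ to write arbitrary elements $y_1, y_2 \in \SkAlg^{\mathfrak{gl}_1, \st}_q(\widetilde{\tri})$ as $y_i = F_{\tri}(x_i)$. Then, using that $F_{\tri}$ is an algebra homomorphism (Theorem \ref{thm:stated-quantum-UV-IR} together with the multiplicativity under sign-twisting), that $\pi_{\tri}$ is an algebra homomorphism (Proposition \ref{prop:gl2-sl2-map}), and that $\Tr_{\tri} \otimes \mathrm{id}$ is an algebra homomorphism (Theorem \ref{thm:2d-quantum-trace-map} applied to the triangle), I compute
\begin{align*}
\evmap_{\tri}(y_1 y_2)
&= \evmap_{\tri}(F_{\tri}(x_1) F_{\tri}(x_2))
= \evmap_{\tri}(F_{\tri}(x_1 \overset{\st}{\cdot} x_2)) \\
&= (\Tr_{\tri} \otimes \mathrm{id})(\pi_{\tri}(x_1 \overset{\st}{\cdot} x_2)) \\
&= (\Tr_{\tri} \otimes \mathrm{id})(\pi_{\tri}(x_1)) \cdot (\Tr_{\tri} \otimes \mathrm{id})(\pi_{\tri}(x_2)) \\
&= \evmap_{\tri}(y_1) \cdot \evmap_{\tri}(y_2),
\end{align*}
so $\evmap_{\tri}$ is multiplicative. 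Unitality follows similarly from the fact that the unit $1 \in \SkAlg^{\mathfrak{gl}_1,\st}_q(\widetilde{\tri})$ equals $F_{\tri}([\emptyset]^{\mathfrak{gl}_2})$, and all three other maps send the empty skein to $1$. The whole argument is essentially formal; the only thing doing real work is Lemma \ref{lem:qUVIR_triangle_surjectivity}, which has already been established. Consequently, the actual construction of $\evmap_{\tri}$ — which is the nontrivial content — is deferred to a separate statement, and this corollary serves only to pin down the map's uniqueness and algebraic structure once existence is known.
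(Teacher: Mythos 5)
Your argument is correct and is exactly the paper's proof: uniqueness and multiplicativity both follow from the surjectivity of $F_{\tri}$ (Lemma \ref{lem:qUVIR_triangle_surjectivity}) together with the fact that the other three arrows in the square are algebra homomorphisms. The paper states this in one line; you have merely written out the routine diagram chase.
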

\begin{proof}
This follows immediately from the surjectivity of $F$ (Lemma \ref{lem:qUVIR_triangle_surjectivity}) and the fact that the other $3$ arrows in the diagram are algebra homomorphisms. 
\end{proof}

Below, we show that the evaluation map $\evmap$ indeed exists:
\begin{thm} \label{thm:evMapTriangle}
There is an algebra homomorphism
\[
\evmap_{\tri} : \SkAlg^{\mathfrak{gl}_1, \st}_q(\widetilde{\tri}) \rightarrow \widetilde{\mathbb{T}} \otimes \SkAlg^{\mathfrak{gl}_1}_{-A}(\tri)
\]
defined on the generators by
\begin{align*}
\alpha_1 &\mapsto [bc]^{-1} \otimes \alpha, \\
\beta_1 &\mapsto [ca]^{-1} \otimes \beta, \\
\gamma_1 &\mapsto [ab]^{-1} \otimes \gamma, \\
\alpha_2 &\mapsto [bc] \otimes \alpha, \\
\beta_2 &\mapsto [ca] \otimes \beta, \\
\gamma_2 &\mapsto [ab] \otimes \gamma,
\end{align*}
which makes the square \eqref{cd:triangle_compatibility} commutative. 
\end{thm}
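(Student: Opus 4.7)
The plan is to split the argument into two halves: first verify that $\evmap_{\tri}$, prescribed on the six generators of $\SkAlg^{\mathfrak{gl}_1, \st}_q(\widetilde{\tri})$, is a well-defined algebra homomorphism; then use the surjectivity of $F_{\tri}$ established in Lemma \ref{lem:qUVIR_triangle_surjectivity} to reduce the commutativity of square \eqref{cd:triangle_compatibility} to a finite check on a small list of $\mathfrak{gl}_2$ generators.

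For well-definedness, I would appeal to the presentation of $\SkAlg^{\mathfrak{gl}_1,\st}_q(\widetilde{\tri})$ given in Lemma \ref{lem:sign_twisted_hexagon}: it is a quantum torus on $\alpha_1, \beta_2, \gamma_1, \alpha_2, \beta_1, \gamma_2$ subject to twelve pairwise commutation relations (six $q$-commutations at adjacent vertices, six anti-commutations at opposite-sheet vertices) and the single central relation $[\alpha_1\beta_2\gamma_1\alpha_2\beta_1\gamma_2] = -1$. Each pairwise check factorizes along the tensor product $\widetilde{\mathbb{T}}\otimes\SkAlg^{\mathfrak{gl}_1}_{-A}(\tri)$: the $\widetilde{\mathbb{T}}$-part reduces to a Weyl-ordered computation using the lattice pairing $\langle a,b\rangle = \langle b,c\rangle = \langle c,a\rangle = -1$ (for instance, one finds $[bc]^{-1}[ab]^{-1} = A\,[ab]^{-1}[bc]^{-1}$), and the $\SkAlg^{\mathfrak{gl}_1}_{-A}(\tri)$-part uses $\alpha\beta = (-A)\beta\alpha$ and its cyclic permutations. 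The central relation is checked by computing the six-fold product of images: the lattice vectors in $\widetilde{\mathbb{T}}$ sum to zero so that factor collapses to a scalar power of $A$, while $\alpha\beta\gamma\alpha\beta\gamma = (\alpha\beta\gamma)^2$ is a scalar by virtue of $[\alpha\beta\gamma] = 1$ and the centrality of $\alpha\beta\gamma$. Combining these two scalars with the Weyl-ordering factor from the sign-twisted source gives $-1$.

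For the commutativity of the square, by Lemma \ref{lem:qUVIR_triangle_surjectivity} the map $F_{\tri}$ is surjective, so it suffices to check the square on a generating set of $\overline{\SkAlg}^{\mathfrak{gl}_2,\st}_q(\tri)$. By Lemma \ref{lem:red_gl2_triangle} this is $\overrightarrow{\alpha}_{++}, \overleftarrow{\alpha}_{++}, \overrightarrow{\beta}_{++}, \overleftarrow{\beta}_{++}, \overrightarrow{\gamma}_{++}, \overleftarrow{\gamma}_{++}$ together with their inverses. For each such generator the two routes around the square unwind explicitly: starting from $\overrightarrow{\alpha}_{++}$, the $F_{\tri}$-then-$\evmap_{\tri}$ route produces $\alpha_2 \mapsto [bc]\otimes\alpha$ by the prescribed formula, while the $\pi_{\tri}$-then-$\Tr_{\tri}\otimes\mathrm{id}$ route gives $[\alpha]^{\mathfrak{sl}_2}\otimes\alpha \mapsto \CT[bc]\otimes\alpha$, matching after absorbing the scaling parameter $\CT$ into the generators of $\widetilde{\mathbb{T}}$. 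The remaining five cases are identical modulo the $\mathbb{Z}/3$-cyclic symmetry in $(a,b,c)$ and orientation reversal, the latter handled by the boundary skein relations used in the proof of Proposition \ref{prop:gl2tosl2}.

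The main obstacle is purely bookkeeping: twelve pairwise commutation relations and one central relation must each be verified explicitly, and the verifications intertwine three separate sign sources (the $(-1)^{b(\vec L)}$ factor built into $\pi_{\tri}$, the sign twist on the $\mathfrak{gl}_1$-skein product, and the $(-A)$ parameter in $\SkAlg^{\mathfrak{gl}_1}_{-A}(\tri)$). Setting up a uniform lattice/bilinear-form notation for $\widetilde{\mathbb{T}}$ at the outset and tabulating $b$ on all six generating corner tangles in advance should make the verification tractable and symmetric, but there is no shortcut past the direct computation.
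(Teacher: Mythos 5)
Your proposal is correct and takes essentially the same route as the paper: there, too, $\evmap_{\tri}$ is pinned down on the six generators $\alpha_1,\dots,\gamma_2$ (by chasing the preimages $\overrightarrow{\alpha}_{\pm\pm}$, etc., through the other three sides of the square, which makes commutativity on the $\mathfrak{gl}_2$ generators automatic) and well-definedness is verified against the presentation of Lemma \ref{lem:sign_twisted_hexagon}, with the only relation checked explicitly being $[\alpha_1\beta_2\gamma_1\alpha_2\beta_1\gamma_2]=-1$, the pairwise commutation checks being exactly the routine Weyl-ordered computations you describe. Two small corrections of emphasis: reducing commutativity to a generator check uses only that both composites are algebra homomorphisms (surjectivity of $F_{\tri}$ is what gives uniqueness in the preceding corollary, not the reduction), and in this 2d square the quantum trace is taken with $\CT=1$, so no absorption of $\CT$ into the generators of $\widetilde{\mathbb{T}}$ is actually needed.
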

\begin{proof}
Choosing the preimages of the generators as in Lemma \ref{lem:qUVIR_triangle_surjectivity}, we get
\[
\alpha_1
\;\overset{F^{-1}}{\mapsto}\;
\overrightarrow{\alpha}_{--}
\;\overset{\pi}{\mapsto}\;
\alpha_{--} \otimes \alpha
\;\overset{\Tr \otimes \mathrm{id}}{\mapsto}\;
[bc]^{-1} \otimes \alpha,
\]
and
\[
\alpha_2
\;\overset{F^{-1}}{\mapsto}\;
\overrightarrow{\alpha}_{++}
\;\overset{\pi}{\mapsto}\;
\alpha_{++} \otimes \alpha
\;\overset{\Tr \otimes \mathrm{id}}{\mapsto}\;
[bc] \otimes \alpha. 
\]
Images of the remaining generators can be computed in the same way. 
Now, it suffices to check that this map respects the relation \eqref{eq:gl1hexagonrel}:
\[
q^{-2}\alpha_1 \beta_2 \gamma_1 \alpha_2 \beta_1 \gamma_2 =: [\alpha_1 \beta_2 \gamma_1 \alpha_2 \beta_1 \gamma_2] = -1.
\]
Indeed, 
\begin{align*}
&q^{-2} \evmap(\alpha_1) \evmap(\beta_2) \evmap(\gamma_1) \evmap(\alpha_2) \evmap(\beta_1) \evmap(\gamma_2)\\
&= q^{-2}
\qty([bc]^{-1} \otimes \alpha)
\qty([ca] \otimes \beta)
\qty([ab]^{-1} \otimes \gamma)
\qty([bc] \otimes \alpha)
\qty([ca]^{-1} \otimes \beta)
\qty([ab] \otimes \gamma) \\
&= q^{-2}[bc]^{-1}[ca][ab]^{-1}[bc][ca]^{-1}[ab] \otimes \alpha\beta\gamma\alpha\beta\gamma\\
&= q^{-2}
\qty(A^{\frac{1}{2}} b^{-1} c^{-1})
\qty(A^{\frac{1}{2}} c a)
\qty(A^{\frac{1}{2}} a^{-1} b^{-1})
\qty(A^{\frac{1}{2}} b c)
\qty(A^{\frac{1}{2}} c^{-1} a^{-1})
\qty(A^{\frac{1}{2}} a b)
\otimes 
(-A)^{\frac{1}{2}}(-A)^{\frac{1}{2}}\\
&=-1.
\end{align*}
\end{proof}

\subsection{Gluing the commutative squares}
To finish the construction of the desired commutative square \eqref{eqn: surface compatibility}, we simply need to ``glue'' together the commutative squares for triangles \eqref{cd:triangle_compatibility} built in Theorem \ref{thm:evMapTriangle}. 
That is, given an ideally triangulated surface $\Sigma$ without boundary, 
we have the following commutative diagram (commutativity follows from commutativity of the 3 squares): 
\begin{equation}\label{eqn:surface-commutative-diagram-gluing}
\adjustbox{scale=0.7625}{
\begin{tikzcd}[row sep=1.25em, column sep = .5em]
\SkAlg^{\mathfrak{gl}_2}_{q}(\Sigma) \arrow[rr, "F_{\tau}"] \arrow[dr, "\sigma^{\mathfrak{gl}_2}"] \arrow[dd, "\pi"] \arrow[drrr,phantom,""] &&
\SkAlg^{\mathfrak{gl}_1}_{q}(\widetilde{\Sigma}) \arrow[dr, "\sigma^{\mathfrak{gl}_1}"] \\
& \underset{\tri \in \tau^{(2)}}{\bigotimes} \overline{\SkAlg}^{\mathfrak{gl}_2, \st}_{q}(\tri) \arrow[rr, "\bigotimes_{\tri \in \tau^{(2)}} F_{\tri}"] \arrow[dd, "\bigotimes_{\tri \in \tau^{(2)}} \pi_{\tri}"] \arrow[ddrr,phantom,""] && \underset{\tri \in \tau^{(2)}}{\bigotimes} \SkAlg^{\mathfrak{gl}_1, \st}_{q}(\widetilde{\tri}) \arrow[dd, "\bigotimes_{\tri \in \tau^{(2)}} \evmap_{\tri}"] \\
\SkAlg^{\mathfrak{sl}_2}_{A}(\Sigma) \otimes \SkAlg^{\mathfrak{gl}_1}_{-A}(\Sigma) \arrow[dr, "\sigma^{\mathfrak{sl}_2} \otimes \sigma^{\mathfrak{gl}_1}"] \arrow[drrr, bend right=50, swap, "\Tr_{\tau} \otimes \sigma^{\mathfrak{gl}_1}"] \arrow[ur,phantom,""] && \\
& \underset{\tri \in \tau^{(2)}}{\bigotimes} \qty( \overline{\SkAlg}^{\mathfrak{sl}_2}_{A}(\tri) \otimes \SkAlg^{\mathfrak{gl}_1}_{-A}(\tri) ) \arrow[rr, "\bigotimes_{\tri \in \tau^{(2)}}\Tr_{\tri}\otimes \mathrm{id}"] && \underset{\tri \in \tau^{(2)}}{\bigotimes} \qty(\widetilde{\mathbb{T}} \otimes \SkAlg^{\mathfrak{gl}_1}_{-A}(\tri))
\end{tikzcd},
}
\end{equation}
where $\sigma^{\mathfrak{sl}_2}$, $\sigma^{\mathfrak{gl}_2}$, $\sigma^{\mathfrak{gl}_1}$ are the splitting homomorphisms for $\mathfrak{sl}_2$, $\mathfrak{gl}_2$, and $\mathfrak{gl}_1$-skein algebras, respectively. 

Moreover, note that the image of $\Tr_\tau \otimes \sigma^{\mathfrak{gl}_1}$ (i.e., the bottom arrow) is contained in the subalgebra
\[
\SQTS_\tau(\Sigma) \otimes \qty(\bigotimes_{\tri \in \tau^{(2)}} \SkAlg^{\mathfrak{gl}_1}_{-A}(\tri) )_0 \subset \bigotimes_{\tri \in \tau^{(2)}}  \widetilde{\mathbb{T}} \otimes \bigotimes_{\tri \in \tau^{(2)}}\SkAlg^{\mathfrak{gl}_1}_{-A}(\tri),
\]
where $\qty(\bigotimes_{\tri \in \tau^{(2)}} \SkAlg^{\mathfrak{gl}_1}_{-A}(\tri) )_0$ denotes the $0$-graded part of $\bigotimes_{\tri \in \tau^{(2)}} \SkAlg^{\mathfrak{gl}_1}_{-A}(\tri)$, with respect to the $\mathbb{Z}^{\tau^{(1)}}$-grading given by the number of end points on each edge $e\in \tau^{(1)}$, counted with sign by orientation. 
The same is true for the image of $\qty(\otimes_{\tri \in \tau^{(2)}}\evmap_{\tri}) \circ \sigma^{\mathfrak{gl}_1}$. 

A useful fact is that: 
\begin{lem} \label{lem:gl1_iso_surfaces}
The splitting map
\[
\SkAlg_{-A}^{\mathfrak{gl}_1}(\Sigma) \overset{\sigma^{\mathfrak{gl}_1}}{\rightarrow} \qty(\bigotimes_{\tri\in \tau^{(2)}} \SkAlg_{-A}^{\mathfrak{gl}_1}(\tri))_0
\]
is an isomorphism of $R$-algebras. 
\end{lem}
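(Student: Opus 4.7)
The plan is to exhibit an explicit inverse $\mathrm{glue}$ to $\sigma^{\mathfrak{gl}_1}$ and verify that it is both a two-sided inverse and an algebra homomorphism. First, I would verify that $\sigma^{\mathfrak{gl}_1}$ does land in the $0$-graded part: when a framed oriented link $L\subset \Sigma\times I$ is cut along $E\times I$, where $E$ is the $1$-skeleton of $\tau$, each transverse intersection point of $L$ with $e\times I$ produces two new boundary endpoints -- one in each of the two abutting triangles -- with matching orientations when the edge $e$ is consistently oriented. Consequently, the signed count of endpoints on the two sides of any internal edge agrees, which is precisely the $0$-grading condition.

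Second, I would construct $\mathrm{glue}$. Using the description of $\SkAlg^{\mathfrak{gl}_1}_{-A}(\tri)$ as the quantum torus on generators $\alpha_\tri,\beta_\tri,\gamma_\tri$ with relation $[\alpha_\tri\beta_\tri\gamma_\tri]=1$, the $0$-graded part of the tensor product is spanned by Weyl-ordered monomials whose edge-degrees balance across every internal edge. For each such monomial, I would pair up endpoints across each internal edge in any order-preserving fashion along the edge and glue the corresponding elementary arcs to produce a framed oriented multicurve in $\Sigma$, then declare this to be its image under $\mathrm{glue}$. Independence of the pairing choice follows from \eqref{eq:gl1skeinrel1}: swapping two adjacent endpoints changes the tangle by a crossing that contributes a factor of $q^{\pm 1}$, which is exactly the commutation factor picked up when re-ordering the corresponding generators in the Weyl-ordered monomial.

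Third, I would verify that $\mathrm{glue}\circ\sigma^{\mathfrak{gl}_1}=\mathrm{id}$ and $\sigma^{\mathfrak{gl}_1}\circ\mathrm{glue}=\mathrm{id}$. The first is immediate from the topological description: splitting and regluing returns the original tangle up to isotopy. The second holds on a spanning set of Weyl-ordered monomials and extends by linearity. Algebra-homomorphism properties of both maps follow from the locality of skein constructions -- stacking in $\Sigma\times I$ commutes with splitting and gluing, provided the order of heights is preserved during the cut.

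The main obstacle will be the sign bookkeeping in Step 2: verifying that the parameter $-A$ (rather than $A$) in $\SkAlg^{\mathfrak{gl}_1}_{-A}$, together with the half-twist relation \eqref{eq:gl1skeinrel4}, gives precisely the normalization that makes the re-ordering swaps within one triangle compatible with the commutation factors coming from the triangle-algebra relations. Once this has been verified -- essentially by tracking how the $(-A)^{1/2}$ framing factors accumulate when endpoints on a given internal edge are permuted -- the remaining arguments reduce to the combinatorics of matching endpoints across edges, and the lemma follows.
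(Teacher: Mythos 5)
Your proposal is correct and follows essentially the same route as the paper: both arguments prove the lemma by exhibiting gluing as an explicit inverse to $\sigma^{\mathfrak{gl}_1}$ on a spanning set of the $0$-graded part of $\bigotimes_{\tri\in\tau^{(2)}}\SkAlg^{\mathfrak{gl}_1}_{-A}(\tri)$. The only real difference is that the paper works with the distinguished basis of $\mathfrak{gl}_1$-webs $L_{\vec{n}_{\tri}}$ (one flat canonical representative per boundary multidegree), which makes the gluing manifestly unambiguous, whereas your use of Weyl-ordered monomials in the corner-arc generators forces the crossing-versus-commutation bookkeeping (factors $(-A)^{\pm 1}$ per triangle against $(-A)^{\pm 2}=q^{\pm 1}$ per crossing) that you correctly flag as the remaining work.
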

\begin{proof}
Each copy of $\SkAlg_{-A}^{\mathfrak{gl}_1}(\tri)$ has a basis labeled by $\{ [L_{\vec{n}_{\tri}}] \;\vert\; \vec{n}_{\tri} \in \mathbb{Z}^3, \; n_{\tri, 1} + n_{\tri, 2} + n_{\tri, 3} = 0\}$, 
where $L_{\vec{n}_{\tri}}$ denotes the distinguished $\mathfrak{gl}_1$-web in $\tri \times I$ with boundary condition $\vec{n}_{\tri}$; see Figure \ref{fig:triangle-gl1-web-basis}. 
\begin{figure}[htbp]
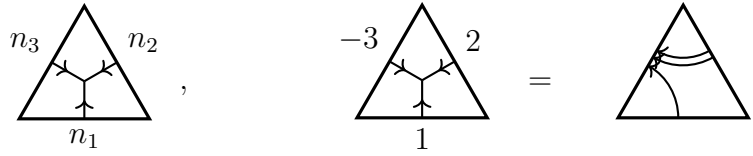

\centering
\[
\vcenter{\hbox{

}}
\]
\caption{Left: the $\mathfrak{gl}_1$-web $L_{n_1, n_2, n_3}$; Right: an example, $L_{1,2,-3}$, is shown, which should be understood as the Weyl-ordered product of the shown tangles.}
\label{fig:triangle-gl1-web-basis}
\end{figure}
Likewise, $\qty(\bigotimes_{\tri\in \tau^{(2)}} \SkAlg_{-A}^{\mathfrak{gl}_1}(\tri))_0$ has a basis given by the form $\otimes_{\tri \in \tau^{(2)}} [L_{\vec{n}_{\tri}}]$ with matching boundary conditions along each $e \in \tau^{(1)}$ (i.e., the sum of the two $n$'s must vanish on each edge $e$). 
Since the $\mathfrak{gl}_1$-webs with matching boundary conditions can be glued, the splitting map is surjective. 
Also, such gluing gives the inverse of the splitting map, showing that the splitting map is injective. 
\end{proof}

As a result, we can replace the bottom right corner of the commutative square by $\SQTS_{\tau}(\Sigma) \otimes \SkAlg^{\mathfrak{gl}_1}_{-A}(\Sigma)$ to obtain:
\begin{thm}[Compatibility theorem for surfaces]\label{thm:2d-compatibility}
The 2d quantum trace map $\Tr_\tau$ is compatible with the 2d quantum UV-IR map $F_\tau$ in the sense that they fit into the commutative square
\[
\begin{tikzcd}
\SkAlg^{\mathfrak{gl}_2}_{q}(\Sigma) \arrow[d, "\pi"] \arrow[r, "F_{\tau}"] & \SkAlg^{\mathfrak{gl}_1}_{q}(\widetilde{\Sigma}_{\tau}) \arrow[d, "\evmap"]\\
\SkAlg^{\mathfrak{sl}_2}_{A}(\Sigma) \otimes \SkAlg^{\mathfrak{gl}_1}_{-A}(\Sigma) \arrow[r, "\Tr_{\tau} \otimes \mathrm{id}"] & \SQTS_{\tau}(\Sigma) \otimes \SkAlg^{\mathfrak{gl}_1}_{-A}(\Sigma)
\end{tikzcd},
\]
where $\pi$ is the $\mathfrak{gl}_2$-$\mathfrak{sl}_2$ map (Proposition \ref{prop:gl2-sl2-map}), and $\evmap$ is the composition $\qty(\otimes_{\tri \in \tau^{(2)}}\evmap_{\tri}) \circ \sigma^{\mathfrak{gl}_1}$. 
\end{thm}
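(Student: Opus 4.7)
The plan is to assemble the desired commutative square by pasting together the triangle-level compatibility squares of Theorem \ref{thm:evMapTriangle} along the splitting homomorphisms. The central object is the large diagram \eqref{eqn:surface-commutative-diagram-gluing}: its middle square is a tensor product of copies of the triangle square \eqref{cd:triangle_compatibility}, one for each $\tri \in \tau^{(2)}$, and it is commutative. The top trapezoid commutes by Proposition \ref{prop:UVIRMapCompatibleWithSplitting} (naturality of the quantum UV-IR map with respect to the splitting), and the left trapezoid commutes by Proposition \ref{prop:gl2sl2CompatibleWithSplitting} (naturality of the $\mathfrak{gl}_2$-$\mathfrak{sl}_2$ map with respect to the splitting). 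The bottom triangle commutes by the very definition of the 2d quantum trace map as $\qty(\bigotimes_{\tri}\Tr_{\tri}) \circ \overline{\sigma}$ (Theorem \ref{thm:2d-quantum-trace-map}), tensored with the identity on the $\mathfrak{gl}_1$-factor. Thus the outer square of \eqref{eqn:surface-commutative-diagram-gluing} commutes.

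Next I would show that both composites from the top-left corner land in the subspace $\SQTS_{\tau}(\Sigma) \otimes \qty(\bigotimes_{\tri} \SkAlg^{\mathfrak{gl}_1}_{-A}(\tri))_0$ of the bottom-right corner. For the composite through $\Tr_{\tau} \otimes \sigma^{\mathfrak{gl}_1}$, the $\widetilde{\mathbb{T}}^{\otimes \tau^{(2)}}$-factor lies in $\SQTS_\tau(\Sigma)$ by definition of the 2d quantum trace map, and the $\mathfrak{gl}_1$-factor lies in the $0$-graded part because $\sigma^{\mathfrak{gl}_1}$ preserves the boundary-count grading by construction. For the other composite, the $0$-gradedness on the $\mathfrak{gl}_1$-side is again immediate from the splitting, and the $\SQTS_\tau(\Sigma)$-part follows from inspecting the explicit formulas of Theorem \ref{thm:evMapTriangle}: after gluing along each internal edge $e \in \tau^{(1)}$, the Weyl-ordered monomials in bare edges $a_i, b_i, c_i$ assemble into the square-root shear parameters $\hat{x}_e$. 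Applying Lemma \ref{lem:gl1_iso_surfaces}, the splitting map identifies this $0$-graded subspace with $\SkAlg^{\mathfrak{gl}_1}_{-A}(\Sigma)$, and setting $\evmap := \qty(\bigotimes_{\tri}\evmap_{\tri}) \circ \sigma^{\mathfrak{gl}_1}$, composed with this identification on the codomain, yields the desired commutative square.

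The main point to verify carefully is that the image of the right vertical composite genuinely lies in $\SQTS_\tau(\Sigma) \otimes \qty(\bigotimes_\tri \SkAlg^{\mathfrak{gl}_1}_{-A}(\tri))_0$; this is the only nontrivial content beyond assembling the diagram. Once this containment is established, the inverse provided by Lemma \ref{lem:gl1_iso_surfaces} is simply invoked to reinterpret the target, and the theorem drops out of the outer square. I do not anticipate any obstruction here, since all the sign-twisting subtleties have been absorbed earlier (in Propositions \ref{prop:splitpi} and \ref{prop:gl2sl2CompatibleWithSplitting}) and the gradings on the $\mathfrak{gl}_1$-side are preserved by every map in sight.
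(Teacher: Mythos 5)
Your proposal is correct and follows essentially the same route as the paper: commutativity of the outer square of diagram \eqref{eqn:surface-commutative-diagram-gluing} (triangle squares from Theorem \ref{thm:evMapTriangle} plus compatibility of $F$ and $\pi$ with the splittings), the observation that both composites land in $\SQTS_{\tau}(\Sigma)\otimes\bigl(\bigotimes_{\tri}\SkAlg^{\mathfrak{gl}_1}_{-A}(\tri)\bigr)_0$, and the identification of that $0$-graded part with $\SkAlg^{\mathfrak{gl}_1}_{-A}(\Sigma)$ via Lemma \ref{lem:gl1_iso_surfaces}. No gaps.
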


\subsection{Proof of Neitzke-Yan conjecture} 
A version of the above compatibility was conjectured earlier by Neitzke and Yan. 
To state their conjecture precisely, 
let $\Gamma$ be the lattice $H_1(\widetilde{\Sigma}; \mathbb{Z})$ with the standard intersection pairing $\Gamma \times \Gamma \rightarrow \mathbb{Z}$, and consider 3 sublattices $2\Gamma$, $\Gamma^{\mathrm{odd}}$, $\Gamma^{\mathrm{even}}$, where
\[
2\Gamma := \{2\gamma \;\vert\; \gamma \in \Gamma\}, \quad
\Gamma^{\mathrm{odd}} := \{\gamma \in \Gamma \;\vert\; \sigma(\gamma) = -\gamma\}, 
\quad\text{and}\quad
\Gamma^{\mathrm{even}} := \{\gamma \in \Gamma \;\vert\; \sigma(\gamma) = \gamma\},
\]
with $\sigma$ being the $\mathbb{Z}/2$-deck transformation action on $\Gamma$. 
Each of those sublattices has an induced intersection pairing, 
so we can consider the corresponding quantum tori $Q_{2\Gamma}$, $Q_{\Gamma^{\mathrm{odd}}}$, and $Q_{\Gamma^{\mathrm{even}}}$ (as in Definition \ref{defn:quantum-torus}) based on those lattices. 
As explained in \cite[Sec. 3.5]{NY}, there is an algebra isomorphism 
\begin{align}\label{eq:gl1-quantum-torus-iso}
\iota : \SkAlg^{\mathfrak{gl}_1, *}_{q}(\widetilde{\Sigma}_{\tau}) &\overset{\sim}{\rightarrow} Q_{2\Gamma} \\
[\vec{L}]^{\mathfrak{gl}_1} &\mapsto (-1)^{n(\vec{L})}q^{w(\vec{L})}x_{2\vec{L}}, \nonumber
\end{align}
where $n(\vec{L})$ denotes the number of non-local crossings (i.e., crossings in the projection to $\Sigma$ which do not come from a crossing on $\widetilde{\Sigma}$),
and $*$ denotes the twist of the product structure by a sign given by the mod 2 intersection number between the two links in the projection to $\Sigma$.\footnote{
Here, the factor $(-1)^{n(\vec{L})}$ ensures that the map is invariant under an isotopy of $\vec{L}$ across a branch point, 
and the twist in the product structure is due to the fact that the monomials in $Q_{2\Gamma}$ are $A^2 = -q$-commuting, instead of $q$-commuting. 
} 
By twisting the product structures on $\SkAlg^{\mathfrak{gl}_2}_{q}(\Sigma)$ and $\SkAlg^{\mathfrak{gl}_1}_{-A}(\Sigma)$ in the same way (i.e., by a sign given by the mod 2 intersection number), the quantum UV-IR map $F$ and the $\mathfrak{gl}_2$-$\mathfrak{sl}_2$ map remain algebra homomorphisms. 

\begin{conj}[{\cite[Sec. 9.2]{NY}}]\label{conj:AndyAndFei}
There is a commutative diagram
\begin{equation}\label{eq:NeitzkeYan-commutative-diagram}
\begin{tikzcd}
\SkAlg^{\mathfrak{gl}_2, *}_{q}(\Sigma) \arrow[d, "\pi"] \arrow[r, "F_{\tau}"] & \SkAlg^{\mathfrak{gl}_1, *}_{q}(\widetilde{\Sigma}_{\tau}) \cong Q_{2\Gamma} \arrow[d, "\rho"]\\
\SkAlg^{\mathfrak{sl}_2}_{A}(\Sigma) \otimes \SkAlg^{\mathfrak{gl}_1, *}_{-A}(\Sigma) \arrow[r, "F^{\mathrm{odd}} \otimes F^{\mathrm{even}}"] & Q_{\Gamma^{\mathrm{odd}}} \otimes Q_{\Gamma^{\mathrm{even}}}
\end{tikzcd},
\end{equation}
where 
\begin{itemize}
\item $\rho$ is an algebra homomorphism given by
\begin{align}\label{eq:rho-map}
\rho : Q_{2\Gamma} &\rightarrow Q_{\Gamma^{\mathrm{odd}}} \otimes Q_{\Gamma^{\mathrm{even}}} \\
x_{2\gamma} &\mapsto x_{\gamma - \sigma(\gamma)} \otimes x_{\gamma + \sigma(\gamma)}, \nonumber
\end{align}
\item $F^{\mathrm{odd}}$ is an algebra homomorphisms given by\footnote{Here, and also in $F^{\mathrm{even}}$ below, the factor written in \cite{NY} is $q^{-\frac{1}{2}w(\vec{L})}$, which is corrected to $(-A)^{-w(\vec{L})}$ here.} 
\begin{align}\label{eq:Fodd}
F^{\mathrm{odd}} : \SkAlg^{\mathfrak{sl}_2}_{A}(\Sigma) &\rightarrow Q_{\Gamma^{\mathrm{odd}}} \\
[L]^{\mathfrak{sl}_2} &\mapsto (-A)^{-w(\vec{L})} \rho^{\mathrm{odd}}(\iota \circ F([\vec{L}]^{\mathfrak{gl}_2})), \nonumber
\end{align}
where $\vec{L}$ is $L$ with an arbitrary choice of orientation, 
$w(\vec{L})$ is the writhe, 
and $\rho^{\mathrm{odd}} : Q_{2\Gamma} \rightarrow Q_{\Gamma^{\mathrm{odd}}}$ is the $R$-linear map $x_{2\gamma} \mapsto x_{\gamma - \sigma(\gamma)}$, and
\item $F^{\mathrm{even}}$ is an algebra homomorphism given by
\begin{align}\label{eq:Feven}
F^{\mathrm{even}} : \SkAlg^{\mathfrak{gl}_1, *}_{-A}(\Sigma) &\rightarrow Q_{\Gamma^{\mathrm{even}}} \\
[\vec{L}]^{\mathfrak{gl}_1} &\mapsto (-A)^{w(\vec{L})} x_{p^{-1}(\vec{L})}, \nonumber
\end{align}
with $p : \widetilde{\Sigma} \rightarrow \Sigma$ being the projection. 
\end{itemize}
Morever, $F^{\mathrm{odd}}$ coincides with the Bonahon-Wong quantum trace map. 
\end{conj}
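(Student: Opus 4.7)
The plan is to deduce Conjecture \ref{conj:AndyAndFei} as a corollary of our 2d compatibility theorem (Theorem \ref{thm:2d-compatibility}). Both diagrams share the same top-left corner $\SkAlg^{\mathfrak{gl}_2}_q(\Sigma)$ and the same top-horizontal arrow $F_\tau$, so it suffices to identify the right vertical arrow and the bottom row of the Neitzke-Yan diagram \eqref{eq:NeitzkeYan-commutative-diagram} with those of the commutative square in Theorem \ref{thm:2d-compatibility}, up to the various sign/writhe prefactors appearing in \eqref{eq:Fodd}, \eqref{eq:Feven}.

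First, I will set up the dictionary between the two sets of codomains. The isomorphism $\iota$ of \eqref{eq:gl1-quantum-torus-iso} identifies $\SkAlg^{\mathfrak{gl}_1,*}_q(\widetilde{\Sigma}_\tau) \cong Q_{2\Gamma}$. Next, each square-root quantized shear parameter $\hat{x}_e \in \SQTS_\tau(\Sigma)$ corresponds naturally to the odd cycle in $H_1(\widetilde{\Sigma};\mathbb{Z})$ given by the difference of the two sheets of a small loop transverse to $e$; comparing the Chekhov-Fock antisymmetric form $\langle e,e'\rangle = a_{ee'} - a_{e'e}$ with the intersection pairing on $\Gamma^{\mathrm{odd}}$ (a standard computation, cf.\ \cite{NY}) yields an algebra isomorphism $\SQTS_\tau(\Sigma) \cong Q_{\Gamma^{\mathrm{odd}}}$. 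Similarly, closed links in $\Sigma$ lift to $\sigma$-invariant cycles $p^{-1}(\vec{L}) = \widetilde{L} + \sigma(\widetilde{L})$ in $\widetilde{\Sigma}$; after the mod-$2$ intersection twist, this pull-back realizes $\SkAlg^{\mathfrak{gl}_1,*}_{-A}(\Sigma)$ as (an embedding into) $Q_{\Gamma^{\mathrm{even}}}$, which is precisely the map $F^{\mathrm{even}}$ of \eqref{eq:Feven} (the writhe prefactor $(-A)^{w(\vec{L})}$ accounts for the half-twist relation \eqref{eq:gl1skeinrel4}).

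The crucial step is to check that, under these identifications, the evaluation map $\evmap$ from Theorem \ref{thm:2d-compatibility} corresponds to $\rho$. By Proposition \ref{prop:UVIRMapCompatibleWithSplitting} and the definition of $\evmap$ as $(\otimes_\tri \evmap_\tri) \circ \sigma^{\mathfrak{gl}_1}$, it suffices to check this on a single triangle, where Theorem \ref{thm:evMapTriangle} gives explicit formulas $\alpha_2 \mapsto [bc]\otimes \alpha$, etc. Tracing $\alpha_2$ through $\iota$ gives $x_{2\gamma}$ for $\gamma = [\alpha_2] \in \Gamma$; on the other hand, $[bc]$ and $\alpha$ correspond to $x_{\gamma - \sigma(\gamma)}$ and $x_{\gamma + \sigma(\gamma)}$ respectively (the shear-coordinate factor $[bc]$ picks up precisely the odd difference, while the $\mathfrak{gl}_1$-cycle $\alpha$ pulls back to the even sum). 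This matches \eqref{eq:rho-map} exactly, so commutativity of the Neitzke-Yan square reduces to commutativity of our square in Theorem \ref{thm:2d-compatibility}. The final claim that $F^{\mathrm{odd}}$ coincides with the Bonahon-Wong quantum trace is then immediate from comparing \eqref{eq:Fodd} to the definition of $\pi$ in Proposition \ref{prop:gl2tosl2}: the factor $(-A)^{-w(\vec{L})}$ in $F^{\mathrm{odd}}$ absorbs the writhe-dependent sign $(-1)^{b(\vec{L})}$ and the $(-A)^{w(\vec{L})}$ hidden in the $\mathfrak{gl}_1$-component of $\pi$, leaving $\Tr_\tau([L]^{\mathfrak{sl}_2})$ on the nose.

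The main obstacle is bookkeeping the various sign and normalization conventions: the mod-$2$ intersection $*$-twist of Neitzke-Yan, the sign-twist by $(-1)^{b(\vec{L})}$ used in Section \ref{subsec:gl2tosl2}, the factor $(-1)^{n(\vec{L})} q^{w(\vec{L})}$ inside $\iota$, and the writhe prefactors in \eqref{eq:Fodd}, \eqref{eq:Feven}. For closed surfaces without boundary, $b(\vec{L}) = 0$, so our sign-twist is trivial, whereas the $*$-twist is not; reconciling these will require a careful check that the various sign contributions line up. A secondary point to verify is that the natural maps $\SQTS_\tau(\Sigma) \to Q_{\Gamma^{\mathrm{odd}}}$ and $\SkAlg^{\mathfrak{gl}_1,*}_{-A}(\Sigma) \to Q_{\Gamma^{\mathrm{even}}}$ are genuine algebra homomorphisms (i.e., preserve the antisymmetric forms), which amounts to a short computation at each edge of $\tau$.
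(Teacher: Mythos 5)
Your proposal is correct and follows essentially the same route as the paper's proof: deduce the conjecture from Theorem~\ref{thm:2d-compatibility} by constructing the edge-cycle map $\hat{x}_e \mapsto x_{\gamma_e}$ into $Q_{\Gamma^{\mathrm{odd}}}$ together with $i^{\mathrm{even}} = F^{\mathrm{even}}$, verifying $(i^{\mathrm{odd}}\otimes i^{\mathrm{even}})\circ \evmap = \rho\circ\iota$ locally on each ideal triangle using the explicit formulas of Theorem~\ref{thm:evMapTriangle} (with $\iota$ extended to bordered pieces), gluing via the splitting maps, and fixing the writhe prefactors in \eqref{eq:Fodd}--\eqref{eq:Feven} by well-definedness. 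The only slip is calling $\SQTS_\tau(\Sigma)\to Q_{\Gamma^{\mathrm{odd}}}$ an isomorphism; it is in general only an algebra embedding, which is all the argument (and the identification of $F^{\mathrm{odd}}$ with the Bonahon--Wong trace) requires.
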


\begin{thm}\label{thm:NeitzkeYanConjIsTrue}
The Neitzke-Yan conjecture \ref{conj:AndyAndFei} is true. 
\end{thm}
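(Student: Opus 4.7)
The plan is to deduce the Neitzke--Yan conjecture essentially as a corollary of the surface compatibility theorem (Theorem \ref{thm:2d-compatibility}), by identifying the target quantum tori in the two commutative squares.

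First, I would twist the products. Starting from the square of Theorem \ref{thm:2d-compatibility}, I twist the product structures on $\SkAlg_q^{\mathfrak{gl}_2}(\Sigma)$, $\SkAlg_q^{\mathfrak{gl}_1}(\widetilde{\Sigma}_{\tau})$ and $\SkAlg_{-A}^{\mathfrak{gl}_1}(\Sigma)$ by the sign $(-1)^{i_2(\cdot,\cdot)}$ coming from the mod $2$ intersection number of projected links. The key observation is that each of the four maps $F_\tau$, $\pi$, $\Tr_\tau\otimes\mathrm{id}$, $\evmap$ respects this sign-twisting: for $F_\tau$ and $\pi$ this is because the $\mathfrak{gl}_1$ lifts on either side carry the same mod $2$ intersection data as the $\mathfrak{gl}_2$ projection, while for $\Tr_\tau\otimes\mathrm{id}$ and $\evmap$ the twist passes transparently through the $\mathfrak{sl}_2$ factor. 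Hence Theorem \ref{thm:2d-compatibility} continues to hold after $*$-twisting.

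Next, I would make three quantum-torus identifications and check them against the maps in the Neitzke--Yan diagram.
\begin{enumerate}[label=(\roman*)]
\item The isomorphism $\iota: \SkAlg_q^{\mathfrak{gl}_1,*}(\widetilde{\Sigma}_\tau)\xrightarrow{\sim} Q_{2\Gamma}$ of \eqref{eq:gl1-quantum-torus-iso} is already given.
\item I identify $\SQTS_\tau(\Sigma)\cong Q_{\Gamma^{\mathrm{odd}}}$ by sending each square-root quantized shear parameter $\hat{x}_e$ to the generator $x_{\widetilde{e}}$ associated to the canonical odd lift $\widetilde e\in\Gamma^{\mathrm{odd}}$ of the edge $e\in\tau^{(1)}$. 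Antisymmetry of the intersection pairing on $\Gamma^{\mathrm{odd}}$ matches the combinatorial formula $\langle e,e'\rangle=a_{ee'}-a_{e'e}$.
\item I identify $\SkAlg_{-A}^{\mathfrak{gl}_1,*}(\Sigma)\cong Q_{\Gamma^{\mathrm{even}}}$ via $[\vec L]\mapsto(-A)^{w(\vec L)}x_{p^{-1}(\vec L)}$, which is precisely the map $F^{\mathrm{even}}$ of \eqref{eq:Feven}; the writhe correction kills the half-twist ambiguity, and the intersection twist converts the $-A$-commutation of $\mathfrak{gl}_1$-skeins into the $q$-commutation of $Q_{\Gamma^{\mathrm{even}}}$.
\end{enumerate}
Under these identifications, I must verify that the right vertical arrow $\evmap$ is transported to $\rho:Q_{2\Gamma}\to Q_{\Gamma^{\mathrm{odd}}}\otimes Q_{\Gamma^{\mathrm{even}}}$. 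It suffices to check this on a triangle by Lemma \ref{lem:gl1_iso_surfaces} and Proposition \ref{prop:UVIRMapCompatibleWithSplitting}: the explicit formulas in Theorem \ref{thm:evMapTriangle} send each hexagon generator $\alpha_i,\beta_i,\gamma_i$ to $[bc]^{\mp 1}\otimes\alpha$ etc., which under $\iota$ correspond exactly to $x_{\widetilde{a}_i-\sigma\widetilde{a}_i}\otimes x_{a+\sigma a}$, matching $\rho(x_{2a_i})=x_{a_i-\sigma a_i}\otimes x_{a_i+\sigma a_i}$. This is essentially a bookkeeping of how the odd/even decomposition of $H_1(\widetilde{\tri})$ respects the $\widetilde{\mathbb{T}}$ / biangle factorization.

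Finally, with all identifications in place, the twisted version of the square in Theorem \ref{thm:2d-compatibility} becomes precisely \eqref{eq:NeitzkeYan-commutative-diagram}, with the bottom row identified as $\Tr_\tau\otimes F^{\mathrm{even}}$. It remains to match $\Tr_\tau$ with $F^{\mathrm{odd}}$ as defined in \eqref{eq:Fodd}: pairing the commutativity just proved with the definition
\[
F^{\mathrm{odd}}([L]^{\mathfrak{sl}_2})=(-A)^{-w(\vec L)}\rho^{\mathrm{odd}}(\iota\circ F_\tau([\vec L]^{\mathfrak{gl}_2}))
\]
and projecting onto the first tensor factor of $Q_{\Gamma^{\mathrm{odd}}}\otimes Q_{\Gamma^{\mathrm{even}}}$, the right-hand side becomes $(-1)^{b(\vec L)}\Tr_\tau([L]^{\mathfrak{sl}_2})$ times a compensating sign coming from the projection; a direct check that the relevant $b(\vec L)$ and writhe contributions cancel against $(-A)^{-w(\vec L)}$ yields $F^{\mathrm{odd}}=\Tr_\tau$. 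In particular, this shows that the right-hand side of the definition of $F^{\mathrm{odd}}$ is independent of the auxiliary choice of orientation $\vec L$.

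The main obstacle I anticipate is the careful bookkeeping in this last step: tracking the interaction between the three distinct sign twists (the $(-1)^{b(\vec L)}$ from $\pi$, the sign inside $\iota$, the writhe/framing factor $(-A)^{-w(\vec L)}$) and verifying they cancel cleanly enough to produce an orientation-independent, Bonahon--Wong-valued $F^{\mathrm{odd}}$. All the homological and combinatorial content is already encoded in Theorem \ref{thm:2d-compatibility} and Theorem \ref{thm:evMapTriangle}; the remainder is a careful, but essentially mechanical, identification of conventions.
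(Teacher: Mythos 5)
Your proposal is correct and takes essentially the same route as the paper: it deduces the conjecture from Theorem \ref{thm:2d-compatibility} by sending each shear parameter to its edge cycle to get $i^{\mathrm{odd}}$, taking $i^{\mathrm{even}}=F^{\mathrm{even}}$, verifying $(i^{\mathrm{odd}}\otimes i^{\mathrm{even}})\circ\evmap=\rho\circ\iota$ generator-by-generator on a triangle via Theorem \ref{thm:evMapTriangle}, and gluing to read off $F^{\mathrm{odd}}=\Tr_\tau$. The only ingredients you compress are the bordered extension of $\iota$ in \eqref{eq:gl1-quantum-torus-iso} (relative $H_1$ with half-integer pairing and an extra writhe twist of the product), which the paper isolates as a lemma precisely so that the triangle-level check is meaningful, and the fact that since $\Sigma$ has no boundary one has $b(\vec{L})=0$, so your final sign bookkeeping reduces to the writhe normalization that makes $F^{\mathrm{odd}}$ framing-independent.
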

\begin{proof}
Thanks to Theorem \ref{thm:2d-compatibility}, it suffices to construct an algebra embedding
\[
\SQTS_\tau(\Sigma) \overset{i^{\mathrm{odd}}}{\hookrightarrow} Q_{\Gamma^{\mathrm{odd}}}
\]
such that 
\begin{equation}\label{eq:NYconjproof1}
(i^{\mathrm{odd}} \otimes i^{\mathrm{even}}) \circ \evmap = \rho \circ \iota,
\end{equation}
where $i^{\mathrm{even}} = F^{\mathrm{even}}$,
and
\begin{equation}\label{eq:NYconjproof2}
i^{\mathrm{odd}} \circ \Tr_{\tau} = F^{\mathrm{odd}}.     
\end{equation}

For each square-root quantized shear parameter $\hat{x}_e$, set
\begin{equation}\label{eq:iodd}
i^{\mathrm{odd}}(\hat{x}_e) = x_{\gamma_e},
\end{equation}
where $\gamma_e \in \Gamma^{\mathrm{odd}}$ denotes the 1-cycle in $\widetilde{\Sigma}_\tau$ depicted below:
\[
\vcenter{\hbox{

}}.
\]
From the commutation relations, we see that this extends to an algebra homomorphism $i^{\mathrm{odd}} : \SQTS_\tau(\Sigma) \rightarrow Q_{\Gamma^{\mathrm{odd}}}$. 
It is easy to see that each monomial in $\SQTS_{\tau}(\Sigma)$ gets sent to a different lattice point in $\Gamma^{\mathrm{odd}}$ (e.g., by looking at the intersection numbers with ideal arcs in $\widetilde{\Sigma}$), so $i^{\mathrm{odd}}$ is indeed an embedding. 

It remains to show equations \eqref{eq:NYconjproof1} and \eqref{eq:NYconjproof2}. 
Just like how we constructed the compatibility map, we follow the strategy of cutting and gluing. 
To that end, we first show the following lemma which extends the isomorphism between $\mathfrak{gl}_1$-skeins and quantum tori \eqref{eq:gl1-quantum-torus-iso} to surfaces with boundary: 
\begin{lem}
Let $\Sigma$ be a punctured, bordered surface with ideal triangulation $\tau$, and let $\widetilde{\Sigma}_\tau$ be the corresponding branched double cover. 
Set $\Gamma := H_1(\widetilde{\Sigma}_\tau, \partial \widetilde{\Sigma}_\tau)$, equipped with the intersection pairing $\Gamma \times \Gamma \rightarrow \frac{1}{2}\mathbb{Z}$. 
Then, there is an algebra isomorphism
\begin{align*}
\iota : \SkAlg^{\mathfrak{gl}_1, \st, *}_q(\widetilde{\Sigma}_\tau) &\overset{\sim}{\rightarrow} Q_{2\Gamma} \\
[\vec{L}]^{\mathfrak{gl}_1} &\mapsto (-1)^{b(\vec{L})} (-1)^{w(p(\vec{L}))}A^{2 w(\vec{L})} x_{2\vec{L}}\\
&\quad = (-1)^{b(\vec{L})} (-1)^{n(\vec{L})} q^{w(\vec{L})} x_{2\vec{L}},
\end{align*}
where $n(\vec{L}), w(\vec{L}) \in \frac{1}{2}\mathbb{Z}$ are the non-local writhe and the writhe of $\vec{L}$, respectively, 
$w(p(\vec{L})) = n(\vec{L}) + w(\vec{L}) \in \frac{1}{2}\mathbb{Z}$ is the writhe of the projection $p(\vec{L})$ of $\vec{L}$ to $\Sigma$, 
and the product structure on $\SkAlg^{\mathfrak{gl}_1, \st}_q(\widetilde{\Sigma}_\tau)$ is -- on top of the sign-twisting $(-1)^{-b(\vec{L}_1, \vec{L}_2)}$ -- further twisted by $(-1)^{-w(p(\vec{L}_1), p(\vec{L}_2))}$, where $w(p(\vec{L}_1), p(\vec{L}_2)) := w(p(\vec{L}_1 \cdot \vec{L}_2)) - w(p(\vec{L}_1)) - w(p(\vec{L}_2)) \in \frac{1}{2}\mathbb{Z}$.\footnote{
Unlike in \eqref{eq:gl1-quantum-torus-iso}, here, the intersection pairing, as well as various (local and non-local) writhes, $n(\vec{L}), w(\vec{L}), w(p(\vec{L}))$, are half-integer-valued because of boundary contributions. 
} 
\end{lem}
\begin{proof}
Firstly, this map is well-defined because it respects the ordinary $\mathfrak{gl}_1$-skein relations -- thanks to the factor $q^{w(\vec{L})}$ -- and the relation for sign defects -- thanks to the factor $(-1)^{n(\vec{L})}$. 
This is an algebra map because, for any two tangles $\vec{L}_1$ and $\vec{L}_2$ flat on $\widetilde{\Sigma}_{\tau}$ so that $w(\vec{L}_1) = w(\vec{L}_2) = 0$, 
\[
A^{2w(\vec{L}_1 \cdot \vec{L}_2)} x_{2(\vec{L}_1 \cdot\vec{L}_2)}
= A^{\frac{\langle 2\vec{L}_1, 2\vec{L}_2\rangle}{2}} x_{2\vec{L}_1 + 2\vec{L}_2}
= x_{2\vec{L}_1} \cdot x_{2\vec{L}_2}.
\]
Finally, both the domain and codomain of this map are graded by $H_1(\widetilde{\Sigma}_\tau, \partial \widetilde{\Sigma}_\tau)$, with each graded piece being isomorphic to the base ring $R$. 
It is easy to see that $\iota$ respects this grading and is an isomorphism in each graded piece. 
Therefore, $\iota$ is an algebra isomorphism. 
\end{proof}

Now, back to the case where $\Sigma$ is an ideally triangulated surface without boundary. 
For each ideal triangle $\tri$, we have: 
\[
\begin{tikzcd}
\overline{\SkAlg}^{\mathfrak{gl}_2, \st, *}_{q}(\tri) \arrow[d, "\pi"] \arrow[r, "F_{\tri}"] & \SkAlg^{\mathfrak{gl}_1, \st, *}_{q}(\widetilde{\tri}) \cong Q_{2\Gamma_{\widetilde{\tri}}} \arrow[d, "\evmap"] \arrow[dashed, ddr, bend left=30, "\rho"]\\
\overline{\SkAlg}^{\mathfrak{sl}_2}_{A}(\tri) \otimes \SkAlg^{\mathfrak{gl}_1, *}_{-A}(\tri) \arrow[r, "\Tr_{\tri} \otimes \mathrm{Id}"] \arrow[dashed, drr, bend right=20, swap, "F^{\mathrm{odd}} \otimes F^{\mathrm{even}}"] & \widetilde{\mathbb{T}} \otimes \SkAlg^{\mathfrak{gl}_1, *}_{-A}(\tri) \arrow[dr, "i^{\mathrm{odd}} \otimes i^{\mathrm{even}}"] \\
&& Q_{\Gamma_{\widetilde{\tri}}^{\mathrm{odd}}} \otimes Q_{\Gamma_{\widetilde{\tri}}^{\mathrm{even}}}
\end{tikzcd},
\]
where $\Gamma_{\widetilde{\tri}} := H_1(\widetilde{\tri}, \partial \widetilde{\tri})$ is a lattice of rank 5, 
and $i^{\mathrm{odd}}$ and $i^{\mathrm{even}}$ are algebra maps (in fact, isomorphisms) between rank 3 and 2 quantum tori, respectively, defined by
\begin{align*}
i^{\mathrm{odd}} : \widetilde{\mathbb{T}} &\rightarrow Q_{\Gamma_{\widetilde{\tri}}^{\mathrm{odd}}}\\
\vcenter{\hbox{

}},
\end{align*}
and similarly for the remaining generators $\beta, \gamma$ of $\SkAlg^{\mathfrak{gl}_1, *}_{-A}(\tri)$.\footnote{Note, these generators $\alpha, \beta, \gamma$, which were originally $-A$-commuting, become $A$-commuting after the twist in the product.} 
A straightforward computation on each generator of $\SkAlg^{\mathfrak{gl}_1, \st, *}_{q}(\widetilde{\tri})$ shows that the composition $(i^{\mathrm{odd}} \otimes i^{\mathrm{even}}) \circ \evmap$ is indeed given by $\rho \circ \iota$ where $\rho$ is as in \eqref{eq:rho-map} but for $\Gamma = \Gamma_{\widetilde{\tri}}$. 

Now, what remains is a simple matter of gluing these commutative squares back together to get the desired commutative diagram \eqref{eq:NeitzkeYan-commutative-diagram}. 
The splitting of $\Sigma$ into ideal triangles induces the corresponding splitting map on the quantum tori
\[
Q_{2\Gamma} \rightarrow \bigotimes_{\tri \in \tau^{(2)}} Q_{2\Gamma_{\widetilde{\tri}}}
\]
and analogous maps for $Q_{\Gamma^{\mathrm{odd}}}$ and $Q_{\Gamma^{\mathrm{even}}}$, 
which are isomorphisms onto the degree-0 subalgebra (i.e., the subalgebra generated by elements of the form $\otimes_{\tri \in \tau^{(2)}} x_{\gamma_{\tri}}$ such that $\partial \gamma_{\tri}\vert_{e} + \partial \gamma_{\tri'}\vert_{e} = 0$ for each edge $e$, if $\tri$ and $\tri'$ are the two triangles sharing the edge $e$).  
From the construction, it is clear that, after gluing, $i^{\mathrm{odd}}$ becomes the map described earlier in \eqref{eq:iodd}. 
That \eqref{eq:NYconjproof1} holds follows directly, since we have already checked it locally. 
That $i^{\mathrm{even}} = F^{\mathrm{even}}$ is indeed given by the formula \eqref{eq:Feven} is straightforward: 
for links flat on $\Sigma$, this is evident from our local definition of $i^{\mathrm{even}}$, and the prefactor $(-A)^{w(\vec{L})}$ is uniquely determined in order for the map to be well-defined. 
Finally, that \eqref{eq:NYconjproof2} holds, or equivalently, that $i^{\mathrm{odd}} \circ \Tr$ is indeed given by the formula \eqref{eq:Fodd} is an immediate corollary of the commutative diagram: 
for any flat link $L$ on $\Sigma$, we have
\[
i^{\mathrm{odd}} \circ \Tr([L]^{\mathfrak{sl}_2}) = i^{\mathrm{odd}} \circ \evmap \circ F([\vec{L}]^{\mathfrak{gl}_2}) = \rho^{\mathrm{odd}} \circ \iota \circ F([\vec{L}]^{\mathfrak{gl}_2}).
\]
The prefactor $(-A)^{-w(\vec{L})}$ is uniquely determined for the map to be well-defined. 
\end{proof}

\subsection{Naturality with respect to flips}
Here, we show that the commutative squares constructed in Theorem \ref{thm:2d-compatibility} are natural with respect to change of triangulation of the surface. 
\begin{thm}
Under a change of triangulation $\tau \rightarrow \tau'$, we have the following commutative diagram
\[
\adjustbox{scale=.9}{
\begin{tikzcd}
\overline{\SkAlg}^{\mathfrak{gl}_2}_{q}(\Sigma) \arrow[dd, "\pi"] \arrow[rr, "F_{\tau'}"] \arrow[dr, "F_{\tau}"] &  & \SkAlg^{\mathfrak{gl}_1}_{q}(\widetilde{\Sigma}_{\tau'}) \arrow[dd, "\evmap_{\tau'}"]\\
 & \SkAlg^{\mathfrak{gl}_1}_{q}(\widetilde{\Sigma}_{\tau}) \arrow[ur, "\psi_{\tau \rightarrow \tau'}"] & \\
\overline{\SkAlg}^{\mathfrak{sl}_2}_{A}(\Sigma) \otimes \SkAlg^{\mathfrak{gl}_1}_{-A}(\Sigma) \arrow[rr, "\Tr_{\tau'} \otimes \mathrm{id}" {xshift=-3em}] \arrow[dr, "\Tr_{\tau} \otimes \mathrm{id}"] &  & \widehat{\SQTS}_{\tau'}(\Sigma) \otimes \SkAlg^{\mathfrak{gl}_1}_{-A}(\Sigma) \\
 & \widehat{\SQTS}_{\tau}(\Sigma) \otimes \SkAlg^{\mathfrak{gl}_1}_{-A}(\Sigma) \arrow[leftarrow,uu,crossing over,"\evmap_\tau" swap,pos=.7] \arrow[ur, swap, "\theta_{\tau \rightarrow \tau'} \otimes \mathrm{id}"] & 
\end{tikzcd},
}
\]
where $\theta_{\tau \rightarrow \tau'}$ are the transition maps of the square-root quantum Teichm\"uller space discussed in Section \ref{subsubsec:2d-quantum-trace-naturality}, and $\psi_{\tau \rightarrow \tau'}$ are the transition maps for the $\mathfrak{gl}_1$-skein algebras of the branched double covers discussed in Section \ref{subsubsec:2d-quantum-UVIR-naturality}. 
\end{thm}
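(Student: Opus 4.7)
My plan is to establish the commutativity of the cube face-by-face, reducing the novel content to the commutativity of the right face, which I will verify by a direct local computation on the flipped quadrilateral. The back and front faces are exactly the compatibility squares of Theorem \ref{thm:2d-compatibility} applied to $\tau'$ and $\tau$, respectively. The top face commutes by naturality of the 2d quantum UV-IR map (Section \ref{subsubsec:2d-quantum-UVIR-naturality}), and the bottom face commutes by naturality of the 2d quantum trace map (Section \ref{subsubsec:2d-quantum-trace-naturality}). The left face is trivial since the $\mathfrak{gl}_2$-$\mathfrak{sl}_2$ map $\pi$ does not depend on the triangulation. Thus the entire cube commutes if and only if the right face commutes, i.e.,
\[
\evmap_{\tau'} \circ \psi_{\tau \rightarrow \tau'} = (\theta_{\tau \rightarrow \tau'} \otimes \mathrm{id}) \circ \evmap_\tau.
\]

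To verify this identity, I would first localize: both $\psi_{\tau \to \tau'}$ and $\theta_{\tau \to \tau'}$ act as the identity on generators supported outside the quadrilateral $Q$ containing the flip, and both $\evmap_\tau$ and $\evmap_{\tau'}$ restrict identically to triangles not involved in the flip (by the gluing construction used to produce $\evmap$ in Theorem \ref{thm:2d-compatibility}). It therefore suffices to check the identity on the stated $\mathfrak{gl}_1$-skein algebra of $\widetilde{Q}_\tau$, which is a quantum torus of rank $8$ whose generators are the eight corner tangles in \eqref{eq:psi-map-3-term} together with one ``horizontal'' generator. Since both sides of the claimed identity are algebra homomorphisms (into an appropriate completion on the codomain side), I only need to verify the equation on these generators.

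For the eight corner tangles, $\psi_{\tau \to \tau'}$ is the identity, while the two sides of the evaluation map differ only by a relabeling of bare edges of the two triangles in $Q$; matching this relabeling with the shear-parameter substitutions in $\theta_{\tau\to\tau'}$ (restricted to the edges $\hat y, \hat z, \hat v, \hat w$, where the flip acts by the simpler factors $f(\cdot), g(\cdot)$) gives a straightforward verification using the explicit quantum-dilogarithm generating functions recalled in Section \ref{subsubsec:2d-quantum-trace-naturality}. For the horizontal generator, $\psi_{\tau\to\tau'}$ produces the two-term sum in \eqref{eq:psi-map-3-term}; its image under $\evmap_{\tau'}$ must be matched against $\theta_{\tau\to\tau'}$ applied to $\evmap_\tau$ of the single horizontal generator, which, after passing through $\evmap_\tau$, is a monomial in the $\hat x$-variables on which $\theta_{\tau\to\tau'}$ acts by the flip formula $\hat x \mapsto \hat x'^{-1}$ dressed by quantum dilogarithms.

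The main obstacle will be this final horizontal-generator check: the identity that must hold is exactly the three-term recurrence \eqref{eq:Psi-3-term} for the quantum dilogarithm $\Psi$, packaged as a compatibility between the conjugation-by-$\Psi$ presentation of $\psi_{\tau\to\tau'}$ (Remark \ref{rmk: conjugation-by-quantum-dilog}) and the dilogarithm factors appearing in $\theta_{\tau\to\tau'}$. I expect this to follow cleanly from the functional equations $f(A^{-1/2}x)f(A^{1/2}x) = (1+x^{-2})^{-1}$ and $g(A^{-1/2}x)g(A^{1/2}x) = 1+x^2$ collected in Section \ref{subsubsec:2d-quantum-trace-naturality}, once the generators of $\SkAlg^{\mathfrak{gl}_1}_q(\widetilde Q_\tau)$ are identified with their images in the square-root Chekhov--Fock algebra via $\evmap_\tau$. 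As a sanity check, both sides of the equation have already been shown to agree after precomposition with $F_\tau$ (via the five other faces of the cube), so any discrepancy would have to lie in the cokernel of $F_\tau$, providing a useful consistency check on the computation.
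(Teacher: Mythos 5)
Your reduction is the same as the paper's: the two triangular faces commute by naturality of $F$ and $\Tr$, the two compatibility squares are Theorem \ref{thm:2d-compatibility} for $\tau$ and $\tau'$, and everything localizes to the flipped quadrilateral, so only the face $\evmap_{\tau'} \circ \psi_{\tau \rightarrow \tau'} = (\theta_{\tau \rightarrow \tau'} \otimes \mathrm{id}) \circ \evmap_{\tau}$ remains. Where you diverge is the last step: you plan a direct generator-by-generator check on the rank-$8$ quantum torus $\SkAlg^{\mathfrak{gl}_1}_q(\widetilde{Q}_\tau)$, matching the corner tangles and the horizontal generator of \eqref{eq:psi-map-3-term} against the dilogarithm factors in $\theta_{\tau\rightarrow\tau'}$, whereas the paper observes that the stated quantum UV-IR map $F_\tau$ is \emph{surjective} for the ideal quadrilateral (as in Lemma \ref{lem:qUVIR_triangle_surjectivity}), so the commutativity of the remaining face is forced by the other faces with no computation at all: both sides agree after precomposition with $F_\tau$, and the cokernel of $F_\tau$ is trivial. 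Amusingly, your closing ``sanity check'' is exactly this argument — you only stop short of noting that the cokernel vanishes, which would have let you delete the entire computation. Your computational route is nonetheless valid (the paper carries it out explicitly in the remark following the proof, where the corner tangles go through trivially and the horizontal generator reduces to $[\hat{w}\hat{x}\hat{z}] \mapsto \hat{w}(\hat{x}'+\hat{x}'^{-1})\hat{z}$, i.e.\ the three-term dilogarithm identity), so what you lose in brevity you gain in an explicit consistency check of the transition maps; but as written your proof leaves the decisive dilogarithm verification as an expectation rather than a completed step, and the surjectivity argument closes that gap for free.
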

\begin{proof}
In the triangular-prism-shaped diagram above, it suffices to check that the front right face commutes, as we already know that all the other faces commute. 
Thanks to the local nature of our construction, it suffices to check this just for the ideal quadrilateral where the flip happens. 
Since the stated quantum UV-IR map $F_\tau$ is surjective for the ideal quadrilateral, the commutativity of the front right face follows from commutativity of the other faces. 
\end{proof}

\begin{rmk}
That the evaluation map $\evmap$ is natural with respect to coordinate transformation maps $\psi_{\tau \rightarrow \tau'}$ and $\theta_{\tau \rightarrow \tau'}$, i.e. that the following diagram commutes 
\[
\begin{tikzcd}
\SkAlg^{\mathfrak{gl}_1, \st}_{q}
\qty(
\vcenter{\hbox{

}}
)
\end{tikzcd},
\]
can also be checked explicitly. 
Since every arrow is an algebra map, it suffices to check the commutativity for the generators of the top left corner of this diagram, which is a rank 8 quantum torus.\footnote{Rank 8 because that's the rank of the relative first homology group of the branched double cover of the ideally triangulated quadrilateral, which is an annulus with 4 punctures on each of the two $S^1$ boundary components.} 
There are 8 corner tangles (4 corners $\times$ 2 sheets) spanning the rank 7 part of the quantum torus. 
For each of them, we get a simple commutative diagram like
\[
\begin{tikzcd}
\vcenter{\hbox{

}}
\end{tikzcd}.
\]
\end{rmk}

\section{Compatibility for 3-manifolds}\label{sec: 3-manifold compatibility}
In this section, we extend the analysis of the previous section to 3-manifolds. 
That is, given a (non-compact) 3-manifold $Y$ without boundary, with an ideal triangulation $\mathcal{T}$, we will construct a commutative square
\begin{equation}\label{eqn: 3-manifold compatibility}
\begin{tikzcd}
\overline{\Sk}^{\mathfrak{gl}_2}_{q}(Y) \arrow[d, "\pi"] \arrow[r, "F_{\mathcal{T}}"] & \Sk^{\mathfrak{gl}_1}_{q}(\widetilde{Y}_{\mathcal{T}}) \arrow[d, "\evmap"]\\
\overline{\Sk}^{\mathfrak{sl}_2}_{A}(Y) \otimes \Sk^{\mathfrak{gl}_1}_{-A}(Y) \arrow[r, "\Tr_{\mathcal{T}} \otimes \mathrm{id}"] & \SQGM_{\mathcal{T}}(Y) \otimes \Sk^{\mathfrak{gl}_1}_{-A}(Y)
\end{tikzcd}.
\end{equation}
Compared to the surface cases, the main difference is that there are extra gluing relations (in the ``relative tensor products'') coming from the splitting maps, and we have to check that commutative squares for little pieces (i.e., face suspensions) glue well (i.e., respects the gluing relations).

\subsection{Commutative square for a face suspension}

In order to define the evaluation map of a face suspension, we first need to establish the structure of various skein algebras of the biangle. 
The following lemmas are easy to show: 
\begin{lem}\label{lem:gl1-skein-bigon}
The $\mathfrak{gl}_1$-skein algebra $\SkAlg^{\mathfrak{gl}_1}_q(D_2)$ is isomorphic to the ring of Laurent polynomials in $1$-variable,
\[
\SkAlg^{\mathfrak{gl}_1}_q(D_2)\cong R[x^{\pm1}],
\] 
with
\[
\overrightarrow{ab}
:=
\vcenter{\hbox{

}}
.
\]
These generators satisfy
\[
\overleftarrow{a_+ b_+} = \overrightarrow{a_- b_-}^{-1},\quad 
\overleftarrow{a_- b_-} = \overrightarrow{a_+ b_+}^{-1},
\]
and 
\[
\overleftarrow{a_+ b_+} \overrightarrow{a_+ b_+} =
\overrightarrow{a_+ b_+} \overleftarrow{a_+ b_+}. 
\]
\end{lem}
Here, we have used a notational convention for stated tangles similar to the one used in Example \ref{eg:angled-homomorphism}. 
For the rest of the paper, we adopt similar notation for the generators of the various skein algebras we consider. 

To describe the structure of $\mathfrak{gl}_1$ and $\mathfrak{gl}_2$ stated skein modules of face suspensions, first recall the following definition:  
\begin{defn}[{\cite[Def. 3.17]{PP}}] \label{defn:combinatorialFoliation}
A \textit{combinatorial foliation} of a bordered, punctured surface $\Sigma$ is a decomposition of $\Sigma$ into pieces, each of which is topologically an elementary quadrilateral depicted below:
\[
\vcenter{\hbox{
\begin{tikzpicture}
\draw[thick] (0,-1)--(1,0)--(0,1)--(-1,0)--cycle;
\fill[orange] (0,-1) circle (1pt);
\fill[orange] (0,1) circle (1pt);
\begin{scope}[decoration={
markings,
mark=at position 0.5 with {\arrow{>}}}]
\draw[thick,orange,postaction={decorate}] (0,-1)--(0,1); 
\end{scope}
\draw[fill=white,draw=black,thick] (1,0) circle (3pt);
\draw[fill=white,draw=black,thick] (-1,0) circle (3pt);
\end{tikzpicture}
}}.
\]
That is, each piece is a quadrilateral with a diagonal marking and two vertices removed. 
\end{defn}
The boundary of a face suspension, as well as its double cover, admit a combinatorial foliation. 
The corresponding boundary marking determines a tessellation of the boundary into polygonal faces, one for each boundary puncture. 

The following theorems follow from an argument almost identical to that in \cite[Sec. 4.1]{PP}. 
\begin{thm} \label{thm:gl2skeinmodule}
Let $B$ be a $3$-ball whose boundary is combinatorially foliated, and let $\Gamma \subset \partial B$ be the associated boundary marking. 
Then, the stated $\mathfrak{gl}_2$ skein module of $B$ has the following presentation:
\[
\Sk_q^{\mathfrak{gl}_2,\st}(B,\Gamma) \cong \frac{\SkAlg_q^{\mathfrak{gl}_2,\st}(V(\Gamma)^+) \otimes \SkAlg_q^{\mathfrak{gl}_2,\st}(V(\Gamma)^-)^{\mathrm{op}}}{\mathrm{Ann}([\emptyset])},
\]
where $\mathrm{Ann}([\emptyset])$ is the left ideal generated by the following relations (as well as simultaneous orientation reversal of all of the tangles) for each face of the tessellation of $\partial B$:\footnote{In both this and the following theorem, the relation is drawn in a hexagon for illustrative purposes, but the face associated to a puncture can be any $2n$-gon.} 
\begin{gather*}
\vcenter{\hbox{

}}
\end{gather*}
\end{thm}

A similar statement holds for $\mathfrak{gl}_1$ skein modules of $3$-balls:
\begin{thm} \label{thm:gl1skeinmodule}
In the setup of Theorem \ref{thm:gl2skeinmodule}, the stated $\mathfrak{gl}_1$ skein module of $B$ has the following presentation:
\[
\Sk_q^{\mathfrak{gl}_1}(B,\Gamma) \cong \frac{\SkAlg_q^{\mathfrak{gl}_1}(V(\Gamma)^+) \otimes \SkAlg_q^{\mathfrak{gl}_1}(V(\Gamma)^-)^{\mathrm{op}}}{\mathrm{Ann}([\emptyset])},
\]
where $\mathrm{Ann}([\emptyset])$ is the left ideal generated by the following relations (as well as simultaneous reversal of all of the tangles) for each face of the tessellation of $\partial B$: 
\begin{gather*}
\vcenter{\hbox{
\begin{tikzpicture}[scale = 1.5]
\foreach \i in {0,...,5} {
\coordinate (P\i) at ({cos(30 + \i*60)}, {sin(30 + \i*60)});
}
\foreach \i in {0,...,5} {
\pgfmathsetmacro\j{mod(\i+1,6)};
\coordinate (P1\i) at ($(P\i)!.25!(P\j)$);
\coordinate (P2\i) at ($(P\i)!.75!(P\j)$);
}
\begin{scope}[very thick,decoration={
markings,
mark=at position 0.5 with {\arrow{>}}}]
\draw[thick,->] (P11) to[out=-60,in=-120] (P20);
\draw[orange,postaction={decorate}] (P0)--(P1);
\draw[orange,postaction={decorate}] (P2)--(P1);
\draw[orange,postaction={decorate}] (P2)--(P3);
\draw[orange,postaction={decorate}] (P4)--(P3);
\draw[orange,postaction={decorate}] (P4)--(P5);
\draw[orange,postaction={decorate}] (P0)--(P5);
\end{scope}
\foreach \i in{0,...,5} {
\fill[orange] (P\i) circle (1pt);
\draw[] (0,0) circle (2pt);
}
\end{tikzpicture}
}}
\;\;=\;\;
\vcenter{\hbox{
\begin{tikzpicture}[scale = 1.5]
\foreach \i in {0,...,5} {
\coordinate (P\i) at ({cos(30 + \i*60)}, {sin(30 + \i*60)});
}
\foreach \i in {0,...,5} {
\pgfmathsetmacro\j{mod(\i+1,6)};
\coordinate (P1\i) at ($(P\i)!.25!(P\j)$);
\coordinate (P2\i) at ($(P\i)!.75!(P\j)$);
}
\begin{scope}[very thick,decoration={
markings,
mark=at position 0.5 with {\arrow{>}}}]
\draw[thick,->] (P21) to[out=-60,in=0] (P12);
\draw[thick,->] (P22) to[out=0,in=60] (P13);
\draw[thick,->] (P23) to[out=60,in=120] (P14);
\draw[thick,->] (P24) to[out=120,in=180] (P15);
\draw[thick,->] (P25) to[out=180,in=240] (P10);
\draw[orange,postaction={decorate}] (P0)--(P1);
\draw[orange,postaction={decorate}] (P2)--(P1);
\draw[orange,postaction={decorate}] (P2)--(P3);
\draw[orange,postaction={decorate}] (P4)--(P3);
\draw[orange,postaction={decorate}] (P4)--(P5);
\draw[orange,postaction={decorate}] (P0)--(P5);
\end{scope}
\foreach \i in{0,...,5} {
\fill[orange] (P\i) circle (1pt);
\draw[] (0,0) circle (2pt);
}
\end{tikzpicture}
}}
\end{gather*}
\end{thm}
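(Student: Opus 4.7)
The plan is to follow the strategy used in \cite[Sec. 4.1]{PP} for the corresponding $\mathfrak{sl}_2$ structure theorem, as hinted in the statement's preamble; the argument adapts essentially verbatim to the $\mathfrak{gl}_1$ setting, with simplifications reflecting the fact that $\mathfrak{gl}_1$-skeins are purely homological. The proof divides into three parts: constructing a natural surjective module homomorphism from the tensor product algebra $\mathcal{A} := \SkAlg_q^{\mathfrak{gl}_1}(V(\Gamma)^+) \otimes \SkAlg_q^{\mathfrak{gl}_1}(V(\Gamma)^-)^{\mathrm{op}}$ to $\Sk_q^{\mathfrak{gl}_1}(B,\Gamma)$ by evaluation on the empty skein $[\emptyset]$; checking that the displayed face relations lie in $\mathrm{Ann}([\emptyset])$; and showing that no additional relations are needed.

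First, the $\mathfrak{gl}_1$-analog of Proposition \ref{prop:bimodule-structure} (which we expect to be established in Appendix \ref{sec:stated-gl1-skeins}) supplies a natural bimodule structure on $\Sk_q^{\mathfrak{gl}_1}(B,\Gamma)$ by stacking near the source and sink vertices of $\Gamma$; evaluation on $[\emptyset]$ then gives the desired module map. Surjectivity follows because $B$ is a $3$-ball: any framed oriented link may be isotoped into a collar of $\partial B$, and the combinatorial foliation (Definition \ref{defn:combinatorialFoliation}) then lets one localize each component near a single face of the boundary tessellation. Each such local piece can be absorbed into the action of an appropriate generator of $\mathcal{A}$ on $[\emptyset]$. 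Verifying that each face relation holds is a direct topological computation: an arc traversing a corner of a face, when pushed through the puncture associated to that face, becomes the displayed chain running along the opposite edges, with quantum factors dictated by \eqref{eq:gl1skeinrel1}-\eqref{eq:gl1skeinrel4} as strands cross or encounter the sign defects tangent to the puncture.

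The main obstacle is completeness, namely that $\mathrm{Ann}([\emptyset])$ is generated by exactly the face relations. For this, the key observation is that $\Sk_q^{\mathfrak{gl}_1}(B,\Gamma)$ is naturally graded by the boundary intersection pattern with each edge of $\Gamma$, and within each graded component the vanishing $H_1(B;\mathbb{Z}) = 0$ means that relative $1$-cycles are determined entirely by their boundary data. The strategy is to introduce a normal form: fix a distinguished reference tangle in each graded piece, chosen to be the action on $[\emptyset]$ of a canonical monomial in the generators of $\mathcal{A}$. Using the face relations one shows that any representative in a given graded piece reduces, up to an overall scalar recorded by the quantum torus commutation rules, to a scalar multiple of this reference; conversely, enumerating distinct graded pieces and matching with a known basis of $\Sk_q^{\mathfrak{gl}_1}(B,\Gamma)$ shows the quotient has precisely the right size. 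Adapting this reduction from the $\mathfrak{sl}_2$ case of \cite{PP} is where the bulk of the work sits, but the absence of honest crossing resolutions in $\mathfrak{gl}_1$ (skein relations \eqref{eq:gl1skeinrel1}-\eqref{eq:gl1skeinrel2} simply record homology) makes the bookkeeping considerably lighter than in the $\mathfrak{sl}_2$ setting.
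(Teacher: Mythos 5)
Your proposal takes essentially the same approach as the paper, which proves this statement only by remarking that it follows from an argument almost identical to \cite[Sec.~4.1]{PP}: one shows $\Sk_q^{\mathfrak{gl}_1}(B,\Gamma)$ is a cyclic bimodule generated by $[\emptyset]$, verifies the face relations, and then uses the boundary grading together with a normal-form reduction (much easier here since $\mathfrak{gl}_1$-skeins are homological) to see that no further relations are needed. One minor correction: for the theorem as stated the face relation carries no quantum or sign-defect factors (sign defects enter only the double-cover variant), but this does not affect your argument.
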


From Theorems \ref{thm:gl2skeinmodule} and \ref{thm:gl1skeinmodule}, we immediately obtain the following corollaries on the structure of stated skein modules of face suspensions. 
For simplicity of notation, in the rest of the paper, we will often suppress tensor product symbols; for example, we express the element $\overrightarrow{L} \otimes \overrightarrow{K} \in \SkAlg^{\mathfrak{gl}_1}_{-A}(D_3)^{\otimes 2}$ as $\overrightarrow{L}\overrightarrow{K}$.

\begin{cor}
As a $\overline{\SkAlg}^{\mathfrak{gl}_2,\st}_q(D_3)^{\otimes 2}$--$\overline{\SkAlg}^{\mathfrak{gl}_2,\st}_q(D_2)^{\otimes 3}$-bimodule, 
$\overline{\Sk}^{\mathfrak{gl}_2,\st}_q(Sf)$ is a cyclic bimodule generated by the empty skein $[\emptyset]$. 
More explicitly, 
\[
\overline{\Sk}^{\mathfrak{gl}_2,\st}_q(Sf) \cong \frac{\overline{\SkAlg}^{\mathfrak{gl}_2,\st}_q(D_3)^{\otimes 2} \otimes \qty(\overline{\SkAlg}^{\mathfrak{gl}_2,\st}_q(D_2)^{\mathrm{op}})^{\otimes 3}}{\mathrm{Ann}([\emptyset])},
\]
and $\mathrm{Ann}([\emptyset])$ is the left ideal generated by relations, one for each face of the boundary tessellation: 
\begin{equation} 
\overrightarrow{x_{S+}y_{S+}}\overrightarrow{y_{T+}x_{T+}}
\;\;
=
\;\;
\vcenter{\hbox{
\begin{tikzpicture}[decoration={
    markings,
    mark=at position 0.5 with {\arrow{>}}}]
\draw[thick,->] (-2/5,3/5) to[out = -45, in = -135] (2/5,3/5);
\draw[thick,<-] (-2/5,-3/5) to[out = 45, in = 135] (2/5,-3/5);
\draw[very thick, orange, postaction = {decorate}] (-1,0) -- (0,1);
\draw[very thick, orange, postaction = {decorate}] (-1,0) -- (0,-1);
\draw[very thick, orange, postaction = {decorate}] (1,0) -- (0,1);
\draw[very thick, orange, postaction = {decorate}] (1,0) -- (0,-1);
\filldraw[orange] (-1,0) circle (1pt);
\filldraw[orange] (1,0) circle (1pt);
\filldraw[orange] (0,1) circle (1pt);
\filldraw[orange] (0,-1) circle (1pt);
\node[left] at (-1/2,1/2) {$x_S$};
\node[right] at (1/2,1/2) {$y_S$};
\node[left] at (-1/2,-1/2) {$x_T$};
\node[right] at (1/2,-1/2) {$y_T$};
\node[above] at (-1/2,1/2) {$+$};
\node[above] at (1/2,1/2) {$+$};
\node[below] at (-1/2,-1/2) {$+$};
\node[below ] at (1/2,-1/2) {$+$};
\end{tikzpicture}
}}
\;\;
=
\;\;
\vcenter{\hbox{
\begin{tikzpicture}[decoration={
    markings,
    mark=at position 0.5 with {\arrow{>}}}]
\draw[thick,->] (-7/10,3/10) to[out = -45, in = 45] (-7/10,-3/10);
\draw[thick,<-] (7/10,3/10) to[out = -135, in = 135] (7/10,-3/10);
\draw[very thick, orange, postaction = {decorate}] (-1,0) -- (0,1);
\draw[very thick, orange, postaction = {decorate}] (-1,0) -- (0,-1);
\draw[very thick, orange, postaction = {decorate}] (1,0) -- (0,1);
\draw[very thick, orange, postaction = {decorate}] (1,0) -- (0,-1);
\filldraw[orange] (-1,0) circle (1pt);
\filldraw[orange] (1,0) circle (1pt);
\filldraw[orange] (0,1) circle (1pt);
\filldraw[orange] (0,-1) circle (1pt);
\node[left] at (-1/2,1/2) {$x_S$};
\node[right] at (1/2,1/2) {$y_S$};
\node[left] at (-1/2,-1/2) {$x_T$};
\node[right] at (1/2,-1/2) {$y_T$};
\node[above] at (-1/2,1/2) {$+$};
\node[above] at (1/2,1/2) {$+$};
\node[below] at (-1/2,-1/2) {$+$};
\node[below ] at (1/2,-1/2) {$+$};
\end{tikzpicture}
}}
\;\;
=
\;\;
\overrightarrow{x_{S+}x_{T+}}\overrightarrow{y_{T+}y_{S+}}.
\end{equation}
\end{cor}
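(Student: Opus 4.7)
\medskip

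\noindent\textbf{Proof proposal.} The plan is to deduce this corollary directly from Theorem \ref{thm:gl2skeinmodule}, specialized to the 3-ball $B = Sf$ with its standard boundary tessellation, and then pass to the reduced quotient. The argument will parallel the one carried out in \cite[Sec.~4.1]{PP} for $\mathfrak{sl}_2$-skeins of face suspensions, with modifications appropriate to the $\mathfrak{gl}_2$ skein relations and sign-twisted product.

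First I would verify the combinatorial structure of $\partial Sf$: the boundary marking has two sink vertices of degree $3$ (the two cone points) and three source vertices of degree $2$ (one on each of the side edges $a, b, c$), giving $V = 5$, $E = 6$, and hence $F = 3$ faces by Euler's formula. Each face is a quadrilateral whose four vertices alternate between one source (say $x$, on a side edge) and a sink, traversed cyclically as (top sink, source $x$, bottom sink, source $y$), where $x, y$ are the two sources adjacent to a common side edge.

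Next, I would apply Theorem \ref{thm:gl2skeinmodule} to obtain
\[
\Sk^{\mathfrak{gl}_2,\st}_q(Sf) \;\cong\; \frac{\SkAlg^{\mathfrak{gl}_2,\st}_q(D_3)^{\otimes 2}\otimes\bigl(\SkAlg^{\mathfrak{gl}_2,\st}_q(D_2)^{\mathrm{op}}\bigr)^{\otimes 3}}{\mathrm{Ann}([\emptyset])},
\]
with $\mathrm{Ann}([\emptyset])$ generated by the face relations of that theorem, specialized to the quadrilaterals identified above. Passing to the reduced quotient kills bad arcs at every vertex; the main calculation is then to show that, modulo bad arcs, the quadrilateral face relation of Theorem \ref{thm:gl2skeinmodule} collapses from a sum over intermediate states $\epsilon_i \in \{\pm\}$ to the single stated identity displayed in the corollary. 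Concretely, after using the biangle presentation of Lemma \ref{lem:reduced-gl2-biangle} to rewrite each source-vertex factor in terms of $\overrightarrow{a_+b_+}$ and its inverse, the bad-arc quotient forces the ``non-$+$'' summands to be expressible via the $\overrightarrow{(\cdot)_{++}}$ generators, so only the diagonal $(+,+,+,+)$ configuration survives as a generator and the face relation becomes exactly the identity $\overrightarrow{x_{S+}y_{S+}}\,\overrightarrow{y_{T+}x_{T+}} = \overrightarrow{x_{S+}x_{T+}}\,\overrightarrow{y_{T+}y_{S+}}$ up to the sign-twisting $(-1)^{-b(\cdot,\cdot)}$ in the product.

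The main obstacle I anticipate is this last step: tracking the interaction between the sign-twisted product structure on $\overline{\SkAlg}^{\mathfrak{gl}_2,\st}_q$ and the collapse of the summation in the face relation. In particular, one must verify that the sign contributions $(-1)^{b(\cdot)}$ from the boundary points on each edge of the quadrilateral cancel out so that the reduced face relation takes the clean form stated in the corollary (rather than carrying an extra overall sign). Once this sign book-keeping is done, the cyclicity of the bimodule follows from cyclicity of the ambient stated $\mathfrak{gl}_2$-skein module (Theorem \ref{thm:gl2skeinmodule}), since both sides of the presentation share the same cyclic vector $[\emptyset]$ and the reduction map is surjective.
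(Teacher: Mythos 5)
Your route is the same as the paper's: the corollary is obtained by specializing Theorem \ref{thm:gl2skeinmodule} to the $3$-ball $Sf$ with its standard combinatorially foliated boundary marking and then passing to the reduced (bad-arc) quotient, exactly as you propose, and the paper records this step as immediate, in analogy with the $\mathfrak{sl}_2$ case \cite[Cor.~4.9]{PP}. Your combinatorial count (two degree-$3$ sinks, three degree-$2$ sources, hence three quadrilateral faces) is correct, and the sign-twisting bookkeeping you flag is the right place to be careful -- just note that the reduced relation is obtained not by a termwise ``collapse'' of the state sum but by rewriting the stated face relations in terms of the $++$ generators of Lemmas \ref{lem:reduced-gl2-biangle} and \ref{lem:red_gl2_triangle}, under which the family of face relations becomes equivalent to the single displayed identity per face.
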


\begin{cor}
As a
$\SkAlg^{\mathfrak{gl}_1,\st}_q(\widetilde{D_3})^{\otimes 2}$--$\SkAlg^{\mathfrak{gl}_1,\st}_q(\widetilde{D_2})^{\otimes 3}$-bimodule,  $\SkAlg^{\mathfrak{gl}_1,\st}_q(\widetilde{Sf})$ is a cyclic bimodule generated by the empty skein $[\emptyset]$. 
More explicitly, 
\[
\Sk^{\mathfrak{gl}_1,\st}_q(\widetilde{Sf}) \cong \frac{\SkAlg^{\mathfrak{gl}_1,\st}_q(\widetilde{D_3})^{\otimes 2} \otimes \qty(\SkAlg^{\mathfrak{gl}_1,\st}_q(\widetilde{D_2})^{\mathrm{op}})^{\otimes 3}}{\mathrm{Ann}([\emptyset])},
\]
where $\mathrm{Ann}([\emptyset])$ is the left ideal generated by 
the relations, one for each face of the boundary tessellation:

\begin{equation}\label{eqn:fsDoubleCoverBimoduleRelations} 
\overrightarrow{x_Sy_S}\overrightarrow{y_Tx_T}
\;\;
=
\;\;
\vcenter{\hbox{
\begin{tikzpicture}[decoration={
    markings,
    mark=at position 0.5 with {\arrow{>}}}]
\draw[thick,->] (-2/5,3/5) to[out = -45, in = -135] (2/5,3/5);
\draw[thick,<-] (-2/5,-3/5) to[out = 45, in = 135] (2/5,-3/5);
\draw[very thick, orange, postaction = {decorate}] (-1,0) -- (0,1);
\draw[very thick, orange, postaction = {decorate}] (-1,0) -- (0,-1);
\draw[very thick, orange, postaction = {decorate}] (1,0) -- (0,1);
\draw[very thick, orange, postaction = {decorate}] (1,0) -- (0,-1);
\filldraw[orange] (-1,0) circle (1pt);
\filldraw[orange] (1,0) circle (1pt);
\filldraw[orange] (0,1) circle (1pt);
\filldraw[orange] (0,-1) circle (1pt);
\node[above left] at (-1/2,1/2) {$x_S$};
\node[above right] at (1/2,1/2) {$y_S$};
\node[below left] at (-1/2,-1/2) {$x_T$};
\node[below right] at (1/2,-1/2) {$y_T$};
\end{tikzpicture}
}}
\;\;
=
\;\;
\vcenter{\hbox{
\begin{tikzpicture}[decoration={
    markings,
    mark=at position 0.5 with {\arrow{>}}}]
\draw[thick,->] (-7/10,3/10) to[out = -45, in = 45] (-7/10,-3/10);
\draw[thick,<-] (7/10,3/10) to[out = -135, in = 135] (7/10,-3/10);
\draw[very thick, orange, postaction = {decorate}] (-1,0) -- (0,1);
\draw[very thick, orange, postaction = {decorate}] (-1,0) -- (0,-1);
\draw[very thick, orange, postaction = {decorate}] (1,0) -- (0,1);
\draw[very thick, orange, postaction = {decorate}] (1,0) -- (0,-1);
\filldraw[orange] (-1,0) circle (1pt);
\filldraw[orange] (1,0) circle (1pt);
\filldraw[orange] (0,1) circle (1pt);
\filldraw[orange] (0,-1) circle (1pt);
\node[above left] at (-1/2,1/2) {$x_S$};
\node[above right] at (1/2,1/2) {$y_S$};
\node[below left] at (-1/2,-1/2) {$x_T$};
\node[below right] at (1/2,-1/2) {$y_T$};
\end{tikzpicture}
}}
\;\;
=
\;\;
\overrightarrow{x_S x_T}\overrightarrow{y_T y_S}.
\end{equation}
\end{cor}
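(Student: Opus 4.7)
The proof proceeds in direct analogy with the preceding $\mathfrak{gl}_2$-corollary by applying (the sign-twisted version of) Theorem \ref{thm:gl1skeinmodule} to the branched double cover $\widetilde{Sf}$, after establishing that $\widetilde{Sf}$ is a 3-ball whose boundary carries the indicated combinatorial foliation.

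For the topology: $Sf$ is a bipyramidal 3-ball, and the branch arc inside $Sf$ runs from one apex (tetrahedron barycenter) to the other through the face barycenter. Since the apices lie on $\partial Sf$, this is a trivial boundary-to-boundary arc, and its branched double cover is again a 3-ball, modeled locally as $(z,t)\mapsto(z^2,t)$ on $D^2\times I$. Consequently $\partial\widetilde{Sf}$ is the branched double cover of $\partial Sf\cong S^2$ at the two points where the branch arc meets $\partial Sf$, which is again a sphere. The boundary marking on $\partial Sf$ has two $3$-valent vertices (the apices, one sink and one source), three $2$-valent vertices (midpoints of the side edges of $f$), six edges, and three quadrilateral tessellation faces. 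Each apex has a single preimage in $\partial\widetilde{Sf}$ becoming a $6$-valent vertex with local structure $\widetilde{D_3}$ (cf.\ Lemma \ref{lem:sign_twisted_hexagon}); each $2$-valent midpoint lifts to two copies with local structure $\widetilde{D_2}\cong D_2\sqcup D_2$ (cf.\ Lemma \ref{lem:gl1-skein-bigon}); and each quadrilateral face, being disjoint from the branch locus, lifts to two quadrilateral faces, giving six in total. This is precisely the combinatorial foliation compatible with the bimodule structure claimed by the corollary.

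Applying Theorem \ref{thm:gl1skeinmodule} then yields that $\Sk^{\mathfrak{gl}_1,\st}_q(\widetilde{Sf})$ is a cyclic bimodule generated by $[\emptyset]$, with $\mathrm{Ann}([\emptyset])$ generated by the $4$-gon relation for each face of the tessellation of $\partial\widetilde{Sf}$. The six relations from the six lifted faces pair up under the deck transformation and can be written as a single symmetric equation per face of the original tessellation of $\partial Sf$ with implicit sheet labels; specializing the general $4$-gon relation of Theorem \ref{thm:gl1skeinmodule} to a lifted quadrilateral face yields exactly equation \eqref{eqn:fsDoubleCoverBimoduleRelations}. The main subtlety requiring attention is the compatibility of the sign-twisted product on the vertex algebras with the $4$-gon relations coming from Theorem \ref{thm:gl1skeinmodule} (which is stated without sign-twisting). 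Since both the sign-twisting (via the local bilinear form $b(\cdot,\cdot)$) and the face relations are local, this reduces to careful sign bookkeeping at each boundary edge of the face, and the resulting twisted theorem has the same cyclic-bimodule presentation, with the twisted products appearing uniformly on both sides of each relation; this is entirely analogous to the argument for the preceding $\mathfrak{gl}_2$-corollary.
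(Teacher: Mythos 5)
Your argument is essentially the paper's: the corollary is obtained there directly by applying Theorem \ref{thm:gl1skeinmodule} to $\widetilde{Sf}$, which (as you verify) is a $3$-ball with combinatorially foliated boundary whose vertex algebras are the two branched hexagon algebras $\SkAlg^{\mathfrak{gl}_1,\st}_q(\widetilde{D_3})$ of Lemma \ref{lem:sign_twisted_hexagon} and the three $\SkAlg^{\mathfrak{gl}_1,\st}_q(\widetilde{D_2})$'s, and the $4$-gon relations of that theorem specialize to \eqref{eqn:fsDoubleCoverBimoduleRelations}; your extra care with the sign-twisting is consistent with how the paper treats it. One small correction: both degree-$3$ cone-point vertices of the marking on $\partial Sf$ are sinks (and the three degree-$2$ vertices are sources), not ``one sink and one source'' as you wrote — this is what produces the left $\SkAlg^{\mathfrak{gl}_1,\st}_q(\widetilde{D_3})^{\otimes 2}$-action and right $\SkAlg^{\mathfrak{gl}_1,\st}_q(\widetilde{D_2})^{\otimes 3}$-action in the statement, though it does not affect the rest of your argument.
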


\begin{cor}
As a $\SkAlg^{\mathfrak{gl}_1}_{-A}(D_3)^{\otimes 2}$-$\SkAlg^{\mathfrak{gl}_1}_{-A}(D_2)^{\otimes 3}$-bimodule, 
$\Sk^{\mathfrak{gl}_1}_{-A}(Sf)$ is a cyclic bimodule generated by the empty skein $[\emptyset]$. 
More explicitly, 
\[
\Sk^{\mathfrak{gl}_1}_{-A}(Sf) \cong \frac{\SkAlg^{\mathfrak{gl}_1}_{-A}(D_3)^{\otimes 2} \otimes \qty(\SkAlg^{\mathfrak{gl}_1}_{-A}(D_2)^{\mathrm{op}})^{\otimes 3}}{\mathrm{Ann}([\emptyset])},
\]
where $\mathrm{Ann}([\emptyset])$ is the left ideal generated by the relations, one for each face of the boundary tesselation:
\begin{equation}\label{eqn:fsBimoduleRelations} 
\overrightarrow{x_S y_S}\overrightarrow{y_T x_T}
\;\;
=
\;\;
\vcenter{\hbox{
\begin{tikzpicture}[decoration={
    markings,
    mark=at position 0.5 with {\arrow{>}}}]
\draw[thick,->] (-2/5,3/5) to[out = -45, in = -135] (2/5,3/5);
\draw[thick,<-] (-2/5,-3/5) to[out = 45, in = 135] (2/5,-3/5);
\draw[very thick, orange, postaction = {decorate}] (-1,0) -- (0,1);
\draw[very thick, orange, postaction = {decorate}] (-1,0) -- (0,-1);
\draw[very thick, orange, postaction = {decorate}] (1,0) -- (0,1);
\draw[very thick, orange, postaction = {decorate}] (1,0) -- (0,-1);
\filldraw[orange] (-1,0) circle (1pt);
\filldraw[orange] (1,0) circle (1pt);
\filldraw[orange] (0,1) circle (1pt);
\filldraw[orange] (0,-1) circle (1pt);
\node[above left] at (-1/2,1/2) {$x_S$};
\node[above right] at (1/2,1/2) {$y_S$};
\node[below left] at (-1/2,-1/2) {$x_T$};
\node[below right] at (1/2,-1/2) {$y_T$};
\end{tikzpicture}
}}
\;\;
=
\;\;
\vcenter{\hbox{
\begin{tikzpicture}[decoration={
    markings,
    mark=at position 0.5 with {\arrow{>}}}]
\draw[thick,->] (-7/10,3/10) to[out = -45, in = 45] (-7/10,-3/10);
\draw[thick,<-] (7/10,3/10) to[out = -135, in = 135] (7/10,-3/10);
\draw[very thick, orange, postaction = {decorate}] (-1,0) -- (0,1);
\draw[very thick, orange, postaction = {decorate}] (-1,0) -- (0,-1);
\draw[very thick, orange, postaction = {decorate}] (1,0) -- (0,1);
\draw[very thick, orange, postaction = {decorate}] (1,0) -- (0,-1);
\filldraw[orange] (-1,0) circle (1pt);
\filldraw[orange] (1,0) circle (1pt);
\filldraw[orange] (0,1) circle (1pt);
\filldraw[orange] (0,-1) circle (1pt);
\node[above left] at (-1/2,1/2) {$x_S$};
\node[above right] at (1/2,1/2) {$y_S$};
\node[below left] at (-1/2,-1/2) {$x_T$};
\node[below right] at (1/2,-1/2) {$y_T$};
\end{tikzpicture}
}}
\;\;
=
\;\;
\overrightarrow{x_S x_T}\overrightarrow{y_T y_S}.
\end{equation}
\end{cor}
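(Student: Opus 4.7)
The plan is to recognize the face suspension $Sf$ as a $3$-ball whose boundary $\partial Sf$ is combinatorially foliated (in the sense of Definition \ref{defn:combinatorialFoliation}) by the standard boundary marking shown in Figure \ref{fig:face_suspension}. The induced tessellation of $\partial Sf$ has two triangular faces (one at each cone point) and three bigonal faces (one at each side edge), which precisely match the two ``triangle'' and three ``biangle'' factors on the right-hand side of the stated isomorphism. With this identification in hand, the corollary should follow from the appropriate specialization of Theorem \ref{thm:gl1skeinmodule} applied to $B = Sf$.

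The first step is to verify that Theorem \ref{thm:gl1skeinmodule}, as stated there for the stated $\mathfrak{gl}_1$-skein module at generic $q$, gives the desired presentation of the (non-stated) module $\Sk^{\mathfrak{gl}_1}_{-A}(Sf)$. For $\mathfrak{gl}_1$-skeins, the orientations of strands at the boundary already carry the data that ``states'' would encode, so the stated and non-stated notions coincide under a natural identification, and the specialization $q=-A$ is harmless in the derivation of face relations, which can be run verbatim using \eqref{eq:gl1skeinrel1}--\eqref{eq:gl1skeinrel4}. If one prefers not to invoke this identification, one can simply rerun the proof of Theorem \ref{thm:gl1skeinmodule} — itself an adaptation of \cite[Sec.~4.1]{PP} — directly in the non-stated setting.

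Given this reduction, I would then specialize the general face relations of Theorem \ref{thm:gl1skeinmodule} to each of the five faces of the tessellation of $\partial Sf$. On a bigonal face at a side edge, the general face relation collapses to the three-term equality \eqref{eqn:fsBimoduleRelations}: these three pictures exhaust the planar ways two in-strands can be joined to two out-strands inside a $\mathfrak{gl}_1$-bigon, and the face relation identifies them pairwise. On a triangular cone-point face, the corresponding relation yields no new information once the three bigon relations have been imposed, because any skein inside the cone-point triangle can be slid, via isotopy and the local $\mathfrak{gl}_1$-skein relations, into the neighboring bigons.

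The main step of real content will be confirming that $\mathrm{Ann}([\emptyset])$ is generated \emph{exactly} by the three bigon relations in \eqref{eqn:fsBimoduleRelations}, with no hidden extra relations arising from the cone-point triangles or from interactions between the $\SkAlg^{\mathfrak{gl}_1}_{-A}(D_3)^{\otimes 2}$ and $\SkAlg^{\mathfrak{gl}_1}_{-A}(D_2)^{\otimes 3}$ factors. I would handle this by following the strategy of \cite[Cor.~4.9]{PP}: exhibit an explicit $R$-module basis for the right-hand quotient — parametrized, for instance, by the side-edge weights once the cone generators have been normalized using the bigon relations — and match it with a basis of $\Sk^{\mathfrak{gl}_1}_{-A}(Sf)$ coming from the lattice-algebra description afforded by the relative first homology $H_1(Sf, \Gamma)$. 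The argument is closely parallel to the corresponding $\mathfrak{sl}_2$ statement, so I expect the remaining work to be bookkeeping rather than any conceptual obstruction.
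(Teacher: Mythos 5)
Your overall route---realize $Sf$ as a $3$-ball with combinatorially foliated boundary and specialize Theorem \ref{thm:gl1skeinmodule}---is the intended one, but your identification of the boundary tessellation is wrong, and the middle of your argument rests on it. In Theorem \ref{thm:gl1skeinmodule} the faces of the tessellation are indexed by the boundary \emph{punctures} (one $2n$-gon per puncture), not by the vertices of the boundary marking. For the face suspension the boundary is a $3$-punctured sphere: the two cone points and the three side-edge midpoints are the \emph{vertices} of the marking graph (they are what produce the factors $\SkAlg^{\mathfrak{gl}_1}_{-A}(D_3)^{\otimes 2}$ and $\SkAlg^{\mathfrak{gl}_1}_{-A}(D_2)^{\otimes 3}$ via $D_{\deg v}$), while the complement of the six marking edges consists of exactly three quadrilateral faces, one around each ideal vertex of $f$. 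There are no triangular faces at the cone points and no bigonal faces at the side edges. Consequently your derivation of \eqref{eqn:fsBimoduleRelations} from ``bigonal faces,'' and your plan to argue that ``cone-point triangle'' relations are redundant, address faces that do not exist: \eqref{eqn:fsBimoduleRelations} is precisely the $4$-gon instance of the face relation of Theorem \ref{thm:gl1skeinmodule} applied to each of the three quadrilateral faces, which is why the corollary records one relation per face and there are three of them.

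Once the faces are identified correctly, the corollary is genuinely immediate: Theorem \ref{thm:gl1skeinmodule} already asserts that $\mathrm{Ann}([\emptyset])$ is the left ideal generated by exactly these face relations, so the additional basis-matching step you propose in the spirit of \cite{PP} is unnecessary (and, as framed around bigon and triangle faces, it would in any case have to be redone). Your remarks that stated and unstated $\mathfrak{gl}_1$-skeins coincide and that working at the parameter $-A$ is harmless are fine; see Appendix \ref{sec:stated-gl1-skeins}.
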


Recall from Section \ref{subsubec:3d-quantum-trace} that the generators of the face suspension module are each naturally associated to an edge cone, and thus to an edge of some tetrahedron in the triangulation of $Y$. 
For the face suspension module generator $x$, set $\theta_x$ to be the angle associated to this edge. 

Furthermore, label edge cones of the double cover of a face suspension as in Figure \ref{fig:liftsOfFaceSuspensionModuleVariables}.
$S$ (resp. $T$) is the top (resp. bottom) tetrahedron. 
Vertices of the double cover of the face suspension are decorated with sheet labels. 
While we have used the notations like $\widetilde{x}_{12}$ and $\widetilde{x}_{21}$ in Section \ref{subsec:stated_quantum_UV-IR} to denote the two lifts of the boundary marking $x$, to further simplify the notation, here we use asterisks (*) to denote the two lifts; we simply write $x$ and $x^*$ for $\widetilde{x}_{12}$ and $\widetilde{x}_{21}$, respectively. 
\begin{figure}[htbp]
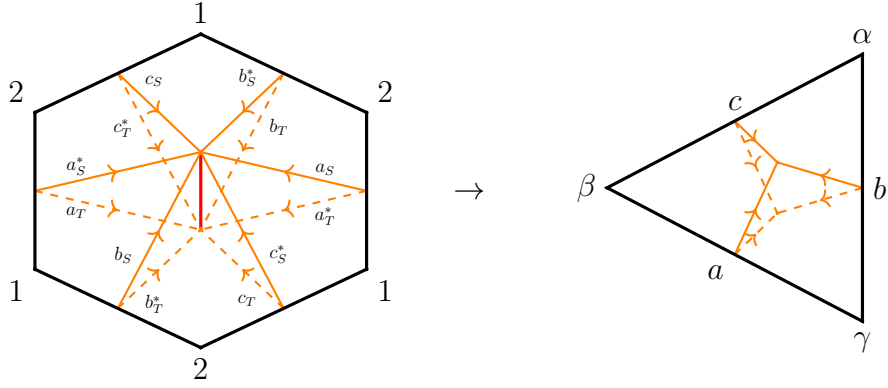

\[
\vcenter{\hbox{
\tdplotsetmaincoords{35}{-90}

}}
\]
\caption{Labeling the edge cones of a face suspension and its double cover with face suspension module variables and their lifts.}
\label{fig:liftsOfFaceSuspensionModuleVariables}
\end{figure}
\begin{lem}\label{lem:qUVIR_Sf_surjectivity}
The quantum UV-IR map on a face suspension,
\[
F_{Sf} :\overline{\Sk}^{\mathfrak{gl}_2,\st}_q(Sf)\rightarrow \Sk^{\mathfrak{gl}_1,\st}_q(\widetilde{Sf}),
\]
is surjective.  
\end{lem}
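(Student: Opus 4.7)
The plan is to reduce the statement to the surjectivity of the quantum UV-IR map on the boundary pieces of $Sf$, namely the two triangles (top and bottom of the face suspension) and the three biangles (sides). The key ingredient is that both $\overline{\Sk}^{\mathfrak{gl}_2,\st}_q(Sf)$ and $\Sk^{\mathfrak{gl}_1,\st}_q(\widetilde{Sf})$ are cyclic bimodules generated by the empty skein over their respective boundary algebras, and that $F_{Sf}$ is a bimodule homomorphism (Proposition \ref{prop:quantum-UV-IR-bimodule-homomorphism}, with the bimodule structure on the codomain induced via the quantum UV-IR maps on the boundary).

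First, observe that $F_{Sf}([\emptyset]) = [\emptyset]$ by construction of the quantum UV-IR map. Next, since $\Sk^{\mathfrak{gl}_1,\st}_q(\widetilde{Sf})$ is cyclically generated by $[\emptyset]$ as a module over $\SkAlg^{\mathfrak{gl}_1,\st}_q(\widetilde{D_3})^{\otimes 2} \otimes (\SkAlg^{\mathfrak{gl}_1,\st}_q(\widetilde{D_2})^{\mathrm{op}})^{\otimes 3}$, every element of $\Sk^{\mathfrak{gl}_1,\st}_q(\widetilde{Sf})$ can be expressed as $u \cdot [\emptyset] \cdot v$ for some $u, v$ in the appropriate tensor products of boundary algebras. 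It therefore suffices to show that each such $u$ and $v$ lies in the image of the quantum UV-IR map on the corresponding boundary pieces, because then we can pull back $u$ and $v$ to elements $\widetilde{u}, \widetilde{v}$ in the boundary $\mathfrak{gl}_2$-skein algebras and obtain $F_{Sf}(\widetilde{u} \cdot [\emptyset] \cdot \widetilde{v}) = u \cdot [\emptyset] \cdot v$ by the bimodule property.

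Surjectivity on the two triangles is exactly Lemma \ref{lem:qUVIR_triangle_surjectivity}. Surjectivity on each biangle follows from the explicit description of the stated quantum UV-IR map on an angled biangular prism given in Example \ref{eg:angled-homomorphism}, combined with Lemma \ref{lem:gl1-skein-bigon}: up to invertible scalars of the form $q^{\pm \theta/2\pi}$, the generators $\overrightarrow{a_+ b_+}$ and $\overleftarrow{a_+ b_+}$ of $\overline{\SkAlg}^{\mathfrak{gl}_2,\st}_q(D_2)$ from Lemma \ref{lem:reduced-gl2-biangle} hit the generators of the two tensor factors $\SkAlg^{\mathfrak{gl}_1,\st}_q(\widetilde{D_2}) \cong \SkAlg^{\mathfrak{gl}_1}_q(D_2)^{\otimes 2}$, making the boundary map an algebra surjection.

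Putting these ingredients together gives surjectivity of $F_{Sf}$. The only mildly subtle point is keeping track of the bimodule structure on the codomain: one must verify that the element $\widetilde{u} \cdot [\emptyset] \cdot \widetilde{v} \in \overline{\Sk}^{\mathfrak{gl}_2,\st}_q(Sf)$ is independent of the choice of lifts $\widetilde{u}, \widetilde{v}$ modulo the cyclic module relations, but this is automatic because we are only asked for surjectivity (not bijectivity), so any choice of preimage suffices. Hence the main (and essentially only) obstacle is establishing surjectivity of the boundary maps, which is settled by Lemma \ref{lem:qUVIR_triangle_surjectivity} and the explicit biangle computation.
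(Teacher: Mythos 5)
Your proof is correct and takes essentially the same route as the paper: reduce via the bimodule property of $F_{Sf}$ (with $F_{Sf}([\emptyset])=[\emptyset]$) and the cyclic presentation of $\Sk^{\mathfrak{gl}_1,\st}_q(\widetilde{Sf})$ over its boundary algebras to surjectivity of the boundary algebra maps, then verify that the generators of the $\mathfrak{gl}_1$-algebras are hit. The only cosmetic difference is that for the two triangles the paper invokes the angle-dependent formulas of Example \ref{eg:angled-homomorphism} directly rather than Lemma \ref{lem:qUVIR_triangle_surjectivity} (which concerns the flat 2d triangle map), but since the discrepancy is only by invertible prefactors $q^{\pm\theta/2\pi}$ this does not affect surjectivity.
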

\begin{proof}
Recall from Proposition \ref{prop:quantum-UV-IR-bimodule-homomorphism} that
$F$ is a bimodule homomorphism mapping the empty skein to the empty skein. 
Thus, it is enough to check that the associated algebra maps 
\[
F: \overline{\SkAlg}^{\mathfrak{gl}_2,\st}_q(D_3)^{\otimes 2} \rightarrow \SkAlg^{\mathfrak{gl}_1,\st}_q(\widetilde{D_3})^{\otimes 2}
\]
and 
\[
F: \overline{\SkAlg}^{\mathfrak{gl}_2,\st}_q(D_2)^{\otimes 3} \rightarrow \SkAlg^{\mathfrak{gl}_1,\st}_q(\widetilde{D_2})^{\otimes 3}
\]
are surjective. 
\begin{figure}[htbp]
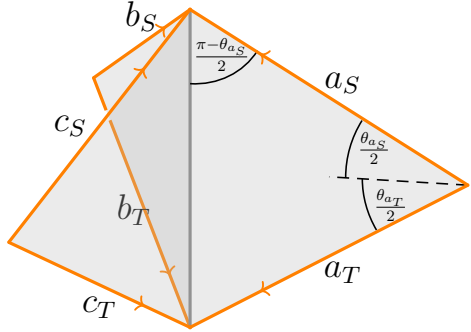

    \centering
    \includestandalone[scale=1.5]{figures/faceSuspensionLeafSpaceWithAngles}
    \caption{
    The leaf space of a face suspension with some angles labeled. 
    $\theta_{a_S}$ and $\theta_{a_T}$ are the generalized angles assigned to the edges of the tetrahedra associated to the face suspension module variables $a_S$ and $a_T$, respectively.
    }
    \label{fig:faceSuspensionAngles}
\end{figure}
Recall from Example \ref{eg:angled-homomorphism} that these maps are given by (the relevant leaf space is shown in Figure \ref{fig:faceSuspensionAngles}) 
\begin{align*}
F : \overline{\SkAlg}^{\mathfrak{gl}_2,\st}_q(D_3)^{\otimes 2} &\rightarrow \SkAlg^{\mathfrak{gl}_1, \st}_q(\widetilde{D_3})^{\otimes 2} \\
\overrightarrow{x_+ y_+} &\mapsto q^{-\frac{1}{2\pi}(\pi - \frac{\theta_x}{2} - \frac{\theta_y}{2})} \overrightarrow{xy^*}\\
\overrightarrow{x_- y_-} &\mapsto q^{\frac{1}{2\pi}(\pi - \frac{\theta_x}{2} - \frac{\theta_y}{2})} \overrightarrow{x^*y}, 
\end{align*}
for $xy \in \{ a_S b_S, b_S c_S, c_S a_S, b_T a_T, c_T b_T, a_T c_T \}$, 
and
\begin{align*}
F : \overline{\SkAlg}^{\mathfrak{gl}_2,\st}_q(D_2)^{\otimes 3} &\rightarrow \SkAlg^{\mathfrak{gl}_1, \st}_q(\widetilde{D_2})^{\otimes 3}\\ 
\overrightarrow{x_+y_+} &\mapsto q^{\frac{1}{2\pi}(\frac{\theta_{x}}{2} + \frac{\theta_{y}}{2})} \overrightarrow{xy^*},\\
\overrightarrow{x_-y_-} &\mapsto q^{-\frac{1}{2\pi}(\frac{\theta_{x}}{2} + \frac{\theta_{y}}{2})} \overrightarrow{x^*y},\\
\end{align*}
for $xy \in \{ a_S a_T, b_S b_T, c_S c_T \}$. 
The surjectivity then follows immediately from Lemmas \ref{lem:sign_twisted_hexagon} and \ref{lem:gl1-skein-bigon}, as those images generate the $\mathfrak{gl}_1$-skein algebras. 
\end{proof}

\begin{cor}
If there exists a linear map
\[
\evmap_{Sf} : \Sk^{\mathfrak{gl}_1, \st}_q(\widetilde{Sf}) \rightarrow \mathbb{S}f \otimes \Sk^{\mathfrak{gl}_1}_{-A}(Sf)
\]
making the following square 
\begin{equation}\label{cd:Sf_compatibility}
\begin{tikzcd}
\overline{\Sk}^{\mathfrak{gl}_2, \st}_{q}(Sf) \arrow[d, "\pi_{Sf}"] \arrow[r, "F_{Sf}"] & \Sk^{\mathfrak{gl}_1, \st}_{q}(\widetilde{Sf}) \arrow[d, dashed, "\evmap_{Sf}"] \\
\overline{\Sk}^{\mathfrak{sl}_2}_{A}(Sf) \otimes \Sk^{\mathfrak{gl}_1}_{-A}(Sf) \arrow[r, "\Tr_{Sf} \otimes \mathrm{id}"] & \mathbb{S}f \otimes \Sk^{\mathfrak{gl}_1}_{-A}(Sf)
\end{tikzcd}
\end{equation}
commutative, then it is unique, and it must be a bimodule homomorphism.
\end{cor}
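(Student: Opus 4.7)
The plan is to imitate the argument used in the triangle case (the Corollary immediately preceding Theorem \ref{thm:evMapTriangle}), deducing both statements from the surjectivity of $F_{Sf}$ established in Lemma \ref{lem:qUVIR_Sf_surjectivity}.

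For uniqueness, I would argue as follows. Since $F_{Sf}$ is surjective, every element $x \in \Sk^{\mathfrak{gl}_1, \st}_{q}(\widetilde{Sf})$ has some preimage $y \in \overline{\Sk}^{\mathfrak{gl}_2, \st}_{q}(Sf)$ with $F_{Sf}(y) = x$. Commutativity of the square \eqref{cd:Sf_compatibility} then forces
\[
\evmap_{Sf}(x) \;=\; (\Tr_{Sf} \otimes \mathrm{id}) \circ \pi_{Sf}(y),
\]
so $\evmap_{Sf}$ is completely determined on all of $\Sk^{\mathfrak{gl}_1, \st}_{q}(\widetilde{Sf})$ by the other three maps in the square.

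For the bimodule property, I would use the fact that the remaining three arrows are already bimodule homomorphisms for the appropriate bimodule structures: $\pi_{Sf}$ by the face-suspension analogue of Proposition \ref{prop:gl2-sl2-map}, $F_{Sf}$ by Proposition \ref{prop:quantum-UV-IR-bimodule-homomorphism}, and $\Tr_{Sf} \otimes \mathrm{id}$ by construction. Moreover, the quantum UV-IR maps on the vertex algebras $\overline{\SkAlg}^{\mathfrak{gl}_2,\st}_q(D_3)^{\otimes 2}$ and $\overline{\SkAlg}^{\mathfrak{gl}_2,\st}_q(D_2)^{\otimes 3}$ are themselves surjective onto the corresponding $\mathfrak{gl}_1$-algebras (this is part of the content of Lemma \ref{lem:qUVIR_Sf_surjectivity}, and can also be read off directly from the explicit formulas in Example \ref{eg:angled-homomorphism}). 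Given a left-action element $a \in \SkAlg^{\mathfrak{gl}_1,\st}_q(\widetilde{D_3})^{\otimes 2}$, choose $a' \in \overline{\SkAlg}^{\mathfrak{gl}_2,\st}_q(D_3)^{\otimes 2}$ with $F(a') = a$, and $y$ with $F_{Sf}(y) = x$; then, writing $a\cdot x = F_{Sf}(a'\cdot y)$ and applying the uniqueness formula above,
\[
\evmap_{Sf}(a\cdot x) \;=\; (\Tr_{Sf}\otimes \mathrm{id})\,\pi_{Sf}(a'\cdot y) \;=\; \bigl((\Tr\otimes \mathrm{id})\pi(a')\bigr)\cdot \evmap_{Sf}(x) \;=\; a\cdot \evmap_{Sf}(x),
\]
and symmetrically for the right action.

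The only subtlety — which I do not expect to be a real obstacle — is keeping track of which bimodule structure one uses at each corner: the codomains are bimodules over the $\mathfrak{gl}_1$-vertex algebras, while the domains are bimodules over the $\mathfrak{gl}_2$-vertex algebras, and the compatibility of $\evmap_{Sf}$ with the former is what is being claimed, whereas the calculation above naturally proceeds by pulling back along the $F$ maps on the vertex algebras. These two viewpoints agree precisely because the $F$-maps on vertices are algebra homomorphisms that are surjective, so that the $\mathfrak{gl}_1$-bimodule structure is the pushforward of the $\mathfrak{gl}_2$-one; this is exactly what makes the final step of the displayed computation valid. Once that is noted, no further work is required.
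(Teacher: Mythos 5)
Your proposal is correct and follows essentially the paper's own argument: uniqueness and the bimodule property are both deduced from the surjectivity of $F_{Sf}$ (Lemma \ref{lem:qUVIR_Sf_surjectivity}) together with the fact that the other three arrows in \eqref{cd:Sf_compatibility} are bimodule homomorphisms. The only caveat is your closing remark that ``the $\mathfrak{gl}_1$-bimodule structure is the pushforward of the $\mathfrak{gl}_2$-one'' because the vertex maps $F$ are surjective --- surjectivity alone does not transport a module structure (one needs the kernel to act trivially) --- but this does not damage the argument, since the claim is naturally read with respect to the $\mathfrak{gl}_2$-vertex-algebra bimodule structures pulled back along $F$, $\pi$, and $(\Tr\otimes\mathrm{id})\circ\pi$, and the consistency of the induced $\mathfrak{gl}_1$-action on the codomain is exactly what the explicit formulas of Theorem \ref{thm: Sf commutative square} verify afterwards.
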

\begin{proof}
This follows immediately from the surjectivity of $F_{Sf}$ (Lemma \ref{lem:qUVIR_Sf_surjectivity}) and the fact that the other $3$ arrows in the diagram are bimodule homomorphisms. 
\end{proof}

Below, we show that the evaluation map $\evmap_{Sf}$ indeed exists. 
\begin{thm}\label{thm: Sf commutative square}
There is a bimodule homomorphism
\[
\evmap_{Sf} : \Sk^{\mathfrak{gl}_1, \st}_q(\widetilde{Sf}) \rightarrow \mathbb{S}f \otimes \Sk^{\mathfrak{gl}_1}_{-A}(Sf)
\]
mapping the empty skein $[\emptyset]$ to $1 \otimes [\emptyset]$, 
defined on the algebra generators by 
\begin{align*}
\evmap : \SkAlg^{\mathfrak{gl}_1}_q(\widetilde{D_3})^{\otimes 2} &\rightarrow \widetilde{\mathbb{T}}^{\otimes 2} \otimes \SkAlg^{\mathfrak{gl}_1}_{-A}(D_3)^{\otimes 2}\\
\overrightarrow{xy^*} &\mapsto (q^{\frac{1}{2}}\CT) q^{-\frac{1}{2\pi}(\frac{\theta_x}{2} + \frac{\theta_y}{2})}[xy] \otimes \overrightarrow{xy}\\
\overrightarrow{x^*y} &\mapsto (q^{\frac{1}{2}}\CT)^{-1} q^{\frac{1}{2\pi}(\frac{\theta_x}{2} + \frac{\theta_y}{2})} [xy]^{-1} \otimes \overrightarrow{xy}, 
\end{align*}
for $xy \in \{ a_S b_S, b_S c_S, c_S a_S, b_T a_T, c_T b_T, a_T c_T \},$
and
\begin{align*}
\evmap : \SkAlg^{\mathfrak{gl}_1}_q(\widetilde{D_2})^{\otimes 3} &\rightarrow \widetilde{\mathbb{T}}^{\otimes 2} \otimes \SkAlg^{\mathfrak{gl}_1}_{-A}(D_2)^{\otimes 3}\\
\overrightarrow{xy^*} &\mapsto \CB q^{-\frac{1}{2\pi}(\frac{\theta_x}{2} + \frac{\theta_y}{2})} [xy]\otimes \overrightarrow{xy},\\
\overrightarrow{x^*y} &\mapsto \CB^{-1} q^{\frac{1}{2\pi}(\frac{\theta_x}{2} + \frac{\theta_y}{2})} [xy]^{-1}\otimes \overrightarrow{xy},
\end{align*}
for $xy \in \{a_Sa_T,b_Sb_T,c_Sc_T\}$,
which makes the square \eqref{cd:Sf_compatibility} commutative. 
\end{thm}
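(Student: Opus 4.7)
The strategy parallels the construction of $\evmap_{\tri}$ in Theorem \ref{thm:evMapTriangle}, and proceeds in two stages: first checking that the prescribed formulas descend to a well-defined bimodule map, then verifying commutativity of the square on generators, where surjectivity of $F_{Sf}$ reduces everything to a finite check.

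\emph{Stage 1: Well-definedness.} The source $\Sk^{\mathfrak{gl}_1,\st}_q(\widetilde{Sf})$ is a cyclic bimodule over $\SkAlg^{\mathfrak{gl}_1,\st}_q(\widetilde{D_3})^{\otimes 2}$-$\SkAlg^{\mathfrak{gl}_1,\st}_q(\widetilde{D_2})^{\otimes 3}$, generated by $[\emptyset]$ and cut out by the face-of-tessellation relations \eqref{eqn:fsDoubleCoverBimoduleRelations}. I would first verify that the prescribed formulas on the six triangle-corner generators and three biangle generators respect the quantum-torus commutation relations of $\SkAlg^{\mathfrak{gl}_1,\st}_q(\widetilde{D_3})^{\otimes 2}$ and $\SkAlg^{\mathfrak{gl}_1,\st}_q(\widetilde{D_2})^{\otimes 3}$ respectively, so that $\evmap$ extends to algebra homomorphisms on these two factors. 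Then for each quadrilateral face in the tessellation of $\partial Sf$, I would push both sides of \eqref{eqn:fsDoubleCoverBimoduleRelations} through $\evmap$ and check they agree in $\mathbb{S}f \otimes \Sk^{\mathfrak{gl}_1}_{-A}(Sf)$: the $\mathbb{S}f$-component collapses via the Weyl-ordering identity $[xy]\cdot[yz] = [x y^2 z]\cdot\text{(scalar)}$ inside the quantum torus $\widetilde{\mathbb T}^{\otimes 2}$, while the $\Sk^{\mathfrak{gl}_1}_{-A}(Sf)$-component matches via \eqref{eqn:fsBimoduleRelations}. The angle contributions $q^{\pm \theta/(2\pi)}$ cancel around each face because the two tetrahedra $S$ and $T$ contribute equal and opposite angles to opposite corners.

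\emph{Stage 2: Commutativity of the square.} By Lemma \ref{lem:qUVIR_Sf_surjectivity} and the fact that the remaining three arrows are bimodule homomorphisms, it suffices to check commutativity on each algebra generator acting on $[\emptyset]$. For each generator $\overrightarrow{xy^*} \in \SkAlg^{\mathfrak{gl}_1,\st}_q(\widetilde{D_3})^{\otimes 2}$, pick the preimage $\overrightarrow{x_+ y_+} \in \overline{\SkAlg}^{\mathfrak{gl}_2,\st}_q(D_3)^{\otimes 2}$ (up to the scalar $q^{-\frac{1}{2\pi}(\pi - \theta_x/2 - \theta_y/2)}$ recorded in Example \ref{eg:angled-homomorphism}); apply $\pi$, picking up a sign $(-1)^{b}$ and a factor $\overrightarrow{xy}$ in the $\mathfrak{gl}_1$-component; and finally apply $\Tr_{Sf}\otimes\mathrm{id}$, which by the definition of $\Tr_{Sf}$ sends the $\mathfrak{sl}_2$-part of the triangle corner to $\CT[\cdot]^{\pm 1}$ for the appropriate Weyl-ordered product of edge cones. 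Collecting scalars should reproduce the prescribed prefactor $(q^{\frac{1}{2}}\CT)q^{-\frac{1}{2\pi}(\theta_x/2+\theta_y/2)}$. A parallel and slightly simpler calculation handles the biangle generators, where $\CB$ replaces $\CT$ via the splitting formula $x_a \mapsto \CB a_1 \otimes a_2$ for the quantum trace on a face suspension.

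\emph{Main obstacle.} The principal difficulty is the careful bookkeeping of four separate sources of scalars that must conspire to produce the stated formulas: (i) the angle factors $q^{\pm \theta/(2\pi)}$ introduced by the stated quantum UV-IR map (Example \ref{eg:angled-homomorphism}); (ii) the sign factor $(-1)^{b(\vec L)}$ from the $\mathfrak{gl}_2$-to-$\mathfrak{sl}_2$ projection; (iii) the scaling parameters $\CT$ and $\CB$ entering the triangle-biangle embeddings into $\widetilde{\mathbb T}$; and (iv) the additional $q$-powers picked up when rewriting products as Weyl-ordered monomials in the quantum torus. The constraint $\CB^2 = q \CT^2$ from \eqref{eq:scaling-parameters} is expected to be the critical input making these cancellations work, just as in the analogous check in Theorem \ref{thm:evMapTriangle}. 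Once this is verified on the nine algebra generators, Stage 1 compatibility with the face-of-tessellation relations is essentially automatic, being the image under $(\Tr_{Sf}\otimes\mathrm{id})\circ\pi_{Sf}$ of the corresponding $\mathfrak{gl}_2$-bimodule relations already known to hold.
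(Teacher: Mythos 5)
Your proposal follows essentially the same route as the paper: use surjectivity of $F_{Sf}$ to pin down $\evmap_{Sf}$ on the nine algebra generators by chasing them through $F^{-1}$, $\pi_{Sf}$, and $\Tr_{Sf}\otimes\mathrm{id}$ (with the angle factors from Example \ref{eg:angled-homomorphism}), and then verify directly that the images satisfy the hexagon-type relations and the face-of-tessellation relations \eqref{eqn:fsDoubleCoverBimoduleRelations}, with $\CB^2 = q\CT^2$ being the decisive cancellation — exactly as in the paper's proof. One caution: your closing remark that the relation check is ``essentially automatic'' from the generator computation is not justified (well-definedness is precisely the statement that $(\Tr_{Sf}\otimes\mathrm{id})\circ\pi_{Sf}$ factors through $F_{Sf}$, which cannot be inferred without controlling $\ker F_{Sf}$), but since your Stage 1 already performs the direct check, the argument as a whole is sound.
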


\begin{proof}
Choosing the preimages of the generators as in Lemma \ref{lem:qUVIR_Sf_surjectivity}, we find, for the generators of $\SkAlg^{\mathfrak{gl}_1}_q(\widetilde{D_3})^{\otimes 2}$, 
\begin{align*}
\overrightarrow{x y^*}
\;&\overset{F^{-1}}{\mapsto}\;
q^{\frac{1}{2\pi}(\pi-\frac{\theta_{x}}{2} - \frac{\theta_y}{2})}\overrightarrow{x_+ y_+} 
\;\;\overset{\pi}{\mapsto}\;\;
q^{\frac{1}{2\pi}(\pi-\frac{\theta_{x}}{2} - \frac{\theta_y}{2})} \overline{x_+ y_+} \otimes \overrightarrow{xy} \\
\;&\overset{\Tr \otimes \mathrm{id}}{\mapsto}\;
q^{\frac{1}{2\pi}(\pi-\frac{\theta_{x}}{2} - \frac{\theta_y}{2})} \CT [xy] \otimes \overrightarrow{xy}
\; = (q^{\frac{1}{2}} \CT) q^{-\frac{1}{2\pi}(\frac{\theta_x}{2} + \frac{\theta_y}{2})} [xy] \otimes \overrightarrow{xy}, 
\end{align*}
and
\begin{align*}
\overrightarrow{x^* y}
\;&\overset{F^{-1}}{\mapsto}\;
q^{-\frac{1}{2\pi}(\pi-\frac{\theta_{x}}{2} - \frac{\theta_y}{2})}\overrightarrow{x_- y_-} 
\;\;\overset{\pi}{\mapsto}\;\;
q^{-\frac{1}{2\pi}(\pi-\frac{\theta_{x}}{2} - \frac{\theta_y}{2})} \overline{x_- y_-} \otimes \overrightarrow{xy} \\
\;&\overset{\Tr \otimes \mathrm{id}}{\mapsto}\;
q^{-\frac{1}{2\pi}(\pi-\frac{\theta_{x}}{2} - \frac{\theta_y}{2})} q^{\frac{1}{2}} \CT^{-1} [xy]^{-1} \otimes \overrightarrow{xy}
\; = (q^{\frac{1}{2}} \CT)^{-1} q^{\frac{1}{2\pi}(\frac{\theta_x}{2} + \frac{\theta_y}{2})} [xy]^{-1} \otimes \overrightarrow{xy}. 
\end{align*}
Likewise, for the generators of $\SkAlg^{\mathfrak{gl}_1}_q(\widetilde{D_2})^{\otimes 3}$,  
\begin{align*}
\overrightarrow{x y^*}
\;&\overset{F^{-1}}{\mapsto}\;
q^{-\frac{1}{2\pi}(\frac{\theta_x}{2} + \frac{\theta_y}{2})} \overrightarrow{x_+ y_+}
\;\overset{\pi}{\mapsto}\;
q^{-\frac{1}{2\pi}(\frac{\theta_x}{2} + \frac{\theta_y}{2})} \overline{x_+ y_+}
\otimes \overrightarrow{xy}
\;\overset{\Tr \otimes \mathrm{id}}{\mapsto}\;
\CB q^{-\frac{1}{2\pi}(\frac{\theta_x}{2} + \frac{\theta_y}{2})} [xy] \otimes \overrightarrow{xy},
\end{align*}
and
\begin{align*}
\overrightarrow{x^* y}
\;&\overset{F^{-1}}{\mapsto}\;
q^{\frac{1}{2\pi}(\frac{\theta_x}{2} + \frac{\theta_y}{2})} \overrightarrow{x_- y_-}
\;\overset{\pi}{\mapsto}\;
q^{\frac{1}{2\pi}(\frac{\theta_x}{2} + \frac{\theta_y}{2})} \overline{x_- y_-}
\otimes \overrightarrow{xy}
\;\overset{\Tr \otimes \mathrm{id}}{\mapsto}\;
\CB^{-1} q^{\frac{1}{2\pi}(\frac{\theta_x}{2} + \frac{\theta_y}{2})} [xy]^{-1} \otimes \overrightarrow{xy}.
\end{align*}

The fact $\evmap$ respects relation \eqref{eq:gl1hexagonrel} in both copies of $\SkAlg^{\mathfrak{gl}_1}_q(\widetilde{D_3})$ follows from an almost identical computation to the one performed in Theorem \ref{thm:evMapTriangle}. 

Now, it suffices to check that the relations \eqref{eqn:fsDoubleCoverBimoduleRelations} are preserved by $\evmap$, and by symmetry, we just need to check just one of the relations. 
Observe 
\begin{align*}
\overrightarrow{x^*_S y_S} \overrightarrow{y^*_T x_T} [\emptyset] &\mapsto 
\qty((q^{\frac{1}{2}} \CT)^{-1} q^{\frac{1}{2\pi}(\frac{\theta_{x_S}}{2} + \frac{\theta_{y_S}}{2})} [x_S y_S]^{-1} \otimes \overrightarrow{x_S y_S} ) 
\qty((q^{\frac{1}{2}} \CT)^{-1} q^{\frac{1}{2\pi}(\frac{\theta_{x_T}}{2} + \frac{\theta_{y_T}}{2})} [x_T y_T]^{-1} \otimes \overrightarrow{x_T y_T} ) \\ 
&= (q^{-1} \CT^{-2}) q^{\frac{1}{2\pi}\qty(\frac{\theta_{x_S}}{2} + \frac{\theta_{x_T}}{2} + \frac{\theta_{y_S}}{2} + \frac{\theta_{y_T}}{2})} [x_Sy_S]^{-1}[x_Ty_T]^{-1} \otimes \overrightarrow{x_Sy_S}\overrightarrow{x_Ty_T}, 
\end{align*}
and
\begin{align*}
[\emptyset] \overrightarrow{x^*_S x_T} \overrightarrow{y^*_T y_S} &\mapsto 
\qty(\CB^{-1} q^{\frac{1}{2\pi}(\frac{\theta_{x_S}}{2} + \frac{\theta_{x_T}}{2})} [x_S x_T]^{-1} \otimes \overrightarrow{x_S x_T} ) 
\qty(\CB^{-1} q^{\frac{1}{2\pi}(\frac{\theta_{y_S}}{2} + \frac{\theta_{y_T}}{2})} [y_S y_T]^{-1} \otimes \overrightarrow{y_T y_S} ) \\
&= \CB^{-2} q^{\frac{1}{2\pi}\qty(\frac{\theta_{x_S}}{2} + \frac{\theta_{x_T}}{2} + \frac{\theta_{y_S}}{2} + \frac{\theta_{y_T}}{2})} x^{-1}_S x^{-1}_T y^{-1}_S y^{-1}_T\otimes \overrightarrow{x_S x_T}\overrightarrow{y_T y_S}\\
&= \CB^{-2} q^{\frac{1}{2\pi}\qty(\frac{\theta_{x_S}}{2} + \frac{\theta_{x_T}}{2} + \frac{\theta_{y_S}}{2} + \frac{\theta_{y_T}}{2})} [x_S y_S]^{-1} [x_T y_T]^{-1} \otimes \overrightarrow{x_S x_T} \overrightarrow{y_T y_S}.
\end{align*}
Using \eqref{eq:scaling-parameters} and the relations in \eqref{eqn:fsBimoduleRelations}, we see that these are equal in $\mathbb{S}f \otimes \Sk^{\mathfrak{gl}_1}_{-A}(Sf)$. 
\end{proof}

\subsection{Gluing the commutative squares}

In this subsection, we complete the construction of the commutative square in \eqref{eqn: 3-manifold compatibility}. 
To do so, consider the following diagram analogous to \eqref{eqn:surface-commutative-diagram-gluing}: 
\begin{equation} \label{eqn:3dgluing}
\adjustbox{scale=.8375}{
\begin{tikzcd}[row sep=1.25em, column sep = .5em] 
\overline{\Sk}^{\mathfrak{gl}_2}_{q}(Y) \arrow[rr, "F_{\mathcal{T}}"] \arrow[dr, "\sigma^{\mathfrak{gl}_2}"] \arrow[dddd, "\pi_{Y}"] \arrow[drrr,phantom,""] &&
\Sk^{\mathfrak{gl}_1}_{q}(\widetilde{Y}) \arrow[dr, "\sigma^{\mathfrak{gl}_1}"] \\
& \underset{f \in \mathcal{T}^{(2)}}{\overline{\bigotimes}} \overline{\Sk}^{\mathfrak{gl}_2, \st}_{q}(Sf) \arrow[rr, "\overline{\bigotimes}_{f \in \mathcal{T}^{(2)}} F_{Sf}"] \arrow[dd, "\overline{\bigotimes}_{f \in \mathcal{T}^{(2)}} \pi_{Sf}"] 
&&
\underset{f \in \mathcal{T}^{(2)}}{\overline{\bigotimes}} \Sk^{\mathfrak{gl}_1, \st}_{q}(\widetilde{Sf}) \arrow[dddd, "\overline{\bigotimes}_{f \in \mathcal{T}^{(2)}} \evmap_{Sf}"] \\
 \\
& \underset{f \in \mathcal{T}^{(2)}}{\overline{\bigotimes}} \qty(\overline{\Sk}^{\mathfrak{sl}_2}_{A}(Sf) \otimes \Sk^{\mathfrak{gl}_1}_{-A}(Sf)) \arrow[dd] 
\\
\overline{\Sk}^{\mathfrak{sl}_2}_{A}(Y) \otimes \Sk^{\mathfrak{gl}_1}_{-A}(Y) \arrow[dr, "\sigma^{\mathfrak{sl}_2} \otimes \sigma^{\mathfrak{gl}_1}"] \arrow[drrr, bend right=50, swap, "\Tr_{\mathcal{T}} \otimes \sigma^{\mathfrak{gl}_1}"] \arrow[ur,phantom,""] && \\
& \underset{f \in \mathcal{T}^{(2)}}{\overline{\bigotimes}} \overline{\Sk}^{\mathfrak{sl}_2}_{A}(Sf) \otimes 
\underset{f \in \mathcal{T}^{(2)}}{\overline{\bigotimes}} \Sk^{\mathfrak{gl}_1}_{-A}(Sf) \arrow[rr, "\overline{\bigotimes}_{f \in \mathcal{T}^{(2)}}\Tr_{Sf}\otimes \mathrm{id}"] && 
\underset{f \in \mathcal{T}^{(2)}}{\overline{\bigotimes}} \mathbb{S}f \otimes \underset{f \in \mathcal{T}^{(2)}}{\overline{\bigotimes}} \Sk^{\mathfrak{gl}_1}_{-A}(Sf)
\end{tikzcd}
}.
\end{equation}
Here, $\sigma^{\mathfrak{sl}_2}$, $\sigma^{\mathfrak{gl}_2}$, $\sigma^{\mathfrak{gl}_1}$ are the splitting homomorphisms for $\mathfrak{sl}_2$, $\mathfrak{gl}_2$, and $\mathfrak{gl}_1$-skein modules, respectively; see Theorem \ref{thm:3d-splitting-map-for-Sf}, Corollary \ref{cor:gl2-splitting-map}, Corollary \ref{cor:gl1splittingmap} and \ref{cor:gl1splittingwithconepoints}. 
The tensor products in the front square are the relative tensor products. 
In the top-right corner of the front square, the relative tensor product $\overline{\bigotimes}_{f \in \mathcal{T}^{(2)}} \Sk^{\mathfrak{gl}_1, \st}_{q}(\widetilde{Sf})$ must remember the extra 3-term relations near the cone points in $\widetilde{Y}$; see Corollary \ref{cor:gl1splittingwithconepoints} in Appendix \ref{sec:stated-gl1-skeins}. 

We need to first check that all of the maps in the front square of \eqref{eqn:3dgluing} are well-defined; i.e., that the naive tensor product of maps descend to the corresponding quotients. 
For the top horizontal arrow and the left vertical arrow, this follows from Propositions \ref{prop:UVIRMapCompatibleWithSplitting} and \ref{prop:splitpi}, respectively. 
That the bottom horizontal arrow is well-defined is by design; the relative tensor product $\overline{\bigotimes}_{f\in\mathcal{T}^{(2)}}\mathbb{S}f$ is defined in such a way that $\overline{\bigotimes}_{f\in\mathcal{T}^{(2)}}\mathrm{Tr}_{Sf}$ is well-defined; see \cite[Sec. 5.2]{PP}. 

As for the right vertical arrow, $\overline{\bigotimes}_{f\in\mathcal{T}^{(2)}} \evmap_{Sf}$, this is addressed in the following proposition.

\begin{prop}
The map 
\[
\underset{f \in \mathcal{T}^{(2)}}{\overline{\bigotimes}} \evmap_{Sf} 
\;:\; 
\underset{f \in \mathcal{T}^{(2)}}{\overline{\bigotimes}} \Sk^{\mathfrak{gl}_1, \st}_{q}(\widetilde{Sf}) 
\;\rightarrow\;
\underset{f \in \mathcal{T}^{(2)}}{\overline{\bigotimes}} \mathbb{S}f \otimes \underset{f \in \mathcal{T}^{(2)}}{\overline{\bigotimes}} \Sk^{\mathfrak{gl}_1}_{-A}(Sf)
\]
is well-defined. 
\end{prop}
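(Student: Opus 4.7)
The plan is to check that the naive tensor product $\bigotimes_{f\in\mathcal{T}^{(2)}}\evmap_{Sf}$ descends to the quotients defining the relative tensor products. The relations to verify come in two flavors: (i) gluing relations from the biangle bimodule structure along each edge of $\mathcal{T}$, and (ii) the 3-term cone-point relations \eqref{eq:gl1skeinrel5} of $\overline{\bigotimes}_{f\in\mathcal{T}^{(2)}} \Sk_q^{\mathfrak{gl}_1,\st}(\widetilde{Sf})$, one at the barycenter of each tetrahedron $T\in\mathcal{T}^{(3)}$ (cf.\ Corollary \ref{cor:gl1splittingwithconepoints}). Type (i) is essentially free: by Theorem \ref{thm: Sf commutative square} each $\evmap_{Sf}$ is a bimodule homomorphism with respect to the biangle bimodule structures, so it sends the gluing relations of \eqref{eqn:fsDoubleCoverBimoduleRelations} to the analogous relations for $\overline{\bigotimes}_{f}\mathbb{S}f \otimes \overline{\bigotimes}_{f}\Sk_{-A}^{\mathfrak{gl}_1}(Sf)$; in particular, the $\mathbb{S}f$-side relations are precisely the gluing relations \eqref{item:gluing} of $\SQGM_{\mathcal{T}}(Y)$, while the $\Sk_{-A}^{\mathfrak{gl}_1}$-side relations are the standard ones used in the isomorphism of Lemma \ref{lem:gl1_iso_surfaces}.

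The substantive content is verifying type (ii). Fix a cone point at the barycenter of a tetrahedron $T$, and apply \eqref{eq:gl1skeinrel5} in a neighborhood of one vertex $v$ of $T$, so that the three face suspensions meeting along the vertex cone $Cv$ contribute. The empty (LHS) term maps to $1\otimes [\emptyset]$. For each of the two resolved terms on the RHS, the elementary arcs added at the cone point decompose, upon splitting into face suspensions, into three elementary stated tangles in the three face cones at $v$. Under $\bigotimes_f\evmap_{Sf}$, each such tangle becomes a single edge-cone generator of $\mathbb{S}f$, tensored with an elementary $\mathfrak{gl}_1$-tangle in $\Sk_{-A}^{\mathfrak{gl}_1}(Sf)$. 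Crucially, the three $\mathfrak{gl}_1$-tangles obtained in this way glue to bad arcs near $v$ in $\Sk_{-A}^{\mathfrak{gl}_1}(Y)$, so the $\mathfrak{gl}_1$-factor evaluates to $[\emptyset]$ (or reduces trivially on both sides), and the identity to be checked collapses to one entirely inside $\SQGM_{\mathcal{T}}(Y)$.

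What remains is to identify that identity as (a scalar multiple of) the Lagrangian relation \eqref{item:Lagrangian} at $T$. Tracking the prefactors in Theorem \ref{thm: Sf commutative square}, the two resolved terms produce Weyl-ordered products of three edge-cone generators around $v$, carrying overall scalars of the form $q^{\pm\theta/\pi} \cdot \CT^{\pm 3}$ and $q^{\mp\theta'/\pi}\cdot \CT^{\pm 3}$, respectively. Using the vertex relation \eqref{item:vertex} $[\hat{z}\hat{z}'\hat{z}''] = q^{-1}\CT^{-3}$ and the angle constraint $\theta+\theta'+\theta''=\pi$, these Weyl-ordered products simplify to $\hat{z}^{\pm 2}$ and $\hat{z}''^{\pm 2}$, and the angle-dependent powers of $q^{\theta/\pi}$ cancel against the $q^{\pm\theta/\pi}$ coefficients of \eqref{eq:gl1skeinrel5}. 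The residual scalars then match precisely $\CB^{-2}$ and $\CB^2$ via the defining relation $\CB^2 = q\CT^2$ from \eqref{eq:scaling-parameters}, reproducing the Lagrangian relation $1 = \CB^{-2}\hat{z}^{-2} + \CB^{2}\hat{z}''^2$.

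The main obstacle is precisely this scalar accounting: assembling the angle factors $q^{\pm\theta_e/(2\pi)}$ from the six edge-cone contributions around $v$, the $\CT,\CB$ normalizations of $\evmap_{Sf}$, the Weyl-ordering correction from \eqref{item:vertex}, and the linear relation $\theta+\theta'+\theta''=\pi$, and confirming that all three vertices of $T$ (together with the three opposite-edge identifications built into Definition \ref{defn:SQGM}) yield the same Lagrangian relation. Once this calculation is carried out for a single $(v,T)$, symmetry under the $\mathbb{Z}/2$ on vertex cones and the opposite-edge identification takes care of the remaining choices, completing the descent.
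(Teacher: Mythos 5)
There is a genuine gap in your bookkeeping of which relations must be checked. The relative tensor product $\overline{\bigotimes}_{f}\Sk^{\mathfrak{gl}_1,\st}_q(\widetilde{Sf})$ is cut out by \emph{three} families of relations, not two: the left-action relations \eqref{eqn:gl1-vertex-cone} at each vertex cone $Cv$ (three face suspensions meeting along a vertex cone, with small arcs encircling it), the right-action relations \eqref{eqn:gl1-gluing} around each internal edge, and the cone-point $3$-term relations \eqref{eq:3termRelProjected}. Your flavor (i) covers only the edge relations and your flavor (ii) the cone-point relations; the vertex-cone family is never verified, and it is not free: under $\overline{\bigotimes}_f\evmap_{Sf}$ its image is $q\CT^3[\hat z\hat z'\hat z'']\otimes[\emptyset]-1\otimes[\emptyset]$, which vanishes precisely because of the vertex relation \eqref{item:vertex} of $\SQGM_{\mathcal{T}}(Y)$ — this is a separate scalar computation, parallel to the edge check (which uses \eqref{item:gluing} together with the angle sum $2\pi$ around the edge, so even that is not purely a consequence of the bimodule-homomorphism property) and to the cone-point check (which uses \eqref{item:Lagrangian} and \eqref{eq:scaling-parameters}). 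You appear to have conflated the vertex-cone relation with the cone-point relation: the resolved terms of \eqref{eq:3termRelProjected} do not split into ``three elementary tangles in the three face cones at $v$''; each resolved loop around the cone point passes through \emph{two} face suspensions and contributes three generators in each, which is how the identity $q\CT^2\hat z^2+q^{-1}\CT^{-2}\hat z'^{-2}=1$ (the Lagrangian relation, after $\CB^2=q\CT^2$) actually arises.

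Two smaller points. First, there are no ``bad arcs'' in $\mathfrak{gl}_1$-skein theory; the collapse of the $\Sk^{\mathfrak{gl}_1}_{-A}$-factor to $[\emptyset]$ in the cone-point check comes from the boundary relation \eqref{eq:statedgl1skeinrel3} together with the gluing relations in $\overline{\bigotimes}_f\Sk^{\mathfrak{gl}_1}_{-A}(Sf)$, so that part of your mechanism needs to be replaced, though the conclusion is correct. Second, invoking \eqref{item:vertex} to simplify the Weyl-ordered products in the cone-point check is an unnecessary detour (the six triangle-algebra factors already multiply out to $\hat z^{\pm2}$ and $\hat z'^{\mp2}$ directly), but that is a harmless variant. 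The essential missing step is the vertex-cone verification; once you add it, your outline matches the intended argument.
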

\begin{proof}
We need to show that the composition
\[
\bigotimes_{f\in \mathcal{T}^{(2)}}\Sk_q^{\mathfrak{gl}_1, \st}(\widetilde{Sf}) \overset{\otimes_{f \in \mathcal{T}^{(2)}} \evmap_{Sf}}{\longrightarrow}
\bigotimes_{f \in \mathcal{T}^{(2)}} \mathbb{S}f \otimes \bigotimes_{f \in \mathcal{T}^{(2)}} \Sk^{\mathfrak{gl}_1}_{-A}(Sf)
\rightarrow
\underset{f \in \mathcal{T}^{(2)}}{\overline{\bigotimes}} \mathbb{S}f \otimes \underset{f \in \mathcal{T}^{(2)}}{\overline{\bigotimes}} \Sk^{\mathfrak{gl}_1}_{-A}(Sf)
\]
factors through $\overline{\bigotimes}_{f \in \mathcal{T}^{(2)}} \Sk^{\mathfrak{gl}_1, \st}_{q}(\widetilde{Sf})$. 

First, let's check that the relations for the pre-relative tensor product -- \eqref{eqn:gl1-vertex-cone} and \eqref{eqn:gl1-gluing} in Definition \ref{defn:gl1GluingRelationsDoubleCover} -- are respected. 
\begin{itemize}
    \item To check the relation \eqref{eqn:gl1-vertex-cone}, suppose a vertex cone is surrounded by the face suspensions $Sf_1$, $Sf_2$, and $Sf_3$. 
    Then, as a left relation,  
    \begin{align*}
    &\evmap \qty(
    \vcenter{\hbox{

    }}
    )\\
    &= \evmap \qty( \overrightarrow{z_1 z_1'^*} \overrightarrow{z_2' z_2''^*} \overrightarrow{z_3'' z_3^*} - 1 ) \\
    &= \qty( \qty(q^{\frac{1}{2}}\CT) q^{-\frac{1}{2\pi}(\frac{\theta}{2} + \frac{\theta'}{2})}[z_1 z_1'] ) 
    \qty( \qty(q^{\frac{1}{2}}\CT) q^{-\frac{1}{2\pi}(\frac{\theta'}{2} + \frac{\theta''}{2})}[z_2' z_2''] ) 
    \qty( \qty(q^{\frac{1}{2}}\CT) q^{-\frac{1}{2\pi}(\frac{\theta''}{2} + \frac{\theta}{2})}[z_3'' z_3] ) \\
    &\qquad \otimes \overrightarrow{z_1 z_1'} \overrightarrow{z_2' z_2''} \overrightarrow{z_3'' z_3} - 1 \otimes [\emptyset] \\
    &\overset{\eqref{eq:gl1GluingRel1}}{=} q\CT^3[\hat{z}\hat{z}'\hat{z}''] \otimes [\emptyset] - 1 \otimes [\emptyset] \\
    &\overset{\eqref{item:vertex}}{=} 0,
    \end{align*}
    where $\hat{z}, \hat{z}'$, and $\hat{z}''$ are the square-root quantized shape parameters (as in Section \ref{subsubec:3d-quantum-trace}) associated to the three edge cones around the vertex cone. 
    \item To check the relation \eqref{eqn:gl1-gluing}, consider face suspensions surrounding an edge. 
    Then, as a right relation, 
    \begin{align*}
    &\evmap \qty(
    \vcenter{\hbox{

    }}   
    ) \\
    &= \evmap \qty( 
    \overrightarrow{x_{1,e_1}x_{1,e_2}^*} \overrightarrow{x_{2,e_2}x_{2,e_3}^*}\cdots \overrightarrow{x_{k,e_k}x_{k,e_1}^*} - 1 
    ) \\
    &= \qty( \CB q^{-\frac{1}{2\pi}(\frac{\theta_{e_1}}{2} + \frac{\theta_{e_2}}{2})} [x_{1,e_1} x_{1,e_2}] ) 
    \qty( \CB q^{-\frac{1}{2\pi}(\frac{\theta_{e_2}}{2} + \frac{\theta_{e_3}}{2})} [x_{2,e_2}x_{2,e_3}] ) 
    \cdots 
    \qty( \CB q^{-\frac{1}{2\pi}(\frac{\theta_{e_k}}{2} + \frac{\theta_{e_1}}{2})} [x_{k,e_k}x_{k,e_1}] )\\
    &\quad\quad \otimes \overrightarrow{x_{1,e_1}x_{1,e_2}^*} \overrightarrow{x_{2,e_2}x_{2,e_3}^*}\cdots \overrightarrow{x_{k,e_k}x_{k,e_1}^*} 
    - 1 \otimes [\emptyset] \\ 
    &\overset{\eqref{eq:gl1GluingRel2}}{=} q^{-1} \CB^k [\hat{x}_{e_1}\hat{x}_{e_2}\cdots \hat{x}_{e_k}] \otimes [\emptyset] - 1 \otimes [\emptyset] \\
    &\overset{\eqref{item:gluing}}{=} 0,
    \end{align*}
    where $\hat{x}_{e_i}$ is the square-root quantized shape parameter associated to the edge cone $e_i$. 
\end{itemize}
For the remaining relations, which can be obtained from the above diagrams through orientation reversal and/or an involution of the double cover, the computations proceed identically to the ones given above. 

Lastly, let's check that the relations for the relative tensor product -- \eqref{eq:3termRelProjected} in Definition \ref{defn:gl1-relative-tensor-product} are respected. 
\begin{align*}
&\evmap \qty(
q^{\frac{\theta}{\pi}} \!\!\!
\vcenter{\hbox{

}}
) 
\\
&= 
q^{\frac{\theta}{\pi}} \evmap 
\qty( \overrightarrow{z_2z_2^{\prime* }}\overrightarrow{z_2^{\prime*}z_2^{\prime\prime}} \overrightarrow{z_2^{\prime\prime}z_2^*} \overrightarrow{z_1z_1^{\prime*}}\overrightarrow{z_1^{\prime*} z_1^{\prime\prime}} \overrightarrow{z_1^{\prime\prime}z_1^*} 
) 
+q^{-\frac{\theta'}{\pi}} \evmap 
\qty(
\overrightarrow{z_3^{\prime*}z_3^{\prime\prime }}\overrightarrow{z_3^{\prime\prime}z_3^{*}} \overrightarrow{z_3^{*}z_3^\prime} \overrightarrow{z_1^{\prime*}z_1^{\prime\prime}}\overrightarrow{z_1^{\prime\prime} z_1^{*}} \overrightarrow{z_1^{*}z_1^{\prime}}
)
- \evmap\left(1\right) \\
&= 
q^{\frac{\theta}{\pi}} \qty( \qty(q^{\frac{1}{2}}\CT ) q^{-\frac{1}{2\pi}(\frac{\theta}{2} + \frac{\theta'}{2})} [z_2 z'_2] )
\qty( \qty(q^{\frac{1}{2}}\CT )^{-1} q^{\frac{1}{2\pi}(\frac{\theta'}{2} + \frac{\theta''}{2})} [z'_2 z''_2]^{-1} )
\qty( \qty(q^{\frac{1}{2}}\CT ) q^{-\frac{1}{2\pi}(\frac{\theta''}{2} + \frac{\theta}{2})}[z''_2 z_2] ) \\
&\qquad
\qty( \qty(q^{\frac{1}{2}}\CT ) q^{-\frac{1}{2\pi}(\frac{\theta}{2} + \frac{\theta'}{2})} [z_1 z'_1] )
\qty( \qty(q^{\frac{1}{2}}\CT )^{-1} q^{\frac{1}{2\pi}(\frac{\theta'}{2} + \frac{\theta''}{2})} [z'_1 z''_1]^{-1} )
\qty( \qty(q^{\frac{1}{2}}\CT ) q^{-\frac{1}{2\pi}(\frac{\theta''}{2} + \frac{\theta}{2})} [z''_1 z_1] ) \\
&\qquad
\otimes 
\overrightarrow{z_2 z_2'}
\overrightarrow{z_2' z_2''} 
\overrightarrow{z_2'' z_2} 
\overrightarrow{z_1 z_1'}
\overrightarrow{z_1' z_1''} 
\overrightarrow{z_1'' z_1} \\
&\;\;+ 
q^{-\frac{\theta'}{\pi}} \qty( \qty(q^{\frac{1}{2}}\CT )^{-1} q^{\frac{1}{2\pi}(\frac{\theta'}{2} + \frac{\theta''}{2})} [z'_3 z''_3]^{-1} )
\qty( \qty(q^{\frac{1}{2}}\CT ) q^{-\frac{1}{2\pi}(\frac{\theta''}{2} + \frac{\theta}{2})} [z''_3 z_3] )
\qty( \qty(q^{\frac{1}{2}}\CT )^{-1} q^{\frac{1}{2\pi}(\frac{\theta}{2} + \frac{\theta'}{2})} [z_3 z'_3]^{-1} )
\\
&\qquad 
\qty( \qty(q^{\frac{1}{2}}\CT)^{-1} q^{\frac{1}{2\pi}(\frac{\theta'}{2} + \frac{\theta''}{2})} [z'_1 z''_1]^{-1} ) 
\qty( \qty(q^{\frac{1}{2}}\CT) q^{-\frac{1}{2\pi}(\frac{\theta''}{2} + \frac{\theta}{2})} [z''_1 z_1] )
\qty( \qty(q^{\frac{1}{2}}\CT)^{-1} q^{\frac{1}{2\pi}(\frac{\theta}{2} + \frac{\theta'}{2})} [z_1 z'_1]^{-1} ) \\
&\qquad
\otimes 
\overrightarrow{z_3' z_3''}
\overrightarrow{z_3'' z_3} 
\overrightarrow{z_3 z_3'} 
\overrightarrow{z_1' z_1''}
\overrightarrow{z_1'' z_1} 
\overrightarrow{z_1 z_1'}\\
&\;\;- 1 \otimes [\emptyset]\\
&\overset{\eqref{eq:statedgl1skeinrel3}}{=} q\CT^2\hat{z}^{2} \otimes [\emptyset] + q^{-1}\CT^{-2}\hat{z}^{\prime-2} \otimes [\emptyset]-1\otimes [\emptyset]\\
&\overset{\eqref{item:Lagrangian}}{=} 0,
\end{align*}
where $\hat{z} = z_2 \otimes z_3$ and $\hat{z}'=z'_1\otimes z'_3$, and we used that $\CB^2=q\CT^2.$ 
\end{proof}

Now that we have checked that all the arrows in the the diagram \eqref{eqn:3dgluing} are well-defined, the fact that it is a commutative diagram is almost immediate; 
the top (Proposition \ref{prop:UVIRMapCompatibleWithSplitting}) and left (Proposition \ref{prop:gl2sl2CompatibleWithSplitting}) squares commute simply because all of the maps are locally defined, and the front square commutes since it is the relative tensor product of the commutative squares constructed in Theorem \ref{thm: Sf commutative square}. 

From the construction, note that the images of both $\Tr_{\mathcal{T}} \otimes \sigma^{\mathfrak{gl}_1}$ and of $\overline{\bigotimes}_{f \in \mathcal{T}^{(2)}} \evmap_{Sf} \circ \sigma^{\mathfrak{gl}_1}$ are contained in the submodule
\[
\SQGM_{\mathcal{T}}(Y) \otimes \qty(
\underset{f \in \mathcal{T}^{(2)}}{\overline{\bigotimes}} \Sk_{-A}^{\mathfrak{gl}_1}(Sf)
)_0 
\subset 
\underset{f \in \mathcal{T}^{(2)}}{\overline{\bigotimes}} \mathbb{S}f \otimes \underset{f \in \mathcal{T}^{(2)}}{\overline{\bigotimes}} \Sk_{-A}^{\mathfrak{gl}_1}(Sf). 
\]
Moreover, by Lemma \ref{lem:gl1_iso}, the $\mathfrak{gl}_1$-splitting map 
\[
\sigma^{\mathfrak{gl}_1} : \Sk^{\mathfrak{gl}_1}_{-A}(Y) \rightarrow \qty(
\underset{f \in \mathcal{T}^{(2)}}{\overline{\bigotimes}} \Sk_{-A}^{\mathfrak{gl}_1}(Sf)
)_0
\]
is an isomorphism. 
Therefore, by replacing the bottom right corner of the commutative diagram by $\SQGM_{\mathcal{T}}(Y) \otimes \Sk^{\mathfrak{gl}_1}_{-A}(Y)$, we obtain the first part of Theorem \ref{thm:main-thm-compatibility}. 

\subsection{Naturality with respect to Pachner moves}
Here, we show the second part of Theorem \ref{thm:main-thm-compatibility}, that the commutative squares constructed above behave naturally with respect to 2-3 Pachner moves: 
\begin{thm}
Under a 2-3 Pachner move $\mathcal{T}_2 \rightarrow \mathcal{T}_3$, we have the following commutative diagram
\[
\adjustbox{scale=.9}{
\begin{tikzcd}
\overline{\Sk}^{\mathfrak{gl}_2}_{q}(Y) \arrow[dd, "\pi"] \arrow[rr, "F_{\mathcal{T}_3}"] \arrow[dr, "F_{\mathcal{T}_2}"] &  & \Sk^{\mathfrak{gl}_1}_{q}(\widetilde{Y}_{\mathcal{T}_3}) \arrow[dd, "\evmap_{\mathcal{T}_3}"]\\
 & \Sk^{\mathfrak{gl}_1}_{q}(\widetilde{Y}_{\mathcal{T}_2}) \arrow[ur, "\phi_{2 \rightarrow 3}"] & \\
\overline{\Sk}^{\mathfrak{sl}_2}_{A}(Y) \otimes \Sk^{\mathfrak{gl}_1}_{-A}(Y) \arrow[rr, "\Tr_{\mathcal{T}_3} \otimes \mathrm{id}" {xshift=-3em}] \arrow[dr, "\Tr_{\mathcal{T}_2} \otimes \mathrm{id}"] &  & \SQGM_{\mathcal{T}_3}(Y) \otimes \Sk^{\mathfrak{gl}_1}_{-A}(Y) \\
 & \SQGM_{\mathcal{T}_2}(Y) \otimes \Sk^{\mathfrak{gl}_1}_{-A}(Y) \arrow[leftarrow,uu,crossing over,"\evmap_{\mathcal{T}_2}" swap,pos=.7] \arrow[ur, swap, "\varphi_{2 \rightarrow 3} \otimes \mathrm{id}"] & 
\end{tikzcd},
}
\]
where $\varphi_{2 \rightarrow 3}$ and $\phi_{2 \rightarrow 3}$ are the coordinate change maps given in Propositions \ref{prop:SQGM23Pachner} and \ref{prop:gl1skein_23Pachner}, respectively. 
\end{thm}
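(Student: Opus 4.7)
The plan is to imitate the strategy used in the surface case: realize the theorem as the commutativity of one face of a triangular-prism-shaped diagram whose five other faces have already been checked, and then use surjectivity of the quantum UV-IR map on the relevant building block to deduce the missing commutativity. Concretely, arrange the six maps into a prism with back face the compatibility square for $\mathcal{T}_2$ (Theorem \ref{thm:main-thm-compatibility}, proved above), front face the compatibility square for $\mathcal{T}_3$, top face the 3d quantum UV-IR naturality under the $2$--$3$ Pachner move (the theorem after Proposition \ref{prop:gl1skein_23Pachner}), bottom face the 3d quantum trace naturality (the theorem after Proposition \ref{prop:SQGM23Pachner}), and left face the identity on $\overline{\Sk}^{\mathfrak{gl}_2}_q(Y)$ (resp.\ its image under $\pi$). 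All four non-trivial faces other than the right one have already been established. The remaining right face is precisely the square
\[
\begin{tikzcd}
\Sk^{\mathfrak{gl}_1}_q(\widetilde{Y}_{\mathcal{T}_2}) \arrow[r, "\phi_{2\to 3}"] \arrow[d, "\evmap_{\mathcal{T}_2}"'] & \Sk^{\mathfrak{gl}_1}_q(\widetilde{Y}_{\mathcal{T}_3}) \arrow[d, "\evmap_{\mathcal{T}_3}"] \\
\SQGM_{\mathcal{T}_2}(Y) \otimes \Sk^{\mathfrak{gl}_1}_{-A}(Y) \arrow[r, "\varphi_{2\to 3} \otimes \mathrm{id}"'] & \SQGM_{\mathcal{T}_3}(Y) \otimes \Sk^{\mathfrak{gl}_1}_{-A}(Y).
\end{tikzcd}
\]

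The first step is a locality reduction. Since every arrow in the prism is built out of maps that respect the splitting homomorphisms (Propositions \ref{prop:UVIRMapCompatibleWithSplitting}, \ref{prop:gl2sl2CompatibleWithSplitting}, the splitting factorization of $\Tr_\mathcal{T}$, and the splitting-respecting construction of $\evmap$ through the relative tensor product of the $\evmap_{Sf}$), and since $\mathcal{T}_2$ and $\mathcal{T}_3$ differ only inside a triangular bipyramid $\mathrm{BP}$, the whole diagram restricts to the corresponding prism for $\mathrm{BP}$, and it suffices to prove commutativity of the right face there. This parallels the 2d argument, where we reduced to the ideal quadrilateral where a flip takes place.

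The second step is a surjectivity input. On the bipyramid, the stated quantum UV-IR map $F_{\mathrm{BP}}^{\mathcal{T}_2} : \overline{\Sk}^{\mathfrak{gl}_2,\st}_q(\mathrm{BP}) \to \Sk^{\mathfrak{gl}_1,\st}_q(\widetilde{\mathrm{BP}}_{\mathcal{T}_2})$ is surjective: it is a bimodule homomorphism taking $[\emptyset]$ to $[\emptyset]$, and the induced maps on the vertex skein algebras $\overline{\SkAlg}^{\mathfrak{gl}_2,\st}_q(V(\Gamma)^\pm) \to \SkAlg^{\mathfrak{gl}_1,\st}_q(V(\widetilde{\Gamma})^\pm)$ are surjective face suspension by face suspension (Lemma \ref{lem:qUVIR_Sf_surjectivity}), so their assembly on the bipyramid remains surjective modulo the $\mathfrak{gl}_1$ gluing relations and the cone-point $3$-term relations (already baked into the codomain). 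Given this, for any $x \in \Sk^{\mathfrak{gl}_1}_q(\widetilde{\mathrm{BP}}_{\mathcal{T}_2})$ choose $y \in \overline{\Sk}^{\mathfrak{gl}_2,\st}_q(\mathrm{BP})$ with $F_{\mathcal{T}_2}(y) = x$ and compute
\[
(\varphi_{2\to 3} \otimes \mathrm{id}) \circ \evmap_{\mathcal{T}_2}(x) = (\varphi_{2\to 3} \otimes \mathrm{id})(\Tr_{\mathcal{T}_2} \otimes \mathrm{id})\pi(y) = (\Tr_{\mathcal{T}_3} \otimes \mathrm{id})\pi(y) = \evmap_{\mathcal{T}_3} \circ F_{\mathcal{T}_3}(y) = \evmap_{\mathcal{T}_3} \circ \phi_{2 \to 3}(x),
\]
where the four equalities use, respectively, the back face, the bottom face, the front face, and the top face of the prism.

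The main obstacle is verifying the surjectivity claim rigorously after imposing the extra gluing and cone-point relations that are present in $\Sk^{\mathfrak{gl}_1}_q(\widetilde{\mathrm{BP}}_{\mathcal{T}_2})$. On each face suspension surjectivity is clean (Lemma \ref{lem:qUVIR_Sf_surjectivity}), but gluing several face suspensions introduces relative-tensor-product relations, including the $3$-term relation at the interior cone point of $\widetilde{\mathrm{BP}}_{\mathcal{T}_2}$. The key point is that these relations on the codomain are precisely matched by the left ideal relations for the reduced $\mathfrak{gl}_2$-skein module of $\mathrm{BP}$ that we have already controlled in proving the first half of Theorem \ref{thm:main-thm-compatibility}; thus surjectivity of the assembled map follows from surjectivity on each piece together with the compatibility-with-splitting diagrams of Propositions \ref{prop:UVIRMapCompatibleWithSplitting} and \ref{prop:gl2sl2CompatibleWithSplitting}. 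Once this is in hand the rest of the argument is a routine diagram chase.
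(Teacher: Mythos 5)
Your overall route is the paper's: realize the statement as the remaining face of the prism, localize to the triangular bipyramid $\mathrm{BP}$ where the Pachner move is performed, and deduce commutativity of that face from the already-established faces together with surjectivity of the stated quantum UV-IR map $F_{\mathcal{T}_2}$ on $\mathrm{BP}$; your diagram chase is exactly the intended argument.

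The one genuine soft spot is how you justify that surjectivity. The half of your argument that works is the bimodule one: $F_{\mathcal{T}_2}$ is a bimodule homomorphism sending $[\emptyset]$ to $[\emptyset]$, and the induced maps on the boundary vertex algebras are surjective (the computation of Lemma \ref{lem:qUVIR_Sf_surjectivity}, cf.\ Example \ref{eg:angled-homomorphism}). To conclude that $F_{\mathcal{T}_2}$ itself is surjective you additionally need that the codomain $\Sk^{\mathfrak{gl}_1,\st}_q(\widetilde{\mathrm{BP}}_{\mathcal{T}_2})$ is a \emph{cyclic} bimodule generated by the empty skein over $\SkAlg^{\mathfrak{gl}_1,\st}_q(\widetilde{D_3})^{\otimes 6}$--$\SkAlg^{\mathfrak{gl}_1,\st}_q(\widetilde{D_2})^{\otimes 9}$; this is precisely what the paper supplies, by the same boundary-reduction argument as in \cite[Lem.\ 4.1]{PP} (cf.\ Theorem \ref{thm:gl1skeinmodule}), with the cone-point $3$-term relations handling strands near the two interior cone points. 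Your substitute mechanism in the ``main obstacle'' paragraph --- deducing surjectivity from piecewise surjectivity on face suspensions together with Propositions \ref{prop:UVIRMapCompatibleWithSplitting} and \ref{prop:gl2sl2CompatibleWithSplitting} --- does not close this gap: the splitting maps go \emph{from} the skein module of the glued bipyramid \emph{into} the relative tensor product, so surjectivity of the assembled map onto the relative tensor product says nothing about hitting all of $\Sk^{\mathfrak{gl}_1,\st}_q(\widetilde{\mathrm{BP}}_{\mathcal{T}_2})$, and any attempt to route back through $\sigma^{\mathfrak{gl}_1}$ is circular unless one already knows its image. Replace that paragraph by the cyclic-generation statement and your proof coincides with the paper's.
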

\begin{proof}
It suffices to check that the front right face commutes, since we already know that all the other faces commute. 
Thanks to the local nature of our construction, it is enough to check this for the triangular bipyramid $\mathrm{BP}$ where the Pachner move is applied, and since we know that all the other faces commute, it suffices to show that the stated quantum UV-IR map $\overline{\Sk}^{\mathfrak{gl}_2, \st}_q(\mathrm{BP}) \overset{F_{\mathcal{T}_2}}{\rightarrow} \Sk^{\mathfrak{gl}_1, \st}_q(\widetilde{\mathrm{BP}}_{\mathcal{T}_2})$ is surjective for the bipyramid triangulated into 2 ideal tetrahedra. 

Indeed, by the same argument as in \cite[Lem. 4.1]{PP}, $\Sk^{\mathfrak{gl}_1, \st}_q(\widetilde{\mathrm{BP}}_{\mathcal{T}_2})$ is generated by the empty skein $[\emptyset]$ as a $\SkAlg^{\mathfrak{gl}_1, \st}_q(\widetilde{D_3})^{\otimes 6}$-$\SkAlg^{\mathfrak{gl}_1, \st}_q(\widetilde{D_2})^{\otimes 9}$ bimodule, and the desired surjectivity of $F_{\mathcal{T}_2}$ follows, essentially by the same computation as in the proof of Lemma \ref{lem:qUVIR_Sf_surjectivity}. 
\end{proof}

\subsection{Recovering the quantum trace map from the quantum UV-IR map}\label{sec: decouple gl(1)}
In this short subsection, we show Theorem \ref{thm:quantum-trace-from-quantum-UV-IR};
that, under a mild assumption on the homology of $Y$, it is possible to recover the genuine quantum trace map from the quantum UV-IR map. 

From Theorem \ref{thm:main-thm-compatibility}, using the quantum UV-IR map, we can compute
\[
\Tr_{\mathcal{T}}([L]) \otimes [\vec{L}] \in \SQGM_{\mathcal{T}}(Y)\otimes\Sk_{-A}^{\mathfrak{gl}_1}(Y)
\]
for any unoriented link $L \subset Y$, where $\vec{L}$ is $L$ endowed with an arbitrary orientation. 
This is almost as good as having the quantum trace map
\[
\Sk_{A}^{\mathfrak{sl}_2}(Y) \overset{\Tr_{\mathcal{T}} }{\rightarrow} \SQGM_{\mathcal{T}}(Y)
\]
itself, since, as long as $[\vec{L}] \in \Sk_{-A}^{\mathfrak{gl}_1}(Y)$ is not torsion, we can recover $\Tr_{\mathcal{T}}([L])$ from $\Tr_{\mathcal{T}}([L]) \otimes [\vec{L}]$. 
This is because, if $[\vec{L}] \in \Sk_{-A}^{\mathfrak{gl}_1}(Y)$ is not torsion and $\alpha \in H_1(Y;\mathbb{Z})$ is the homology class of $\vec{L}$, the $\alpha$-graded part of the $\mathfrak{gl}_1$-skein module is $\Sk^{\mathfrak{gl}_1}_{-A}(Y)_\alpha = R[\vec{L}] \cong R$, and the map
\begin{align*}
\SQGM_{\mathcal{T}} &\rightarrow \SQGM_{\mathcal{T}} \otimes \Sk^{\mathfrak{gl}_1}_{-A}(Y)_\alpha \\
x &\mapsto x \otimes [\vec{L}]
\end{align*}
is an isomorphism. 

The structure of $\mathfrak{gl}_1$-skein modules was completely characterized in \cite[Thm 2.3]{Przytycki0} (see also \cite[Thm IX.3.7]{Przytycki1}) where it is shown that, for $\alpha \in H_1(Y;\mathbb{Z})$, 
the $\alpha$-graded part $\Sk^{\mathfrak{gl}_1}_{-A}(Y)_\alpha$ of $\Sk_{-A}^{\mathfrak{gl}_1}(Y)$ is torsion-free (and isomorphic to the base ring $R$) iff the intersection pairing $(\alpha, \beta) \in \mathbb{Z}$ is $0$ for all $\beta \in H_2(Y; \mathbb{Z})$. 
In particular, we obtain Theorem \ref{thm:quantum-trace-from-quantum-UV-IR} as a corollary.

\section{Examples}\label{sec:examples}

In this section, we demonstrate the compatibility theorem -- Theorem \ref{thm:main-thm-compatibility} (and also Theorem \ref{thm:quantum-trace-from-quantum-UV-IR}) -- for a skein in the figure-8 knot complement $Y = S^3 \setminus \mathbf{4_1}$.  
Consider the triangulation $\mathcal{T}$ of $Y$ shown in Figure \ref{fig:figure8_triangulation}. 
\begin{figure}[htbp]
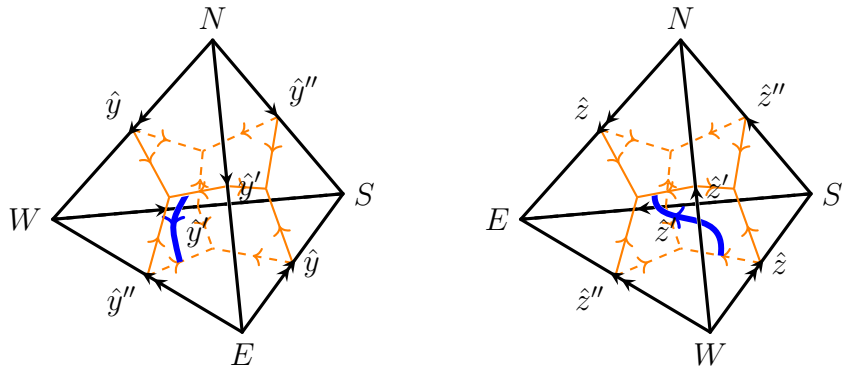

    \centering
    \includestandalone[scale=1.0]{figures/fig8_tet1}
    \hspace{.5cm}
    \includestandalone[scale=1.0]{figures/fig8_tet2}
    \caption{A triangulation of the figure-8 knot complement, as well as the tangle $\vec{K}_b$ contained inside of it (shown in blue). 
    The gluing of the tetrahedra is controlled by the edge markings.
    The faces of the tetrahedra are labeled $N, S, E,$ and $W$, and edges are labeled with their corresponding square-root quantized shape parameters.}
    \label{fig:figure8_triangulation}
\end{figure}
Let us compute the quantum trace of the knot $K_b \subset Y$ from \cite[Sec. 6]{PP}, using the quantum UV-IR map; an oriented version of this knot, $\vec{K}_b$, is shown in blue in Figure \ref{fig:figure8_triangulation}. 

While the image of $\vec{K}_b$ under the quantum UV-IR map can be computed without any reference to splitting, we will do our computation locally, by applying the splitting map, to demonstrate the compatibility in each face suspension and illustrate how our proof works in practice. 
Our knot $\vec{K}_b$, after splitting into face suspensions, is depicted in Figure \ref{fig:kb_in_face_suspensions}; only the face suspensions containing part of $\vec{K}_b$ are shown. 
\begin{figure}[htbp]
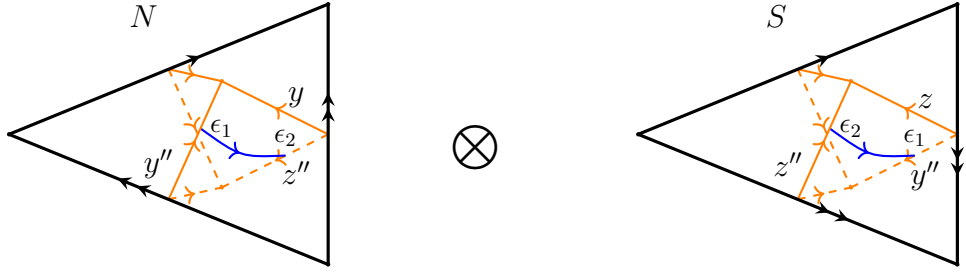

    \centering
    \begin{minipage}{.45\textwidth}  
        \centering
        \includestandalone[scale=1.0]{figures/kb_face_susp1}
    \end{minipage}
    \begin{minipage}{.04\textwidth}
        \centering
        \begin{equation*}
            \bigotimes
        \end{equation*}
    \end{minipage}
    \begin{minipage}{.45\textwidth}
        \centering
        \includestandalone[scale=1.0]{figures/kb_face_susp2}
    \end{minipage}
    \caption{The image of $\vec{K}_b$ under the splitting map. The face suspension variables associated to edge cones that will be used in later computations are labeled with the corresponding shape parameter.} 
    \label{fig:kb_in_face_suspensions}
\end{figure}
Before iterating through the compatible states, observe that the components of $\vec{K}_b$ in the face suspensions above can be written explicitly as elements of $\overline{\SkAlg}^{\mathfrak{gl}_2,\st}_q(D_3)^{\otimes 2}$ and $\overline{\SkAlg}^{\mathfrak{gl}_2,\st}_q(D_2)^{\otimes 3}$ acting on the empty skein as follows: 
\begin{gather*}
    \vcenter{\hbox{
    \includestandalone[scale=.8]{figures/kb_face_susp1}
    }}
    \;\;=\;\;
    q^{-\frac{\epsilon_2}{2}}
    \vcenter{\hbox{
    \includestandalone[scale=.8]{figures/kb_face_susp1_reduced}
    }}
    \;\;=\;\;
    q^{-\frac{\epsilon_2}{2}}\overrightarrow{y''_{\epsilon_1}y_{-\epsilon_2}}\,[\emptyset]\,\overrightarrow{y_{\epsilon_2}z''_{\epsilon_2}},
\end{gather*}
\begin{gather*}
    \vcenter{\hbox{
    \includestandalone[scale=.8]{figures/kb_face_susp2}
    }}
    \;\;=\;\;
    q^{-\frac{\epsilon_1}{2}}
    \vcenter{\hbox{
    \includestandalone[scale=.8]{figures/kb_face_susp2_reduced}
    }}
    \;\;=\;\;
    q^{-\frac{\epsilon_1}{2}}\overrightarrow{z''_{\epsilon_2}z_{-\epsilon_1}}\,[\emptyset]\,\overrightarrow{z_{\epsilon_1}y''_{\epsilon_1}}.
\end{gather*}

There are three nonzero compatible states. 
In the computations that follow, we will denote by $x_f$ the generator of $\mathbb{S}f$ corresponding to the shape parameter $x$.
\begin{itemize}
    \item $\epsilon_1 = +$ and $\epsilon_2 = -$: We compute on each face suspension separately to find
    \begin{equation*}
    \begin{aligned}
    &\evmap \circ F \Bigl(
        q^{\frac{1}{2}}\overrightarrow{y''_{S_+} y_{S_+}}\,[\emptyset]\,\overrightarrow{y_{S_-} z''_{S_-}} 
    \Bigr) \\
    &= q^{\tfrac{1}{2}}\evmap\Bigl(
        q^{-\tfrac{1}{2\pi}\Bigl(\pi-\tfrac{\theta_{y''}}{2}-\tfrac{\theta_{y}}{2}\Bigr)}
        \overrightarrow{y''_S y_S^*}\,[\emptyset]\,
        q^{-\tfrac{1}{2\pi}\Bigl(\tfrac{\theta_y}{2}+\tfrac{\theta_{z''}}{2}\Bigr)}
        \overrightarrow{y_S^* z''_S}\Bigr) \\
    &= q^{\tfrac{1}{2\pi}\qty( \tfrac{\theta_{y''}}{2} - \tfrac{\theta_{z''}}{2})}
       \Bigl((q^{\tfrac{1}{2}}\CT)q^{-\tfrac{1}{2\pi}\Bigl(\tfrac{\theta_{y''}}{2}+\tfrac{\theta_y}{2}\Bigr)}
         [y''_S y_S] \otimes \overrightarrow{y''_S y_S} \Bigr) 
       \Bigl(\CB^{-1} q^{\tfrac{1}{2\pi}\Bigl(\tfrac{\theta_{y}}{2}+\tfrac{\theta_{z''}}{2}\Bigr)}
         [y_S z''_S]^{-1} \otimes \overrightarrow{y_S z''_S} \Bigr) \\
    &= q^{\frac{1}{2}}\CT\CB^{-1}[y''_S y_S][y_S z''_S]^{-1}
       \otimes \overrightarrow{y''_S y_S}\,\overrightarrow{y_S z''_S}
    \end{aligned}
    \end{equation*}
    and
    \begin{equation*}
    \evmap \circ F \Bigl(
        q^{-\frac{1}{2}}\overrightarrow{z''_{N_-}z_{N_-}}\,[\emptyset]\,\overrightarrow{z_{N_+}y''_{N_+}}
    \Bigr)
    = q^{-\frac{1}{2}}\CT^{-1}\CB[z''_Nz_N]^{-1}[z_Ny''_N] \otimes \overrightarrow{z''_Nz_N}\,\overrightarrow{z_Ny''_N}.
    \end{equation*}
    Combining these,
    \begin{align*}
    &\bigotimes_{i=1}^2 \Bigl(\evmap \circ F \Bigr) \Bigl(
    q^{\frac{1}{2}}\overrightarrow{y''_{S_+}y_{S_+}}\,[\emptyset]\,\overrightarrow{y_{S_-}z''_{S_-}} \otimes
    q^{-\frac{1}{2}}\overrightarrow{z''_{N_-}z_{N_-}}\,[\emptyset]\,\overrightarrow{z_{N_+}y''_{N_+}} \Bigr) \\
    &= A\hat{y}''\hat{z}''^{-1}\otimes \Bigl(\overrightarrow{y''_Sy_S}\,\overrightarrow{y_Sz''_S} \otimes \overrightarrow{z''_Nz_N}\,\overrightarrow{z_Ny''_N}\Bigr) 
    \quad \in \SQGM_{\mathcal{T}}(Y) \otimes \bigotimes_{i=1}^2 \Sk^{\mathfrak{gl}_1}_{-A}(Sf).
    \end{align*}
    On the other hand, we find 
    \begin{equation*}
    \begin{aligned}
    (\Tr_{Sf} \otimes \mathrm{Id}) \circ \pi_{Sf} \Bigl(
        q^{\frac{1}{2}}\overrightarrow{y''_{S_+}y_{S_+}}\,[\emptyset]\,\overrightarrow{y_{S_-}z''_{S_-}}
    \Bigr) 
    &= q^{\frac{1}{2}}\left(\CT[y''_Sy_S]\otimes \overrightarrow{y''_Sy_S}\right)
        \left(\CB^{-1}[y_Sz''_S]^{-1}\otimes \overrightarrow{y_Sz''_S}\right)\\
    &= q^{\frac{1}{2}}\CT\CB^{-1}[y''_Sy_S][y_Sz''_S]^{-1}\otimes 
    \overrightarrow{y''_Sy_S}\,\overrightarrow{y_Sz''_S},
    \end{aligned}
    \end{equation*}
    and 
    \begin{equation*}
    (\Tr_{Sf} \otimes \mathrm{Id}) \circ \pi_{Sf} \Bigl(
    q^{-\frac{1}{2}}\overrightarrow{z''_{N_-}z_{N_-}}\,[\emptyset]\,\overrightarrow{z_{N_+}y''_{N_+}}
    \Bigr)
    = q^{-\frac{1}{2}}\CT^{-1}\CB[z''_Nz_N]^{-1}[z_Ny''_N] \otimes \overrightarrow{z''_Nz_N}\,\overrightarrow{z_Ny''_N},
    \end{equation*}
    so that
    \begin{align*}
    &\bigotimes_{i=1}^2 \Bigl((\Tr_{Sf} \otimes \mathrm{Id})\circ \pi_{Sf} \Bigr) \Bigl(
    q^{\frac{1}{2}}\overrightarrow{y''_{S_+}y_{S_+}}\,[\emptyset]\,\overrightarrow{y_{S_-}z''_{S_-}} \otimes
    q^{-\frac{1}{2}}\overrightarrow{z''_{N_-}z_{N_-}}\,[\emptyset]\,\overrightarrow{z_{N_+}y''_{N_+}} \Bigr) \\
    &= A\hat{y}''\hat{z}''^{-1}\otimes \Bigl(\overrightarrow{y''_Sy_S}\,\overrightarrow{y_Sz''_S} \otimes \overrightarrow{z''_Nz_N}\,\overrightarrow{z_Ny''_N}\Bigr) 
    \quad \in \SQGM_{\mathcal{T}}(Y) \otimes \bigotimes_{i=1}^2 \Sk^{\mathfrak{gl}_1}_{-A}(Sf),
    \end{align*} 
    yielding the same result as above, as expected. 
    \item $\epsilon_1 = -$ and $\epsilon_2 = +$: The computation proceeds almost identically to above; the result is 
    \begin{align*}
    &\bigotimes_{i=1}^2 \Bigl(\evmap \circ F \Bigr)\Bigl(
    q^{-\frac{1}{2}}\overrightarrow{y''_{S_-}y_{S_-}}\,[\emptyset]\,\overrightarrow{y_{S_+}z''_{S_+}} \otimes
    q^{\frac{1}{2}}\overrightarrow{z''_{N_+}z_{N_+}}\,[\emptyset]\,\overrightarrow{z_{N_-}y''_{N_-}} \Bigr) \\
    &= A\hat{y}''^{-1}\hat{z}''\otimes \Bigl(\overrightarrow{y''_Sy_S}\,\overrightarrow{y_Sz''_S} \otimes \overrightarrow{z''_Nz_N}\,\overrightarrow{z_Ny''_N}\Bigr) \\
    &= \bigotimes_{i=1}^2 \Bigl((\Tr_{Sf} \otimes \mathrm{Id})\circ \pi_{Sf} \Bigr) \Bigl(
    q^{-\frac{1}{2}}\overrightarrow{y''_{S_-}y_{S_-}}\,[\emptyset]\,\overrightarrow{y_{S_+}z''_{S_+}} \otimes
    q^{\frac{1}{2}}\overrightarrow{z''_{N_+}z_{N_+}}\,[\emptyset]\,\overrightarrow{z_{N_-}y''_{N_-}} \Bigr). 
    \end{align*}
    \item $\epsilon_1 = \epsilon_2 = -$: 
    First, observe that in $\Sk_q^{\mathfrak{gl}_1,\st}(\widetilde{Sf})$, 
    \begin{gather*}
        \overrightarrow{y''^*_S y_S^*}
        \;\;=\;\;
        \vcenter{\hbox{
        \includestandalone[scale=0.6]{figures/lift_with_detour}
        }}
        \;\;=\;\;
        q^{-\frac{1}{2}}
        \vcenter{\hbox{
        \includestandalone[scale=0.6]{figures/lift_with_detour_reduced}
        }}
        \;\;=\;\;
        q^{-\frac{1}{2}} \overrightarrow{y'_S y_S^*}\,\overrightarrow{y''^*_S y'_S}.
    \end{gather*}
    Then,
    \begin{align*}
        &\evmap\circ F \Bigl(q^{\tfrac{1}{2}}\overrightarrow{y''_{S_-}y_{S_+}}\,[\emptyset]\,\overrightarrow{y_{S_-}z''_{S_-}} \Bigr) \\
        &=
        q^{\tfrac{1}{2}}\evmap\Bigl(q^{\tfrac{1}{2\pi}\Bigl(\tfrac{\theta_{y}}{2}-\tfrac{\theta_{y''}}{2}\Bigr)} \overrightarrow{y''^*_S y^*_S}\,[\emptyset] \, q^{-\tfrac{1}{2\pi}\Bigl(\tfrac{\theta_y}{2}+\tfrac{\theta_{z''}}{2}\Bigr)}
        \overrightarrow{y_S^*z''_S}\Bigr) \\
        &= q^{\tfrac{1}{2}}\evmap \Bigl( 
        q^{\tfrac{1}{2\pi}\Bigl(\tfrac{\theta_{y}}{2}-\tfrac{\theta_{y''}}{2}\Bigr)} q^{-\tfrac{1}{2}} \overrightarrow{y'_S y_S^*} \overrightarrow{y''^*_S y'_S}\, [\emptyset] \, q^{-\tfrac{1}{2\pi}\Bigl(\tfrac{\theta_y}{2}+\tfrac{\theta_{z''}}{2}\Bigr)}
        \overrightarrow{y_S^*z''_S}
        \Bigr)\\
        &= q^{-\frac{1}{2\pi}\qty(\frac{\theta_{y''}}{2}+\frac{\theta_{z''}}{2})}
        \Bigl((q^{\tfrac{1}{2}}\CT) q^{-\tfrac{1}{2\pi}\Bigl(\tfrac{\theta_{y'}}{2}+\tfrac{\theta_{y}}{2}\Bigr)}[y'_S y_S] \otimes \overrightarrow{y'_S y_S} \Bigr) 
        \Bigl( (q^{\tfrac{1}{2}}\CT)^{-1}q^{\tfrac{1}{2\pi}\Bigl(\tfrac{\theta_{y''}}{2}+\tfrac{\theta_{y'}}{2}\Bigr)}[y''_S y'_S]^{-1}\otimes \overrightarrow{y''_S y'_S}\Bigr) \\
        &\quad \Bigl(\CB^{-1} q^{\tfrac{1}{2\pi}\Bigl(\tfrac{\theta_{y}}{2}+\tfrac{\theta_{z''}}{2}\Bigr)}
         [y_S z''_S]^{-1} \otimes \overrightarrow{y_S z''_S}\Bigr)\\
        &= \CB^{-1}[y'_S y_S][y''_S y'_S]^{-1}[y_S z''_S]^{-1}\otimes \overrightarrow{y'_S y_S}\,\overrightarrow{y''_S y'_S}\,\overrightarrow{y_S z''_S}\\
        &=\CB^{-1} y''^{-1}_S z''^{-1}_S \otimes \overrightarrow{y'_S y_S}\,\overrightarrow{y''_S y'_S}\,\overrightarrow{y_S z''_S},
    \end{align*}
    and 
    \begin{equation*}
        \evmap\circ F \Bigl(q^{\tfrac{1}{2}}\overrightarrow{z''_{N_-}z_{N_+}}\,[\emptyset]\,\overrightarrow{z_{N_-}y''_{N_-}} \Bigr) 
        = \CB^{-1}y''^{-1}_N z''^{-1}_N \otimes \overrightarrow{z'_N z_N}\,\overrightarrow{z''_N z'_N}\,\overrightarrow{z_N y''_N}.
    \end{equation*}
    Tensoring these together, we get
    \begin{align*}
    &\bigotimes_{i=1}^2 \Bigl(\evmap \circ F \Bigr) \Bigl(
    q^{\frac{1}{2}}\overrightarrow{y''_{S_-}y_{S_+}}\,[\emptyset]\,\overrightarrow{y_{S_-}z''_{S_-}} \otimes
    q^{\frac{1}{2}}\overrightarrow{z''_{N_-}z_{N_+}}\,[\emptyset]\,\overrightarrow{z_{N_-}y''_{N_-}} \Bigr) \\
    &= \CB^{-2}\hat{y}''^{-1}\hat{z}''^{-1}\otimes \Bigl(\overrightarrow{y'_S y_S}\,\overrightarrow{y''_S y'_S}\,\overrightarrow{y_S z''_S} \otimes \overrightarrow{z'_N z_N}\,\overrightarrow{z''_N z'_N}\,\overrightarrow{z_N y''_N} \Bigr) \\
    &= \CB^{-2}(-A)\hat{y}''^{-1}\hat{z}''^{-1}\otimes \Bigl(\overrightarrow{y''_S y_S}\,\overrightarrow{y_S z''_S} \otimes \overrightarrow{z''_N z_N}\,\overrightarrow{z_N y''_N} \Bigr)
    \quad \in \SQGM_{\mathcal{T}}(Y) \otimes \bigotimes_{i=1}^2 \Sk^{\mathfrak{gl}_1}_{-A}(Sf),
    \end{align*}
    where, in the last line, we have used the following relations in $\Sk^{\mathfrak{gl}_1}_{-A}(Sf)$: 
    \begin{align*}
    \overrightarrow{y'_S y_S}\,\overrightarrow{y''_S y'_S}&=(-A)^{\frac{1}{2}}\overrightarrow{y''_S y_S}\\
    \overrightarrow{z_N' z_N}\,\overrightarrow{z_N'' z'_N}&=(-A)^{\frac{1}{2}}\overrightarrow{z''_N z_N}. 
    \end{align*}
    On the other hand, easy computations also show that, 
    \begin{align*}
        (\Tr_{Sf}\otimes \mathrm{Id}) \circ \pi_{Sf}\Bigl(q^{\tfrac{1}{2}}\overrightarrow{y''_{S_-}y_{S_+}}\,[\emptyset]\,\overrightarrow{y_{S_-}z''_{S_-}} \Bigr) &=
        q^{\tfrac{1}{2}}\Bigl([y''^{-1}_Sy_S]\otimes \overrightarrow{y''_Sy_S}\Bigr)\Bigl(\CB^{-1}[y_{S}z''_{S}]^{-1}\otimes \overrightarrow{y_{S}z''_S}\Bigr)\\
        &= \CB^{-1}q^{\tfrac{1}{2}}A^{-\tfrac{1}{2}}y''^{-1}_Sz''^{-1}_S\otimes \overrightarrow{y''_Sy_S}\,\overrightarrow{y_{S}z''_S},
    \end{align*}
    and 
    \begin{align*}
        (\Tr_{Sf}\otimes \mathrm{Id}) \circ \pi_{Sf}\Bigl(q^{\tfrac{1}{2}}\overrightarrow{z''_{N_-}z_{N_+}}\,[\emptyset]\,\overrightarrow{z_{N_-}y''_{N_-}}  \Bigr) &=
        q^{\tfrac{1}{2}}\Bigl([z''^{-1}_Nz_N]\otimes \overrightarrow{z''_Nz_N}\Bigr)\Bigl(\CB^{-1}[z_{N}y''_{N}]^{-1}\otimes \overrightarrow{z_{N}y''_N}\Bigr)\\
        &= \CB^{-1}q^{\tfrac{1}{2}}A^{-\tfrac{1}{2}}y''^{-1}_Nz''^{-1}_N\otimes \overrightarrow{z''_Ny_N}\,\overrightarrow{z_{N}y''_N},
    \end{align*}
    which after combining yields
    \begin{align*}
    &\bigotimes_{i=1}^2 \Bigl((\Tr_{Sf} \otimes \mathrm{Id})\circ \pi_{Sf} \Bigr) \Bigl(
    q^{\frac{1}{2}}\overrightarrow{y''_{S_-}y_{S_+}}\,[\emptyset]\,\overrightarrow{y_{S_-}z''_{S_-}} \otimes
    q^{\frac{1}{2}}\overrightarrow{z''_{N_-}z_{N_+}}\,[\emptyset]\,\overrightarrow{z_{N_-}y''_{N_-}} \Bigr) \\
    &= \CB^{-2} (-A) \hat{y}''^{-1} \hat{z}''^{-1}\otimes \Bigl(\overrightarrow{y''_Sy_S}\,\overrightarrow{y_Sz''_S} \otimes \overrightarrow{z''_Nz_N}\,\overrightarrow{z_Ny''_N}\Bigr) 
    \quad \in \SQGM\otimes \bigotimes_{i=1}^2 \Sk^{\mathfrak{gl}_1}_{-A}(Sf),
    \end{align*}
    as expected. 
\end{itemize}

Lastly, since 
\[
\overrightarrow{y''_S y_S}\,\overrightarrow{y_S z''_S} \otimes \overrightarrow{z''_N z_N}\,\overrightarrow{z_N y''_N} = [\vec{K}_b] \in \Sk^{\mathfrak{gl}_1}_{-A}(Y)
\]
under the isomorphism in Lemma \ref{lem:gl1_iso}, 
we have
\begin{align*}
p_{\vec{K}_b}\circ \evmap \circ F_{\mathcal{T}}(\vec{K}_b)&= p_{\vec{K}_b}\Bigl(A\hat{y}''\hat{z}''^{-1}\otimes \Bigl(\overrightarrow{y''_Sy_S}\,\overrightarrow{y_Sz''_S} \otimes \overrightarrow{z''_Nz_N}\,\overrightarrow{z_Ny''_N}\Bigr) \Bigr)\\
&+p_{\vec{K}_b}\Bigl( A\hat{y}''^{-1}\hat{z}''\otimes \Bigl(\overrightarrow{y''_Sy_S}\,\overrightarrow{y_Sz''_S} \otimes \overrightarrow{z''_Nz_N}\,\overrightarrow{z_Ny''_N}\Bigr)\Bigr) \\
&+p_{\vec{K}_b}\Bigl( \CB^{-2}(-A)\hat{y}''^{-1}\hat{z}''^{-1}\otimes \Bigl(\overrightarrow{y''_Sy_S}\,\overrightarrow{y_Sz''_S} \otimes \overrightarrow{z''_Nz_N}\,\overrightarrow{z_Ny''_N}\Bigr)\Bigr)\\
&=A\hat{y}''\hat{z}''^{-1}+ A\hat{y}''^{-1}\hat{z}''+(-A)\CB^{-2}\hat{y}''^{-1}\hat{z}''^{-1}\\
&=\Tr_{\mathcal{T}}(K_b),
\end{align*}
confirming Theorem \ref{thm:quantum-trace-from-quantum-UV-IR} in this case.\footnote{This result for $\Tr(K_b)$ agrees with that from \cite[Sec. 6.2.2]{PP} after adjusting for the difference in convention for shape parameters and recalling that $\CB=(-1)^{-\frac{1}{2}}$ in that work.}

\appendix
\section{Stated $\mathfrak{gl}_2$-skein modules}\label{sec:stated-gl2-skeins}
We need to first define precisely what we mean by the stated (i.e. relative) version of the $\mathfrak{gl}_2$-skein module. 

\begin{defn}
Let $(Y,\Gamma)$ be a boundary marked 3-manifold, and $R := \mathbb{Z}[q^{\pm \frac{1}{2}}]$. 
The \emph{(stated) $\mathfrak{gl}_2$-skein module} of $(Y,\Gamma)$, $\Sk_q^{\mathfrak{gl}_2}(Y,\Gamma)$ is defined as the $R$-span of isotopy classes of all framed, oriented, stated tangles in $(Y,\Gamma)$, modulo usual relations \eqref{eq:gl2skeinrel1}-\eqref{eq:gl2extraskeinrel2}, as well as the following boundary skein relations: 
\begin{gather}
\vcenter{\hbox{

\label{eq:gl2boundaryskeinrel4}
\;\;,
\end{gather}
plus all the skein relations obtained by simultaneous orientation reversal of the tangles in the skein relations above. 
\end{defn}

\begin{rmk}
Note, the boundary height exchange relations (i.e., the ones involving matrices) correspond exactly to the second part of Lemma 3.26 in \cite{PP}, upon projection to $\mathfrak{sl}_2$-skeins; see Proposition \ref{prop:gl2tosl2}. 
These are also the $R$-matrices for the Jones polynomial:\footnote{It is worth noting that these $R$-matrices are exactly the matrices we get by applying the quantum UV-IR map to a positive and a negative crossing.}
\[
\vcenter{\hbox{

\;\;,
\]
etc. 
In particular, as we gradually collide a link with a boundary marking, we obtain the usual state sum formula for its Jones polynomial. 
\end{rmk}

For convenience, again write 
\[
\SkAlg^{\mathfrak{gl}_2}_q(V(\Gamma)^\pm):=\bigotimes_{v \in V(\Gamma)^\pm}\SkAlg^{\mathfrak{gl}_2}_q(D_{\mathrm{deg}\,v}),
\]
where $V(\Gamma^+)$ and $V(\Gamma^-)$ denote the sets of sink and source vertices of $\Gamma$, respectively. 
By an identical argument to the $\mathfrak{sl}_2$ case, we have: 
\begin{prop}
The stated skein module $\Sk^{\mathfrak{gl}_2}_q(Y,\Gamma)$ has a natural $\SkAlg^{\mathfrak{gl}_2}_q(V(\Gamma)^+)$-$\SkAlg^{\mathfrak{gl}_2}_q(V(\Gamma)^-)$-bimodule structure.
\end{prop}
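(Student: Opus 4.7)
The plan is to follow the same strategy as in \cite[Prop. 3.13]{PP} for the $\mathfrak{sl}_2$ case, adapting it to the $\mathfrak{gl}_2$ setting. The essential point is that the bimodule structure is constructed locally, one vertex of $\Gamma$ at a time, so the argument reduces to checking that the ``stacking'' action is well defined with respect to all $\mathfrak{gl}_2$-skein relations.

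First, I would construct the action of $\SkAlg^{\mathfrak{gl}_2}_q(D_{\deg v})$ at each vertex $v \in V(\Gamma)$. Pick a small open neighborhood $U_v \subset Y$ of $v$; removing $U_v$ exposes a cylindrical boundary component combinatorially identified with $D_{\deg v} \times I$ (oriented so that $I$ points away from $v$ into $U_v$), as in Figure \ref{fig: module structure}. Given a stated $\mathfrak{gl}_2$-tangle $T$ in $D_{\deg v} \times I$ and a stated $\mathfrak{gl}_2$-tangle $L$ in $(Y,\Gamma)$, one forms $T \cdot L$ (or $L \cdot T$, depending on whether $v$ is a sink or source) by stacking $T$ on top of $L$ inside $U_v$ and reattaching. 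This defines a candidate left (resp.\ right) action of $\SkAlg^{\mathfrak{gl}_2}_q(D_{\deg v})$ on $\Sk^{\mathfrak{gl}_2}_q(Y,\Gamma)$ when $v$ is a sink (resp.\ source).

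Next, I would verify well-definedness. Since all $\mathfrak{gl}_2$-skein relations (both the interior relations \eqref{eq:gl2skeinrel1}–\eqref{eq:gl2extraskeinrel2} and the boundary relations \eqref{eq:gl2boundaryskeinrel1}–\eqref{eq:gl2boundaryskeinrel4}) are local, applying such a relation in the neighborhood $U_v$ is the same whether performed before or after the stacking. Thus the action descends through the quotient defining $\SkAlg^{\mathfrak{gl}_2}_q(D_{\deg v})$ on one side and $\Sk^{\mathfrak{gl}_2}_q(Y,\Gamma)$ on the other. Associativity of each individual action is immediate from the stacking description, since it reduces to the fact that vertical composition in $D_{\deg v} \times I$ is associative on the nose.

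Finally, I would check that the left actions at sink vertices commute with the right actions at source vertices, giving the bimodule structure. For any two distinct vertices $v \neq w$, the neighborhoods $U_v$ and $U_w$ can be chosen disjoint, so the corresponding stackings are performed in disjoint regions and hence commute. I would not expect any serious obstacle here — the $\mathfrak{gl}_2$ boundary relations \eqref{eq:gl2boundaryskeinrel3}–\eqref{eq:gl2boundaryskeinrel4} are more elaborate than their $\mathfrak{sl}_2$ counterparts (they involve nontrivial $R$-matrices rather than the simpler height-exchange relations), but since they are still strictly local near the boundary marking, the locality-based argument of \cite[Prop. 3.13]{PP} goes through verbatim. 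The only mild care needed is to ensure the conventions for orientation and ``sink vs.\ source'' match those used when writing the boundary relations, so that stacking a sink-side element on top genuinely corresponds to a left multiplication in $\SkAlg^{\mathfrak{gl}_2}_q(D_{\deg v})$ and not in its opposite algebra.
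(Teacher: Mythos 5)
Your proposal is correct and follows essentially the same route as the paper, which gives no separate argument here but simply invokes the identical stacking construction used for the $\mathfrak{sl}_2$ case in \cite[Prop.~3.13]{PP}: the action of $\SkAlg^{\mathfrak{gl}_2}_q(D_{\deg v})$ is defined by stacking in a cylindrical neighborhood of each vertex $v$, well-definedness follows from locality of the interior and boundary $\mathfrak{gl}_2$-skein relations, and actions at distinct vertices commute because the neighborhoods are disjoint. Your remark about matching the sink/source convention with left/right (versus opposite) multiplication is exactly the only point of care, and it is handled the same way as in the $\mathfrak{sl}_2$ setting.
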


The stated skein relations above are necessary to obtain a well-defined splitting map for stated $\mathfrak{gl}_2$-skein modules. 

\begin{thm} \label{thm:gl2splittingmap}
Let $(Y_1,\Gamma_1)$ and $(Y_2,\Gamma_2)$ be boundary marked $3$-manifolds. 
Suppose that $\Sigma_1 \subset \partial Y_1$ and $\Sigma_2 \subset Y_2$, along with their markings, are homeomorphic combinatorial foliated surfaces (see Definition \ref{defn:combinatorialFoliation}) of opposite orientations. 
Write $\Sigma$ for the common image of $\Sigma_1$ and $\Sigma_2$ after gluing, and set $Y=Y_1 \cup_\Sigma Y_2$ and $\Gamma = (\Gamma_1 \setminus \mathrm{int}\,\Sigma_1)\cup(\Gamma_2 \setminus \mathrm{int}\, \Gamma_2).$
Let $\vec{L}$ be a stated, oriented tangle representing $[\vec{L}]\in\Sk^{\mathfrak{gl}_2}(Y,\Gamma)$, isotoped so that $\vec{L}\cap \Sigma \subset \Gamma,$ guaranteeing that $\vec{L}_1:=\vec{L}\cap Y_1$ and $\vec{L}_2:=\vec{L}\cap Y_2$ are tangles in $(Y_1,\Gamma_1)$ and $(Y_2,\Gamma_2)$.
Then, there is an R-module homomorphism,
\begin{align*}
    \sigma^{\mathfrak{gl}_2}: \Sk^{\mathfrak{gl}_2}_q(Y,\Gamma) &\rightarrow \Sk^{\mathfrak{gl}_2}_q(Y_1,\Gamma_1) \overline{\otimes} \Sk^{\mathfrak{gl}_2}_q(Y_2,\Gamma_2)\\
    [\vec{L}] &\mapsto \left[ \sum_{\vec{\epsilon}\in \{\pm\}^{\Gamma \cap \vec{L}}}[\vec{L}^{\vec{\epsilon}}_1]\otimes [\vec{L}^{\vec{\epsilon}}_2]\right],
\end{align*}
where the sum is over all functions $\vec{\epsilon}:\vec{L} \cap \Sigma \rightarrow \{\pm\}$ assigning states to endpoints of $\vec{L}$, $\vec{L}^{\vec{\epsilon}}_1$ and $\vec{L}^{\vec{\epsilon}}_2$ denote the stated tangles obtained from $\vec{L}_1$ and $\vec{L}_2$ by assigning states according to $\vec{\epsilon},$ and the relative tensor product $\Sk^{\mathfrak{gl}_2}_q(Y_1,\Gamma_1) \overline{\otimes} \Sk^{\mathfrak{gl}_2}_q(Y_2,\Gamma_2)$ denotes the quotient of the usual tensor product $\Sk^{\mathfrak{gl}_2}_q(Y_1,\Gamma_1) \otimes \Sk^{\mathfrak{gl}_2}_q(Y_2,\Gamma_2)$ by the following relations:
\begin{enumerate}
\item For each internal edge e of of $\Sigma$ adjacent to a sink, we have the following relations among left actions:
\begin{align*}
&
\text{the left action of }\;
q^{\frac{\mu + \nu}{4}}
\vcenter{\hbox{

}}
\text{ on }\Sk^{\mathfrak{gl}_2}_q(Y_2, \Gamma_2), 
\end{align*}
along with the relations obtained by simultaneous orientation reversal of the tangles in both figures. 
\end{enumerate}
\end{thm}
\begin{proof}
As in \cite[Sec. 3]{PP}, we need to study how the element 
\begin{equation*}
\tilde{\sigma}^{\mathfrak{gl}_2}(\vec{L}):=\sum_{\vec{\epsilon}\in \{\pm\}^{\Gamma \cap \vec{L}}}[\vec{L}^{\vec{\epsilon}}_1]\otimes [\vec{L}^{\vec{\epsilon}}_2]
\end{equation*}
behaves under isotopy of $\vec{L}$. 
The proof is completely analogous to that of \cite[Thm 3.21]{PP}. 
The only difference lies in the invariance under the height exchange isotopy near a component of $\Gamma$:
\begin{equation*}
\vcenter{\hbox{

}}.
\end{equation*}

\begin{defn}
The \textit{reduced stated skein module} $\overline{\Sk}^{\mathfrak{gl}_2}_q(Y,\Gamma)$ is the quotient 
\[
\overline{\Sk}^{\mathfrak{gl}_2}_q(Y, \Gamma) := 
I^{\mathrm{bad}, +} \backslash \Sk^{\mathfrak{gl}_2}_q(Y, \Gamma)/I^{\mathrm{bad}, -}
 = \frac{\Sk^{\mathfrak{gl}_2}_q(Y, \Gamma)}{I^{\mathrm{bad}, +}\Sk^{\mathfrak{gl}_2}_q(Y,\Gamma) + \Sk^{\mathfrak{gl}_2}_q(Y,\Gamma)I^{\mathrm{bad}, -}},
\]
where $I^{\mathrm{bad}, +}$ denotes the right ideal of $\SkAlg_q^{\mathfrak{gl}_2}(V(\Gamma)^+)$ generated by the bad arcs near the sinks, and $I^{\mathrm{bad}, -}$ denotes the left ideal of $\SkAlg_q^{\mathfrak{gl}_2}(V(\Gamma)^-)$ generated by the bad arcs near the sources. 
\end{defn}

As before, set 
\[
\overline{\SkAlg}_q^{\mathfrak{gl}_2}(V(\Gamma)^{\pm}) := \bigotimes_{v\in V(\Gamma)^{\pm}} \overline{\SkAlg}_q^{\mathfrak{gl}_2}(D_{\deg v}),
\]
and note that the $\SkAlg^{\mathfrak{gl}_2}_q(V(\Gamma)^+)$-$\SkAlg^{\mathfrak{gl}_2}_q(V(\Gamma)^-)$-bimodule structure on $\Sk^{\mathfrak{gl}_2}_q(Y,\Gamma)$ clearly descends to a $\overline{\SkAlg}^{\mathfrak{gl}_2}_q(V(\Gamma)^+)$-$\overline{\SkAlg}^{\mathfrak{gl}_2}_q(V(\Gamma)^-)$-bimodule structure on $\overline{\Sk}^{\mathfrak{gl}_2}_q(Y,\Gamma)$.

The following corollary -- the 2d splitting map for $\mathfrak{gl}_2$-skeins -- follows from Theorem \ref{thm:gl2splittingmap}; 
this is because each ideal arc times an interval is homeomorphic to an elementary quadrilateral and hence admits a natural combinatorial foliation as follows:
\begin{gather*}
e\times I \,=\,
\vcenter{\hbox{
\begin{tikzpicture}[decoration={
    markings,
    mark=at position 0.5 with {\arrow{>}}}]
\draw[dotted, very thick] (0,0)--(0,2);
\draw[dotted, very thick] (2,0)--(2,2);
\draw[black, very thick] (0,0)--(2,0);
\draw[black, very thick] (0,2)--(2,2);
\draw[orange, very thick, postaction={decorate}] (1,0)
--(1,2);
\node[above, scale=.75] at (1, 2) {$e\times \{1\}$};
\node[below, scale=.75] at (1, 0) {$e\times \{0\}$};
\filldraw[orange] (1,0) circle (1pt);
\filldraw[orange] (1,2) circle (1pt);
\end{tikzpicture}
}}
\,\cong\,
\vcenter{\hbox{
\begin{tikzpicture}[decoration={
    markings,
    mark=at position 0.5 with {\arrow{>}}}]
\draw[black,very thick] (1,0)--(0,1);
\draw[black,very thick] (0,1)--(1,2);
\draw[black,very thick] (1,0)--(2,1);
\draw[black,very thick] (2,1)--(1,2);
\draw[orange,very thick,postaction={decorate}] (1,0)
--(1,2);
\draw[fill = white] (2,1) circle (2pt);
\draw[fill = white] (0,1) circle (2pt);
\filldraw[orange] (1,0) circle (1pt);
\filldraw[orange] (1,2) circle (1pt);
\end{tikzpicture}
}}.
\end{gather*}
Since the boundary markings involved in the decomposition of an ideally triangulated surface into the ideal triangles have no vertices, there are no extra relations in the tensor product. 
\begin{cor}
Let $\Sigma=\bigcup_{\tri\in \tau^{(2)}}\tri$ be a decomposition of an ideally triangulated surface $\Sigma$ into ideal triangles. 
Then, there is a well-defined splitting map 
\begin{align*}
\overline{\sigma}^{\mathfrak{gl}_2}:
\overline{\SkAlg}^{\mathfrak{gl}_2}_q(\Sigma) &\rightarrow \bigotimes_{\tri\in \tau^{(2)}} \overline{\SkAlg}^{\mathfrak{gl}_2}_q(\tri), \\
[\vec{L}] &\mapsto \sum_{\vec{\epsilon} \in \{\pm\}^{(E\times I)\cap L}} \otimes_{\tri \in \tau^{(2)}}[\vec{L}^{\vec{\epsilon}}_{\tri}],
\end{align*}
where $E\in \Sigma$ denotes the union of all of the edges of the triangulation, and $\vec{L}^{\vec{\epsilon}}_{\tri}$ denotes the part of $\vec{L}$ in $\tri \times I$ after splitting, with states assigned to the newly created boundary points according to $\vec{\epsilon}.$
\end{cor}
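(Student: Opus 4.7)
The plan is to derive this corollary from the general 3d splitting map (Theorem \ref{thm:gl2splittingmap}) by specializing to the cylinder $\Sigma \times I$ with its canonical boundary marking $P \times I$, and then check that nothing forces us to quotient either the codomain (relative vs.\ ordinary tensor product) or the domain (beyond what is already imposed by passing to reduced skein modules).

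First, I would apply Theorem \ref{thm:gl2splittingmap} iteratively (one ideal edge $e \in \tau^{(1)}$ at a time) to split $\Sigma \times I$ along the surfaces $e \times I$. As noted in the excerpt, each $e \times I$ is topologically an elementary quadrilateral whose combinatorial foliation is the one pictured there, and the gluing of these foliated quadrilaterals along the triangles of $\tau$ reconstructs $\Sigma \times I$. This yields an $R$-module homomorphism
\[
\sigma^{\mathfrak{gl}_2} : \SkAlg^{\mathfrak{gl}_2}_q(\Sigma) \;\rightarrow\; \underset{\tri \in \tau^{(2)}}{\overline{\bigotimes}} \SkAlg^{\mathfrak{gl}_2}_q(\tri)
\]
defined by the stated sum formula; well-definedness at the non-reduced level is a direct consequence of Theorem \ref{thm:gl2splittingmap}.

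Next, I would argue that the relative tensor product on the right-hand side coincides with the ordinary tensor product. In Theorem \ref{thm:gl2splittingmap}, the left- and right-action relations that define the quotient are indexed by \emph{internal} edges of the splitting surface adjacent to a sink or source of the combinatorial foliation. In the present situation the splitting surface is the disjoint union $\bigsqcup_{e \in \tau^{(1)}} e \times I$, and each connected component is a single elementary quadrilateral -- so every edge of the combinatorial foliation is a boundary edge, and there are no internal edges. Hence no quotient relations are imposed, proving the tensor product claim.

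Finally, I would verify that $\sigma^{\mathfrak{gl}_2}$ descends to a map between reduced skein modules, which amounts to checking that the image of a bad arc lies in the left (resp.\ right) ideal generated by bad arcs of $\bigotimes_{\tri} \overline{\SkAlg}^{\mathfrak{gl}_2}_q(\tri)$. This is a local check: a bad arc in $\Sigma \times I$ near a boundary puncture $p$ is supported in a small neighborhood of $p$, so when one applies $\sigma^{\mathfrak{gl}_2}$ the states on the newly created endpoints propagate as in the explicit formulas of Section \ref{sec:stated-gl2-skeins}, and each term in the resulting sum contains a bad arc in some triangle $\tri$ abutting $p$. The anticipated main obstacle is precisely this last bookkeeping step, since it requires tracking the interplay between orientations of the stated tangle, the states assigned in the splitting formula, and the specific orientation/state patterns defining bad arcs in the $\mathfrak{gl}_2$ setting; however, it is a direct calculation rather than a conceptual hurdle, and closely parallels the corresponding check for stated $\mathfrak{sl}_2$-skeins that yielded Corollary \ref{cor:2d-splitting-triangles}.
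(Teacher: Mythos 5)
Your proposal is correct and takes essentially the same route as the paper: the paper likewise deduces the corollary from Theorem \ref{thm:gl2splittingmap} by noting that each ideal arc times $I$ is an elementary quadrilateral with its natural combinatorial foliation and that the decomposition into triangles produces no sink/source vertex configurations on the splitting surface, so the relative tensor product collapses to the ordinary one. The only difference is that you spell out the descent to reduced skein algebras (images of bad arcs landing in the bad-arc ideal), which the paper leaves implicit by analogy with the $\mathfrak{sl}_2$ case; your sketch of that local check is the standard argument and is fine.
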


To obtain the $\mathfrak{gl}_2$ splitting map for ideally triangulated $3$-manifolds, we first need the following definition:
\begin{defn}
The \emph{relative tensor product} $\overline{\bigotimes}_{f\in \mathcal{T}^{(2)}}\overline{\Sk}^{\mathfrak{gl}_2}_q(Sf)$ of the bimodules $\overline{\Sk}^{\mathfrak{gl}_2}_q(Sf)$, $f\in \mathcal{T}^{(2)}$ is defined to be the quotient of the ordinary tensor product (as $R$-modules) $\bigotimes_{f\in \mathcal{T}^{(2)}}\overline{\Sk}^{\mathfrak{gl}_2}_q(Sf)$ by the following relations: 
\begin{itemize}
\item For each vertex cone $Cv$, we have the following relations among left actions on $\bigotimes_{f\in \mathcal{T}^{(2)}} \overline{\Sk}^{\mathfrak{gl}_2}_q(Sf)$:
\begin{gather} 
\vcenter{\hbox{

}}
\;,
\quad \epsilon \in \{\pm\},
\end{equation}
where each sector in the above diagrams represents one of the face suspensions surrounding $e$ (as many as the number of tetrahedra abutting $e$), and the markings shown are on the bare edge cones abutting $e$. 
\end{itemize} 
\end{defn}

The splitting map of Theorem \ref{thm:gl2splittingmap} then implies the following important corollary:
\begin{cor}\label{cor:gl2-splitting-map}
Let $Y= \bigcup_{f\in \mathcal{T}^{(2)}} Sf$ be a decomposition of an ideally triangulated $3$-manifold (without boundary except for cusps at infinity) into face suspensions. Then, there is a well-defined splitting map 
\begin{align*}              
\overline{\sigma}^{\mathfrak{gl}_2}:\overline{\Sk}^{\mathfrak{gl}_2}_q(Y) &\rightarrow 
\underset{f\in \mathcal{T}^{(2)}}{\overline{\bigotimes}} \overline{\Sk}^{\mathfrak{gl}_2}_q(Sf),\\
[\vec{L}] &\mapsto  \qty[\sum_{\vec{\epsilon} \;\in\; \{\pm\}^{\cup_f Sf\cap  \vec{L}}}\otimes_{f\in \mathcal{T}^{(2)}} [\vec{L}_{f}^{\vec{\epsilon}}]], 
\end{align*}
where $\vec{L}_f^{\vec{\epsilon}}$ denotes the part of $\vec{L}$ in $Sf$ after splitting, with boundary states determined by $\vec{\epsilon}$.
\end{cor}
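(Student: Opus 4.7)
The plan is to obtain the splitting map as an iterated application of the general boundary-marked splitting theorem (Theorem \ref{thm:gl2splittingmap}), and then identify the relations in the resulting relative tensor product with those defining the target $\overline{\bigotimes}_{f\in \mathcal{T}^{(2)}} \overline{\Sk}^{\mathfrak{gl}_2}_q(Sf)$. This mirrors the $\mathfrak{sl}_2$ argument from \cite[Cor. 3.38]{PP} outlined earlier in the paper, but now has to be carried out for the stated $\mathfrak{gl}_2$ splitting map, using the new $\mathfrak{gl}_2$ height-exchange relations proved in Theorem \ref{thm:gl2splittingmap}.

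First, I would observe that the common boundary $\Sigma_f = Sf \cap \partial(Y \setminus Sf)$ for each face suspension $f$ carries a canonical combinatorial foliation, since its standard boundary marking (Figure \ref{fig:face_suspension}) fits the local model of Definition \ref{defn:combinatorialFoliation} around each edge of the tessellation. This makes Theorem \ref{thm:gl2splittingmap} applicable. Cutting $Y$ open along all the $\Sigma_f$ simultaneously (or, equivalently, applying Theorem \ref{thm:gl2splittingmap} inductively along the face suspensions) yields a well-defined $R$-module homomorphism
\[
\tilde{\sigma}^{\mathfrak{gl}_2} \;:\; \Sk^{\mathfrak{gl}_2}_q(Y) \longrightarrow \bigotimes_{f\in \mathcal{T}^{(2)}} \Sk^{\mathfrak{gl}_2}_q(Sf)\big/\sim,
\]
sending $[\vec{L}]$ to the state-sum $\sum_{\vec{\epsilon}} \otimes_f [\vec{L}_f^{\vec{\epsilon}}]$, where $\sim$ encodes the height-exchange relations produced by Theorem \ref{thm:gl2splittingmap} at every interior edge of the boundary tessellation. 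These are exactly the edges of $\Sigma_f$ that are identified under gluing, i.e.\ the bare edges of the face suspensions that either abut a vertex cone in $Y$ or lie along an interior edge of $\mathcal{T}$.

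Second, I would match these height-exchange relations to the two families of relations in the definition of the relative tensor product $\overline{\bigotimes}_{f\in\mathcal{T}^{(2)}}\overline{\Sk}_q^{\mathfrak{gl}_2}(Sf)$. The sink/source dichotomy of the boundary marking translates into left vs.\ right relations: at each vertex cone $Cv$, three face suspensions meet along sink vertices, producing the three-term left-relation displayed in the definition; at each interior edge $e \in \mathcal{T}^{(1)}$, the face suspensions abutting $e$ share source vertices, producing the right-relation displayed. The prefactor $q^{\pm(\mu+\nu)/4}$ in Theorem \ref{thm:gl2splittingmap} is exactly tuned so that, once summed over $\vec\epsilon$ and rewritten using the stated $\mathfrak{gl}_2$ relations \eqref{eq:gl2boundaryskeinrel3}--\eqref{eq:gl2boundaryskeinrel4}, the resulting identifications collapse to the pictorial relations stated in the definition of the relative tensor product. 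This is a purely algebraic verification that I expect to be essentially identical in shape to \cite[Cor. 3.38]{PP}, but done with the $\mathfrak{gl}_2$ $R$-matrix in place of the $\mathfrak{sl}_2$ one. Consequently, $\tilde\sigma^{\mathfrak{gl}_2}$ descends to a map into $\overline{\bigotimes}_{f} \Sk^{\mathfrak{gl}_2}_q(Sf)$.

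Finally, I would pass to the reduced quotient. The key observation is that a bad arc in $Y$, when split, produces a sum of tensor products each of whose factors contains a bad arc (localized at a single vertex of the boundary marking of some face suspension $Sf$), since bad arcs are defined by a purely local boundary condition near a vertex of $\Gamma$. It follows that the bad-arc ideals $I^{\mathrm{bad}, \pm}$ in $\Sk^{\mathfrak{gl}_2}_q(Y)$ are sent into the image of the corresponding bad-arc ideals in the $\Sk^{\mathfrak{gl}_2}_q(Sf)$ factors, so $\tilde\sigma^{\mathfrak{gl}_2}$ descends to the claimed map $\overline{\sigma}^{\mathfrak{gl}_2}$ on reduced stated skein modules.

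I expect the main obstacle to be the bookkeeping in the second step: explicitly verifying that the $q$-weighted sum of height-exchange relations coming from Theorem \ref{thm:gl2splittingmap} (one relation per interior edge of the tessellation, involving all four sign choices of $(\mu,\nu)$) reorganizes into the two compact pictorial relations in the definition of the relative tensor product. The computation is structurally parallel to the $\mathfrak{sl}_2$ case treated in \cite{PP}, but the presence of the non-diagonal $R$-matrix in \eqref{eq:gl2boundaryskeinrel3}--\eqref{eq:gl2boundaryskeinrel4} means that one must be careful that the off-diagonal $(q-q^{-1})$ contributions cancel across the sum over $\vec\epsilon$ at each vertex cone, leaving only the asserted signed identities. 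Everything else is routine once this verification is in place.
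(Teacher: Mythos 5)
Your high-level plan (get the map from Theorem \ref{thm:gl2splittingmap}, match relations, pass to reduced modules) is the route the paper itself gestures at, but the mechanism you propose in steps 1--2 has a genuine gap. The decomposition $Y=\bigcup_{f}Sf$ is \emph{not} a splitting along a surface: the cut locus is the union of all edge cones, and these meet along the vertex-cone segments (three edge cones of a tetrahedron meet along each segment from the barycenter to an ideal vertex) and along each edge $e\in\mathcal{T}^{(1)}$ (one edge cone per tetrahedron abutting $e$). Theorem \ref{thm:gl2splittingmap} only splits $Y$ into two pieces glued along a two-sided combinatorially foliated surface, so it cannot be applied ``simultaneously or inductively'' to this decomposition -- any intermediate cut would be along a surface whose boundary lies in the interior of the manifold, which the theorem does not cover. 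More importantly, the vertex-cone and internal-edge relations defining $\overline{\bigotimes}_{f}\overline{\Sk}^{\mathfrak{gl}_2}_q(Sf)$ are not a repackaging of the theorem's height-exchange/balancing relations: the theorem's relations identify the action of a single corner arc on the two pieces adjacent to one edge cone (with flipped states and $q^{\pm(\mu+\nu)/4}$), whereas the relations of the relative tensor product involve products of arcs distributed over the three (resp.\ $k$) face suspensions surrounding a vertex cone (resp.\ an edge of $\mathcal{T}$), set equal to $q^{\mp\epsilon}$ times the empty skein, together with a three-term relation. Those relations encode precisely the invariance of the state sum under isotopies of $\vec{L}$ across the one-dimensional strata of the cut locus (vertex-cone segments, edges of $\mathcal{T}$, barycenters), a check the surface-splitting theorem does not see at all; it requires its own computation with the boundary relations \eqref{eq:gl2boundaryskeinrel1}--\eqref{eq:gl2boundaryskeinrel4}, parallel to the $\mathfrak{sl}_2$ verification behind \cite[Cor.~3.38]{PP}. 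Your proposal neither performs this check nor correctly identifies it as the missing step; asserting that the $(q-q^{-1})$ contributions ``cancel across the sum over $\vec\epsilon$'' is exactly the content that must be proved, and it is a statement about strand moves across the non-manifold strata, not about reorganizing the relations of Theorem \ref{thm:gl2splittingmap}.

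A smaller point: your reduction step is aimed at the wrong side. Since $Y$ has empty boundary marking, $\overline{\Sk}^{\mathfrak{gl}_2}_q(Y)=\Sk^{\mathfrak{gl}_2}_q(Y)$ and there are no bad arcs in $Y$ to push forward; what is actually needed is to compose the state sum with the quotients $\Sk^{\mathfrak{gl}_2}_q(Sf)\to\overline{\Sk}^{\mathfrak{gl}_2}_q(Sf)$ and to carry out the well-definedness checks above modulo the bad-arc ideals of the pieces, which is where the relative tensor product is defined. For comparison, the paper itself derives the corollary in one line from Theorem \ref{thm:gl2splittingmap}, implicitly deferring the verification at the vertex cones and internal edges to the argument of \cite{PP}; a complete write-up along your lines would have to supply exactly that verification.
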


\section{Stated $\mathfrak{gl}_1$-skein modules with defects} \label{sec:stated-gl1-skeins}

In this section of the appendix, we define the stated $\mathfrak{gl}_1$-skein module, both with and without sign defects.\footnote{
Even though we call it the ``stated'' $\mathfrak{gl}_1$-skein module in analogy with the stated $\mathfrak{sl}_2$- or $\mathfrak{gl}_2$-skein modules, there is a unique ``state'' that we can assign to the boundary of a $\mathfrak{gl}_1$-skein, so there's no extra data at the boundary points.
} 
Furthermore, we construct and prove the well-definedness of the splitting maps for the $\mathfrak{gl}_1$-skein modules above. 
The key results are Corollaries \ref{cor:gl1splittingmap} and \ref{cor:gl1splittingwithconepoints}, which allow us to split the $\mathfrak{gl}_1$-skein modules of both an ideally triangulated $3$-manifold as well as its double cover. 

\begin{defn}
Let $(Y,\Gamma)$ be a boundary marked $3$-manifold, and let $R:=\mathbb{Z}[q^{\pm1}]$.
The \emph{(stated) $\mathfrak{gl}_1$-skein module} $\Sk^{\mathfrak{gl}_1}_q(Y,\Gamma)$ of $Y$ is defined as the $R$-span of isotopy classes of all framed, oriented tangles in $(Y,\Gamma)$ modulo the usual $\mathfrak{gl}_1$-skein relations (\ref{eq:gl1skeinrel1}), (\ref{eq:gl1skeinrel2}), and (\ref{eq:gl1skeinrel4}) in addition to the following: 
\begin{gather}
\vcenter{\hbox{
\begin{tikzpicture}[scale=0.7]
\draw[->, orange, ultra thick] (0, -1) -- (0, 1);
\draw[very thick] (0, -0.5) to[out=0, in=-90] (0.5, 0) to[out=90, in=0] (0, 0.5);
\draw[->, very thick] (0, -0.5) to[out=0, in=-90] (0.5, 0);
\end{tikzpicture}
}}
\;\;=\;\;
\vcenter{\hbox{
\begin{tikzpicture}[scale=0.7]
\draw[->, orange, ultra thick] (0, -1) -- (0, 1);
\end{tikzpicture}
}}
\;\;,
\label{eq:statedgl1skeinrel3}
\\
\vcenter{\hbox{
\begin{tikzpicture}[scale=0.7]
\draw[->, orange, ultra thick] (0, -1) -- (0, 1);
\draw[very thick] (1, 0.5) to[out=180, in=90] (0.5, 0) to[out=-90, in=180] (1, -0.5);
\draw[->, very thick] (1, 0.5) to[out=180, in=90] (0.5, 0);
\end{tikzpicture}
}}
\;\;=\;\;
\vcenter{\hbox{
\begin{tikzpicture}[scale=0.7]
\draw[->, orange, ultra thick] (0, -1) -- (0, 1);
\draw[very thick] (1, 0.5) -- (0, 0.5);
\draw[->, very thick] (1, 0.5) -- (0.5, 0.5);
\draw[very thick] (1, -0.5) -- (0, -0.5);
\draw[->, very thick] (0, -0.5) -- (0.5, -0.5);
\end{tikzpicture}
}}
\;\;,
\label{eq:statedgl1skeinrel4}
\end{gather}
as well as all of the skein relations obtained by simultaneous orientation reversal of the tangles in the aforementioned relations. 

Let $\widetilde{Y}$ be the associated double cover with boundary marking $\widetilde{\Gamma}$.
The stated $\mathfrak{gl}_1$-skein module with defects $\Sk^{\mathfrak{gl}_1}_q(\widetilde{Y},\widetilde{\Gamma})$ is the $R$-module spanned by isotopy classes of framed oriented links away from the branch locus modulo the relations above in addition to (\ref{eq:gl1skeinrel3}).
\end{defn}

These skein modules of course admit natural bimodule structures.
\begin{prop}
The stated $\mathfrak{gl}_1$-skein modules $\Sk^{\mathfrak{gl}_1}_q(Sf,\Gamma)$ and $\Sk^{\mathfrak{gl}_1}_q(\widetilde{Sf},\widetilde{\Gamma})$) admit natural $\SkAlg^{\mathfrak{gl}_1}_q(V(\Gamma)^+)$-$\SkAlg^{\mathfrak{gl}_1}_q(V(\Gamma)^-)$ and  $\SkAlg^{\mathfrak{gl}_1}_q(V(\widetilde{\Gamma})^+)$-$\SkAlg^{\mathfrak{gl}_1}_q(V(\widetilde{\Gamma})^-)$-bimodule structures, respectively. 
\end{prop}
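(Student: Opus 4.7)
The plan is to mimic the construction of the $\SkAlg^{\mathfrak{sl}_2}_A$-bimodule structure on $\Sk^{\mathfrak{sl}_2}_A(Y,\Gamma)$ recalled in Proposition \ref{prop:bimodule-structure} and its $\mathfrak{gl}_2$-analogue. Concretely, for each vertex $v\in V(\Gamma)$ of the boundary marking, take a small $3$-ball neighborhood $N_v$ of $v$ and identify $\partial N_v \cap (Y\setminus N_v)$ with a disk $D_{\deg v}$ carrying $\deg v$ marked points. This presents the complement as a ``cylinder over $D_{\deg v}$'' times a collar (as in Figure \ref{fig: module structure}), along which one can stack any element of $\SkAlg^{\mathfrak{gl}_1}_q(D_{\deg v})$ onto a given skein in $(Y,\Gamma)$. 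Whether one gets a left or right action is dictated by whether $v$ is a sink or a source, and the tensor product over all vertices gives the candidate bimodule structure.

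The remaining work is to check that this candidate descends to the quotient defining $\Sk^{\mathfrak{gl}_1}_q$. The routine steps are: (i) invariance under isotopy in the collar, which reduces to invariance under pushing strands across $v$ and across other vertices; (ii) compatibility with the $\mathfrak{gl}_1$-skein relations \eqref{eq:gl1skeinrel1}, \eqref{eq:gl1skeinrel2}, \eqref{eq:gl1skeinrel4} and the stated relations \eqref{eq:statedgl1skeinrel3}, \eqref{eq:statedgl1skeinrel4}, each of which is local and therefore transported intact from the ambient skein to the stacked neighborhood; and (iii) the associativity and commuting of the left and right actions, which follows because elements stacked near distinct vertices live in disjoint collars and can be isotoped past one another.

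For the double cover $\widetilde{Sf}$ (or more generally $\widetilde{Y}$) the only extra input is the sign defect relation \eqref{eq:gl1skeinrel3} and, if relevant, the $3$-term relation \eqref{eq:gl1skeinrel5} at cone points. Near a boundary vertex $\widetilde v \in V(\widetilde\Gamma)$, the branch locus does \emph{not} enter the collar (the cone points sit in the interior of $\widetilde{Sf}$), so the sign defect relation imposes no new local constraint on the action and the argument reduces to the unbranched case. This is the one point that could potentially be subtle, so the main obstacle—if there is one—is to verify that one may always isotope a stacked skein near $\widetilde v$ while keeping it disjoint from the branch locus; but this is automatic since the branch locus and the chosen collar are disjoint by construction.

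Putting these pieces together, stacking yields a well-defined $R$-bilinear map
\[
\SkAlg^{\mathfrak{gl}_1}_q(V(\Gamma)^+)\otimes \Sk^{\mathfrak{gl}_1}_q(Y,\Gamma)\otimes \SkAlg^{\mathfrak{gl}_1}_q(V(\Gamma)^-)\longrightarrow \Sk^{\mathfrak{gl}_1}_q(Y,\Gamma),
\]
and analogously for $\widetilde{Sf}$, completing the proof. As in the $\mathfrak{sl}_2$ and $\mathfrak{gl}_2$ cases, one should record along the way that the ideal generated by bad arcs is preserved by these actions, so that the bimodule structure descends to the reduced versions when needed.
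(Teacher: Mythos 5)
Your overall strategy -- stacking elements of the local disk skein algebras in a collar near each vertex of the boundary marking, with sink/source determining left/right, exactly as in the $\mathfrak{sl}_2$ and $\mathfrak{gl}_2$ cases -- is the intended argument; the paper itself gives no proof, treating the proposition as the evident analogue of Proposition \ref{prop:bimodule-structure}. Points (i)--(iii) of your check are fine, since all the $\mathfrak{gl}_1$ relations are local.

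However, your treatment of the branched case contains a concrete factual error. For the face suspension, the branch locus of $\widetilde{Sf}$ consists of two arcs running from the barycenter of the face $f$ (an interior point of $Sf$) out to the two apex/cone points of the suspension, and these apexes are precisely the two degree-$3$ \emph{sink} vertices of the boundary marking $\Gamma$ (see Figure \ref{fig:face_suspension} and Figure \ref{fig:tetrahedron-double-cover}). So the branch locus \emph{does} meet $\partial\widetilde{Sf}$, and it does so exactly at the sink vertices of $\widetilde{\Gamma}$: the collar there is a thickened branched hexagon $\widetilde{D_3}$ containing a segment of the sign defect, and the algebra acting there is $\SkAlg^{\mathfrak{gl}_1}_q(\widetilde{D_3})$, itself a skein algebra with defect (this is why relation \eqref{eq:gl1hexagonrel} carries the $-1$ from the unknot around the branch point). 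Your parenthetical claim that "the cone points sit in the interior of $\widetilde{Sf}$" is also wrong for a different reason: the singular cone points do not lie in $\widetilde{Sf}$ at all -- the link of an apex inside a single face suspension is just a branched disk, and the cone singularity (and hence the $3$-term relation \eqref{eq:gl1skeinrel5}) only appears after regluing the four face suspensions around a tetrahedron barycenter, which is exactly why the paper imposes that relation only in the relative tensor product (Definition \ref{defn:gl1-relative-tensor-product}). The correct justification of the branched case is therefore not "the defect avoids the collar" but rather: the stacking action at a sink vertex is by $\SkAlg^{\mathfrak{gl}_1}_q(\widetilde{D_3})$, and it descends to the quotient because the sign-defect relation \eqref{eq:gl1skeinrel3} is a local relation imposed identically in the algebra and in the module, while generic isotopies in the collar keep tangles transverse (hence, up to the relation, disjoint) from the defect arc. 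With this correction the rest of your argument goes through and yields the stated bimodule structures.
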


In much the same way as Theorem \ref{thm:gl2splittingmap}, 
we have:
\begin{thm}
Let $(Y,\Gamma)$ be either a boundary marked $3$-manifold or its double cover $(\widetilde{Y},\widetilde{\Gamma})$.
We write $\Sk^{\mathfrak{gl}_1}_q(Y,\Gamma)$ for the corresponding skein module in each case.

Then, there is an $R$-module homomorphism, 
\begin{align*}
    \sigma^{\mathfrak{gl}_1}: \Sk^{\mathfrak{gl}_1}_q(Y,\Gamma) &\rightarrow \Sk^{\mathfrak{gl}_1}_q(Y_1,\Gamma_1) \overline{\otimes} \Sk^{\mathfrak{gl}_1}_q(Y_2,\Gamma_2)\\
    [\vec{L}] &\mapsto \left[[\vec{L}_1] \otimes [\vec{L}_2]\right],
\end{align*}
where the relative tensor product $\Sk^{\mathfrak{gl}_1}_q(Y_1,\Gamma_1) \overline{\otimes} \Sk^{\mathfrak{gl}_1}_q(Y_2,\Gamma_2)$ denotes the quotient of the usual tensor product $\Sk^{\mathfrak{gl}_1}_q(Y_1,\Gamma_1) \otimes \Sk^{\mathfrak{gl}_1}_q(Y_2,\Gamma_2)$ by the following relations:
\begin{enumerate}
\item For each internal edge e of of $\Sigma$ adjacent to a sink, we have the following relations among left actions:
\begin{align*}
&
\text{the left action of }\;
\vcenter{\hbox{

}}
\text{ on }\Sk^{\mathfrak{gl}_1}_q(Y_2, \Gamma_2), 
\end{align*}
along with the relations obtained by simultaneous orientation reversal of the tangles in both figures. 
\end{enumerate}
\end{thm}

For the purpose of comparing the 3d quantum trace map with the 3d quantum UV-IR map, we are most interested in the special case of splitting an ideally triangulated 3-manifold $Y$ into face suspensions and the corresponding splitting for the branched double cover (i.e., splitting $\widetilde{Y}$ into $\widetilde{Sf}$'s). 

\subsubsection*{Splitting $Y$ into face suspensions}
Here, we use the $\mathfrak{gl}_1$-skein modules with parameter $-A$, since that is the relevant skein module used in our main construction. 
\begin{defn}
The \textit{relative tensor product} $\overline{\bigotimes}_{f\in \mathcal{T}^{(2)}}\Sk^{\mathfrak{gl}_1}_{-A}(Sf)$ of the bimodules $\Sk^{\mathfrak{gl}_1}_{-A}(Sf)$ is the quotient of the naive tensor product $\bigotimes_{f\in \mathcal{T}^{(2)}}\Sk^{\mathfrak{gl}_1}_{-A}(Sf)$ by the following relations:
\begin{itemize}
\item For each vertex cone $Cv$, we have the following relations among left actions on $\bigotimes_{f\in \mathcal{T}^{(2)}} \Sk^{\mathfrak{gl}_1}_{-A}(Sf)$:
\begin{equation} \label{eq:gl1GluingRel1}
\vcenter{\hbox{

}}
\;,
\end{equation}
where each sector in the above diagrams represents one of the face suspensions surrounding $e$ (as many as the number of tetrahedra abutting $e$), in addition to the relation obtained by simultaneously reversing the orientations of all of the tangles in the above figures. 
\end{itemize} 
\end{defn}

\begin{cor} \label{cor:gl1splittingmap}
Let $Y = \bigcup_{f\in \mathcal{T}^{(2)}} Sf$ be a decomposition of an ideally triangulated 3-manifold into face suspensions. 
Then, there is a well-defined splitting map
\begin{align*}              
\sigma^{\mathfrak{gl}_1}:\Sk^{\mathfrak{gl}_1}_{-A}(Y) &\rightarrow 
\underset{f\in \mathcal{T}^{(2)}}{\overline{\bigotimes}}\Sk^{\mathfrak{gl}_1}_{-A}(Sf)\\
[\vec{L}] &\mapsto  \qty[\otimes_{f\in \mathcal{T}^{(2)}} [\vec{L}_{f}]], 
\end{align*}
where $\vec{L}_f$ denotes the part of $\vec{L}$ in $Sf$ after splitting.
\end{cor}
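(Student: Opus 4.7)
The plan is to derive this corollary directly from the general splitting theorem for $\mathfrak{gl}_1$-skein modules stated earlier in this appendix, by iterating it along the surfaces that separate adjacent face suspensions and then verifying that the relations it produces match the vertex-cone relation \eqref{eq:gl1GluingRel1} and the edge relation \eqref{eq:gl1GluingRel2} defining the relative tensor product $\overline{\bigotimes}_{f \in \mathcal{T}^{(2)}} \Sk_{-A}^{\mathfrak{gl}_1}(Sf)$. Concretely, the decomposition $Y = \bigcup_{f} Sf$ can be presented as a sequence of splittings along boundary-marked surfaces, each of which is a union of 2-cells of the standard boundary tessellation of a face suspension (as in Figure \ref{fig:face_suspension}), and each of these surfaces admits a combinatorial foliation of the form required by the general splitting theorem. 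Iterating gives a map
\[
\Sk^{\mathfrak{gl}_1}_{-A}(Y) \longrightarrow \bigotimes_{f \in \mathcal{T}^{(2)}} \Sk^{\mathfrak{gl}_1}_{-A}(Sf) \,/\, (\text{single-cut bimodule relations}),
\]
and the task reduces to identifying the quotient on the right with $\overline{\bigotimes}_{f \in \mathcal{T}^{(2)}} \Sk^{\mathfrak{gl}_1}_{-A}(Sf)$.

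To carry this out, I would first handle an internal edge $e \in \mathcal{T}^{(1)}$. Near $e$, the face suspensions $Sf_1, \ldots, Sf_k$ abutting $e$ share a common source vertex of the boundary marking of degree $k$. The single-cut splitting theorem produces, for every pair of adjacent face suspensions around $e$, a right-action identification of the two elementary corner tangles on either side of the shared edge of the boundary markings. Cycling through all $k$ such identifications around $e$ yields exactly the equality of the total right action of the cyclic product of corner tangles around $e$ with the trivial action, which is the content of \eqref{eq:gl1GluingRel2} (and its orientation-reversed variant). Next, for each vertex cone $Cv$, where three face suspensions meet, a similar analysis around the degree-$3$ sink vertex of the boundary marking (viewed from $v$) produces exactly \eqref{eq:gl1GluingRel1}: the left action of the cyclic product of the three corner tangles around $Cv$ equals the identity. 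The well-definedness of the map $[\vec{L}] \mapsto [\otimes_f [\vec{L}_f]]$ then follows because, after isotoping $\vec{L}$ to meet each separating surface transversally in the boundary markings, the image in the naive tensor product is independent of the order in which we cut (this is immediate since the single-cut map has trivial $\vec{\epsilon}$-summation in the $\mathfrak{gl}_1$ case).

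The main thing to be careful about is that the relations \eqref{eq:gl1GluingRel1} and \eqref{eq:gl1GluingRel2}, as written in the definition of the relative tensor product, are phrased in terms of boundary markings on face suspensions \emph{before} any splitting, so one must check that the identifications produced iteratively by the general splitting theorem -- which naturally live at the interior edges of the tessellation of $\partial Sf$ -- reassemble into precisely those cyclic relations around the vertex cones and internal edges of $\mathcal{T}$, not some stronger quotient. This bookkeeping is identical in spirit to the $\mathfrak{sl}_2$ analog (Theorem \ref{thm:3d-splitting-map-for-Sf}) and the $\mathfrak{gl}_2$ analog (Corollary \ref{cor:gl2-splitting-map}), but it is simpler in the $\mathfrak{gl}_1$ case because there are no sign factors of $(-A^2)^{\pm\epsilon}$ or $q^{\pm\epsilon}$ to keep track of; the splitting relations are strict equalities, with no state summation. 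Once this matching is checked, the splitting map is a well-defined $R$-module homomorphism with the formula in the statement.
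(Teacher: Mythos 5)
Your overall strategy---reduce to the general splitting theorem of this appendix and then match its output with the relations \eqref{eq:gl1GluingRel1} and \eqref{eq:gl1GluingRel2}---is the route the paper intends (no separate argument is written out there; the corollary is presented as the exact $\mathfrak{gl}_1$ analogue of Theorem \ref{thm:3d-splitting-map-for-Sf} and Corollary \ref{cor:gl2-splitting-map}). However, the specific reduction you describe has a gap. The wall separating two adjacent face suspensions is a single edge cone, and a single edge cone is not an admissible cutting surface for the quoted theorem: its boundary consists of the two vertex cones and the bare edge, which lie in the \emph{interior} of $Y$ at the moment of cutting, so it is not a properly embedded combinatorially foliated surface along which $Y$ splits into two boundary-marked pieces. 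The decomposition into face suspensions is a cut along a branched $2$-complex with singular $1$-strata (the vertex cones $Cv$ and the edges $e'\in\mathcal{T}^{(1)}$, where three, respectively $k$, edge cones meet), not an iteration of cuts ``along the surfaces that separate adjacent face suspensions.'' Consequently, the cyclic relations \eqref{eq:gl1GluingRel1} and \eqref{eq:gl1GluingRel2} cannot be obtained by ``cycling through pairwise identifications'' produced by the theorem, because no such pairwise cuts exist to produce them.

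What must be supplied, beyond the moves already handled in the single-surface theorem (which cover all isotopies of $\vec{L}$ supported near the interior of a single edge cone), is invariance under the elementary isotopies in which a strand sweeps across a singular stratum: such a move trades a sub-arc crossing one marking for a sub-arc crossing the remaining two (resp. $k-1$) markings around $Cv$ (resp. around $e'$), and equating the two images in $\overline{\bigotimes}_{f}\Sk^{\mathfrak{gl}_1}_{-A}(Sf)$ is precisely the content of \eqref{eq:gl1GluingRel1} and \eqref{eq:gl1GluingRel2}. Equivalently, the right-hand side ``$=1$'' of those relations encodes the fact that a meridian of $Cv$ or of $e'$ is contractible in $Y$ and evaluates to $1$ by \eqref{eq:gl1skeinrel2}; this closing-up input is an extra step, not a formal consequence of single-cut identifications. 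The repair is straightforward and, as you correctly anticipate, easier than in the $\mathfrak{sl}_2$ and $\mathfrak{gl}_2$ cases since there are no state sums and no $(-A^2)^{\pm\epsilon}$ or $q^{\pm\epsilon}$ coefficients: either verify these singular-stratum moves directly, or peel off whole face suspensions one at a time along their frontiers (which \emph{are} properly embedded unions of elementary quadrilaterals) and then push the resulting relations, together with the meridian contractibility, into \eqref{eq:gl1GluingRel1}--\eqref{eq:gl1GluingRel2} by an induction over the remaining cuts. As written, the assertion that the single-cut theorem ``produces, for every pair of adjacent face suspensions around $e$, a right-action identification'' is the step that does not go through.
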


The relative tensor product $\overline{\bigotimes}_{f\in \mathcal{T}^{(2)}} \Sk_{-A}^{\mathfrak{gl}_1}(Sf)$ is graded by $\oplus_{Ce}\mathbb{Z}$; i.e., for each edge cone $Ce$, it has a $\mathbb{Z}$-grading given by the signed count of the end points of a $\mathfrak{gl}_1$-tangle on that edge cone. 
The image of the splitting map lies in the $0$-graded part, with respect to this grading. 
We denote by $\qty(\overline{\bigotimes}_{f\in \mathcal{T}^{(2)}} \Sk_{-A}^{\mathfrak{gl}_1}(Sf))_0$ this 0-graded piece. 

\begin{lem}\label{lem:gl1_iso}
The splitting map
\[
\Sk_{-A}^{\mathfrak{gl}_1}(Y) \overset{\sigma^{\mathfrak{gl}_1}}{\rightarrow} 
\qty(
\underset{f\in \mathcal{T}^{(2)}}{\overline{\bigotimes}} \Sk_{-A}^{\mathfrak{gl}_1}(Sf)
)_0
\]
is an isomorphism of $R$-modules. 
\end{lem}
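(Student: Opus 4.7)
The strategy mirrors the proof of Lemma \ref{lem:gl1_iso_surfaces}, with the extra bookkeeping needed to handle vertex cones in the 3d case.

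First, I would establish an explicit basis for $\Sk^{\mathfrak{gl}_1}_{-A}(Sf)$.
Since $Sf$ is a 3-ball with combinatorially foliated boundary, Theorem \ref{thm:gl1skeinmodule} presents $\Sk^{\mathfrak{gl}_1}_{-A}(Sf)$ as a cyclic bimodule, and the face relations of that theorem together with the stated $\mathfrak{gl}_1$-skein relations \eqref{eq:statedgl1skeinrel3}--\eqref{eq:statedgl1skeinrel4} allow any tangle to be pushed, up to scalars, into a standard ``fanned'' position where each component is a simple arc between two edge cones.
The resulting basis $\{[L_{\vec n}]\}$ is indexed by tuples $\vec n = (n_e)$, one integer per bare edge cone $e$ of $Sf$, subject to the vertex balance at each vertex of the boundary marking (two apex sink vertices and three equatorial source vertices), exactly as in the triangle case of Lemma \ref{lem:gl1_iso_surfaces}.

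Next, I would analyze the degree-$0$ part of $\overline{\bigotimes}_{f\in\mathcal{T}^{(2)}}\Sk^{\mathfrak{gl}_1}_{-A}(Sf)$.
In the naive tensor product, basis elements are tuples of standard webs $\otimes_f [L_{\vec n_f}]$; the $\oplus_{Ce}\mathbb{Z}$-grading by signed boundary count on each edge cone picks out those with $n_{e,f}+n_{e,f'}=0$ at every pair of bare edge cones of a split edge cone $Ce$.
The gluing relations \eqref{eq:gl1GluingRel1}--\eqref{eq:gl1GluingRel2} then identify any two basis elements differing by adding a small arc that crosses a vertex cone or wraps around an internal edge of $\mathcal{T}$, so the quotient is freely generated by global tuples of integers $(m_e)_{e\in\mathcal{T}^{(1)}}$ satisfying the vertex balance at each vertex cone of $\mathcal{T}$ and no other constraints.

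Finally, I would construct an inverse map by gluing. Given a compatible global tuple $(m_e)$, the matching conditions at internal edges and the vertex-cone balances ensure that the standard webs $L_{\vec n_f}$ in adjacent face suspensions agree on their common boundary in $Cv$-neighborhoods, so they glue to a genuine $\mathfrak{gl}_1$-web $L \subset Y$ whose image under $\sigma^{\mathfrak{gl}_1}$ is precisely the given tensor.
Sending $(m_e) \mapsto [L]$ gives an $R$-linear inverse, proving the lemma.

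\textbf{Main obstacle.} The principal technical challenge is the middle step: one must verify that the gluing relations in the relative tensor product precisely realize the expected quotient, with neither extra collapses nor missed relations. In particular, the fact that standard webs meeting a vertex cone with the \emph{same} total signed flux but different distributions among the three surrounding face suspensions become identified -- and that this identification exactly implements vertex balance -- requires carefully unpacking \eqref{eq:gl1GluingRel1} (and the analogous orientation-reversed version) in the presence of the $\mathfrak{gl}_1$-skein relations \eqref{eq:gl1skeinrel1}--\eqref{eq:gl1skeinrel4}. Once this is in hand, the comparison of bases on the two sides is straightforward, and the splitting map is manifestly the inverse of the gluing construction.
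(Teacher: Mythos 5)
Your surjectivity argument is essentially the paper's: the degree-$0$ part of the relative tensor product is spanned by matching tuples of distinguished webs $\otimes_f [L_{\vec n_f}]$, each of which is visibly $\sigma^{\mathfrak{gl}_1}$ of a glued web in $Y$. (One correction there: for a single face suspension the only constraint on $\vec n_f\in\mathbb{Z}^6$ is that the total signed flux vanishes, not ``vertex balance at each vertex of the boundary marking''; e.g.\ the elementary arc $\overrightarrow{b_1c_1}$ has nonzero flux on two edge cones abutting the same equatorial source vertex, so the per-vertex constraints you state would exclude elements that are genuinely in $\Sk^{\mathfrak{gl}_1}_{-A}(Sf)$.)

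The real gap is in your injectivity step, which runs through the claim that the degree-$0$ part of $\overline{\bigotimes}_{f}\Sk^{\mathfrak{gl}_1}_{-A}(Sf)$ is \emph{freely} generated by global flux tuples subject only to vertex balance. That claim is both unproved and, as stated, false in general: first, the relations \eqref{eq:gl1GluingRel1} and \eqref{eq:gl1GluingRel2} change the individual fluxes through the edge cones around a vertex cone or an internal edge, so these tuples are not even invariants of classes in the quotient; second, if the lemma holds then the degree-$0$ relative tensor product is isomorphic to $\Sk^{\mathfrak{gl}_1}_{-A}(Y)$, which by Przytycki's structure theorem (used in Section \ref{sec: decouple gl(1)} and in the hypothesis of Theorem \ref{thm:quantum-trace-from-quantum-UV-IR}) has torsion graded pieces whenever the $H_1$--$H_2$ intersection pairing is nonzero, so no freeness statement of this kind can be correct for all $Y$. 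Determining the exact module structure of the quotient is precisely the hard computation you defer to your ``main obstacle,'' and it is not needed. The paper avoids it by constructing a one-sided inverse: define the gluing map $g$ on the \emph{naive} tensor product $\bigl(\bigotimes_f \Sk^{\mathfrak{gl}_1}_{-A}(Sf)\bigr)_0$ using the distinguished-web basis of each factor, check directly that $g$ annihilates the left relations \eqref{eq:gl1GluingRel1} and the right relations \eqref{eq:gl1GluingRel2} (a local verification, much easier than computing the quotient), so that it descends to $\overline g$ on the relative tensor product, and observe $\overline g\circ\sigma^{\mathfrak{gl}_1}=\mathrm{id}$. Injectivity then follows without ever identifying the quotient explicitly; I recommend you restructure your argument along these lines.
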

\begin{proof}
The proof is similar to that of Lemma \ref{lem:gl1_iso_surfaces}, except that in this case we have some relations to check. 
Firstly, it is easy to see that this map is surjective; $\qty(\overline{\bigotimes}_{f\in \mathcal{T}^{(2)}} \Sk_{-A}^{\mathfrak{gl}_1}(Sf))$ has a spanning set given by the form $\{\otimes_{f\in \mathcal{T}^{(2)}} [L_{\vec{n}_f}]\;\vert\; \vec{n}_f \in \mathbb{Z}^6,\, n_{f,1} + \cdots + n_{f,6} = 0\}$, where $L_{\vec{n}_f} \in \Sk_{-A}^{\mathfrak{gl}_1}(Sf)$ denotes the distinguished $\mathfrak{gl}_1$-skein ($\mathfrak{gl}_1$-web) in $Sf$ with boundary condition $\vec{n}_f$; see Figure \ref{fig:gl1-web-basis-for-Sf}. 
\begin{figure}[htbp]
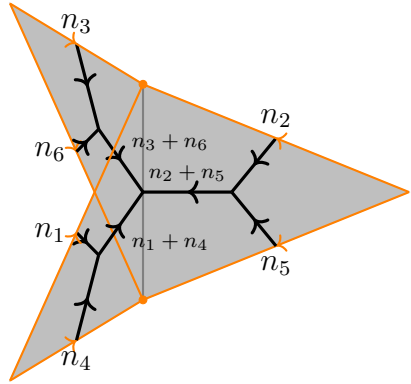

\centering
\[
\vcenter{\hbox{
\tdplotsetmaincoords{35}{60}

}}
\]
\caption{The $\mathfrak{gl}_1$-web $L_{\vec{n}}$ in $Sf$; each strand labeled by $n \in \mathbb{Z}$ denotes the $n$-colored strand (equivalent to $n$ parallel strands), and they are all flat on the leaf space.}
\label{fig:gl1-web-basis-for-Sf}
\end{figure}
Likewise, $\qty(\overline{\bigotimes}_{f\in \mathcal{T}^{(2)}} \Sk_{-A}^{\mathfrak{gl}_1}(Sf))_0$ has a spanning set given by the form $\otimes_{f\in \mathcal{T}^{(2)}} [L_{\vec{n}_f}]$ but with matching boundary conditions along boundary markings that are glued, which is the image of the corresponding glued $\mathfrak{gl}_1$-skein in $\Sk_{-A}^{\mathfrak{gl}_1}(Y)$. 

To show the injectivity of the splitting map, we can construct the inverse of the splitting map in the following way. 
There is a map
\[
\qty(\bigotimes_{f\in \mathcal{T}^{(2)}} \Sk_{-A}^{\mathfrak{gl}_1}(Sf))_0 \overset{g}{\rightarrow} \Sk_{-A}^{\mathfrak{gl}_1}(Y)
\]
which can be defined on the basis elements $\otimes_{f\in \mathcal{T}^{(2)}} [L_{\vec{n}_f}]$ (with matching boundary conditions) by just gluing the skeins. 
Furthermore, it is easy to see that this map respects the left and right relations, and thus descends to the quotient to give a map
\[
\qty(
\underset{f\in \mathcal{T}^{(2)}}{\overline{\bigotimes}} \Sk_{-A}^{\mathfrak{gl}_1}(Sf)
)_0 \overset{\overline{g}}{\rightarrow} \Sk_{-A}^{\mathfrak{gl}_1}(Y).
\]
By construction, $\overline{g} \circ \sigma^{\mathfrak{gl}_1}$ is an identity on $\Sk_{-A}^{\mathfrak{gl}_1}(Y)$. 
Therefore, the splitting map
\[
\Sk_{-A}^{\mathfrak{gl}_1}(Y) \overset{\sigma^{\mathfrak{gl}_1}}{\rightarrow} \qty(
\underset{f\in \mathcal{T}^{(2)}}{\overline{\bigotimes}} \Sk_{-A}^{\mathfrak{gl}_1}(Sf)
)_0
\]
is an isomorphism. 
\end{proof}

\subsubsection*{Splitting $\widetilde{Y}$ into branched double covers of face suspensions}
Obtaining a splitting map for $\Sk^{\mathfrak{gl}_1}_q(\widetilde{Y},\Theta)$ requires a bit more work, due to the presence of the 3-term relations \eqref{eq:gl1skeinrel5} around the cone points. 

\begin{defn} \label{defn:gl1GluingRelationsDoubleCover}
The \emph{pre-relative tensor product} $\overline{\bigotimes}^{\circ}_{f\in \mathcal{T}^{(2)}}\Sk^{\mathfrak{gl}_1}_q(\widetilde{Sf})$ of the bimodules $\Sk^{\mathfrak{gl}_1}_q(\widetilde{Sf})$ is the quotient of $\bigotimes_{f\in \mathcal{T}^{(2)}}\Sk^{\mathfrak{gl}_1}_q(\widetilde{Sf})$ by the following relations (as well as the ones obtained by simultaneous orientation reversal of all the tangles): 
\begin{itemize}
\item For each vertex cone $Cv$, we have the following relations among left actions on $\bigotimes_{f\in \mathcal{T}^{(2)}} \Sk^{\mathfrak{gl}_1}_q(\widetilde{Sf})$:
\begin{equation} \label{eqn:gl1-vertex-cone}
\vcenter{\hbox{

}}
\;,
\quad
i \in \{1,2\},
\end{equation}
where each sector in the above diagrams represents the projection one of the face suspensions surrounding $e$ and $i$ labels the sheet of the tangles. 
\end{itemize}
\end{defn}

\begin{cor} \label{cor:gl1splittingwithoutconepoints}
Suppose $Y$ is an ideally triangulated $3$-manifold with an associated branched double cover $\widetilde{Y}$. 
Let $\widetilde{Y}^\circ$ denote $\widetilde{Y}$ with a small neighborhood of the cone points removed, and let $\Sk_q^{\mathfrak{gl}_1}(\widetilde{Y}^\circ)$ denote the $\mathfrak{gl}_1$-skein module of $\widetilde{Y}^\circ$, i.e., with sign defects along the branch locus but \emph{without} the 3-term relations at cone points. 
Then, for the decomposition $\widetilde{Y}^\circ = \bigcup_{f\in \mathcal{T}^{(2)}}\widetilde{Sf}$ of $\widetilde{Y}^\circ$ into branched double covers of face suspensions, there is a well-defined splitting map 
\begin{align*}
\sigma^{\mathfrak{gl}_1}:\Sk^{\mathfrak{gl}_1}_q(\widetilde{Y}^\circ) &\rightarrow \, 
\underset{f\in \mathcal{T}^{(2)}}{\overline{\bigotimes}^\circ}\Sk^{\mathfrak{gl}_1}_q(\widetilde{Sf})\\
[\vec{L}] &\mapsto \qty[\otimes_{f\in \mathcal{T}^{(2)}} [\vec{L}_{f}]]. 
\end{align*}
\end{cor}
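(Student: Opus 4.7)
The plan is to iteratively apply the general $\mathfrak{gl}_1$-splitting theorem along the codimension-one strata that separate adjacent face suspensions, exactly paralleling the construction used for the $\mathfrak{gl}_2$-splitting map in Corollary \ref{cor:gl2-splitting-map}. Each interface $\widetilde{Sf} \cap \widetilde{Sf'}$ lies either along the lift of an edge cone or along one of the sides of a vertex cone; both are homeomorphic (together with their markings) to elementary combinatorially foliated surfaces, so the general splitting theorem above applies to each such interface in turn. The crucial point is that $\widetilde{Y}^{\circ}$ has been chosen precisely to exclude the singular cone points, so only the usual $\mathfrak{gl}_1$-skein relations and the sign defect along the branch locus are in play; no $3$-term relation enters the computation at this stage.

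First I would put a representative tangle $\vec{L}$ in general position with respect to the union of the interfaces, keeping it disjoint from the branch locus by a small isotopy, so that the naive assignment $[\vec{L}] \mapsto \bigotimes_{f} [\vec{L}_f]$ lands in $\bigotimes_{f\in \mathcal{T}^{(2)}} \Sk^{\mathfrak{gl}_1}_q(\widetilde{Sf})$. Well-definedness as an $R$-module map into the naive tensor product then follows by invoking the splitting theorem locally around each interface, using the fact that the $\mathfrak{gl}_1$-$R$-matrix is essentially diagonal so that the height-exchange computation (the $\mathfrak{gl}_1$ analog of the exchange step in the proof of Theorem \ref{thm:gl2splittingmap}) collapses to the identity.

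Next I would verify that the resulting map descends to the pre-relative tensor product $\overline{\bigotimes}^{\circ}_{f\in \mathcal{T}^{(2)}} \Sk^{\mathfrak{gl}_1}_q(\widetilde{Sf})$. This amounts to checking two local invariance statements. The vertex-cone relation \eqref{eqn:gl1-vertex-cone} expresses that a small sheet-$i$ arc sitting near a vertex cone can be slid through any one of the three adjacent face suspensions without changing its class; this is precisely the equality one obtains by chasing the height-exchange identity around the three sectors meeting at $Cv$, labeling each intermediate strand by its sheet $i \in \{1,2\}$. The gluing relation \eqref{eqn:gl1-gluing} analogously arises by performing a cyclic sequence of height exchanges around the face suspensions abutting an interior edge $e \in \mathcal{T}^{(1)}$, again with all sheet labels equal to a common $i$. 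Both checks are literal translations into the $\mathfrak{gl}_1$ setting of the corresponding $\mathfrak{gl}_2$ (and $\mathfrak{sl}_2$) arguments, and the simultaneous orientation-reversed versions follow by symmetry.

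The main obstacle, though minor, is keeping track of the sign defect while sliding tangles across the interfaces. Since the branch locus of $\widetilde{Y}$ meets the union of interfaces only at the cone points, which have been removed, any local isotopy required to perform the height exchanges can be chosen to avoid the sign defect entirely, so no extra signs are produced. Combined with the standard verification that the assignment is unchanged by interior $\mathfrak{gl}_1$-skein relations and by the half-twist relation in each piece, this establishes the desired splitting map.
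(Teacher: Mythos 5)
Your proposal is correct and takes essentially the paper's route: the corollary is there an immediate consequence of the general $\mathfrak{gl}_1$-splitting theorem (itself proved by the same height-exchange argument as Theorem \ref{thm:gl2splittingmap}, trivialized for $\mathfrak{gl}_1$), applied iteratively to the face-suspension decomposition, with the pre-relative tensor product relations \eqref{eqn:gl1-vertex-cone} and \eqref{eqn:gl1-gluing} supplying exactly the invariance under isotopies across vertex cones and triangulation edges that you check, and with the removal of the cone points guaranteeing that neither the 3-term relation nor the branch locus interferes at the interfaces. One minor point of phrasing: those quotient relations are needed to make the map well defined on the source (independence of the chosen representative of $[\vec{L}]$), rather than to make an already-defined map ``descend''; your two local checks are precisely that verification.
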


For each neighborhood of a cone point that is removed from $\widetilde{Y}$, $\widetilde{Y}^\circ$ has a boundary component which is a torus $T^2$ with 4 branch points. 
It follows that $\Sk^{\mathfrak{gl}_1}_q(\widetilde{Y}^\circ)$ is a module over $\bigotimes_{T \in \mathcal{T}^{(3)}} \SkAlg^{\mathfrak{gl}_1}_q(T^2)$, where $T^2$ here means the torus with 4 branch points; see Figure \ref{fig:boundaryProjection} for an illustration of this boundary torus. 
\begin{figure}[htbp]
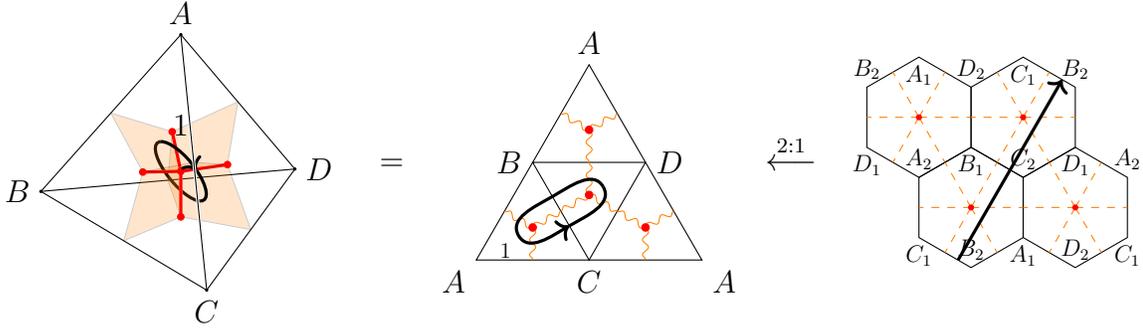

\[
\vcenter{\hbox{
\tdplotsetmaincoords{60}{80}

}}
\]
\caption{The boundary of a tetrahedron has been unraveled; its double cover is a torus. 
Vertices $A, B, C,$ and $D$ are lifted to the double cover as $A_1, B_1, C_1,$ and $D_1$ on sheet $1$ and as $A_2, B_2, C_2,$ and $D_2$ on sheet 2.
We also show a cycle, its image under projection to the boundary, and its appearance in the double cover. 
}
\label{fig:boundaryProjection}
\end{figure}
While the naive tensor product $\bigotimes_{f\in \mathcal{T}^{(2)}}\Sk^{\mathfrak{gl}_1}_q(\widetilde{Sf})$ is no longer a module over $\bigotimes_{T \in \mathcal{T}^{(3)}} \SkAlg^{\mathfrak{gl}_1}_q(T^2)$, the pre-relative tensor product $\overline{\bigotimes}^\circ_{f\in \mathcal{T}^{(2)}}\Sk^{\mathfrak{gl}_1}_q(\widetilde{Sf})$ recovers this module structure. 
Indeed, while an isotopy of a link diagram on $T^2$ does not preserve the corresponding element in $\SkAlg^{\mathfrak{gl}_1}_q(\widetilde{D_3})^{\otimes 4}$ upon splitting of $T^2$ into 4 hexagons, the relations \eqref{eqn:gl1-vertex-cone} around each vertex cone ensures that we have a well-defined splitting
\[
\SkAlg^{\mathfrak{gl}_1}_q(T^2) \rightarrow \underset{f \in \mathcal{T}^{(2)}}{\overline{\bigotimes}} \SkAlg^{\mathfrak{gl}_1}_q(\widetilde{D_3}),
\]
where the relative tensor product on the right-hand side is over the 4 face suspensions around the same barycenter. 
It follows that $\overline{\bigotimes}^\circ_{f\in \mathcal{T}^{(2)}}\Sk^{\mathfrak{gl}_1}_q(\widetilde{Sf})$ is a $\bigotimes_{T \in \mathcal{T}^{(3)}} \SkAlg^{\mathfrak{gl}_1}_q(T^2)$-module. 
It is also immediate that the splitting map $\sigma^{\mathfrak{gl}_1}:\Sk^{\mathfrak{gl}_1}_q(\widetilde{Y}^\circ) \rightarrow \overline{\bigotimes}^\circ_{f\in \mathcal{T}^{(2)}}\Sk^{\mathfrak{gl}_1}_q(\widetilde{Sf})$
is a $\bigotimes_{T \in \mathcal{T}^{(3)}} \SkAlg^{\mathfrak{gl}_1}_q(T^2)$-module homomorphism. 

To this end, in order to get a splitting map out of $\Sk^{\mathfrak{gl}_1}_q(\widetilde{Y})$, we simply need to take a further quotient of the pre-relative tensor product $\overline{\bigotimes}^\circ_{f\in \mathcal{T}^{(2)}}\Sk^{\mathfrak{gl}_1}_q(\widetilde{Sf})$ by imposing the 3-term relations \eqref{eq:gl1skeinrel5} using this $\bigotimes_{T \in \mathcal{T}^{(3)}} \SkAlg^{\mathfrak{gl}_1}_q(T^2)$-module structure. 

\begin{defn}\label{defn:gl1-relative-tensor-product}
The \emph{relative tensor product} $\overline{\bigotimes}_{f\in \mathcal{T}^{(2)}} \Sk^{\mathfrak{gl}_1}_q(\widetilde{Sf})$ of the bimodules $\Sk^{\mathfrak{gl}_1}_q(\widetilde{Sf})$ is the quotient of the pre-relative tensor product $\overline{\bigotimes}^\circ_{f\in \mathcal{T}^{(2)}}\Sk^{\mathfrak{gl}_1}_q(\widetilde{Sf})$, thought of as a left $\bigotimes_{T \in \mathcal{T}^{(3)}} \SkAlg^{\mathfrak{gl}_1}_q(T^2)$-module, by the 3-term relations \eqref{eq:gl1skeinrel5} among left actions: 
\begin{equation} \label{eq:3termRelProjected}
\vcenter{\hbox{

}}.
\end{equation}
\end{defn}
The following corollary is immediate: 
\begin{cor} \label{cor:gl1splittingwithconepoints}
Suppose $Y$ is an ideally triangulated $3$-manifold with an associated branched double cover $\widetilde{Y}$.
Further suppose that $Y$ is equipped with a generalized angle structure $\Theta$.
Let $\widetilde{Y} = \bigcup_{f\in \mathcal{T}^{(2)}}\widetilde{Sf}$ be the decomposition of $\widetilde{Y}$ into branched double covers of face suspensions. 
Then, there is a well-defined splitting map 
\begin{align*}              
\sigma^{\mathfrak{gl}_1}:\Sk^{\mathfrak{gl}_1}_q(\widetilde{Y},\Theta) &\rightarrow \, 
\underset{f\in \mathcal{T}^{(2)}}{\overline{\bigotimes}} \Sk^{\mathfrak{gl}_1}_q(\widetilde{Sf})\\
[\vec{L}] &\mapsto \qty[\otimes_{f\in \mathcal{T}^{(2)}} [\vec{L}_{f}]]. 
\end{align*} 
\end{cor}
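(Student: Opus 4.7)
The natural strategy is to realize $\sigma^{\mathfrak{gl}_1}$ as the composition
\[
\Sk^{\mathfrak{gl}_1}_q(\widetilde{Y},\Theta) \;\twoheadleftarrow\; \Sk^{\mathfrak{gl}_1}_q(\widetilde{Y}^\circ) \;\xrightarrow{\sigma^{\mathfrak{gl}_1}}\; \underset{f\in\mathcal{T}^{(2)}}{\overline{\bigotimes}^\circ} \Sk^{\mathfrak{gl}_1}_q(\widetilde{Sf}) \;\twoheadrightarrow\; \underset{f\in\mathcal{T}^{(2)}}{\overline{\bigotimes}} \Sk^{\mathfrak{gl}_1}_q(\widetilde{Sf}),
\]
where the middle arrow is the splitting map of Corollary \ref{cor:gl1splittingwithoutconepoints} and the last arrow is the canonical quotient. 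The only thing to verify is that, after composition with this quotient, the map descends from $\Sk^{\mathfrak{gl}_1}_q(\widetilde{Y}^\circ)$ to its quotient $\Sk^{\mathfrak{gl}_1}_q(\widetilde{Y},\Theta)$; equivalently, that the 3-term relations \eqref{eq:gl1skeinrel5} at each cone point of $\widetilde{Y}$ are sent to zero in $\overline{\bigotimes}_{f\in\mathcal{T}^{(2)}} \Sk^{\mathfrak{gl}_1}_q(\widetilde{Sf})$.

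The plan is to work one cone point at a time. Fix a tetrahedron $T\in\mathcal{T}^{(3)}$ and its associated cone point $p\in\widetilde{Y}$. After removing a small neighborhood of $p$ in $\widetilde{Y}$, the resulting boundary is the genus-one pseudomanifold depicted in Figure \ref{fig:boundaryProjection}, whose $\mathfrak{gl}_1$-skein algebra is $\SkAlg^{\mathfrak{gl}_1}_q(T^2)$. The relations \eqref{eqn:gl1-vertex-cone} at each vertex cone around $p$ are precisely what is needed to guarantee that splitting gives a well-defined algebra map $\SkAlg^{\mathfrak{gl}_1}_q(T^2) \to \overline{\bigotimes}_{f \ni p}\SkAlg^{\mathfrak{gl}_1}_q(\widetilde{D_3})$ (where the relative tensor product is over the four face suspensions meeting at $p$), and hence endow $\overline{\bigotimes}^\circ_{f\in\mathcal{T}^{(2)}} \Sk^{\mathfrak{gl}_1}_q(\widetilde{Sf})$ with the structure of a left $\bigotimes_T \SkAlg^{\mathfrak{gl}_1}_q(T^2)$-module; the splitting map $\sigma^{\mathfrak{gl}_1}$ from $\widetilde{Y}^\circ$ is then a homomorphism of such modules.

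The key geometric observation is that the 3-term relation at $p$ in $\Sk^{\mathfrak{gl}_1}_q(\widetilde{Y},\Theta)$ is local: by isotopy (away from the branch locus), it can be pushed toward the torus boundary, where it becomes the difference of a tangle localized on $T^2$ and the corresponding linear combination of two detoured tangles on $T^2$, acting by the left $\SkAlg^{\mathfrak{gl}_1}_q(T^2)$-module structure on the rest of the skein. Under $\sigma^{\mathfrak{gl}_1}$, this difference lands exactly in the subspace generated by the left-action relations \eqref{eq:3termRelProjected} used in Definition \ref{defn:gl1-relative-tensor-product} to pass from the pre-relative to the relative tensor product. Once this identification is in place, descent to $\Sk^{\mathfrak{gl}_1}_q(\widetilde{Y},\Theta)$ is automatic.

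The main obstacle is the bookkeeping in the third paragraph: one needs to verify that the combinatorial picture of the 3-term relation at the cone point (Figure \ref{fig:3termrelation}), after pushing to a collar of $T^2$ and splitting into the four hexagonal pieces tiling the torus (as in Figure \ref{fig:boundaryProjection}), matches the precise local model for the relation \eqref{eq:3termRelProjected}, including the angle labels $\theta,\theta',\theta''$ and the sheet labels $1,2$. This is a direct (but careful) check: the four hexagonal tiles around each branch point on $T^2$ correspond to the four face suspensions abutting $p$, and the three pairs of opposite edges of $T$ give the three sectors with angles $\theta,\theta',\theta''$ appearing on both sides of the relation; the tangle diagrams on both sides translate literally. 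Granting this identification, the proof is complete.
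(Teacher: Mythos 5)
Your proposal is correct and matches the paper's argument: the paper treats the corollary as immediate precisely because the relative tensor product (Definition \ref{defn:gl1-relative-tensor-product}) is defined by imposing the 3-term relations via the $\bigotimes_{T\in\mathcal{T}^{(3)}}\SkAlg^{\mathfrak{gl}_1}_q(T^2)$-module structure on the pre-relative tensor product, and the splitting map of Corollary \ref{cor:gl1splittingwithoutconepoints} is a homomorphism of such modules, which is exactly your composition-plus-descent argument. The ``bookkeeping'' you defer is already built into the statement of relation \eqref{eq:3termRelProjected}, so nothing further is needed.
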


\bibliography{ref}
\bibliographystyle{alpha}
\end{document}